\newcommand{\bD}{{\mathbb D}}
\newcommand{\bG}{{\mathbb G}}
\newcommand{\bK}{{\mathbb K}}
\newcommand{\bF}{{\mathbb F}}
\newcommand{\bZ}{{\mathbb Z}}
\newcommand{\fg}{{\mathfrak g}}
\newcommand{\fl}{{\mathfrak l}}
\newcommand{\fh}{{\mathfrak h}}
\newcommand{\fm}{{\mathfrak m}}
\newcommand{\fn}{{\mathfrak n}}
\newcommand{\fo}{{\mathfrak o}}
\newcommand{\fp}{{\mathfrak p}}
\newcommand{\fs}{{\mathfrak s}}
\newcommand{\fu}{{\mathfrak u}}
\newcommand{\sC}{{\mathscr C}}
\newcommand{\sF}{{\mathscr F}}
\newcommand{\sG}{{\mathscr G}}
\newcommand{\sM}{{\mathscr M}}
\newcommand{\sX}{{\mathscr X}}
\newcommand{\fb}{{\mathfrak b}}
\newcommand{\Aut}{{{\mbox{\rm Aut}}}}
\newcommand{\cen}{{{\mbox{\rm cen}}}} 
\newcommand{\Ext}{{{\mbox{\rm Ext}}}}
\newcommand{\Lie}{{{\mbox{\rm Lie}}}}
\newcommand{\Hom}{{{\mbox{\rm Hom}}}}
\newcommand{\Id}{{{\mbox{\rm Id}}}}
\newcommand{\Rad}{{{\mbox{\rm Rad}}}}
\newcommand{\End}{{{\mbox{\rm End}}}}
\newcommand{\Mod}{{{\mbox{\rm Mod}}}}
\newcommand{\twopartdef}[4]{\left\{
	\begin{array}{ll}
		#1 & \mbox{if } #2 \\
		#3 & \mbox{if } #4
	\end{array}
	\right.}
\newtheorem{theorem}{Theorem}[section]
\newtheorem{prop}[theorem]{Proposition}
\newtheorem{lemma}[theorem]{Lemma}
\newtheorem*{dfn}{Definition}
\newtheorem{cor}[theorem]{Corollary}
\newtheorem{rmk}{Remark}
\newcommand{\arxiv}[1]{{\tt arXiv:#1}}
\begin{document}
\title{On graded representations of modular Lie algebras over commutative algebras}
\author{Matthew Westaway}
\email{M.P.Westaway@bham.ac.uk}
\address{School of Mathematics, University of Birmingham, Birmingham, B15 2TT, UK}
\date{\today}
\subjclass[2020]{Primary 17B50  ; Secondary  13C60 17B10 18G05}
\keywords{Modular Lie algebra, baby Verma module, induction, projective cover}
  
\begin{abstract}
	We develop the theory of a category $\sC_A$ which is a generalisation to non-restricted $\fg$-modules of a category famously studied by Andersen, Jantzen and Soergel for restricted $\fg$-modules, where $\fg$ is the Lie algebra of a reductive group $G$ over an algebraically closed field $\bK$ of characteristic $p>0$. Its objects are certain graded bimodules. On the left, they are graded modules over an algebra $U_\chi$ associated to $\fg$ and to $\chi\in\fg^{*}$ in standard Levi form. On the right, they are modules over a commutative Noetherian $S(\fh)$-algebra $A$, where $\fh$ is the Lie algebra of a maximal torus of $G$. We develop here certain important modules $Z_{A,\chi}(\lambda)$, $Q_{A,\chi}^I(\lambda)$ and $Q_{A,\chi}(\lambda)$ in $\sC_A$ which generalise familiar objects when $A=\bK$, and we prove some key structural results regarding them.
\end{abstract}
\maketitle
\tableofcontents

\section{Introduction}

Suppose $G$ is a connected reductive algebraic group over an algebraically closed field $\bK$ of characteristic $p>0$, and let $\fg$ be its Lie algebra. Each simple module over $\fg$ can be assigned a $p$-character, a linear map $\chi:\fg\to\bK$ controlling how a certain central subalgebra of the universal enveloping algebra $U(\fg)$ acts on the module. We define the {\bf reduced enveloping algebra} as $U_\chi(\fg)=U(\fg)/\langle x^p-x^{[p]}-\chi(x)^p\,\vert\,x\in \fg\rangle$, where $x\mapsto x^{[p]}$ is the $p$-th power map on $\fg$ which equips it with a restricted Lie algebra structure. Every simple $\fg$-module is a simple $U_\chi(\fg)$-module for some $\chi\in\fg^{*}$, and thus understanding the representation theory of reduced enveloping algebras is critical to understand the representation theory of modular Lie algebras.

For suitable $\lambda\in \fh^{*}$, where $\fh$ is the Lie algebra of a maximal torus $T$ of $G$, we may define (so long as we make certain reasonable assumptions on $\chi$) a $U_\chi(\fg)$-module $Z_\chi(\lambda)$, which we call a {\bf baby Verma module}. The key quality possessed by baby Verma modules is that every irreducible $U_\chi(\fg)$-module is a quotient of some $Z_\chi(\lambda)$. In general, a baby Verma module may have more than one irreducible quotient, and an irreducible module may be a quotient of more than one baby Verma module. However, these complications can be removed in certain cases. If we assume $\chi$ is in so-called {\bf standard Levi form}, each $Z_\chi(\lambda)$ has a unique irreducible quotient. We also find in this case that if an irreducible module is a quotient of two baby Verma modules then these baby Verma modules are isomorphic.

There is a bijection between the set of $\chi\in\fg^{*}$ in standard Levi form and the set of subsets $I$ of simple roots of $\fg$. If $\chi\in\fg^{*}$ corresponds to $I$, we may equip $U_\chi(\fg)$ with an $X/\bZ I$-grading, where $X\coloneqq X(T)$ is the character group of $T$, and we may consider $X/\bZ I$-graded $U_\chi(\fg)$-modules. For technical reasons it is convenient not to work with the whole category of such modules, but instead a certain subcategory $\sC$, copies of which the wider category is a direct sum. This category has been extensively studied in \cite{Jan4} (see also \cite{Jan2,Jan}).

The most well-known example of $\chi\in\fg^{*}$ in standard Levi form is $\chi=0$. In this case, $I$ is empty and the category $\sC$ becomes precisely the category of $G_1T$-modules ($G_1$ being the first Frobenius kernel of $G$). Andersen, Jantzen and Soergel, in their seminal work \cite{AJS}, study when $\chi=0$ a category they call $\sC_A$ (they also study the analogous category for quantum groups). Here, $A$ is a commutative Noetherian $S(\fh)$-algebra and the objects of $\sC_A$ are $U_0\otimes A$-modules with an $X$-grading satisfying some conditions, where $U_0$ is a certain quotient of $U(\fg)$ lying between $U(\fg)$ and $U_0(\fg)$. When $A=\bK$, made into an $S(\fh)$ algebra via the counit, one recovers precisely Jantzen's category $\sC$ for $\chi=0$.

The motivation for Andersen, Jantzen and Soergel's study of this category was their interest in proving Lusztig's conjecture \cite{L}, which they were able to do for $p\gg 0$. Lusztig's conjecture, the reader will recall, gives a prediction for the characters of certain simple $G$-modules in terms of combinatorial properties of the affine Weyl group; one can also understand it as predicting composition multiplicities of simple $G$-modules in Weyl modules. It is a positive characteristic analogue of the famous Kazhdan-Lusztig conjecture \cite{KL} (independently proved by Beilinson-Bernstein \cite{BB} and by Brylinski-Kashiwara \cite{BK}), a fundamental result in the characteristic zero theory.

In the last few years, there have been several major developments in our understanding of Lusztig's conjecture. Williamson \cite{Wi} has showed that Lusztig's conjecture cannot hold under the expected assumptions; however Riche and Williamson showed in \cite{RW1} that it would be possible to prove a modification of Lusztig's conjecture (using so-called $p$-Kazhdan-Lusztig polynomials) if one could find a nice action of the Hecke category on a certain category of $G$-modules. Recent preprints by Bezrukavnikov and Riche \cite{BR} and by Ciappara \cite{Ci} are able to find such an action.

Work of Abe \cite{Abe1,Abe2} also investigates actions of the Hecke category, and is able to define such an action on the category of $X$-graded $U_0(\fg)$-modules (i.e. $G_1T$-modules). As observed above, the category of $G_1T$-modules is a special case of the category $\sC$ studied by Jantzen, and so a natural question is whether one can construct a similar categorical action on the category $\sC$ for any $\chi$ in standard Levi form. Lusztig conjectured in \cite{L1} a variation of his conjecture for such $\chi$; finding such a categorical action could plausibly enable us to prove a variation of this conjecture, just as Riche-Williamson \cite{RW1} were able to do for the classical version.

Abe's construction \cite{Abe1} of the Hecke action requires an in-depth understanding of a number of a categories and functors introduced by Andersen, Jantzen and Soergel in \cite{AJS}. To generalise this to work for non-zero $\chi$ in standard Levi form, one must first generalise the Anderson-Jantzen-Soergel categories and functors, particularly the category $\sC_A$ discussed above, to work for such $\chi$. Jantzen discusses this situation briefly in \cite{Jan4}, but doesn't give an full treatment. We do so here. In subsequent work, we hope to be able to use these categories and functors to generalise Abe's description of the category of $G_1T$-modules, and thus find an appropriate categorial action.

In this paper, we study a category which we, similar to \cite{AJS}, denote by $\sC_A$ (since we generally fix $\chi$ in standard Levi form throughout this paper, we do not incorporate $\chi$ into the notation). Its objects are certain $U_\chi\otimes A$-modules which have an $X/\bZ I$-grading and which satisfy several other conditions. The algebra $U_\chi$ here is defined similarly to the algebra $U_0$, and indeed the notations are compatible.

This category turns out to have a number of nice properties, many of which are similar to those explored for $\chi=0$ in \cite{AJS} and for $A=\bK$ in \cite{Jan4} (under assumptions on $G$ and $p$ similar to those required in the latter). There are a number of induction functors which interact nicely with the category. This allows us, for example, to define baby Verma modules $Z_{A,\chi}(\lambda)$, for $\lambda\in X$, in $\sC_A$. Note that in graded settings we use the character group $X$ instead of $\fh^{*}$ to define baby Verma modules. When $A=F$ is a field, these baby Verma modules have unique irreducible quotients, denoted $L_{F,\chi}(\lambda)$ (see Proposition~\ref{UniqIrredQuot}) and, under certain conditions, we may characterise when two baby Verma modules are isomorphic (see Corollary~\ref{ZIsomFull}).

The key distinction from the $\chi=0$ case is that there are a few other categories of interest at play here. We write $\fg_I$ for the Levi subalgebra of $\fg$ obtained from the set of simple roots $I$ corresponding to $\chi$. One may then look at the category $\sC_A^I$ which is obtained from $\fg_I$ in the same way that $\sC_A$ is obtained from $\fg$. Since $\chi\vert_{\fg_I}$ is regular nilpotent (i.e. $I$ contains all the simple roots for this Lie algebra) the category $\sC_A^I$ has a lot of nice structure which doesn't usually exist. We may then define induction functors which get us from $\sC_A^I$ to $\sC_A$.

The fundamental benefit of this is that it allows us, subject to some reasonable conditions on $A$, to define objects $Q_{A,\chi}^I(\lambda)$, $\lambda\in X$, obtained by inducing to $\sC_A$ certain projective objects $Q_{A,I,\chi}(\lambda)\in\sC_A^I$ (which will be projective covers if $A$ is a field). These objects are central to understanding $\sC_A$ - indeed their versions over $\bK$ play a critical role in understanding $\sC$ in \cite{Jan4} and \cite{Jan}.

One important application of the $Q_{A,\chi}^I(\lambda)$ is that they allow us to prove the existence, when $A$ is a local ring with residue field $F$, of projective objects $Q_{A,\chi}(\lambda)$, $\lambda\in X$, such that $Q_{A,\chi}(\lambda)\otimes_{A} F$ equals the projective cover of $L_{F,\chi}(\lambda)$ in $\sC_F$. This is the content of Theorems~\ref{ProjCovQ} and \ref{ProjCovQ2}, which generalise Proposition 4.18 and Theorem 4.19 in \cite{AJS}.

We begin the paper by defining the category $\sC_A$ that we work with. We define the algebras necessary to construct the category, the category itself, and certain variations of the category over subalgebras in Section~\ref{Sec3}. In Section~\ref{Sec4}, we see how the categories are interrelated using induction functors, and define baby Verma modules in this context. We also see the extra categories and induction functors we get when $\chi$ is non-zero. We survey some miscellaneous results about the category $\sC_A$ and its objects in Section~\ref{Sec5}, which will be useful for what follows. In Section~\ref{Sec6} we discuss some properties of baby Verma modules in this context. Finally, Sections~\ref{Sec7} and \ref{Sec8} contain the most substantive results in the paper. We explore in detail the projective objects in the category $\sC_A$, working with regular nilpotent $\chi$ in Section~\ref{Sec7} and more general $\chi$ in Section~\ref{Sec8}. In Section~\ref{Sec8}, we also introduce the objects $Q_{A,\chi}^I(\lambda)$ and we conclude by showing, when $A$ is local with residue field $F$, the connection between projective objects in the categories $\sC_A$ and $\sC_F$.

The author was supported during this research by EPSRC grant EP/R018952/1. He would like to thank Simon Goodwin for many useful discussions about this subject and for his opinions on an earlier version of this paper, as well as the referee for their comments. The author has no competing interests to declare.

\section{Notation}\label{Sec2}

Let $G$ be a connected reductive algebraic group over an algebraically closed field $\bK$ of characteristic $p>0$. We fix a maximal torus $T\leq G$ of rank $d$, and a Borel subgroup $B$ containing $T$. We denote by $X$ the character group $X(T)=\Hom(T,\bG_m)$, where $\bG_m$ is the multiplicative group of the ground field $\bK$. For the Lie algebras, we write $\fg=\Lie(G)$, $\fh=\Lie(T)$ and $\fb=\Lie(B)$. We then write $R\subset X$ for the set of roots of $\fg$ (i.e. the weights of $\fg$ under the adjoint action of $T$), and we set $R^{+}$ to be the set of positive roots and $\Pi$ to be the set of simple roots corresponding to our choice of $B$. We write $\Pi=\{\alpha_1,\ldots,\alpha_n\}$ and $R^{+}=\{\beta_1,\ldots,\beta_r\}$ with $\beta_i=\alpha_i$ for $1\leq i\leq n$. We also write $Y(T)=\Hom(\bG_m,T)$ for the cocharacters of $T$, and we write $\langle\cdot,\cdot\rangle:X(T)\times Y(T)\to \bZ$ for the natural pairing. Given $\alpha\in R$, we denote by $\alpha^\vee\in Y(T)$ the coroot associated to $\alpha$.

We associate to $\fg$ a basis $\{e_\alpha,\,h_i\,\vert\,\alpha\in R,\,1\leq i\leq d\}$ of $\fg$, where $e_\alpha\in \fg_{\alpha}$, the root space of $\alpha\in R$, and $h_1,\ldots,h_d$ is a basis of $\fh$ with the property that $h_i^{[p]}=h_i$ for each $1\leq i\leq d$. Here, $x\mapsto x^{[p]}$ is the $p$-th power map on $\fg$. We write $h_\alpha\coloneqq[e_\alpha,e_{-\alpha}]$ for each $\alpha\in R$, and we set $\fn^{+}=\bigoplus_{\alpha\in R^{+}}\fg_\alpha$ and $\fn^{-}=\bigoplus_{\alpha\in R^{+}}\fg_{-\alpha}$. Given $\chi\in \fg^{*}$, we write $U_\chi(\fg)$ for the {\bf reduced enveloping algebra} of $\fg$, whose definition we shall explain in more detail in the body of the article. Note that $U_\chi(\fg)\cong U_{g\cdot\chi}(\fg)$ for each $g\in G$ (where $g\cdot\chi$ denotes the image of $\chi$ under the coadjoint action of $G$).

We shall make Jantzen's standard assumptions as in Section 6.3 of \cite{Jan}. In other words, we assume (1) that $G$ has simply-connected derived subgroup, (2) that $p$ is good for $G$, and (3) that there exists a non-degenerate $G$-invariant bilinear form on $\fg$. Recall here that a prime $p$ being {\bf good} for $G$ means that it is not {\bf bad} for any irreducible component of $R$,  i.e. $p>2$ if $R$ has a component of type $B_n$, $C_n$, or $D_n$, $p>3$ if $R$ has a component of type $E_6$, $E_7$, $F_4$, or $G_2$, and $p>5$ if $R$ has a component of type $E_8$. Under Jantzen's assumptions, we may assume in studying the representation theory of $U_\chi(\fg)$ that $\chi$ is nilpotent; in fact, that $\chi(\fb)=0$. However, we will go further and assume throughout this paper that $\chi$ is in {\bf standard Levi form}, i.e., that $\chi(\fb)=0$ and that there exists a subset $I\subseteq\Pi$ such that for $\alpha\in R^{+}$ the map $\chi$ is defined by $$\chi(e_{-\alpha})=\twopartdef{1}{\alpha\in I,}{0}{\alpha\notin I.}$$ In this case, $\bZ I$ is a subgroup of the character group $X$, and the quotient group $X/\bZ I$ can be equipped with a partial ordering such that:
$$\lambda+\bZ I\geq \mu+\bZ I\quad\mbox{if and only if}\quad \lambda-\mu+\bZ I=\sum_{i=1}^n m_i \alpha_i +\bZ I\,\,\mbox{for some}\,\, m_1,\ldots, m_n\geq 0.$$

\section{The category $\sC_A$}\label{Sec3}

\subsection{Definition of algebras}\label{Sec3.1}

Since $\fg$ is a Lie algebra over $\bK$, we may of course define the {\bf universal enveloping algebra} $U(\fg)$ as $$U(\fg)=\frac{T(\fg)}{\langle x\otimes y - y\otimes x-[x,y]\,\vert\,x,y\in\fg\rangle},$$ where $T(\fg)$ is the tensor algebra of $\fg$. Given $\chi\in\fg^{*}$, the {\bf reduced enveloping algebra} of $\fg$ is defined as $$U_\chi(\fg)=\frac{U(\fg)}{\langle x^{p}-x^{[p]}-\chi(x)^p\,\vert\, x\in\fg\rangle}.$$ When working over the field $\bK$, the reduced enveloping algebras are key objects of study in the representation theory of $\fg$. When working instead with a general commutative algebra $A$, however, we will need to also consider an algebra lying between $U(\fg)$ and $U_\chi(\fg)$, which we will call $U_\chi$. This is defined as $$U_\chi\coloneqq U(\fg)/\langle e_{\alpha}^p-\chi(e_{\alpha})^p\,\vert\,\alpha\in R\rangle.$$

In particular, $U_\chi$ has a $\bK$-basis consisting of elements of the form $$e_{-\beta_r}^{a_r}\cdots e_{-\beta_1}^{a_1} h_1^{b_1}\cdots h_d^{b_d} e_{\beta_1}^{c_1}\cdots e_{\beta_r}^{c_r}$$ with $0\leq a_i,c_i<p$ and $b_i\geq 0$. We also need notation for certain subalgebras of $U_\chi$. Specifically, we set $$U^{+}\coloneqq U_\chi(\fn^{+})=U_0(\fn^{+})\subseteq U_\chi,$$
$$U^{-}\coloneqq U_{\chi}(\fn^{-})\subseteq U_\chi,$$
and
$$U^{0}\coloneqq U(\fh)\subseteq U_\chi.$$
Recalling that $\chi$ is in {\bf standard Levi form} with associated subset $I$ of simple roots, we furthermore define $U^I$ to be the subalgebra of $U_\chi$ generated by $\fh$ and the root vectors $e_\alpha$ for $\alpha\in R\cap \bZ I$. Going forward, we will sometimes write $R_I\coloneqq R\cap\bZ I$ and $R_I^{+}= R^{+}\cap \bZ I$. If we write $\fg_I$ for the Lie subalgebra of $\fg$ generated by these elements, then we may also describe $U^I$ as 
$$U^I=\frac{U(\fg_I)}{\langle e_\alpha^p-\chi(e_\alpha)^p\,\vert\,\alpha\in R_I\rangle}.$$
If we now write $\fu^{+}$ for the Lie subalgebra of $\fg$ generated by the root vectors $e_\alpha$ for $\alpha\in R^{+}\setminus \bZ I$, and $\fu^{-}$ for the analogous Lie subalgebra for the negative roots, we get that $\fg=\fu^{-}\oplus\fg_I\oplus\fu^{+}$. We may then additionally define the subalgebras $$U_I^{+}\coloneqq U_\chi(\fu^{+})=U_0(\fu^{+})\subseteq U_\chi$$ and $$U_I^{-}\coloneqq U_\chi(\fu^{-})=U_0(\fu^{-})\subseteq U_\chi.$$

Observe that, as $\bK$-vector spaces, $$U_\chi=U^{-}\otimes U^0\otimes U^{+}=U_I^{-}\otimes U^I\otimes U_I^{+}.$$

Now, it is straightforward to see that $U_\chi$ may be equipped with an $X/\bZ I$-grading in the following way, where we write $(U_\chi)_{\lambda+\bZ I}$ for the $\lambda+\bZ I$-graded part of $U_\chi$:
$$e_\alpha\in (U_\chi)_{\alpha+\bZ I}\quad\mbox{for all }\,\alpha\in R;\quad \mbox{and}\quad \fh\subseteq (U_\chi)_{0+\bZ I}.$$  The commutative Lie algebra $\fh$ then acts on each $(U_\chi)_{\lambda+\bZ I}$, which means that each $(U_\chi)_{\lambda+\bZ I}$ decomposes into weight spaces for this action. In particular, we have a decomposition into $\bK$-subspaces $$(U_\chi)_{\lambda+\bZ I}=\bigoplus_{\substack{d\mu\in \fh^{*} \\ \mu\in \lambda+\bZ I +pX }}(U_\chi)_{\lambda+\bZ I}^{d\mu},$$ where $h\in\fh$ acts on $(U_\chi)_{\lambda+\bZ I}^{d\mu}$ as multiplication by $d\mu(h)$. We then have the following lemma (c.f. Lemma 1.4 in \cite{AJS}).

\begin{lemma}\label{Aut}
	There exists a group homomorphism $X\to \Aut_{\bK-alg}(U^0)$, $\mu\mapsto \widetilde{\mu}$, with the property that $$su=u\widetilde{\mu}(s)$$ for all $s\in U^{0}$, $\lambda\in X$, $\mu\in \lambda+\bZ I+pX$, and $u\in (U_\chi)_{\lambda+\bZ I}^{d\mu}$.
\end{lemma}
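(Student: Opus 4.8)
The plan is to construct $\widetilde\mu$ as a ``translation'' automorphism of $U^0$ determined by the differential $d\mu$, in exact analogy with the proof of Lemma~1.4 in \cite{AJS}. Recall that $\fh$ is abelian, so $U^0=U(\fh)=S(\fh)$ is the polynomial $\bK$-algebra on the basis $h_1,\dots,h_d$ of $\fh$. For $\mu\in X$, write $d\mu\in\fh^{*}$ for the differential of the character $\mu\colon T\to\bG_m$; the assignment $\mu\mapsto d\mu$ is a homomorphism of abelian groups $X\to\fh^{*}$.

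First I would define, for each $\mu\in X$, a map $\widetilde\mu\colon U^0\to U^0$ as the unique $\bK$-algebra endomorphism with $\widetilde\mu(h)=h+d\mu(h)\cdot 1$ for all $h\in\fh$; this exists and is unique because $U^0$ is free as a commutative $\bK$-algebra on $h_1,\dots,h_d$. Since $d\nu(h)\in\bK$ is fixed by any $\bK$-algebra endomorphism of $U^0$, one computes on the generators $h\in\fh$ that $\widetilde\mu\circ\widetilde\nu$ and $\widetilde{\mu+\nu}$ agree, using additivity of $\mu\mapsto d\mu$; as both are algebra endomorphisms they are equal. In particular $\widetilde0=\mathrm{id}_{U^0}$ and $\widetilde{-\mu}\circ\widetilde\mu=\mathrm{id}_{U^0}=\widetilde\mu\circ\widetilde{-\mu}$, so each $\widetilde\mu$ is an automorphism and $\mu\mapsto\widetilde\mu$ is a group homomorphism $X\to\Aut_{\bK-alg}(U^0)$.

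Next I would verify the identity $su=u\,\widetilde\mu(s)$ for $s\in U^0$ and $u\in(U_\chi)_{\lambda+\bZ I}^{d\mu}$ with $\mu\in\lambda+\bZ I+pX$. Both sides are multiplicative in $s$: granting the identity for $s_1,s_2\in U^0$ gives $(s_1s_2)u=s_1\bigl(u\,\widetilde\mu(s_2)\bigr)=(s_1u)\,\widetilde\mu(s_2)=u\,\widetilde\mu(s_1)\,\widetilde\mu(s_2)=u\,\widetilde\mu(s_1s_2)$, so it suffices to treat $s=h\in\fh$. By the very definition of the weight space $(U_\chi)_{\lambda+\bZ I}^{d\mu}$ --- on which $\fh$ acts through the adjoint action of $\fg$ on $U_\chi$, i.e.\ through the commutator inside $U_\chi$ --- we have $hu-uh=[h,u]=d\mu(h)\,u$, whence $hu=u\bigl(h+d\mu(h)\cdot 1\bigr)=u\,\widetilde\mu(h)$, as desired.

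I do not expect any serious obstacle here: the argument is essentially bookkeeping. The points deserving a little attention are the well-definedness of $\widetilde\mu$ (which uses that $U^0$ is a polynomial algebra on a basis of $\fh$), the identification of the $\fh$-action in the weight-space decomposition of $(U_\chi)_{\lambda+\bZ I}$ with the commutator action inside $U_\chi$, and, for the homomorphism property, the remark that scalars are fixed by algebra endomorphisms so that successive translations add. Note that $\widetilde\mu$ really depends only on $d\mu$ and is defined for every $\mu\in X$; the coset $\lambda+\bZ I+pX$ enters only through the indexing of the weight spaces.
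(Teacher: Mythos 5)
Your construction is exactly the one the paper uses (and the one in Lemma 1.4 of \cite{AJS}): define $\widetilde\mu$ on $\fh$ by $h\mapsto h+d\mu(h)$ and extend to a $\bK$-algebra automorphism of $U^0$, the remaining verifications being routine. The proposal is correct and takes essentially the same approach as the paper, merely writing out the bookkeeping (group homomorphism property and the reduction of $su=u\widetilde\mu(s)$ to $s=h\in\fh$ via the adjoint weight-space relation $[h,u]=d\mu(h)u$) that the paper leaves implicit.
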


\begin{proof}
	This follows as in Lemma 1.4 in \cite{AJS}. Namely, for $\mu\in X$ we define $\widetilde{\mu}:U^0\to U^0$ by setting $\widetilde{\mu}(h)=h+d\mu(h)$ for each $h\in\fh$, and extending in the natural way. 
\end{proof}

\subsection{Definition of the category $\sC_A$}\label{Sec3.2}

Let $A$ be a commutative Noetherian algebra over $U^0$ with structure map $\pi:U^0\to A$. For example, we could take $A=\bK$ with $\pi(h)=0$ for all $h\in\fh$. 

We define the category $\sC_A$ in the following way. The objects of $\sC_A$ are $U_\chi\otimes A$-modules $M$ which, as $\bK$-vector spaces, decompose as $$M=\bigoplus_{\lambda+\bZ I\in X/\bZ I} M_{\lambda+\bZ I}$$ for some subspaces $M_{\lambda+\bZ I}$, subject to some additional properties. In general, we treat a $U_\chi\otimes A$-module as being a left $U_\chi$-module and a right $A$-module, so that $(u\otimes a)m=uma$ for $u\in U_\chi$, $m\in M$ and $a\in A$. The defining conditions which objects of $\sC_A$ must satisfy are the following:
\begin{enumerate}
	\item[(A)] The $A$-action preserves the $X/\bZ I$-grading, i.e. $M_{\lambda+\bZ I}A\subseteq M_{\lambda+\bZ I}$ for all $\lambda+\bZ I\in X/\bZ I$.
	\item[(B)] There are only finitely many $\lambda+\bZ I\in X/\bZ I$ with $M_{\lambda+\bZ I}\neq 0$, and each $M_{\lambda+\bZ I}$ is finitely-generated as an $A$-module.
	\item[(C)] For any $\sigma+\bZ I, \lambda +\bZ I\in X/\bZ I$, we have $(U_\chi)_{\sigma+\bZ I} M_{\lambda+\bZ I}\subseteq M_{\sigma + \lambda+\bZ I}$.
	\item[(D)] For each $\lambda+\bZ I\in X/\bZ I$ there is a decomposition $$M_{\lambda+\bZ I}=\bigoplus_{\substack{d\mu\in \fh^{*} \\ \mu\in \lambda+\bZ I +pX}} M_{\lambda+\bZ I}^{d\mu}$$ with the property that $$sm=m\pi(\widetilde{\mu}(s))$$ for each $s\in U^0$, $\mu\in \lambda+\bZ I+pX$, and $m\in M_{\lambda+\bZ I}^{d\mu}$. Furthermore, if $\alpha\in R$ and $m\in M_{\lambda+\bZ I}^{d\mu}$ then $e_{\alpha}m\in M_{\lambda+\alpha+\bZ I}^{d(\mu+\alpha)}$, and $ma\in M_{\lambda+\bZ I}^{d\mu}$ for all $a\in A$.
\end{enumerate}
A morphism $M\to N$ in $\sC_A$ is then a homomorphism of $U_\chi\otimes A$-modules which sends $M_{\lambda+\bZ I}^{d\mu}$ to $N_{\lambda+\bZ I}^{d\mu}$ for each $\lambda+\bZ I\in X/\bZ I$ and $\mu\in\lambda+\bZ I+pX$.

Let us make a couple of observations about condition (D). First, we note that, by construction, $\widetilde{\mu}(s)=\widetilde{\mu+p\tau}(s)$ for any $\mu,\tau\in X$ and $s\in U^0$, and so we indeed only care about $d\mu$ rather than $\mu$ itself in the direct sum. Second, the assumption that $e_\alpha M_{\lambda+\bZ I}^{d\mu}\subseteq M_{\lambda+\alpha+\bZ I}^{d(\mu+\alpha)}$, for each $\alpha\in R$, $\lambda+\bZ I\in X/\bZ I$ and $\mu\in\lambda+\bZ I+pX$, is not strictly necessary, as it will follow from the other conditions once we observe that $$M_{\lambda+\bZ I}^{d\mu}=\{m\in M_{\lambda+\bZ I}\,\vert\, hm=m\pi(\widetilde{\mu}(h))\,\mbox{ for all }\, h\in\fh\}.$$ For this latter equality to hold, it is key to note that, for $\mu,\sigma\in\lambda+\bZ I+pX$ and $h\in \fh$, we have $\widetilde{\mu}(h)-\widetilde{\sigma}(h)=h+d\mu(h)-h-d\sigma(h)=d(\mu-\sigma)(h)\in\bK$. Finally, as a notational matter, we shall call the decomposition in condition (D) a {\bf (D)-decomposition} when it helps make things clearer.

At times, it will be useful to think about $\sC_A$ in a different way. We observe that $U_\chi$ is $X/p\bZ I$-graded, with $(U_\chi)_{\lambda+p\bZ I}=(U_\chi)_{\lambda+\bZ I}^{d\lambda}$ for each $\lambda+p\bZ I\in X/p\bZ I$. That this is indeed a grading can be checked directly, but also may be seen using a similar argument to that used in the proof of Lemma~\ref{XZIvXpZI}. We may then define $\widetilde{\sC_A}$ to be the category whose objects are $U_\chi\otimes A$-modules $M$ with $\bK$-decompositions $$M=\bigoplus_{\lambda+p\bZ I\in X/p\bZ I} M_{\lambda+p\bZ I}$$ for some $\bK$-subspaces $M_{\lambda+p\bZ I}$, subject to the following properties (where, as above, we treat a $U_\chi\otimes A$-action as both a left $U_\chi$-action and a right $A$-action):
\begin{enumerate}
	\item[(A$'$)] The $A$-action preserves the $X/p\bZ I$-grading, i.e. $M_{\lambda+p\bZ I}A\subseteq M_{\lambda+p\bZ I}$ for each $\lambda+p\bZ I\in X/p\bZ I$.
	\item[(B$'$)] There are only finitely many $\lambda+p\bZ I\in X/p\bZ I$ with $M_{\lambda+p\bZ I}\neq 0$, and each $M_{\lambda+p\bZ I}$ is finitely-generated as an $A$-module.
	\item[(C$'$)] For any $\sigma+p\bZ I,\lambda +p\bZ I\in X/p\bZ I$, we have $(U_\chi)_{\sigma+p\bZ I} M_{\lambda+p\bZ I}\subseteq M_{\sigma + \lambda+p\bZ I}$.
	\item[(D$'$)] We have that $$sm=m\pi(\widetilde{\mu}(s))$$ for each $s\in U^0$, $\mu\in \lambda+p\bZ I$, and $m\in M_{\mu+p\bZ I}$.
\end{enumerate}

In this category, morphisms are $U_\chi\otimes A$-morphisms which preserve the grading.

\begin{lemma}\label{XZIvXpZI}
	There is an equivalence of categories between $\sC_A$ and $\widetilde{\sC_A}$.
\end{lemma}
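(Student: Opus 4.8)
The plan is to exhibit mutually inverse functors between $\sC_A$ and $\widetilde{\sC_A}$ by repackaging the grading data. Given $M\in\sC_A$, each graded piece $M_{\lambda+\bZ I}$ already carries a (D)-decomposition $M_{\lambda+\bZ I}=\bigoplus_{\mu\in\lambda+\bZ I+pX} M_{\lambda+\bZ I}^{d\mu}$ (with the summand depending only on $d\mu$, hence on $\mu+pX$, hence on the class $\mu+p\bZ I$ once $\lambda+\bZ I$ is fixed — note the coset $\mu+p\bZ I$ determines $\lambda+\bZ I$ since $p\bZ I\subseteq\bZ I$). So I would define a functor $F:\sC_A\to\widetilde{\sC_A}$ by setting, for $\nu+p\bZ I\in X/p\bZ I$, $$F(M)_{\nu+p\bZ I}\coloneqq M_{\nu+\bZ I}^{d\nu},$$ which makes sense because $M_{\nu+\bZ I}^{d\nu}$ depends only on $\nu+p\bZ I$: if $\nu'\in\nu+p\bZ I$ then $\nu'+\bZ I=\nu+\bZ I$ and $d\nu'=d\nu$. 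On morphisms $F$ is the identity (a morphism in $\sC_A$ by definition preserves each $M_{\lambda+\bZ I}^{d\mu}$). Conversely, define $H:\widetilde{\sC_A}\to\sC_A$ by grouping the $X/p\bZ I$-pieces lying over a fixed $X/\bZ I$-class: $$H(N)_{\lambda+\bZ I}\coloneqq\bigoplus_{\substack{\nu+p\bZ I\in X/p\bZ I\\ \nu\in\lambda+\bZ I}} N_{\nu+p\bZ I},\qquad H(N)_{\lambda+\bZ I}^{d\mu}\coloneqq N_{\mu+p\bZ I}.$$

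Next I would verify that these functors land in the stated categories. For $F$: conditions (A$'$), (B$'$), (C$'$) are immediate consequences of (A)–(D) for $M$ together with the fact that the identity $(U_\chi)_{\sigma+p\bZ I}=(U_\chi)_{\sigma+\bZ I}^{d\sigma}$ sends $M_{\nu+\bZ I}^{d\nu}$ into $M_{\sigma+\nu+\bZ I}^{d(\sigma+\nu)}$ by the last clause of (D); (B$'$) uses that only finitely many $M_{\lambda+\bZ I}$ are nonzero and each is finitely generated over $A$, so each of its finitely many weight summands is too (here $A$ Noetherian is convenient but in fact $M_{\lambda+\bZ I}$ being f.g.\ over the commutative ring $A$ and a finite direct sum of the $M_{\lambda+\bZ I}^{d\mu}$ already gives each summand f.g.). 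Condition (D$'$) is exactly the first display of (D). For $H$: conditions (A)–(C) follow by summing over the relevant $X/p\bZ I$-classes, and (D) for $H(N)$ is built into the definition, with the "extra" clause $e_\alpha H(N)_{\lambda+\bZ I}^{d\mu}\subseteq H(N)_{\lambda+\alpha+\bZ I}^{d(\mu+\alpha)}$ following from (C$'$) since $e_\alpha\in(U_\chi)_{\alpha+p\bZ I}$.

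Finally I would check $H\circ F\cong\mathrm{id}_{\sC_A}$ and $F\circ H\cong\mathrm{id}_{\widetilde{\sC_A}}$. The first is precisely the displayed identity $M_{\lambda+\bZ I}=\bigoplus_{d\mu} M_{\lambda+\bZ I}^{d\mu}$ from (D), reassembled; the second amounts to $N_{\nu+p\bZ I}=H(N)_{\nu+\bZ I}^{d\nu}=N_{\nu+p\bZ I}$, a tautology. Both natural isomorphisms are the identity on underlying modules, so naturality is automatic. The only mild subtlety — and the step I would be most careful about — is the bookkeeping that the triple $(\lambda+\bZ I,\ d\mu)$ with $\mu\in\lambda+\bZ I+pX$ is equivalent data to a single class $\nu+p\bZ I\in X/p\bZ I$ (the correspondence being $\nu+p\bZ I\mapsto(\nu+\bZ I,\ d\nu)$, well-defined since $p\bZ I$ kills both coordinates, and injective since $\mu-\nu\in\bZ I$ together with $d\mu=d\nu$, i.e.\ $\mu-\nu\in pX$, forces $\mu-\nu\in\bZ I\cap pX=p\bZ I$ — the last equality because $I$ is a subset of a basis-type root system, so $\bZ I$ is a direct summand of $X$, hence saturated). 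This saturation point is the one genuine input beyond formal nonsense, and I would flag it explicitly; everything else is diagram-chasing on the identity functor.
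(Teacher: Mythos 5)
Your construction coincides with the paper's: both functors are the identity on the underlying $U_\chi\otimes A$-modules, the $X/p\bZ I$-grading is given by $F(M)_{\nu+p\bZ I}=M_{\nu+\bZ I}^{d\nu}$, the inverse regroups the $X/p\bZ I$-pieces lying over each class in $X/\bZ I$, and the verifications of (A$'$)--(D$'$), (A)--(D) and of mutual inverseness are the same bookkeeping the paper carries out. You also correctly isolated the single non-formal input, namely $\bZ I\cap pX=p\bZ I$, which is exactly what makes the passage from the pair $(\lambda+\bZ I,\,d\mu)$ to a unique class $\gamma+p\bZ I$ well defined.

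However, your justification of that equality is incorrect. It is not true in general that $\bZ I$ is a direct summand (equivalently, a saturated subgroup) of $X$: the quotient $X/\bZ I$ can have torsion. For instance, take $G=Sp_4$, simply connected of type $C_2$, and $I=\Pi$; then $\bZ I$ is the root lattice, which has index $2$ in $X$, so $X/\bZ I$ has $2$-torsion and $\bZ I$ is not saturated. Similarly, for $G=SL_n$ with $I=\Pi$ one gets $X/\bZ I\cong\bZ/n$. What the argument actually needs is only that $X/\bZ I$ has no $p$-torsion, and this is where the paper's standing hypotheses enter: the derived subgroup being simply connected, $p$ being good, and the existence of a non-degenerate $G$-invariant form together rule out $p$ dividing the relevant index (in the $Sp_4$ example $p>2$ because $p$ is good; for $SL_n$ with $p\mid n$ the non-degenerate form fails to exist, which is why one works with $GL_n$). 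This is precisely the content of Section 11.2 of \cite{Jan}, which the paper invokes at this step; your proof should appeal to that (or prove the no-$p$-torsion statement from the standing assumptions) rather than to saturation of $\bZ I$. With that repair, the rest of your argument goes through as written.
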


\begin{proof}
	Let $\Phi:\sC_A\to \widetilde{\sC_A}$ be the identity map on objects (as $U_\chi\otimes A$-modules), and let $M\in\sC_A$. We want to equip $\Phi(M)$ with an $X/p\bZ I$-grading.
	
	Let $\lambda+\bZ I\in X/\bZ I$ and $\mu\in \lambda+\bZ I +pX$. Then there exists $\tau\in X$ and $\sigma\in\bZ I$ such that $\mu=\lambda+\sigma+p\tau$. Therefore $\lambda+\bZ I=\mu-p\tau +\bZ I$ and $d(\mu-p\tau)=d\mu$, and thus $M_{\lambda+\bZ I}^{d\mu}=M_{\mu-p\tau+\bZ I}^{d(\mu-p\tau)}$. This means that each $M_{\lambda+\bZ I}^{d\mu}$ can be written as $M_{\gamma+\bZ I}^{d\gamma}$ for some $\gamma \in X$. Furthermore, if $\gamma+\bZ I=\gamma'+\bZ I$ and $d\gamma=d(\gamma')$, then $\gamma-\gamma'\in \bZ I\cap pX$. Since we make Jantzen's assumptions on $\fg$ and $p$, we have $\bZ I\cap pX=p\bZ I$ (see Section 11.2 in \cite{Jan}).
	
	We may hence equip $\Phi(M)$ with an $X/p\bZ I$-grading by setting $\Phi(M)_{\lambda+p\bZ I}=M_{\lambda+\bZ I}^{d\lambda}$. We have $\Phi(M)=\bigoplus_{\lambda+p\bZ I}\Phi(M)_{\lambda+p\bZ I}$ since $$M=\bigoplus_{\lambda+\bZ I\in X/\bZ I} \bigoplus_{\substack{d\mu\in \fh^{*}\\ \mu\in \lambda+\bZ I +pX}} M_{\lambda+\bZ I}^{d\mu}.$$ This makes $\Phi(M)$ into an $X/p\bZ I$ graded $U_\chi$-module using conditions (C) and (D). The object $\Phi(M)$ satisfies condition (A$'$) because of the last clause of condition (D), satisfies condition (B$'$) because $p\bZ I$ has finite index in $\bZ I$ and because $A$ is Noetherian (so $A$-submodules of $M_{\lambda+\bZ I}$ are finitely generated), and satisfies condition (D$'$) by construction. So $\Phi(M)\in\widetilde{\sC_A}$. It is then easy to see that $\Phi$ sends morphisms to morphisms, and so defines a functor $\sC_A\to\widetilde{\sC_A}$.
	
	On the other hand, let $\Psi:\widetilde{\sC_A}\to\sC_A$ be the identity map on objects (as $U_\chi\otimes A$-modules), and let $N\in\widetilde{\sC_A}$. We wish to define the $A$-submodules $\Psi(N)_{\lambda+\bZ I}$ and $\Psi(N)_{\lambda+\bZ I}^{d\mu}$.
	
	Set $\Psi(N)_{\lambda+\bZ I}=\bigoplus_{\tau+p\bZ I\subset \lambda+\bZ I}N_{\tau+p\bZ I}$. This clearly gives $\Psi(N)$ an $X/\bZ I$-grading as a $U_\chi$-module, so $\Psi(N)$ satisfies condition (C). It clearly also satisfies condition (A) by condition (A$'$) and condition (B) by condition (B$'$). For condition (D$'$), let $\lambda+\bZ I\in X/\bZ I$ and $\mu\in \lambda+\bZ I +pX$, and set $\Psi(N)_{\lambda+\bZ I}^{d\mu}\coloneqq N_{\gamma+p\bZ I}$, where $\gamma+p\bZ I$ is the unique element of $X/p\bZ I$ with $\gamma+\bZ I=\lambda+\bZ I$ and $\gamma+pX=\mu+pX$. This exists by above. It is clear that $$\Psi(N)_{\lambda+\bZ I}= \bigoplus_{\substack{d\mu\in \fh^{*} \\ \mu\in \lambda+\bZ I +pX}} \Psi(N)_{\lambda+\bZ I}^{d\mu}$$ since, if $\mu+p\bZ I\subset \lambda+\bZ I$, we have $N_{\mu+p\bZ I}=\Psi(N)_{\mu+\bZ I}^{d\mu}$ and if $\mu\in \lambda+\bZ I +pX$ we have $\Psi(N)_{\lambda+\bZ I}^{d\mu}=M_{\gamma+p\bZ I}$ for some $\gamma+p\bZ I\in X/p\bZ I$ with $\gamma+p\bZ I\subseteq \lambda+\bZ I$. The remaining parts of condition (D) follow easily from conditions (D$'$) and (A$'$). Clearly $\Psi$ sends morphisms to morphisms, and thus defines a functor $\widetilde{\sC_A}\to\sC_A$.
	
	All that remains is to see that $\Psi$ and $\Phi$ are inverse equivalences of categories. This follows since $\Phi(\Psi(N))_{\lambda+p\bZ I}=\Psi(N)_{\lambda+\bZ I}^{d\lambda}=N_{\lambda+p\bZ I}$ and $\Psi(\Phi(M))_{\lambda+\bZ I}^{d\mu}=\Phi(M)_{\gamma+p\bZ I}=M_{\gamma+\bZ I}^{d\gamma}$ for $\gamma+p\bZ I\in X/p\bZ I$ such that $\gamma+\bZ I=\lambda+\bZ I$ and $d\gamma=d\mu$.
\end{proof}

\begin{lemma}
	The category $\sC_A$ has kernels, cokernels and images.
\end{lemma}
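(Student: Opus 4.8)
The plan is to build kernels, cokernels and images at the level of the underlying $U_\chi\otimes A$-modules and then to check that the results lie in $\sC_A$ and carry the correct universal properties. By Lemma~\ref{XZIvXpZI} it is enough to work in the equivalent category $\widetilde{\sC_A}$, whose objects carry only a single $X/p\bZ I$-grading subject to (A$'$)--(D$'$); this removes the bookkeeping with the finer $(D)$-decomposition. So fix a morphism $f\colon M\to N$ in $\widetilde{\sC_A}$.

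For the kernel, set $K:=\ker f$ as a $U_\chi\otimes A$-submodule of $M$. Since $f$ is graded, $f(M_{\lambda+p\bZ I})\subseteq N_{\lambda+p\bZ I}$, and because the $N_{\lambda+p\bZ I}$ are in direct sum, an element $m=\sum m_{\lambda+p\bZ I}$ lies in $K$ if and only if each $f(m_{\lambda+p\bZ I})=0$; hence $K=\bigoplus_{\lambda+p\bZ I}(K\cap M_{\lambda+p\bZ I})$, which gives $K$ a compatible $X/p\bZ I$-grading. Conditions (A$'$), (C$'$), (D$'$) are inherited because the $A$-, $U_\chi$- and $U^0$-actions on the graded pieces of $K$ are the restrictions of those on $M$. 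Condition (B$'$) is the only place work is needed: there are at most as many nonzero graded pieces as for $M$, and each $K\cap M_{\lambda+p\bZ I}$ is an $A$-submodule of the finitely generated module $M_{\lambda+p\bZ I}$, hence finitely generated since $A$ is Noetherian. Thus $K\in\widetilde{\sC_A}$, the inclusion $K\hookrightarrow M$ is a morphism in $\widetilde{\sC_A}$, and its universal property follows from the one for kernels of $U_\chi\otimes A$-modules, since any $g$ with $fg=0$ factors through $K$ via a map that is automatically graded.

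The cokernel and image are handled in the same way. The image $f(M)$ is a graded submodule of $N$ with $f(M)\cap N_{\lambda+p\bZ I}=f(M_{\lambda+p\bZ I})$, and it lies in $\widetilde{\sC_A}$ by the arguments above (with (B$'$) again using Noetherianity); this serves as the image of $f$. The cokernel is $C:=N/f(M)$, which inherits the grading via $C_{\lambda+p\bZ I}=N_{\lambda+p\bZ I}/f(M_{\lambda+p\bZ I})$; conditions (A$'$), (C$'$), (D$'$) pass to the quotient, and (B$'$) holds because each $C_{\lambda+p\bZ I}$ is a quotient of a finitely generated $A$-module (so here the Noetherian hypothesis is not even needed). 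The canonical map $N\twoheadrightarrow C$ then has the universal property of a cokernel. Transporting these constructions back through the equivalence of Lemma~\ref{XZIvXpZI} yields kernels, cokernels and images in $\sC_A$ itself. I expect no genuine obstacle: the only subtle points are that sub- and quotient-objects inherit the grading and $U^0$-weight structure — which works precisely because morphisms in $\sC_A$ are graded by definition — and the finiteness in condition (B), which is exactly where $A$ being Noetherian enters.
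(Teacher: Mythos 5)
Your proposal is correct and follows exactly the route the paper indicates: pass to the equivalent category $\widetilde{\sC_A}$ via Lemma~\ref{XZIvXpZI} and construct kernels, cokernels and images at the level of graded $U_\chi\otimes A$-modules, with the Noetherian hypothesis on $A$ used precisely to secure the finite-generation condition for kernels (and images). The paper's proof is a one-line appeal to this same argument, so you have simply supplied the details it leaves implicit.
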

\begin{proof}
	This is easy to see in $\widetilde{\sC_A}$, using the fact that $A$ is Noetherian.
\end{proof}

It is clear from the definition of $\widetilde{\sC_A}$ that if $I=\emptyset$ (i.e. if $\chi=0$) then $\widetilde{\sC_A}$ is precisely the category called $\sC_A$ in \cite{AJS}. The same observation can be made with a tiny bit more work for the category we call $\sC_A$.

From now on, while we principally work in the category $\sC_A$, we occasionally shift to the category $\widetilde{\sC_A}$ without comment. In particular, the equivalence of these categories means that instead of defining, say, $M_{\lambda+\bZ I}^{d\mu}$ for each $\lambda+\bZ I\in X/\bZ I$ and $\mu\in\lambda+\bZ I+pX$, it suffices to define $M_{\lambda+p\bZ I}$ for each $\lambda+p\bZ I\in X/p\bZ I$.

\subsection{Categories defined over subalgebras}\label{Sec3.3}
%
%
%

Let us write $\sC_A'$ for the category with the same definition as $\sC_A$ but where the objects are $U^0U^{+}\otimes A$-modules rather that $U_\chi\otimes A$-modules. Objects in this category are $X/\bZ I$-graded by definition. However, since $\chi(\fb)=0$, the algebra $U^0U^{+}$ is also $X$-graded. These gradings are related by $$(U^0U^{+})_{\lambda+\bZ I}=\bigoplus_{\mu\in\lambda+\bZ I} (U^0U^{+})_{\mu}.$$ We may hence define by $\widehat{\sC_A'}$ the category of $X$-graded $U^0U^{+}\otimes A$-modules $M$ which satisfy the following:
\begin{enumerate}
	\item[(\^{A})] The $A$-action preserves the $X$-grading, i.e. $M_{\lambda}A\subseteq M_{\lambda}$ for each $\lambda\in X$.
	\item[(\^{B})] There are only finitely many $\lambda\in X$ with $M_{\lambda}\neq 0$, and each $M_{\lambda}$ is finitely-generated as an $A$-module.
	\item[(\^{C})] For any $\sigma,\lambda\in X$, we have $(U^0U^{+})_{\sigma} M_{\lambda}\subseteq M_{\sigma+\lambda}$.
	\item[(\^{D})] For each $\mu\in X$, $m\in M_\mu$, and $s\in U^0$, we have $sm=m\pi(\widetilde{\mu}(s))$. Here, $\widetilde{\mu}$ is as in Lemma~\ref{Aut}.
\end{enumerate}
Morphisms are homomorphisms of $U^0U^{+}\otimes A$-modules which preserve the grading. In particular, the category $\widehat{\sC_A'}$ is precisely the category which is called $\sC_A'$ in \cite{AJS}.

\begin{prop}\label{UpsilonDef}
	There exists a functor $\Upsilon_A:\widehat{\sC_A'}\to \sC_A'$.
\end{prop}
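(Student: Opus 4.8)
The plan is to take $\Upsilon_A$ to be the \emph{coarsening} functor: it leaves the underlying $U^0U^{+}\otimes A$-module (and every morphism) unchanged, and merely replaces the $X$-grading of an object of $\widehat{\sC_A'}$ by the induced $X/\bZ I$-grading together with the appropriate (D)-decomposition. Concretely, for $M=\bigoplus_{\mu\in X}M_\mu$ in $\widehat{\sC_A'}$ I would set
$$\Upsilon_A(M)_{\lambda+\bZ I}:=\bigoplus_{\mu\in\lambda+\bZ I}M_\mu,\qquad \Upsilon_A(M)_{\lambda+\bZ I}^{d\mu}:=\bigoplus_{\substack{\nu\in\lambda+\bZ I\\ d\nu=d\mu}}M_\nu,$$
for $\lambda+\bZ I\in X/\bZ I$ and $\mu\in\lambda+\bZ I+pX$. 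Here one should observe, exactly as in the proof of Lemma~\ref{XZIvXpZI}, that the set of $\nu\in\lambda+\bZ I$ with $d\nu=d\mu$ is a single coset of $p\bZ I$ (using Jantzen's assumption $\bZ I\cap pX=p\bZ I$), so that the above is a genuine (D)-decomposition of $\Upsilon_A(M)_{\lambda+\bZ I}$; equivalently, one may package the whole construction through the $X/p\bZ I$-graded model of Lemma~\ref{XZIvXpZI}, where $\Upsilon_A$ is literally the coarsening of an $X$-grading to an $X/p\bZ I$-grading of the same module.

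Next I would check that $\Upsilon_A(M)\in\sC_A'$ by running through conditions (A)--(D), each of which follows from the corresponding hatted condition on $M$ together with the identity $(U^0U^{+})_{\lambda+\bZ I}=\bigoplus_{\mu\in\lambda+\bZ I}(U^0U^{+})_\mu$ relating the $X$- and $X/\bZ I$-gradings of $U^0U^{+}$. Condition (A) is immediate from $(\widehat{\mathrm A})$; condition (B) holds because $(\widehat{\mathrm B})$ forces only finitely many $M_\mu$ to be nonzero, so each $\Upsilon_A(M)_{\lambda+\bZ I}$ is a finite direct sum of finitely-generated $A$-modules and only finitely many cosets are hit; condition (C) is the compatibility of the two gradings on $U^0U^{+}$ combined with $(\widehat{\mathrm C})$; and condition (D) reduces to $(\widehat{\mathrm D})$ for the weight identity $sm=m\pi(\widetilde{\mu}(s))$, to $(\widehat{\mathrm C})$ and $(\widehat{\mathrm D})$ together for the assertion $e_\alpha\Upsilon_A(M)_{\lambda+\bZ I}^{d\mu}\subseteq\Upsilon_A(M)_{\lambda+\alpha+\bZ I}^{d(\mu+\alpha)}$, and to $(\widehat{\mathrm A})$ for $\Upsilon_A(M)_{\lambda+\bZ I}^{d\mu}A\subseteq\Upsilon_A(M)_{\lambda+\bZ I}^{d\mu}$.

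Finally, a morphism $f\colon M\to N$ in $\widehat{\sC_A'}$ is a $U^0U^{+}\otimes A$-homomorphism with $f(M_\mu)\subseteq N_\mu$ for all $\mu\in X$; hence $f$ preserves the coarser $X/\bZ I$-grading and, since each $\Upsilon_A(M)_{\lambda+\bZ I}^{d\mu}$ is a direct sum of homogeneous pieces $M_\nu$, it also preserves the (D)-decomposition, so $f$ is a morphism $\Upsilon_A(M)\to\Upsilon_A(N)$ in $\sC_A'$. Since $\Upsilon_A$ changes neither the underlying modules nor the underlying maps, the identities $\Upsilon_A(\mathrm{id})=\mathrm{id}$ and $\Upsilon_A(g\circ f)=\Upsilon_A(g)\circ\Upsilon_A(f)$ are automatic, so $\Upsilon_A$ is a functor. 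I do not expect any genuine obstacle here: the argument is pure bookkeeping with gradings, and the only non-formal input is the equality $\bZ I\cap pX=p\bZ I$ (valid under Jantzen's assumptions), needed to see that the prescribed (D)-decomposition is well-indexed — precisely the point already exploited in the proof of Lemma~\ref{XZIvXpZI}.
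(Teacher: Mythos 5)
Your proposal is correct and follows essentially the same route as the paper: the identity on underlying modules with the coarsened grading $\Upsilon_A(M)_{\lambda+\bZ I}=\bigoplus_{\sigma\in\lambda+\bZ I}M_\sigma$, the (D)-decomposition $\Upsilon_A(M)_{\lambda+\bZ I}^{d\mu}=\bigoplus_{\sigma\in\lambda+\bZ I,\ d\sigma=d\mu}M_\sigma$, and a verification of (A)--(D) using $(U^0U^{+})_{\lambda+\bZ I}=\bigoplus_{\mu\in\lambda+\bZ I}(U^0U^{+})_\mu$ and the fact that $\widetilde{\sigma}=\widetilde{\mu}$ whenever $d\sigma=d\mu$. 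Your extra remark that each index set is a single $p\bZ I$-coset (via $\bZ I\cap pX=p\bZ I$) is true but not actually needed here; the paper only uses that the summands are indexed by distinct $d\mu\in\fh^{*}$.
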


\begin{proof}
	
	We define $\Upsilon_A:\widehat{\sC_A'}\to \sC_A'$ as follows. Given $M\in\widehat{\sC_A'}$, we set $\Upsilon_A(M)=M$ as a $U^0U^{+}\otimes A$-module. We need to equip $\Upsilon_A(M)$ with an $X/\bZ I$-grading, and define the subspaces $\Upsilon_A(M)_{\lambda+\bZ I}^{d\mu}$ for $\lambda+\bZ I\in X/\bZ I$ and $\mu\in\lambda+\bZ I+pX$. We do that as follows:
	$$\Upsilon_A(M)_{\lambda+\bZ I}\coloneqq\bigoplus_{\sigma\in\lambda+\bZ I}M_\sigma.$$
	$$\Upsilon_A(M)_{\lambda+\bZ I}^{d\mu}\coloneqq\bigoplus_{\substack{\sigma\in \lambda+\bZ I \\ d\sigma=d\mu}} M_{\sigma}.$$
	
	In order to check that $\Upsilon_A(M)\in \sC_A'$, we need to check conditions (A), (B), (C) and (D).
	
	{\bf Condition (A):} For $\lambda+\bZ I\in X/\bZ I$, we have $$\Upsilon_A(M)_{\lambda+\bZ I} A=\left(\bigoplus_{\sigma\in\lambda+\bZ I} M_\sigma \right)A\subseteq \bigoplus_{\sigma\in\lambda+\bZ I} (M_\sigma A)\subseteq\bigoplus_{\sigma\in\lambda+\bZ I} M_\sigma=\Upsilon_A(M)_{\lambda+\bZ I}.$$
	
	{\bf Condition (B):} The set of $\lambda+\bZ I\in X/\bZ I$ such that $\Upsilon_A(M)_{\lambda+\bZ I}\neq 0$ is the set of $\lambda+\bZ I$ such that there exists $\sigma\in\lambda+\bZ I$ with $M_\sigma\neq 0$. Since there are only finitely many such $\sigma$, this set is finite. Furthermore, since each $M_\sigma$ is finitely generated over $A$ and there are only finitely many $\sigma\in\lambda+\bZ I$ with $M_\sigma\neq 0$, we see that $\Upsilon_A(M)$ is finitely generated over $A$.
	
	{\bf Condition (C):} Since $(U^0U^{+})_{\lambda+\bZ I}=\bigoplus_{\nu\in\lambda+\bZ I} (U^0U^{+})_{\nu}$ we have  $$(U^0U^{+})_{\sigma+\bZ I}\Upsilon_A(M)_{\lambda+\bZ I}\subseteq \sum_{\substack{\varepsilon\in\sigma+\bZ I\\\nu\in\lambda+\bZ I}}(U^0U^{+})_{\varepsilon}M_{\nu}\subseteq \sum_{\substack{\varepsilon\in\sigma+\bZ I\\\nu\in\lambda+\bZ I}} M_{\varepsilon+\nu}\subseteq\sum_{\tau\in\sigma+\lambda+\bZ I}M_\tau=\Upsilon_A(M)_{\sigma+\lambda+\bZ I}.$$
	The last inclusion follows because if $\varepsilon\in\sigma+\bZ I$ and $\nu\in\lambda+\bZ I$ then $\varepsilon+\nu\in\sigma+\lambda+\bZ I$.
	
	{\bf Condition (D):} First, we need to see that for $\lambda+\bZ I\in X/\bZ I$ we have, $$\Upsilon_A(M)_{\lambda+\bZ I}=\bigoplus_{\substack{d\mu\in\fh^{*}\\\mu\in\lambda+\bZ I+pX}}\Upsilon_A(M)_{\lambda+\bZ I}^{d\mu}.$$
	The left-hand side is equal to
	$$\Upsilon_A(M)_{\lambda+\bZ I}=\bigoplus_{\mu\in\lambda+\bZ I}M_\mu,$$
	while the right-hand side is equal to 
	$$\bigoplus_{\substack{d\mu\in\fh^{*}\\\mu\in\lambda+\bZ I+pX}}\Upsilon_A(M)_{\lambda+\bZ I}^{d\mu}=\bigoplus_{\substack{d\mu\in\fh^{*}\\\mu\in\lambda+\bZ I+pX}}\bigoplus_{\substack{\sigma\in \lambda+\bZ I \\ d\sigma=d\mu}} M_\sigma.$$
	Equality will follow once we note that $$\lambda+\bZ I=\{\sigma\in\lambda+\bZ I\,\vert\,d\sigma=d\mu\,\mbox{for some}\,\mu\in\lambda+\bZ I+pX\},$$ which is easy to see. Furthermore, the right-hand side is indeed a direct sum since we index the summands by elements of $\fh^{*}$, not of $X$.
	
	Now, for $\lambda+\bZ I\in X/\bZ I$ and $\mu\in\lambda+\bZ I+pX$, suppose $m\in \Upsilon_A(M)_{\lambda+\bZ I}^{d\mu}$ and so $$m=\sum_{\substack{\sigma\in\lambda+\bZ I\\d\sigma=d\mu}}m_{\sigma}$$ for $m_\sigma\in M_{\sigma}$. Let $s\in U^{0}$. Then we have $$sm=\sum_{\substack{\sigma\in\lambda+\bZ I\\d\sigma=d\mu}}sm_{\sigma}= \sum_{\substack{\sigma\in\lambda+\bZ I\\d\sigma=d\mu}}(m_{\sigma}\pi(\widetilde{\sigma}(s))).$$ Note that $\widetilde{\sigma}=\widetilde{\mu}$, since $\widetilde{\mu}(h)=h+d\mu(h)=h+d\sigma(h)=\widetilde{\sigma}(h)$ for all $h\in\fh$. Hence, we get that $$sm= \sum_{\substack{\sigma\in\lambda+\bZ I\\d\sigma=d\mu}}(m_{\sigma}\pi(\widetilde{\mu}(s)))=\left(\sum_{\substack{\mu\in\lambda+\bZ I\\d\sigma=d\mu}}m_{\sigma}\right)\pi(\widetilde{\mu}(s))=m\pi(\widetilde{\mu}(s)).$$
	We furthermore have that, for $\alpha\in R$, $$e_\alpha m=\sum_{\substack{\sigma\in\lambda+\bZ I\\d\sigma=d\mu}}e_\alpha m_{\sigma}\in \bigoplus_{\substack{\sigma\in\lambda+\bZ I\\d\sigma=d\mu}}M_{\sigma+\alpha}=\bigoplus_{\substack{\tau\in\lambda+\alpha+\bZ I\\ d\tau=d(\mu+\alpha)}}M_\tau=\Upsilon_A(M)_{\lambda+\alpha+\bZ I}^{d(\mu+\alpha)}.$$ 
	Finally, it is easy to see that $ma\in \Upsilon_A(M)_{\lambda+\bZ I}^{d\mu}$ for all $a\in A$. Hence, condition (D) holds.
	
	In conclusion, we indeed have that $\Upsilon_A(M)\in \sC_A'$. Furthermore, it is clear from the construction that $\Upsilon_A$ sends homomorphisms to homomorphisms.
\end{proof}

We may similarly define $\sC_A''$ to be the category of $U^0\otimes A$-modules with an $X/\bZ I$-grading, which satisfy conditions appropriately analogous to conditions (A), (B), (C) and (D), and also define $\widehat{\sC_A''}$ to be the corresponding category with $X$-gradings instead (so that $\widehat{\sC_A''}$ is the category that was called $\sC_A''$ in \cite{AJS}). Once again, there is a well-defined functor $\Upsilon_A:\widehat{\sC_A''}\to\sC_A''$ defined in the same way as above. 

\begin{prop}\label{U0Equiv}
	There is an equivalence of categories between $\sC_A''$ and the category of finitely-generated $X/p\bZ I$-graded $A$-modules.
\end{prop}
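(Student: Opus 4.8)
The plan is to construct explicit functors in both directions and show they are mutually inverse. Recall that an object of $\sC_A''$ is a $U^0\otimes A$-module $M$ with an $X/\bZ I$-grading $M=\bigoplus_{\lambda+\bZ I} M_{\lambda+\bZ I}$ together with a $(D)$-decomposition $M_{\lambda+\bZ I}=\bigoplus_{d\mu} M_{\lambda+\bZ I}^{d\mu}$ on which $U^0$ acts via $s\cdot m = m\pi(\widetilde\mu(s))$. By the equivalence $\sC_A''\simeq\widetilde{\sC_A''}$ (the analogue of Lemma~\ref{XZIvXpZI}, which goes through verbatim with $U_\chi$ replaced by $U^0$ and uses $\bZ I\cap pX=p\bZ I$), it is equivalent to work with $\widetilde{\sC_A''}$: objects are $X/p\bZ I$-graded $U^0\otimes A$-modules $M=\bigoplus_{\lambda+p\bZ I} M_{\lambda+p\bZ I}$ where $U^0$ acts on $M_{\mu+p\bZ I}$ by $s\cdot m = m\pi(\widetilde\mu(s))$, subject to conditions $(A')$, $(B')$ (and $(C')$ is vacuous here since $U^0$ is concentrated in degree $0+p\bZ I$). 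So I would phrase the proof as an equivalence between $\widetilde{\sC_A''}$ and finitely-generated $X/p\bZ I$-graded $A$-modules.

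First I would define the functor $F$ from $\widetilde{\sC_A''}$ to $X/p\bZ I$-graded $A$-modules by forgetting the $U^0$-action: send $M=\bigoplus M_{\lambda+p\bZ I}$ to the same underlying $X/p\bZ I$-graded right $A$-module. Conditions $(A')$ and $(B')$ say precisely that this is a finitely-generated $X/p\bZ I$-graded $A$-module, so $F$ is well-defined; it is clearly functorial. Conversely, I would define $G$ from $X/p\bZ I$-graded $A$-modules to $\widetilde{\sC_A''}$ by equipping a given $N=\bigoplus_{\lambda+p\bZ I} N_{\lambda+p\bZ I}$ with the $U^0$-action determined on the homogeneous component $N_{\mu+p\bZ I}$ by $s\cdot n \coloneqq n\,\pi(\widetilde\mu(s))$. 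The only thing to check here is that this is well-defined and is an algebra action: well-definedness uses that $\widetilde\mu$ depends only on $\mu+p\bZ I$ (indeed $\widetilde\mu=\widetilde{\mu+p\tau}$ for $\tau\in X$, as noted after Lemma~\ref{Aut}, and more is true: $\widetilde\mu$ depends only on $d\mu$), and the action axioms follow since $\mu\mapsto\widetilde\mu$ is a group homomorphism into $\Aut_{\bK\text{-alg}}(U^0)$ by Lemma~\ref{Aut} and $\pi$ is an algebra map, so $(st)\cdot n = n\pi(\widetilde\mu(st)) = n\pi(\widetilde\mu(s))\pi(\widetilde\mu(t)) = s\cdot(t\cdot n)$, using that $A$ is commutative so right multiplication by $\pi(\widetilde\mu(s))$ and by $\pi(\widetilde\mu(t))$ commute appropriately — one should check the order carefully, but it works out because $A$ is commutative. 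One also checks compatibility of the $U^0$-action with the $A$-action (again commutativity of $A$), so that $G(N)$ is genuinely a $U^0\otimes A$-module lying in $\widetilde{\sC_A''}$.

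Finally I would observe that $F$ and $G$ are mutually inverse: $F\circ G$ is the identity on objects and morphisms essentially by construction, since $G$ does not change the underlying graded $A$-module and a morphism of $X/p\bZ I$-graded $A$-modules automatically respects the $U^0$-actions defined by $G$ (because on each homogeneous piece the $U^0$-action is just right multiplication by a fixed element of $A$, which any $A$-module map commutes with); and $G\circ F$ is the identity on $\widetilde{\sC_A''}$ because condition $(D')$ says precisely that the original $U^0$-action on $M_{\mu+p\bZ I}$ was already $s\cdot m = m\pi(\widetilde\mu(s))$, which is exactly what $G$ reinstates after $F$ forgets it. Transporting along the equivalence $\sC_A''\simeq\widetilde{\sC_A''}$ then gives the claimed equivalence.

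I do not expect a serious obstacle; the content is entirely bookkeeping. The one subtle point — and the place I would be most careful — is the well-definedness of the $U^0$-action in the functor $G$: a priori the grading group is $X/p\bZ I$ but $\widetilde\mu$ is defined for $\mu\in X$, so one must verify that $\widetilde\mu(s)$ (and hence $\pi(\widetilde\mu(s))$) depends only on the coset $\mu+p\bZ I$, which is exactly the observation recorded after Lemma~\ref{Aut} that $\widetilde\mu=\widetilde{\mu+p\tau}$ for all $\tau\in X$. The second point requiring a line of justification is that no further conditions analogous to $(C)$ or the ``$e_\alpha$'' clauses of $(D)$ intervene, since $U^0$ carries no relevant grading — this is why the statement is as clean as it is.
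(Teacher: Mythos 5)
Your proof is correct and follows essentially the same route as the paper: pass to the $X/p\bZ I$-graded reformulation $\widetilde{\sC_A''}$ via the analogue of Lemma~\ref{XZIvXpZI}, then show the forgetful functor to finitely-generated $X/p\bZ I$-graded $A$-modules is an equivalence. The only difference is that the paper cites the argument of Lemma 2.5 in \cite{AJS} for this last step, whereas you write out the quasi-inverse (reinstating the $U^0$-action by $s\cdot n=n\pi(\widetilde{\mu}(s))$) explicitly, including the key well-definedness point that $\widetilde{\mu}$ depends only on $\mu+p\bZ I$.
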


\begin{proof}
	We saw in Lemma~\ref{XZIvXpZI} that $\sC_A$ and $\widetilde{\sC_A}$ are equivalent categories. A similar argument shows that $\sC_A''$ and $\widetilde{\sC_A''}$, where the latter is defined analogously to $\widetilde{\sC_A}$ for $U^0\otimes A$-modules, are equivalent. That the forgetful functor from $\widetilde{\sC_A''}$ to the category of finitely-generated $X/p\bZ I$-graded $A$-modules is an equivalence of categories then follows from a similar argument to that of Lemma 2.5 in \cite{AJS}.
\end{proof}

\begin{cor}\label{ProjA}
	Modules in $\sC_A''$ are projective if and only if they are projective as $A$-modules. Hence, $\sC_A''$ has enough projectives.
\end{cor}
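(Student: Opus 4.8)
The plan is to leverage Proposition~\ref{U0Equiv}, which identifies $\sC_A''$ with the category of finitely-generated $X/p\bZ I$-graded $A$-modules, and then reduce the projectivity question in that category to projectivity over $A$ itself. The key point is that an $X/p\bZ I$-graded $A$-module decomposes as a finite direct sum of its graded pieces (condition (A$'$)/(A) says the $A$-action preserves the grading, so each graded piece is an honest $A$-submodule), and this decomposition is compatible with morphisms in the graded category. So the graded category is equivalent to a finite product of copies of $A\text{-mod}$, indexed by $X/p\bZ I$-cosets; projectives in a finite product are exactly tuples of projectives in each factor.

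First I would make precise the claim that $\sC_A''$ is, via Proposition~\ref{U0Equiv}, equivalent to $\prod_{\lambda+p\bZ I\in X/p\bZ I}(A\text{-mod}_{\mathrm{fg}})$ — more precisely, to the full subcategory of the product on tuples with only finitely many nonzero entries, but since $X/p\bZ I$ need not be finite one phrases it as: the category of finitely-generated $X/p\bZ I$-graded $A$-modules, where a morphism is a degree-preserving $A$-module map. Then I would observe that for such a category, an object $P = \bigoplus_\gamma P_\gamma$ is projective if and only if each $P_\gamma$ is projective as an $A$-module: the ``if'' direction follows because $\Hom$ in the graded category splits as $\prod_\gamma \Hom_A(P_\gamma, -)$ and a product of exact functors is exact; the ``only if'' direction follows by applying projectivity of $P$ to surjections that are concentrated in a single degree $\gamma$ (i.e. take a surjection $N_\gamma \twoheadrightarrow P_\gamma$ of $A$-modules, regard it as a surjection of graded modules concentrated in degree $\gamma$, and lift).

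Transporting this back along the equivalence of Proposition~\ref{U0Equiv} gives the first sentence: a module $M \in \sC_A''$ is projective if and only if it is projective as an $A$-module (note that being projective as an $A$-module is the same as each graded piece being projective over $A$, since a finite direct sum is projective iff each summand is, and $M$ is a finite direct sum of its graded pieces by condition (B)). For the second sentence, "enough projectives": given $M \in \sC_A''$, since $A$ is Noetherian and each $M_{\lambda+p\bZ I}$ is finitely generated over $A$, choose for each of the finitely many nonzero graded pieces a surjection $A^{n_\lambda} \twoheadrightarrow M_{\lambda+p\bZ I}$ from a finite free $A$-module; assembling these into the appropriate degrees gives a graded module $P = \bigoplus_\lambda (A^{n_\lambda})$ (with $A^{n_\lambda}$ placed in degree $\lambda+p\bZ I$), which lies in $\sC_A''$, is projective by the criterion just proved, and surjects onto $M$ in $\sC_A''$.

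I do not anticipate a genuine obstacle here; the only point requiring a little care is the bookkeeping in the "only if" direction of the projectivity criterion — one must confirm that a surjection of $A$-modules in a single degree, viewed in the graded category, really is an epimorphism there and that the lift produced by projectivity is automatically degree-preserving (it is, since it lands in the degree-$\gamma$ part, $P$ being concentrated there, or more generally because one can compose with the projection to degree $\gamma$). Everything else is a routine transport of structure along the equivalence from Proposition~\ref{U0Equiv}, so the proof in the paper can reasonably be as brief as "this follows from Proposition~\ref{U0Equiv} and the corresponding fact for graded modules over a Noetherian ring."
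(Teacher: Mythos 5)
Your argument is correct and is essentially the route the paper intends: Corollary~\ref{ProjA} is stated as an immediate consequence of Proposition~\ref{U0Equiv}, i.e.\ of the identification of $\sC_A''$ with finitely-generated $X/p\bZ I$-graded $A$-modules, where projectivity is checked degreewise exactly as you do (as in Lemma 2.5--2.6 of \cite{AJS}). Your degree-by-degree lifting argument and the construction of graded free covers supply precisely the routine details the paper leaves implicit.
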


\section{Induction}\label{Sec4}
\subsection{Induction along $U^0$}\label{Sec4.1}

Induction is one of the key tools in representation theory. Since we have defined a number of categories at this point, let us use induction to see how they all fit together.

For each $M\in\sC_A''$, we define $$\Phi_A'(M)\coloneqq U^0U^{+}\otimes_{U^0}M.$$ We let $u\in U^0 U^{+}$ act on the first factor via left multiplication, while $a\in A$ acts on the right as it does on $M$. We then want to show that $\Phi_A'(M)$ lies in $\sC_A'$. For $\lambda+\bZ I\in X/\bZ I$ we define $$(U^0U^{+}\otimes_{U^0} M)_{\lambda+\bZ I}=\bigoplus_{\nu+\bZ I\in X/\bZ I} (U^{0}U^{+})_{\nu+\bZ I}\otimes_{U^0} M_{\lambda-\nu+\bZ I},$$ and we define, for $\mu\in\lambda+\bZ I+pX$,
$$(U^0U^{+}\otimes_{U^0}M)_{\lambda+\bZ I}^{d\mu}=\bigoplus_{\nu+\bZ I\in X/\bZ I} \bigoplus_{\substack{d\sigma\in \fh^{*} \\ \sigma\in \nu+\bZ I +pX}}(U^{0}U^{+})_{\nu+\bZ I}^{d\sigma}\otimes_{U^0} M_{\lambda-\nu+\bZ I}^{d(\mu-\sigma)}.$$
Since $U^0$ preserves each $M_{\lambda+\bZ I}^{d\mu}$ and $(U^{0}U^{+})_{\nu+\bZ I}^{d\sigma}$, the tensor products are well-defined. Furthermore, $\sigma\in\nu+\bZ I+pX$ implies $\lambda-\sigma\in \lambda-\nu+\bZ I+pX$, so the right-hand side of the tensor product in the second decomposition makes sense. The distributivity of direct sums over tensor products shows that we indeed have direct sum decompositions of the relevant spaces. It is easy to see that conditions (A), (B) and (C) hold for $\Phi_A'(M)$. For condition (D), we let $s\in U^{0}$, $u\in(U^0U^{+})_{\lambda+\bZ I}^{d\mu}$, and $m\in M_{\nu+\bZ I}^{d\sigma}$ (where $\mu\in\lambda+\bZ I+pX$ and $\sigma\in\nu+\bZ I + pX$). Then $$s(u\otimes m)=(su)\otimes m=(u\widetilde{\mu}(s))\otimes m=u\otimes (\widetilde{\mu}(s)m)=u\otimes (m\pi(\widetilde{\sigma}\widetilde{\mu}(s)))=(u\otimes m)\pi(\widetilde{(\mu+\sigma)}(s)).$$

Checking easily that $\Phi_A'$ is compatible with morphisms, we hence have obtained a functor $\Phi_A':\sC_A''\to \sC_A'$. We may similarly define a functor $$\Phi_A:\sC_A''\to\sC_A,\\ \quad\Phi_A(M)=U_\chi\otimes_{U^0}M.$$

\begin{lemma}\label{PhiProj}
	The functors $\Phi_A$ and $\Phi_A'$ are exact and are left adjoint to the corresponding forgetful functors. Therefore, $\Phi_A$ and $\Phi_A'$ both map projective modules to projective modules.
\end{lemma}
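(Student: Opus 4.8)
The plan is to verify the three claimed properties in turn: exactness, the adjunction, and then deduce projectivity formally. I will focus on $\Phi_A$, since the argument for $\Phi_A'$ is identical after replacing $U_\chi$ by $U^0U^{+}$.

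First I would prove exactness. The key point is that $U_\chi$ is free as a right $U^0$-module: using the PBW-type basis of $U_\chi$ displayed in Section~\ref{Sec3.1}, the monomials $e_{-\beta_r}^{a_r}\cdots e_{-\beta_1}^{a_1} h_1^{b_1}\cdots h_d^{b_d} e_{\beta_1}^{c_1}\cdots e_{\beta_r}^{c_r}$ with $0\le a_i,c_i<p$ and $b_i\ge 0$ show that $U_\chi \cong U^{-}\otimes U^0\otimes U^{+}$ as $(U_\chi,U^0)$-bimodules, hence $U_\chi$ is free as a right $U^0$-module on the basis of monomials $e_{-\beta_r}^{a_r}\cdots e_{-\beta_1}^{a_1} e_{\beta_1}^{c_1}\cdots e_{\beta_r}^{c_r}$ (with $0\le a_i,c_i<p$). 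Therefore the underlying functor $M\mapsto U_\chi\otimes_{U^0}M$ from $U^0$-modules to $U_\chi$-modules is exact. Applied to a short exact sequence in $\sC_A''$, whose underlying sequence of $U^0\otimes A$-modules is exact, this gives an exact sequence of $U_\chi\otimes A$-modules; one then checks the grading and $(D)$-decomposition are respected, which is immediate from the explicit formulas for $(U_\chi\otimes_{U^0}M)_{\lambda+\bZ I}$ and $(U_\chi\otimes_{U^0}M)_{\lambda+\bZ I}^{d\mu}$ given just above, since tensoring over $U^0$ with a free module commutes with the direct sum decompositions. Hence $\Phi_A$ is exact.

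Next I would establish the adjunction $\Hom_{\sC_A}(\Phi_A(M),N)\cong\Hom_{\sC_A''}(M,N)$ for $M\in\sC_A''$ and $N\in\sC_A$ (where on the right $N$ is viewed via the forgetful functor as an object of $\sC_A''$). The underlying statement is the standard tensor--hom adjunction $\Hom_{U_\chi\otimes A}(U_\chi\otimes_{U^0}M,N)\cong\Hom_{U^0\otimes A}(M,N)$, sending $f$ to $m\mapsto f(1\otimes m)$; its inverse sends $g$ to $u\otimes m\mapsto u\,g(m)$, which is well-defined because $g$ is $U^0$-linear. What remains is to check that this bijection restricts to morphisms in the respective categories, i.e.\ that a $U_\chi\otimes A$-map $f:\Phi_A(M)\to N$ respects the $(D)$-decompositions if and only if $g=f(1\otimes-):M\to N$ does. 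This follows from the formula for $(\Phi_A(M))_{\lambda+\bZ I}^{d\mu}$: the summand with $\nu+\bZ I=0+\bZ I$ is $(U^0U^{+})_{0+\bZ I}^{0}\otimes_{U^0}M_{\lambda+\bZ I}^{d\mu}\supseteq 1\otimes M_{\lambda+\bZ I}^{d\mu}$, so if $f$ is a morphism then so is $g$; conversely, if $g$ is graded then $f(u\otimes m)=u\,g(m)$ is graded by condition (C)/(D) applied in $N$, since $u\in(U_\chi)_{\nu+\bZ I}^{d\sigma}$ carries $N_{\lambda+\bZ I}^{d\mu}$ into $N_{\nu+\lambda+\bZ I}^{d(\sigma+\mu)}$ and the grading shifts match those recorded in the definition of $\Phi_A(M)$.

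Finally, the last assertion is a formal consequence: a left adjoint of an exact functor preserves projectives. Concretely, if $P\in\sC_A''$ is projective and we are given an epimorphism $N\twoheadrightarrow N'$ in $\sC_A$ and a map $\Phi_A(P)\to N'$, then by adjunction this corresponds to a map $P\to N'$ (applying the forgetful functor to the epimorphism, which remains an epimorphism since the forgetful functor is exact), which lifts to $P\to N$ by projectivity of $P$, and transporting back along the adjunction gives the desired lift $\Phi_A(P)\to N$. The same argument applies verbatim to $\Phi_A'$ and the forgetful functor $\sC_A'\to\sC_A''$. The main obstacle — though a mild one — is the bookkeeping in the middle step: verifying that the classical tensor--hom adjunction is compatible with the $X/\bZ I$-gradings and the finer $(D)$-decompositions, for which the explicit description of the graded pieces of $\Phi_A(M)$ given in the text is exactly what is needed.
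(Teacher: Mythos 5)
Your proposal is correct and follows essentially the same route as the paper, which simply adapts Lemma 2.7 of \cite{AJS}: exactness from freeness of $U_\chi$ (resp.\ $U^0U^{+}$) over $U^0$, the tensor--hom adjunction checked against the gradings and $(D)$-decompositions, and preservation of projectives as a formal consequence of being left adjoint to the exact forgetful functor. (Only a cosmetic slip: in the adjunction step for $\Phi_A$ the degree-$0+\bZ I$ summand should be written with $(U_\chi)_{0+\bZ I}^{d0}$ rather than $(U^0U^{+})_{0+\bZ I}^{0}$, which does not affect the argument since $1\otimes M_{\lambda+\bZ I}^{d\mu}$ still lands in the correct piece.)
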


\begin{proof}
	This can easily be adapted from the proof of Lemma 2.7 in \cite{AJS}, since $U^0U^{+}$ is free over $U^0$.	
\end{proof}

\begin{lemma}
	The categories $\sC_A$ and $\sC_A'$ have enough projectives.
\end{lemma}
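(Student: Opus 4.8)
The plan is to show that $\sC_A$ and $\sC_A'$ have enough projectives by combining Corollary~\ref{ProjA} with the adjunction of Lemma~\ref{PhiProj}. First I would recall that $\sC_A''$ has enough projectives by Corollary~\ref{ProjA}: given any object $M\in\sC_A$ (resp.\ $\sC_A'$), apply the forgetful functor to land in $\sC_A''$, where one can find a projective object $P$ surjecting onto it. The key point, supplied by Lemma~\ref{PhiProj}, is that $\Phi_A$ (resp.\ $\Phi_A'$) is left adjoint to this forgetful functor and sends projectives to projectives, so $\Phi_A(P)$ is projective in $\sC_A$ (resp.\ $\sC_A'$).

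Next I would produce the required surjection. By the adjunction, the identity map $P\to P$ in $\sC_A''$ (viewing $P$ as the image of the forgetful functor applied to $M$, or rather using the surjection $P\twoheadrightarrow M|_{U^0\otimes A}$) corresponds to a morphism $\Phi_A(P)\to M$ in $\sC_A$; concretely this is the multiplication-type map $U_\chi\otimes_{U^0}P\to M$, $u\otimes m\mapsto um$. One then checks this is surjective: since $P\twoheadrightarrow M$ is already surjective as a map of $U^0\otimes A$-modules and every element of $M$ lies in the $U_\chi$-span of the image of $M|_{U^0}$, the composite $\Phi_A(P)\to M$ hits all of $M$. The same argument works verbatim for $\sC_A'$ with $U^0U^{+}$ in place of $U_\chi$.

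The only genuine subtlety — and the step I expect to require the most care — is the surjectivity of $\Phi_A(P)\to M$, i.e.\ verifying that the counit of the adjunction composed with the chosen surjection is still surjective. This reduces to the standard fact that for an induced module $U_\chi\otimes_{U^0}N$ the natural map to any $U_\chi\otimes A$-module containing $N$ (compatibly) is surjective onto the $U_\chi$-submodule generated by $N$; combined with the fact that, by condition (C) and the triangular decomposition $U_\chi=U^{-}\otimes U^0\otimes U^{+}$, every object of $\sC_A$ is generated over $U_\chi$ by the image of a suitable finitely-generated $U^0\otimes A$-submodule (one can even take all of $M$ with its $U^0$-action), this gives the claim. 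Finally I would remark that this is entirely parallel to the corresponding argument in \cite{AJS}, so the details are routine.

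\begin{proof}
	We treat $\sC_A$; the argument for $\sC_A'$ is identical with $U^0U^{+}$ in place of $U_\chi$ and $\Phi_A'$ in place of $\Phi_A$. Let $M\in\sC_A$. Forgetting the $U_\chi$-action down to a $U^0$-action, $M$ becomes an object of $\sC_A''$, and by Corollary~\ref{ProjA} there is a projective object $P\in\sC_A''$ together with a surjection $p:P\to M$ of $U^0\otimes A$-modules respecting the grading. Applying $\Phi_A$ and using the adjunction of Lemma~\ref{PhiProj}, the morphism $p$ corresponds to a morphism $\widehat{p}:\Phi_A(P)=U_\chi\otimes_{U^0}P\to M$ in $\sC_A$, given on elements by $u\otimes m\mapsto u\cdot p(m)$. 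By Lemma~\ref{PhiProj} the object $\Phi_A(P)$ is projective in $\sC_A$, so it remains only to check that $\widehat{p}$ is surjective. The image of $\widehat{p}$ is a $U_\chi$-submodule of $M$ containing $p(P)=M$ (as $1\otimes m\mapsto p(m)$), hence $\widehat{p}$ is surjective. Thus $\sC_A$ has enough projectives.
\end{proof}
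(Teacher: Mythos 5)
Your proof is correct and follows essentially the same route as the paper, which simply invokes the argument of Lemma 2.7 in \cite{AJS}: restrict $M$ to $\sC_A''$, use Corollary~\ref{ProjA} to find a projective cover there, and apply $\Phi_A$ (resp.\ $\Phi_A'$), which by Lemma~\ref{PhiProj} preserves projectivity and whose adjunction counit composed with the chosen surjection remains surjective. Your surjectivity check is fine (indeed immediate, since $1\otimes m\mapsto p(m)$ already hits all of $M$), so no further elaboration is needed.
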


\begin{proof}
	We have already seen in Lemma~\ref{ProjA} that $\sC_A''$ has enough projectives. The proof then works as in Lemma 2.7 in \cite{AJS}.
\end{proof}


\subsection{Baby Verma modules}\label{Sec4.2}

There is another type of induction functor in this area which is key in the modular representation theory of Lie algebras and which we need in order to define {\bf baby Verma modules}.
We aim to define the induction functor $Z_{A,\chi}:\sC_A'\to\sC_A$ by sending $M\in\sC_A'$ to $$Z_{A,\chi}(M)\coloneqq U_\chi\otimes_{U^0U^{+}}M.$$

Observe, as in Section 2.10 of \cite{AJS}, that we have as $\bK$-vector spaces that $$Z_{A,\chi}(M)\cong U^{-}\otimes_{\bK} M.$$ We therefore define, for $\lambda+\bZ I\in X/\bZ I$, $$Z_{A,\chi}(M)_{\lambda+\bZ I}=\bigoplus_{\nu+\bZ I\in X/\bZ I} (U^{-})_{\nu+\bZ I}\otimes M_{\lambda-\nu+\bZ I}$$ and, for $\mu\in \lambda+\bZ I+pX$, $$Z_{A,\chi}(M)_{\lambda+\bZ I}^{d\mu}=\bigoplus_{\nu+\bZ I\in X/\bZ I}\bigoplus_{\substack{d\sigma\in \fh^{*} \\ \sigma\in \nu+\bZ I +pX}} (U^{-})_{\nu+\bZ I}^{d\sigma}\otimes M_{\lambda-\nu+\bZ I}^{d(\mu-\sigma)}.$$

Since the $U^0U^{+}$-action on $M$ is compatible with the grading, by conditions (C) and (D), the natural surjection $U\otimes M\to U\otimes_{U^0U^{+}}M$ induces a surjection
$$\bigoplus_{\nu+\bZ I\in X/\bZ I} (U_\chi)_{\nu+\bZ I}\otimes M_{\lambda-\nu+\bZ I}\twoheadrightarrow Z_{A,\chi}(M)_{\lambda+\bZ I}$$ and a surjection
$$\bigoplus_{\nu+\bZ I\in X/\bZ I}\bigoplus_{\substack{d\sigma\in \fh^{*} \\ \sigma\in \nu+\bZ I +pX}} (U_\chi)_{\nu+\bZ I}^{d\sigma}\otimes M_{\lambda-\nu+\bZ I}^{d(\mu-\sigma)}\twoheadrightarrow Z_{A,\chi}(M)_{\lambda+\bZ I}^{d\mu}.$$
This implies that conditions (C) and (D) are satisfied for $Z_{A,\chi}(M)$, and so we indeed have $Z_{A,\chi}(M)\in\sC_A$. It is clear that this process sends morphisms to morphisms, and thus we have a functor $\sC_A'\to\sC_A$. The following lemma can be proved in the standard way.

\begin{lemma}\label{ZFrobRec}
	Let $M\in\sC_A'$ and $N\in\sC_A$. Then $$\Hom_{\sC_A}(Z_{A,\chi}(M),N)\cong \Hom_{\sC_A'}(M,N).$$
\end{lemma}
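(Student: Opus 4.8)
The plan is to recognise Lemma~\ref{ZFrobRec} as the assertion that $Z_{A,\chi}$ is left adjoint to the forgetful functor $\sC_A\to\sC_A'$ (which restricts a $U_\chi\otimes A$-module to a $U^0U^{+}\otimes A$-module, retaining its $X/\bZ I$-grading and its $(D)$-decomposition), and to produce the adjunction bijection explicitly. This is the standard ``induction is adjoint to restriction'' argument, modelled on the discussion of $Z_{A,\chi}$ in Section 2.10 of \cite{AJS}; the only part that needs care beyond the formal tensor-hom manipulations is checking that the maps one writes down are genuinely morphisms in $\sC_A$ and $\sC_A'$, i.e. that they respect the $(D)$-decompositions.

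First I would set up the unit. There is a $\bK$-linear map $\iota_M\colon M\to Z_{A,\chi}(M)$, $m\mapsto 1\otimes m$; it is right $A$-linear, and left $U^0U^{+}$-linear because the tensor product is taken over $U^0U^{+}$, so $u(1\otimes m)=u\otimes m=1\otimes um$ for $u\in U^0U^{+}$. Since $1\in(U^{-})_{0+\bZ I}^{d0}$, the summand of the defining formula for $Z_{A,\chi}(M)_{\lambda+\bZ I}^{d\mu}$ indexed by $\nu\in\bZ I$ and $d\sigma=0$ shows $\iota_M\big(M_{\lambda+\bZ I}^{d\mu}\big)\subseteq Z_{A,\chi}(M)_{\lambda+\bZ I}^{d\mu}$. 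Hence for $\phi\in\Hom_{\sC_A}(Z_{A,\chi}(M),N)$ the composite $\Theta(\phi)\coloneqq\phi\circ\iota_M\colon M\to N$ is $U^0U^{+}\otimes A$-linear and preserves every graded piece, so it lies in $\Hom_{\sC_A'}(M,N)$; this defines $\Theta$, which is manifestly natural in $M$ and $N$.

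Conversely, given $\psi\in\Hom_{\sC_A'}(M,N)$, the universal property of $U_\chi\otimes_{U^0U^{+}}M$ produces a well-defined map $\widehat\psi\colon Z_{A,\chi}(M)\to N$, $u\otimes m\mapsto u\psi(m)$, since $(u,m)\mapsto u\psi(m)$ is $\bK$-bilinear and $U^0U^{+}$-balanced; it is then left $U_\chi$-linear and right $A$-linear. To see $\widehat\psi$ is a morphism in $\sC_A$, take a generator $u\otimes m$ of $Z_{A,\chi}(M)_{\lambda+\bZ I}^{d\mu}$ with $u\in(U^{-})_{\nu+\bZ I}^{d\sigma}$ and $m\in M_{\lambda-\nu+\bZ I}^{d(\mu-\sigma)}$. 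Then $\psi(m)\in N_{\lambda-\nu+\bZ I}^{d(\mu-\sigma)}$, so $u\psi(m)\in N_{\lambda+\bZ I}$ by condition (C); and using Lemma~\ref{Aut} to rewrite $hu=u\,\widetilde\sigma(h)$ followed by condition (D) for $N$, a short computation resting on the identity $\widetilde{\mu-\sigma}(\widetilde\sigma(h))=\widetilde\mu(h)$ for $h\in\fh$ (equivalently: the last clause of condition (D) applied along a PBW monomial for $u$) gives $h\cdot u\psi(m)=u\psi(m)\,\pi(\widetilde\mu(h))$ for all $h\in\fh$. By the eigenspace description of $N_{\lambda+\bZ I}^{d\mu}$ recorded after condition (D), this places $u\psi(m)$ in $N_{\lambda+\bZ I}^{d\mu}$, so $\widehat\psi\in\Hom_{\sC_A}(Z_{A,\chi}(M),N)$.

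Finally I would check that $\psi\mapsto\widehat\psi$ is inverse to $\Theta$: one composite is the identity because $\widehat\psi\circ\iota_M=\psi$ is immediate from the formulas, and the other because $\iota_M(M)$ generates $Z_{A,\chi}(M)$ as a $U_\chi$-module, so a $U_\chi$-linear map out of $Z_{A,\chi}(M)$ is determined by its restriction along $\iota_M$, whence $\widehat{\Theta(\phi)}=\phi$; naturality of the bijection in $M$ and $N$ then follows from the explicit formulas. The main (and really the only) obstacle is the $(D)$-decomposition bookkeeping in the two middle steps — ensuring $\iota_M$ and each $\widehat\psi$ land in the correct weight-graded subspaces — and this reduces cleanly to Lemma~\ref{Aut} and the eigenspace characterisation of the $(D)$-pieces; the rest is formal.
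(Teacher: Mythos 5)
Your argument is correct and is exactly the "standard way" the paper invokes without spelling out: the unit $m\mapsto 1\otimes m$ together with the universal property of $U_\chi\otimes_{U^0U^{+}}M$, with the only substantive check being the $(D)$-decomposition compatibility, which you handle correctly via Lemma~\ref{Aut} and the eigenspace description of the $(D)$-pieces. Nothing further is needed.
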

The functor $Z_{A,\chi}$ goes from $\sC_A'$ to $\sC_A$, but as in the usual representation theory of $U_\chi(\fg)$ we shall be mostly interested in applying it to modules most naturally thought of as lying in $\sC_A''$. We fix this incongruity by constructing a more-or-less trivial functor $\sC_A''\to\sC_A'$ in the following way.

There is a natural algebra surjection $U^0U^{+}\twoheadrightarrow U^0$, and so we may view a $U^0\otimes A$-module $M$ as a $U^0U^{+}\otimes A$-module via this surjection. We want to show that this procedure sends modules in $\sC_A''$ to modules in $\sC_A'$. If $M\in\sC_A''$ then, as a $\bK$-module, $M$ is clearly still $X/\bZ I$-graded when viewed as a $U^0U^{+}\otimes A$-module, and conditions (A) and (B) follow from those conditions in $\sC_A''$. Furthermore, if $\mu+\bZ I\neq 0$, then $(U^0U^{+})_{\mu+\bZ I}$ is in the kernel of the surjection $U^0U^{+}\twoheadrightarrow U^0$, so condition (C) is satisfied. Condition (D) follows easily, and so, in sum, we have the following:

\begin{prop}
	There exists a fully-faithful functor $\sC_A''\to\sC_A'$ sending $M$ to $M$.
\end{prop}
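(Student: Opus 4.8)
The plan is to verify the four functoriality claims in turn, none of which requires real work given the setup already in place. First I would observe that the construction is just restriction of scalars along the surjection $\varepsilon:U^0U^{+}\twoheadrightarrow U^0$, so $M\in\sC_A''$ becomes a $U^0U^{+}\otimes A$-module with the same underlying $\bK$-vector space and the same $X/\bZ I$-grading (and the same (D)-decomposition), with $e_\alpha$, $\alpha\in R^{+}\setminus\{0\}$, acting as $0$. Conditions (A) and (B) are then literally the conditions (A) and (B) holding for $M$ in $\sC_A''$, since neither the grading nor the $A$-action has changed. For condition (C), the point already made in the text is decisive: $U^0U^{+}=\bigoplus_{\mu+\bZ I}(U^0U^{+})_{\mu+\bZ I}$ with $(U^0U^{+})_{0+\bZ I}\supseteq U^0$ and $(U^0U^{+})_{\mu+\bZ I}\subseteq\ker\varepsilon$ for $\mu+\bZ I\neq 0$; hence $(U^0U^{+})_{\sigma+\bZ I}M_{\lambda+\bZ I}=0\subseteq M_{\sigma+\lambda+\bZ I}$ unless $\sigma\in\bZ I$, in which case $(U^0U^{+})_{\sigma+\bZ I}$ acts through $U^0$ and preserves $M_{\lambda+\bZ I}=M_{\sigma+\lambda+\bZ I}$ by condition (C) (or (D)) for $\sCA''$. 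Condition (D) is inherited directly: the decomposition $M_{\lambda+\bZ I}=\bigoplus_{d\mu} M_{\lambda+\bZ I}^{d\mu}$ is unchanged, the identity $sm=m\pi(\widetilde{\mu}(s))$ for $s\in U^0$ is unchanged, the clause about $e_\alpha$ is satisfied vacuously or reduces to the case $\alpha\in R_I$ which holds in $\sCA''$, and $ma\in M_{\lambda+\bZ I}^{d\mu}$ is unchanged.

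Next I would check the functor acts correctly on morphisms: a morphism $f:M\to N$ in $\sC_A''$ is an $A$-linear, $U^0$-linear map preserving each $M_{\lambda+\bZ I}^{d\mu}$; viewing $M,N$ as $U^0U^{+}\otimes A$-modules via $\varepsilon$, the map $f$ is automatically $U^0U^{+}$-linear (since $u\cdot m=\varepsilon(u)\cdot m$ on both sides) and still preserves the graded pieces, hence is a morphism in $\sC_A'$. Composition and identities are obviously respected, so we have a functor. Full faithfulness is then immediate: $\Hom_{\sC_A''}(M,N)$ and $\Hom_{\sC_A'}(M,N)$ are \emph{the same set}, namely the $\bK$-linear maps that are $A$-linear, commute with the $U^0$-action, and preserve the $M_{\lambda+\bZ I}^{d\mu}$ — because, as just noted, $U^0U^{+}$-linearity of such a map is equivalent to $U^0$-linearity once the $e_\alpha$ (for $\alpha\notin\bZ I$) kill everything. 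So no injectivity/surjectivity computation is needed beyond this identification.

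There is essentially no hard part here; the statement is a bookkeeping lemma. The only point that deserves a sentence of care is the verification of condition (C), where one must invoke that $(U^0U^{+})_{\mu+\bZ I}$ lies in $\ker\varepsilon$ whenever $\mu+\bZ I\neq 0+\bZ I$ — equivalently, that $U^{+}$ sits in degrees indexed by $R^{+}$ and the only element of $R^{+}$ lying in $\bZ I$ that contributes to the $0+\bZ I$ graded piece is forced, after applying $\varepsilon$, to act as an element of $U^0$. (In fact $U^0U^{+}=U^0\oplus(U^0U^{+}\cdot\fn^{+})$ as $U^0$-bimodules and $\varepsilon$ is projection onto the first summand, so the claim is transparent.) Everything else is a direct transcription of the defining conditions.
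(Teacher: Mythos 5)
Your proposal is correct and follows the same route as the paper: view $M\in\sC_A''$ as a $U^0U^{+}\otimes A$-module via the surjection $U^0U^{+}\twoheadrightarrow U^0$, inherit conditions (A), (B), (D), and use the fact that the graded pieces $(U^0U^{+})_{\mu+\bZ I}$ with $\mu+\bZ I\neq 0$ (and, as you note, everything outside $U^0$ in degree $0+\bZ I$) lie in the kernel to get (C); your observation that the two Hom-sets literally coincide supplies the full faithfulness that the paper leaves implicit. The only cosmetic slip is the aside that the $e_\alpha$-clause of (D) "reduces to the case $\alpha\in R_I$": since every $e_\alpha$, $\alpha\in R^{+}$, acts as zero after restriction (including $\alpha\in R_I^{+}$), that clause is always vacuous, which is the branch you already cover.
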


For each $\lambda\in X$, we define an object $A^\lambda\in\sC_A''$ as follows: As a right $A$-module, it is just the algebra $A$ with the usual right multiplication. We set $(A^{\lambda})_{\sigma+\bZ I}$ to be $A$ if $\lambda\in\sigma+\bZ I$ and zero otherwise, and we set $(A^{\lambda})_{\lambda+\bZ I}^{d\tau}$ to be $A$ if $d\tau=d\lambda$ and zero otherwise. We then define $sa=a\pi(\widetilde{\mu}(s))$ for $a\in A^\lambda$ and $s\in U^0$. Hence, $A^\lambda\in\sC_A''$ and so we may define the {\bf baby Verma module} $$Z_{A,\chi}(\lambda)\coloneqq Z_{A,\chi}(A^\lambda)\in\sC_A.$$ Similarly, we define $\Phi_A(\lambda)\coloneqq\Phi_A(A^\lambda)$ and $\Phi_A'(\lambda)\coloneqq\Phi_A'(A^\lambda)$. Baby Verma modules have the particular strength that it is relatively easy to see when there exists a map from a baby Verma module to another module, as we now see.

Let $M\in\sC_A$. As in Lemma~\ref{ZFrobRec}, for $\lambda\in X$ we have $$\Hom_{\sC_A}(Z_{A,\chi}(\lambda),M)\cong\Hom_{\sC_A'}(A^\lambda,M).$$
It is easy to see that $$\Hom_{\sC_A'}(A^\lambda,M)\cong (M_{\lambda+\bZ I}^{d\lambda})^{\fn^{+}},$$ where $$(M_{\lambda+\bZ I}^{d\lambda})^{\fn^{+}}\coloneqq \{m\in M_{\lambda+\bZ I}^{d\lambda}\,\vert\, e_{\alpha}m=0\,\,\mbox{for all}\,\,\alpha\in R^{+}\}.$$ In other words, there exists a non-zero homomorphism $Z_{A,\chi}(\lambda)\to M$ in $\sC_A$ if and only if there exists a non-zero $m\in M_{\lambda+\bZ I}^{d\lambda}$ with $e_\alpha m=0$ for all $\alpha\in R^{+}$.

\begin{prop}\label{ZSurj}
	Let $M\in\sC_A$. Then there exists $\lambda\in X$ such that there is a homomorphism $Z_{A,\chi}(\lambda)\to M$ in $\sC_A$.
\end{prop}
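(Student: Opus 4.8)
The plan is to find a weight $\lambda\in X$ and a nonzero $\fn^+$-invariant element of weight $d\lambda$ in $M_{\lambda+\bZ I}$, since by the discussion immediately preceding the statement the existence of such an element is equivalent to the existence of a nonzero homomorphism $Z_{A,\chi}(\lambda)\to M$. (Note the statement only asks for \emph{a} homomorphism, but the content is to produce a nonzero one; one could of course take the zero map, so implicitly we want $M\neq 0$, and the substance is the nonzero case.)

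First I would reduce to finding a suitable element inside a single nonzero graded piece. Using condition (B), only finitely many $M_{\lambda+\bZ I}$ are nonzero, and by the partial ordering on $X/\bZ I$ described in Section~\ref{Sec2} we may choose a coset $\lambda_0+\bZ I$ which is \emph{maximal} among those with $M_{\lambda_0+\bZ I}\neq 0$. By condition (D) we have a (D)-decomposition $M_{\lambda_0+\bZ I}=\bigoplus_{d\mu} M_{\lambda_0+\bZ I}^{d\mu}$, so we may pick some $\mu\in\lambda_0+\bZ I+pX$ with $M_{\lambda_0+\bZ I}^{d\mu}\neq 0$. Now for any $\alpha\in R^+$ and any $m\in M_{\lambda_0+\bZ I}^{d\mu}$, condition (D) gives $e_\alpha m\in M_{\lambda_0+\alpha+\bZ I}^{d(\mu+\alpha)}$; but $\lambda_0+\alpha+\bZ I>\lambda_0+\bZ I$ since $\alpha$ is a nonnegative combination of simple roots (indeed a single simple root, or a positive root, all of which are nonnegative combinations of simple roots), and this coset is strictly larger, hence $M_{\lambda_0+\alpha+\bZ I}=0$ by maximality. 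Therefore $e_\alpha$ annihilates all of $M_{\lambda_0+\bZ I}^{d\mu}$, and any nonzero $m$ in this space is $\fn^+$-invariant of the required form. Taking $\lambda:=\mu$ (so that $\lambda+\bZ I=\lambda_0+\bZ I$ and $d\lambda=d\mu$), we get a nonzero homomorphism $Z_{A,\chi}(\lambda)\to M$ by the Frobenius-reciprocity identity $\Hom_{\sC_A}(Z_{A,\chi}(\lambda),M)\cong(M_{\lambda+\bZ I}^{d\lambda})^{\fn^+}$ established above.

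There is essentially no hard part here; the only point requiring a little care is making sure the chosen weight $\lambda$ is an honest element of $X$ with the right residue mod $\bZ I$ and the right differential — this is exactly why we pick $\mu\in\lambda_0+\bZ I+pX$ and then set $\lambda=\mu$, rather than trying to use $\lambda_0$ directly. One should also record that the partial order has the property that a coset plus a nonzero sum of simple roots is \emph{strictly} greater, which is immediate from the definition, and that $R^+\subseteq\sum_i\bZ_{\geq 0}\alpha_i$, so that each $e_\alpha$ with $\alpha\in R^+$ raises the grading. If $M=0$ the statement is trivial (take any $\lambda$ and the zero map), so we may assume $M\neq 0$ to guarantee the maximal coset exists and is nonzero.
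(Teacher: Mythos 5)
There is a genuine gap, and it sits exactly at the point that distinguishes this setting from the $\chi=0$ case. Your maximality argument shows that $e_\alpha$ kills $M_{\lambda_0+\bZ I}^{d\mu}$ only for those $\alpha\in R^{+}$ with $\alpha\notin\bZ I$: for such $\alpha$ the coset $\lambda_0+\alpha+\bZ I$ is indeed strictly larger than $\lambda_0+\bZ I$ and maximality applies. But for $\alpha\in R_I^{+}=R^{+}\cap\bZ I$ (which is nonempty whenever $I\neq\emptyset$, i.e.\ whenever $\chi\neq 0$) we have $\lambda_0+\alpha+\bZ I=\lambda_0+\bZ I$, so $e_\alpha$ maps $M_{\lambda_0+\bZ I}^{d\mu}$ into $M_{\lambda_0+\bZ I}^{d(\mu+\alpha)}$, a different summand of the \emph{same} maximal graded piece, and maximality gives no vanishing at all. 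Your sentence ``this coset is strictly larger'' is simply false for $\alpha\in R_I^{+}$; the paper itself flags this phenomenon elsewhere (in the $\Ext$ lemma of Section~\ref{Sec5}: unlike the $\chi=0$ case, it can happen that $\lambda+\bZ I=\lambda+\alpha+\bZ I$). So your argument proves the statement only when $I=\emptyset$, which is not the situation the paper is concerned with.

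What is missing is an argument producing a vector killed by \emph{all} of $\fn^{+}$, including the root vectors $e_\alpha$ with $\alpha\in R_I^{+}$, whose action on the top graded piece is genuinely nontrivial (there $\chi\vert_{\fg_I}$ is regular nilpotent). The paper's route is: first show that the problem reduces to finding any $m\in M$ with $e_\alpha m=0$ for all $\alpha\in R^{+}$, because the components of such an $m$ in the decomposition $M=\bigoplus M_{\tau+\bZ I}^{d\tau}$ are again $\fn^{+}$-invariant (each $e_\alpha$ shifts $\tau+p\bZ I$ injectively), and then invoke an inductive argument as in \cite[B.3]{Jan2} for the existence of such an $m$; concretely, one can use that $\chi(\fn^{+})=0$, so $U^{+}=U_0(\fn^{+})$ is a finite-dimensional local algebra whose augmentation ideal is nilpotent, whence it cannot act without a nonzero invariant on a nonzero module. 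Your maximality reduction is a reasonable first step (it handles $R^{+}\setminus\bZ I$ and could be combined with a separate argument for the Levi part), but as written the proof does not establish the proposition for $\chi$ in standard Levi form with $I\neq\emptyset$.
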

\begin{proof}
	Let $m\in M$ with $e_\alpha m=0$ for all $\alpha\in R^{+}$. Then $m$ can be written $m=\sum_{\tau+p\bZ I\in X/p\bZ I} m_{\tau+\bZ I}^{d\tau}$ with $m_{\tau+\bZ I}^{d\tau}\in M_{\tau+\bZ I}^{d\tau}$. So, $0=e_\alpha m=\sum_{\tau+p\bZ I\in X/p\bZ I} e_\alpha m_{\tau+\bZ I}^{d\tau}$ with $e_\alpha m_{\tau+\bZ I}^{d\tau}\in M_{\tau+\alpha+\bZ I}^{d(\tau+\alpha)}$ for all $\tau+p\bZ I\in X/\bZ I$. Since $\tau+\alpha+p\bZ I=\sigma+\alpha+p\bZ I$ implies $\tau+p\bZ I=\sigma+p\bZ I$, we get that $e_\alpha m=0$ implies that $e_\alpha m_{\tau+\bZ I}^{d\tau}=0$ for all $\tau+p\bZ I$. So to get the result, it suffices to show that there exists $m\in M$ with $e_\alpha m=0$ for all $\alpha\in R^{+}$. This fact follows from an inductive argument as in \cite[B.3]{Jan2}.

\end{proof}

\subsection{$Z$-Filtrations}\label{Sec4.3}
Let us say that $M\in\sC_A$ has a {\bf $Z$-filtration} if it has a filtration of submodules $$0=M_0\subset M_1\subset M_2\subset\cdots\subset M_{r-1}\subset M_r=M$$ such that each $M_{i}/M_{i-1}\cong Z_{A,\chi}(\lambda_i)$ for some $\lambda_i\in X$. In order to show that certain modules have $Z$-filtrations, it will be useful to see how our induction functors interact with the functor $\Upsilon_A$ defined in Proposition~\ref{UpsilonDef}. To do this, recall the functor which we will call $\widehat{\Phi_A'}$ but was denoted $\Phi_A'$ in \cite{AJS}, which is a functor $\widehat{\sC_A''}\to\widehat{\sC_A'}$ (using our notation for the categories) defined analogously to our functor $\Phi_A':\sC_A''\to\sC_A'$.

\begin{prop}\label{commdiag}
	The following diagram commutes:
	$$\xymatrix{
		\widehat{\sC_A''}\ar@{->}[rr]^{\Upsilon_A} \ar@{->}[d]^{\widehat{\Phi_A'}} & & \sC_A'' \ar@{->}[d]^{\Phi_A'}& & \\
		\widehat{\sC_A'} \ar@{->}[rr]^{\Upsilon_A} & & \sC_A'. & & \\
	}
	$$
\end{prop}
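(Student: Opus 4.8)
The plan is to verify that the two composite functors agree on objects and on morphisms, which in both cases reduces to checking that the $X/\bZ I$-grading (and the $d\mu$-refinement) produced by the two routes is literally the same subspace decomposition, since on the underlying $U^0U^{+}\otimes A$-module both composites are just $M\mapsto U^0U^{+}\otimes_{U^0}M$ with the obvious actions. So the content is entirely bookkeeping about which homogeneous pieces get grouped together.

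First I would fix $M\in\widehat{\sC_A''}$, so $M=\bigoplus_{\sigma\in X}M_\sigma$ with $U^0$ acting on $M_\sigma$ via $\widetilde{\sigma}$. Going down-then-right, I first form $\widehat{\Phi_A'}(M)=U^0U^{+}\otimes_{U^0}M$ in $\widehat{\sC_A'}$, whose $X$-grading is $(U^0U^{+}\otimes_{U^0}M)_\tau=\bigoplus_{\nu\in X}(U^0U^{+})_\nu\otimes_{U^0}M_{\tau-\nu}$; then applying $\Upsilon_A$ collapses this to the $X/\bZ I$-grading by $\Upsilon_A(\widehat{\Phi_A'}(M))_{\lambda+\bZ I}=\bigoplus_{\tau\in\lambda+\bZ I}\bigoplus_{\nu\in X}(U^0U^{+})_\nu\otimes_{U^0}M_{\tau-\nu}$, with the $d\mu$-piece being the sub-sum over those $\nu,\tau$ with $d\tau=d\mu$. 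Going right-then-down, I first apply $\Upsilon_A$ to $M$ to land in $\sC_A''$, where $\Upsilon_A(M)_{\lambda+\bZ I}=\bigoplus_{\sigma\in\lambda+\bZ I}M_\sigma$ and $\Upsilon_A(M)_{\lambda+\bZ I}^{d\mu}=\bigoplus_{\sigma\in\lambda+\bZ I,\,d\sigma=d\mu}M_\sigma$; then $\Phi_A'$ produces $(U^0U^{+}\otimes_{U^0}\Upsilon_A(M))_{\lambda+\bZ I}=\bigoplus_{\nu+\bZ I}(U^0U^{+})_{\nu+\bZ I}\otimes_{U^0}\Upsilon_A(M)_{\lambda-\nu+\bZ I}$ with the prescribed $d\mu$-refinement from the definition of $\Phi_A'$ recalled in Section~\ref{Sec4.1}.

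The key step is then to match the two answers: expanding $(U^0U^{+})_{\nu+\bZ I}=\bigoplus_{\varepsilon\in\nu+\bZ I}(U^0U^{+})_\varepsilon$ and $\Upsilon_A(M)_{\lambda-\nu+\bZ I}=\bigoplus_{\sigma\in\lambda-\nu+\bZ I}M_\sigma$ inside the right-then-down expression, and reindexing by $\tau\coloneqq\varepsilon+\sigma$ (noting $\tau\in\lambda+\bZ I$, and $\varepsilon=\tau-\sigma$ recovers $\nu$ up to $\bZ I$), one sees the double sum $\bigoplus_{\nu+\bZ I}\bigoplus_{\varepsilon\in\nu+\bZ I,\,\sigma\in\lambda-\nu+\bZ I}(U^0U^{+})_\varepsilon\otimes_{U^0}M_\sigma$ is exactly $\bigoplus_{\tau\in\lambda+\bZ I}\bigoplus_{\nu\in X}(U^0U^{+})_\nu\otimes_{U^0}M_{\tau-\nu}$, which is the down-then-right answer; the same reindexing respects the $d$-conditions, since $d\varepsilon=d\sigma'$-type constraints in $\Phi_A'$'s definition translate precisely to $d\tau=d\mu$ after combining. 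Finally, both composites send a morphism $f$ to $\mathrm{id}_{U^0U^{+}}\otimes f$ on the same underlying module, so compatibility with morphisms is immediate. The main (and only real) obstacle is doing the reindexing carefully enough that the $d\mu$-refinements—not just the coarse $X/\bZ I$-grading—are seen to coincide; once the bijection $\nu+\bZ I,\sigma,\varepsilon\leftrightarrow\tau,\nu$ is written down explicitly this is routine.
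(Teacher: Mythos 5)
Your proposal is correct and follows essentially the same route as the paper's proof: reduce to the observation that both composites agree on underlying $U^0U^{+}\otimes A$-modules and morphisms, then match the $X/\bZ I$-gradings and the $d\mu$-refinements by an explicit reindexing of the summands (the paper phrases this as an equality of index sets, your $\tau=\varepsilon+\sigma$ bijection). The only difference is that the paper writes out the $d$-condition bookkeeping in full, whereas you state it as routine; the underlying argument is the same.
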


\begin{proof}
	The diagram clearly commutes as a diagram of ungraded modules (since the maps on the left and the right are the same, and the horizontal arrows are identities, for ungraded modules). So all that must be shown is that the decompositions correspond. In other words, we must show for $M\in\widehat{\sC_A''}$, and for each $\lambda+\bZ I\in X/\bZ I$ and $\mu\in\lambda+\bZ I+pX$, that $$\Phi_A'(\Upsilon_A(M))_{\lambda+\bZ I}=\Upsilon_A(\widehat{\Phi_A'}(M))_{\lambda+\bZ I}$$
	and 
	$$\Phi_A'(\Upsilon_A(M))_{\lambda+\bZ I}^{d\mu}=\Upsilon_A(\widehat{\Phi_A'}(M))_{\lambda+\bZ I}^{d\mu}.$$
	
	We may see that $$\Phi_A'(\Upsilon_A(M))_{\lambda+\bZ I}=\bigoplus_{\nu+\bZ I\in X/\bZ I}\bigoplus_{\substack{\sigma\in\nu+\bZ I\\ \gamma\in\lambda-\nu+\bZ I}}(U^0U^+)_{\sigma}\otimes_{U^0}M_\gamma$$ 
	and $$\Upsilon_A(\widehat{\Phi_A'}(M))_{\lambda+\bZ I}=\bigoplus_{\tau\in\lambda+\bZ I}\bigoplus_{\nu\in X} (U^0U^+)_\nu\otimes_{U^0} M_{\tau-\nu}.$$
	
	Then we must show that $$\{(\sigma,\gamma)\in(\nu+\bZ I)\times (\lambda-\nu+\bZ I)\,\,\mbox{for some}\,\,\nu+\bZ I\in X/\bZ I\} = \{(\nu,\tau-\nu)\in X\times X\,\,\mbox{for some }\,\tau\in\lambda+\bZ I\}.$$
	
	If $(\nu,\tau-\nu)$ is in the right-hand side, so $\nu\in X$ and $\tau\in\lambda+\bZ I$, then $(\nu,\tau-\nu)\in (\nu+\bZ I)\times (\lambda-\nu+\bZ I)$, and so $(\nu,\tau-\nu)$ also lies in the left-hand side. Conversely, if $(\sigma,\gamma)$ is in the left-hand side, so $\sigma\in\nu+\bZ I$ and $\gamma\in\lambda-\nu+\bZ I$ for some $\nu+\bZ I\in X/\bZ I$, then $(\sigma,\gamma)=(\sigma,(\gamma+\sigma)-\sigma)$ where $\gamma+\sigma\in \lambda+\bZ I$. Hence, $(\sigma,\gamma)$ lies in the right-hand side, and we have equality.
	
	We may also see that $$\Phi_A'(\Upsilon_A(M))_{\lambda+\bZ I}^{d\mu}=\bigoplus_{\nu+\bZ I\in X/\bZ I}\bigoplus_{\substack{d\sigma\in\fh^{*}\\ \sigma\in\nu+\bZ I+pX}}\bigoplus_{\substack{\tau\in\nu+\bZ I\\ d\tau=d\sigma}}\bigoplus_{\substack{\varepsilon\in\lambda-\nu+\bZ I\\ d\varepsilon=d(\mu-\sigma)}}(U^0U^+)_{\tau}\otimes_{U^0}M_\varepsilon$$ 
	and $$\Upsilon_A(\widehat{\Phi_A'}(M))_{\lambda+\bZ I}^{d\mu}=\bigoplus_{\substack{ \sigma\in\lambda+\bZ I\\ d\sigma=d\mu}}\bigoplus_{\tau\in X} (U^0U^+)_\tau\otimes_{U^0} M_{\sigma-\tau}.$$
	
	Now, set $$A_1=\left\{(\tau,\varepsilon)\in X\times X \middle\vert\begin{array}{l}\,\qquad\exists \nu+\bZ I\in X/\bZ I\,\mbox{and}\,\,d\sigma\in\fh^{*}\,\,\mbox{with}\,\,\sigma\in\nu+\bZ I + pX\\ \mbox{such that} \,\,\tau\in\nu+\bZ I,\,\,d\tau=d\sigma,\,\, \varepsilon\in\lambda-\nu+\bZ I, \,\,d\varepsilon=d(\mu-\sigma) \end{array}\right\}$$ and 
	$$A_2=\left\{(\tau,\sigma-\tau)\in X\times X\,\vert\,\,\sigma\in\lambda+\bZ I\,\mbox{and}\,\,d\sigma=d\mu\right\}.$$
	
	Then we must show that $A_1=A_2$. If $(\tau,\varepsilon)\in A_1$ then $(\tau,\varepsilon)\in A_2$ since $\varepsilon=(\varepsilon+\tau)-\tau$, where $\varepsilon+\tau\in\lambda+\bZ I$ and $d(\varepsilon+\tau)=d\mu$. Conversely, if $(\tau,\sigma-\tau)\in A_2$, so $\sigma\in \lambda+\bZ I$ and $d\sigma=d\mu$, then $(\tau,\sigma-\tau)\in A_2$ since $\tau\in\tau+\bZ I$, $d\tau=d\tau$, $\sigma-\tau\in\lambda-\tau+\bZ I$ and $d(\sigma-\tau)=d(\lambda-\tau)$. Thus, $A_1=A_2$ as required.

\end{proof}

\begin{cor}
	Let $M\in\widehat{\sC_A'}$ such that $M_\sigma$ is free over $A$ for each $\sigma\in X$. Then $\Upsilon_A(M)$ has a filtration with factors $A^\lambda$ for $\lambda\in X$. Furthermore, $Z_{A,\chi}(\Upsilon_A(M))$ has a $Z$-filtration.
\end{cor}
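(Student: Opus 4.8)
The statement has two parts, and the plan is to prove the first (the filtration of $\Upsilon_A(M)$) by a standard highest-weight argument and then deduce the second by pushing the resulting filtration through the exact functor $Z_{A,\chi}$.

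For the first part I would keep track of the underlying $X$-graded $U^0U^{+}\otimes A$-module structure of $M$, which $\Upsilon_A$ leaves unchanged. Since $M$ has only finitely many non-zero graded pieces $M_{\sigma_1},\ldots,M_{\sigma_k}$, fix a total order on $\{\sigma_1,\ldots,\sigma_k\}$ refining the dominance order ($\tau\succeq\sigma$ iff $\tau-\sigma\in\sum_i\bZ_{\geq 0}\alpha_i$) with strictly larger weights coming first, and put $N^{(j)}=\bigoplus_{i\leq j}M_{\sigma_i}$. Since $U^{+}$ is concentrated in $X$-degrees lying in $\sum_i\bZ_{\geq 0}\alpha_i$ while $U^0$ sits in degree $0$, condition (\^{C}) gives $(U^0U^{+})M_{\sigma_i}\subseteq\bigoplus_{\sigma_l\succeq\sigma_i}M_{\sigma_l}\subseteq N^{(j)}$ whenever $i\leq j$, so each $N^{(j)}$ is a $U^0U^{+}\otimes A$-submodule; being a sum of full graded pieces of $\Upsilon_A(M)$, it is a subobject in $\sC_A'$. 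The subquotient $N^{(j)}/N^{(j-1)}$ is isomorphic in $\sC_A'$ to $M_{\sigma_j}$ with $U^0$ acting through $\widetilde{\sigma_j}$ (condition (\^{D})) and the augmentation ideal of $U^{+}$ acting as zero --- its image lies in strictly larger degrees, hence in $N^{(j-1)}$ --- concentrated in the graded piece indexed by $\sigma_j+\bZ I$ and $d\sigma_j$. Because $M_{\sigma_j}$ is free over $A$, any $A$-basis decomposes it as a direct sum of copies of $A^{\sigma_j}$ (regarded in $\sC_A'$ via the fully faithful functor $\sC_A''\to\sC_A'$), each rank-one summand being a subobject since $A$ acts by right multiplication and $U^0U^{+}$ preserves it. Refining the filtration $(N^{(j)})_j$ through these summands yields a filtration of $\Upsilon_A(M)$ with all factors of the form $A^\lambda$, $\lambda\in X$. (Alternatively, one may cite the analogous statement for $\widehat{\sC_A'}$ from \cite{AJS} and apply $\Upsilon_A$, which is the identity on underlying modules and hence exact, using $\Upsilon_A(A^\lambda)=A^\lambda$.)

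For the second part, I would first note that $Z_{A,\chi}=U_\chi\otimes_{U^0U^{+}}(-)\colon\sC_A'\to\sC_A$ is exact: by the vector space decomposition $U_\chi=U^{-}\otimes U^0\otimes U^{+}$, the algebra $U_\chi$ is free as a right $U^0U^{+}$-module (on the ordered monomials in the $e_{-\beta_i}$), so tensoring over $U^0U^{+}$ is exact. Applying $Z_{A,\chi}$ to the filtration $0=P^0\subset P^1\subset\cdots\subset P^N=\Upsilon_A(M)$ obtained above therefore gives a chain $0=Z_{A,\chi}(P^0)\subseteq Z_{A,\chi}(P^1)\subseteq\cdots\subseteq Z_{A,\chi}(P^N)=Z_{A,\chi}(\Upsilon_A(M))$ in which every map is injective and each subquotient satisfies $Z_{A,\chi}(P^t)/Z_{A,\chi}(P^{t-1})\cong Z_{A,\chi}(P^t/P^{t-1})\cong Z_{A,\chi}(A^{\lambda_t})=Z_{A,\chi}(\lambda_t)$; this is precisely a $Z$-filtration.

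The substantive point is entirely in the first part: one has to orient the order on the support so that the partial sums really are submodules (getting the direction of the $U^{+}$-weight shift right), and then verify that the subquotients $M_{\sigma_j}$ --- on which the positive part of $U^{+}$ vanishes --- break up as direct sums of copies of $A^{\sigma_j}$, which is exactly where the freeness of $M_\sigma$ over $A$ is used. Once that and the exactness of $Z_{A,\chi}$ are in hand, the rest is formal.
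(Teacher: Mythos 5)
Your proof is correct, and its second half (exactness of $Z_{A,\chi}$, justified by $U_\chi$ being free as a right $U^0U^{+}$-module, applied to the filtration of $\Upsilon_A(M)$) is exactly what the paper does. Where you differ is in the first half: the paper simply cites Lemma 2.12 of \cite{AJS} for the filtration of $M$ with sections $A^\lambda$ \emph{inside} $\widehat{\sC_A'}$, observes $\Upsilon_A(A^\lambda)=A^\lambda$ and that $\Upsilon_A$ is exact, and transports the filtration; this is the route you only mention parenthetically. Your primary route instead reconstructs that filtration by hand in $\sC_A'$, ordering the finite support of the retained $X$-grading against dominance (larger weights first, so the partial sums are $U^0U^{+}$-stable), killing the augmentation ideal of $U^{+}$ on each subquotient, and splitting each $M_{\sigma_j}$ into copies of $A^{\sigma_j}$ using freeness over $A$ --- in effect re-proving the AJS lemma in the new setting. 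The paper's version is shorter and leans on the established machinery ($\Upsilon_A$ exact, identity on underlying modules); yours is self-contained and makes explicit both where the freeness hypothesis enters and why the direction of the weight ordering matters. One small imprecision: the $N^{(j)}$ are sums of $X$-graded pieces of $M$, not of full $X/\bZ I$-graded pieces of $\Upsilon_A(M)$; they are nonetheless subobjects in $\sC_A'$ because each $M_{\sigma_i}$ lies in a single component $\Upsilon_A(M)_{\sigma_i+\bZ I}^{d\sigma_i}$, so the submodule is compatible with the grading and the (D)-decomposition --- worth stating in that form rather than as written.
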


\begin{proof}
	We know that each $M\in\widehat{\sC_A'}$ with the given property has a filtration with sections $A^\lambda$ for $\lambda\in X$ (see Lemma 2.12 in \cite{AJS}). It is clear that $\Upsilon_A(A^\lambda)=A^\lambda$ (abusing notation to denote by $A^\lambda$ the analogous object in each category), and it is straightforward to see that $\Upsilon_A$ is exact. Hence, $\Upsilon_A(M)$ has the desired filtration. Since $Z_{A,\chi}$ is also exact, $Z_{A,\chi}(\Upsilon_A(M))$ has a $Z$-filtration.
\end{proof}

\begin{cor}\label{PhiFilt}
	Let $M\in\widehat{\sC_A''}$ such that $M_\lambda$ is free over $A$ for each $\lambda\in X$. Then $\Phi_{A}(\Upsilon_A(M))$ has a $Z$-filtration. In particular, $\Phi_A(\lambda)$ has a $Z$-filtration for each $\lambda\in X$.
\end{cor}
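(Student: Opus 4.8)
The plan is to factor $\Phi_A$ through $Z_{A,\chi}$ and then reduce to the preceding corollary. First I would record the identity $\Phi_A\cong Z_{A,\chi}\circ\Phi_A'$ of functors $\sC_A''\to\sC_A$: for $N\in\sC_A''$, associativity of the tensor product gives
$$Z_{A,\chi}(\Phi_A'(N))=U_\chi\otimes_{U^0U^{+}}\left(U^0U^{+}\otimes_{U^0}N\right)\cong U_\chi\otimes_{U^0}N=\Phi_A(N),$$
and one checks that this identification respects the $X/\bZ I$-gradings and the (D)-decompositions, all of which are defined through the grading of $U_\chi$ and the $\fh$-action on $U_\chi$ and on $N$. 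Combining this with Proposition~\ref{commdiag}, for $M\in\widehat{\sC_A''}$ one obtains
$$\Phi_A(\Upsilon_A(M))\cong Z_{A,\chi}(\Phi_A'(\Upsilon_A(M)))\cong Z_{A,\chi}(\Upsilon_A(\widehat{\Phi_A'}(M))).$$

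The second step is to verify that $\widehat{\Phi_A'}(M)\in\widehat{\sC_A'}$ satisfies the hypothesis of the preceding corollary, i.e. that $\widehat{\Phi_A'}(M)_\sigma$ is free over $A$ for each $\sigma\in X$. Here $\widehat{\Phi_A'}(M)_\sigma=\bigoplus_{\nu\in X}(U^0U^{+})_\nu\otimes_{U^0}M_{\sigma-\nu}$, and $U^0U^{+}$ is free as a left $U^0$-module on the PBW monomials $e_{\beta_1}^{c_1}\cdots e_{\beta_r}^{c_r}$ with $0\leq c_i<p$; such a monomial lies in $(U^0U^{+})_\nu$ exactly when $\sum_i c_i\beta_i=\nu$, so each $(U^0U^{+})_\nu$ is free over $U^0$ of finite rank. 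Hence $(U^0U^{+})_\nu\otimes_{U^0}M_{\sigma-\nu}$ is a finite direct sum of copies of $M_{\sigma-\nu}$, which is free over $A$ by hypothesis, and only finitely many $\nu$ contribute; so $\widehat{\Phi_A'}(M)_\sigma$ is free over $A$. Granting this, the preceding corollary shows $Z_{A,\chi}(\Upsilon_A(\widehat{\Phi_A'}(M)))$ has a $Z$-filtration, and the displayed isomorphism transports this to $\Phi_A(\Upsilon_A(M))$.

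For the final clause I would take $\widehat{A^\lambda}\in\widehat{\sC_A''}$ to be the object equal to $A$ in degree $\lambda$ and zero elsewhere, with $U^0$ acting through $\widetilde{\lambda}$ and $\pi$. As in the proof of the preceding corollary, $\Upsilon_A$ carries this object to $A^\lambda$, and its single nonzero graded piece is $A$, which is free over $A$; so the statement just proved applies and $\Phi_A(\lambda)=\Phi_A(A^\lambda)=\Phi_A(\Upsilon_A(\widehat{A^\lambda}))$ has a $Z$-filtration. The only point requiring any care is the bookkeeping in the first step, namely matching the grading and (D)-decomposition of $U_\chi\otimes_{U^0}N$ with those of the iterated induction; but this is of exactly the routine character of the verifications already made for $\Phi_A'$, $Z_{A,\chi}$ and $\Upsilon_A$, and everything else follows immediately from results established above.
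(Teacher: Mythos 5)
Your proof is correct and follows essentially the same route as the paper: it factors $\Phi_A\cong Z_{A,\chi}\circ\Phi_A'$, invokes Proposition~\ref{commdiag} to rewrite $\Phi_A(\Upsilon_A(M))\cong Z_{A,\chi}(\Upsilon_A(\widehat{\Phi_A'}(M)))$, and then applies the preceding corollary. The extra verifications you include (freeness of each $\widehat{\Phi_A'}(M)_\sigma$ over $A$ via the PBW basis of $U^0U^{+}$ over $U^0$, and the choice of $\widehat{A^\lambda}$ for the last clause) are exactly the points the paper leaves implicit, and they are correct.
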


\begin{proof}
	This follows from the fact that $\Phi_{A}(\Upsilon_A(M))\cong Z_{A,\chi}(\Phi_A'(\Upsilon_A(M)))=Z_{A,\chi}(\Upsilon_A(\widehat{\Phi_A'}(M)))$, using Proposition~\ref{commdiag} for the last equality.
\end{proof}

\begin{cor}\label{ZZFilt}
	Let $\lambda\in X$. There exists a projective module $Q\in\sC_A$ which surjects onto $Z_{A,\chi}(\lambda)$, and has a $Z$-filtration. Specifically, $Q=\Phi_A(\lambda)$.
\end{cor}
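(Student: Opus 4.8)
The plan is to verify, one at a time, the three asserted properties of $Q=\Phi_A(\lambda)=\Phi_A(A^\lambda)$; essentially all of the work has already been done. First, $\Phi_A(\lambda)$ is projective in $\sC_A$: as a right $A$-module, $A^\lambda$ is free of rank one, hence projective, so by Corollary~\ref{ProjA} it is a projective object of $\sC_A''$; and by Lemma~\ref{PhiProj} the functor $\Phi_A$ is exact and left adjoint to the forgetful functor $\sC_A\to\sC_A''$, hence carries projectives to projectives, so $\Phi_A(\lambda)$ is projective. Second, $\Phi_A(\lambda)$ has a $Z$-filtration: this is exactly the last assertion of Corollary~\ref{PhiFilt}, applied with $M=A^\lambda$ (every graded piece of $A^\lambda$ being free over $A$).

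It remains to produce a surjection $\Phi_A(\lambda)\twoheadrightarrow Z_{A,\chi}(\lambda)$ in $\sC_A$, and this is the only step requiring a short argument. I would use the natural isomorphism $\Phi_A\cong Z_{A,\chi}\circ\Phi_A'$ of functors $\sC_A''\to\sC_A$ invoked in the proof of Corollary~\ref{PhiFilt} — it comes from associativity of tensor product, $U_\chi\otimes_{U^0U^{+}}(U^0U^{+}\otimes_{U^0}M)\cong U_\chi\otimes_{U^0}M$, and respects the $X/\bZ I$-gradings and $(D)$-decompositions by the way the induction functors were set up in Sections~\ref{Sec4.1} and~\ref{Sec4.2}. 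Thus $\Phi_A(\lambda)\cong Z_{A,\chi}(\Phi_A'(A^\lambda))$, and since $U_\chi$ is free as a right $U^0U^{+}$-module (PBW) the functor $Z_{A,\chi}$ is exact; hence it suffices to exhibit a surjection $\Phi_A'(A^\lambda)\twoheadrightarrow A^\lambda$ in $\sC_A'$, where $A^\lambda$ is viewed in $\sC_A'$ through the fully faithful functor $\sC_A''\to\sC_A'$. Such a surjection is the counit of the adjunction between $\Phi_A'$ and the forgetful functor $\sC_A'\to\sC_A''$ of Lemma~\ref{PhiProj}, namely the map $U^0U^{+}\otimes_{U^0}A^\lambda\to A^\lambda$, $u\otimes a\mapsto \overline{u}\,a$, where $\overline{u}$ is the image of $u$ under the natural surjection $U^0U^{+}\twoheadrightarrow U^0$; it is a morphism in $\sC_A'$ by construction, and it is surjective because the $A$-linear section $a\mapsto 1\otimes a$ composes with it to give $\mathrm{id}_{A^\lambda}$. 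Applying $Z_{A,\chi}$ then yields $\Phi_A(\lambda)\cong Z_{A,\chi}(\Phi_A'(A^\lambda))\twoheadrightarrow Z_{A,\chi}(A^\lambda)=Z_{A,\chi}(\lambda)$, as required.

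I do not expect a genuine obstacle: the adjunctions, the exactness of $Z_{A,\chi}$, and the identity $\Phi_A\cong Z_{A,\chi}\circ\Phi_A'$ are all already in hand, and the only mild care needed is to be sure that the last isomorphism is graded, which is built into the constructions. As an alternative to the detour through $\Phi_A'$, one could instead write down directly the natural surjection $U_\chi\otimes_{U^0}A^\lambda\twoheadrightarrow U_\chi\otimes_{U^0U^{+}}A^\lambda$, $u\otimes a\mapsto u\otimes a$ (well-defined since the defining relations of the target tensor product refine those of the source), and check it is a morphism in $\sC_A$ against the grading conventions already imposed on $\Phi_A(A^\lambda)$ and $Z_{A,\chi}(A^\lambda)$.
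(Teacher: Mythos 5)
Your proposal is correct and follows essentially the same route as the paper: projectivity from Corollary~\ref{ProjA} plus Lemma~\ref{PhiProj}, the $Z$-filtration from Corollary~\ref{PhiFilt}, and a surjection obtained by applying $Z_{A,\chi}$ to a surjection $\Phi_A'(\lambda)\twoheadrightarrow A^\lambda$ via the identification $\Phi_A(\lambda)=Z_{A,\chi}(\Phi_A'(\lambda))$. The only cosmetic difference is that you construct the map $\Phi_A'(\lambda)\to A^\lambda$ directly as the adjunction counit $u\otimes a\mapsto \overline{u}\,a$, whereas the paper transports the corresponding surjection $\widehat{\Phi_A'}(\lambda)\to A^\lambda$ from $\widehat{\sC_A'}$ through $\Upsilon_A$ using Proposition~\ref{commdiag}; both are valid.
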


\begin{proof}
	There is, in $\widehat{\sC_A'}$, a surjection $\widehat{\Phi_A'}(\lambda)\to A^\lambda$. This induces a surjection $\Upsilon_A(\widehat{\Phi_A'}(\lambda))\to \Upsilon_A(A^\lambda)$, i.e., a surjection $\Phi_A'(\lambda)\to A^\lambda$. This, in turn, induces a surjection $\Phi_A(\lambda)=Z_{A,\chi}(\Phi_A'(\lambda))\to Z_{A,\chi}(\lambda)$. The object $\Phi_A(\lambda)$ has a $Z$-filtration by Corollary~\ref{PhiFilt}. Furthermore, $\Phi_A(\lambda)$ is projective since $\Phi_A$ maps projectives to projectives by Lemma~\ref{PhiProj} and $A^\lambda$ is projective in $\sC_A''$ by Corollary~\ref{ProjA}.
\end{proof}

This in fact holds more generally:

\begin{theorem}
	Let $M\in\sC_A$. There exists a projective module $Q\in\sC_A$ which surjects onto $M$, and which has a $Z$-filtration.
\end{theorem}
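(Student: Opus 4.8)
The plan is to reduce the general statement to the case of baby Verma modules, which was already handled in Corollary~\ref{ZZFilt}. First I would invoke Proposition~\ref{ZSurj} to find some $\lambda_1\in X$ and a non-zero homomorphism $Z_{A,\chi}(\lambda_1)\to M$; more carefully, I want to build a surjection onto $M$ from a direct sum of baby Verma modules. Since every $M\in\sC_A$ is generated as a $U_\chi\otimes A$-module by finitely many homogeneous elements (by condition (B), each $M_{\lambda+\bZ I}$ is finitely generated over $A$, and there are only finitely many non-zero graded pieces), I would argue by induction on the number of such generators, or equivalently on something like the total $A$-length/number of non-vanishing weight spaces, using Proposition~\ref{ZSurj} at each stage: given $M\neq 0$, pick $\lambda$ with a non-zero $\fn^+$-invariant in $M_{\lambda+\bZ I}^{d\lambda}$, get a map $Z_{A,\chi}(\lambda)\to M$, and then apply Corollary~\ref{ZZFilt} to lift this through the surjection $\Phi_A(\lambda)\twoheadrightarrow Z_{A,\chi}(\lambda)$, obtaining a map $\Phi_A(\lambda)\to M$ whose image contains that invariant.

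The key construction is then the following: iterating, one produces $\lambda_1,\dots,\lambda_k\in X$ and a map $P\coloneqq\bigoplus_{i=1}^k \Phi_A(\lambda_i)\to M$ which is surjective. The termination of this process needs justification — I would use that $M$ has finitely many non-zero graded pieces each finitely generated over the Noetherian ring $A$, so an appropriate notion of "size" of the cokernel strictly decreases (or stabilises to zero). Each $\Phi_A(\lambda_i)$ is projective in $\sC_A$ by Lemma~\ref{PhiProj} (as $A^{\lambda_i}$ is projective in $\sC_A''$ by Corollary~\ref{ProjA}), so $P$ is projective; and each $\Phi_A(\lambda_i)$ has a $Z$-filtration by Corollary~\ref{PhiFilt}, hence so does the finite direct sum $P$ (concatenating the filtrations). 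Thus $Q\coloneqq P$ works: it is projective, surjects onto $M$, and has a $Z$-filtration.

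The main obstacle I expect is the termination/finiteness argument for the surjection — one must pin down the right inductive quantity. The natural candidate is $\dim_{\bK}$ of the cokernel when $A$ is Artinian, but for general Noetherian $A$ one should instead argue that $M$ is a finitely generated $U_\chi\otimes A$-module (using Noetherianity and condition (B)), so that a Noetherian induction or a direct "finitely many generators, each killed by one $\Phi_A(\lambda_i)$" argument applies. Concretely, if $m_1,\dots,m_k$ generate $M$, I would decompose each $m_j$ into its $(D)$-components and, for each component lying in some $M_{\lambda+\bZ I}^{d\lambda}$ that is additionally $\fn^+$-invariant, use the corresponding $\Phi_A(\lambda)$; but since a generator need not be a highest-weight vector, the cleaner route is the iterative one using Proposition~\ref{ZSurj} on successive cokernels, checking that the chain of images in $M$ is strictly increasing and stabilises because $M$ is a Noetherian object in $\sC_A$ (which follows from $A$ Noetherian together with conditions (A) and (B)). Once surjectivity is secured, everything else is a direct assembly of results already proved.
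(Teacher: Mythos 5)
Your argument is correct, but it takes a genuinely different route from the paper. The paper restricts $M$ to $\sC_A''$, picks a projective $P\in\sC_A''$ (with all graded pieces free over $A$) surjecting onto $M$ there, lifts its $X/p\bZ I$-grading to an $X$-grading via Lemma~\ref{Xpreim}, and takes $Q=\Phi_A(P)$: surjectivity comes from the adjunction $\Hom_{\sC_A}(\Phi_A(P),M)\cong\Hom_{\sC_A''}(P,M)$, projectivity from Lemma~\ref{PhiProj}, and the $Z$-filtration from Corollary~\ref{PhiFilt} applied to $\Upsilon_A(\widehat{P})$. You instead build $Q=\bigoplus_{i=1}^k\Phi_A(\lambda_i)$ by iterating Proposition~\ref{ZSurj} on successive cokernels: for $N\subsetneq M$ the quotient $M/N$ is a nonzero object of $\sC_A$ (cokernels and images exist), so there is a nonzero map $Z_{A,\chi}(\lambda)\to M/N$, hence a nonzero map $\Phi_A(\lambda)\to M/N$ via the surjection of Corollary~\ref{ZZFilt}, which lifts to $\Phi_A(\lambda)\to M$ by projectivity and strictly enlarges the image; termination follows because condition (B) and the Noetherianity of $A$ make $M$ a Noetherian $A$-module, so the ascending chain of images stabilises only at $M$. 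This is a valid alternative: it bypasses the grading-lifting Lemma~\ref{Xpreim} and the passage through $\widehat{\sC_A''}$ entirely, at the cost of relying on the nonvanishing content of Proposition~\ref{ZSurj} for every nonzero quotient and on a chain argument, whereas the paper's construction is a single uniform induction step $Q=\Phi_A(P)$ and incidentally establishes Lemma~\ref{Xpreim}, which is of independent use. Either proof yields a projective $Q$ with a $Z$-filtration surjecting onto $M$.
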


\begin{proof}
	
	We may restrict $M$ to an element of $\sC_A''$. Since $\sC_{A}''$ has enough projectives, we can find a projective $P\in\sC_A''$ with $P\twoheadrightarrow M$. Since being projective in $\sC_A''$ is equivalent to each $P_{\lambda+\bZ I}^{d\mu}$ being projective as an $A$-module, we may assume that each $P_{\lambda+\bZ I}^{d\mu}$ is free over $A$ (so $P$ is free over $A$). 
	
	If $P=\Upsilon_A(\widehat{P})$ for some $\widehat{P}\in\widehat{\sC_A''}$ with each $\widehat{P}_\lambda$ free over $A$, then by Corollary~\ref{ZZFilt} we have that $\Phi_A(P)$ has a $Z$-filtration. Since $$\Hom_{\sC_A}(\Phi_A(P),M)\cong\Hom_{\sC_A''}(P,M)$$ we would then get a map $\Phi_A(P)\to M$ in $\sC_A$, which would be surjective. Furthermore, as $\Phi_A$ sends projectives to projectives, we may take $Q=\Phi_A(P)$ to get the result.
	
	All that remains, therefore, is to find $\widehat{P}$. The following lemma tells us that we can do this.
\end{proof}

\begin{lemma}\label{Xpreim}
	Let $M\in \sC_A''$. Then there exists $\widehat{M}\in\widehat{\sC_A''}$ with $\Upsilon_A(\widehat{M})=M$. Furthermore, if each $M_{\lambda+\bZ I}^{d\mu}$ is free over $A$ then $\widehat{M}$ may be chosen with each $\widehat{M}_\tau$ free over $A$.
\end{lemma}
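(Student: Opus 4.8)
The idea is to lift $M$ by spreading out its $(D)$-decomposition: each piece $M_{\lambda+\bZ I}^{d\mu}$ will be placed, in its entirety, in a single $X$-degree. The structural facts that make this work are two. First, recall that $\Upsilon_A$ is the identity on underlying $U^0\otimes A$-modules and that $\Upsilon_A(\widehat M)_{\lambda+\bZ I}^{d\mu}=\bigoplus_{\sigma\in\lambda+\bZ I,\ d\sigma=d\mu}\widehat M_\sigma$; so one needs an $X$-grading on $M$ for which these ``fibre sums'' return exactly the $M_{\lambda+\bZ I}^{d\mu}$. The point is that the index sets $B_{(\lambda+\bZ I,\,d\mu)}:=\{\sigma\in\lambda+\bZ I:d\sigma=d\mu\}$, taken over all pairs $(\lambda+\bZ I,d\mu)$ with $\mu\in\lambda+\bZ I+pX$, partition $X$: each is nonempty (if $\mu=\lambda+\gamma+p\tau$ with $\gamma\in\bZ I$ then $\lambda+\gamma\in B_{(\lambda+\bZ I,\,d\mu)}$), any $\sigma$ lying in two of them forces the two pairs to coincide, and every $\sigma\in X$ lies in $B_{(\sigma+\bZ I,\,d\sigma)}$; hence these blocks are indexed precisely by the pairs indexing the $(D)$-decomposition of $M$. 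Second, by Lemma~\ref{Aut} the automorphism $\widetilde\mu$ depends only on $d\mu$, so the $U^0$-action $sm=m\pi(\widetilde\mu(s))$ on $M_{\lambda+\bZ I}^{d\mu}$ agrees with the action that condition $(\widehat D)$ prescribes for a degree-$\sigma$ component whenever $d\sigma=d\mu$.

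Using this, I would build $\widehat M$ as follows. By condition (B) only finitely many $M_{\lambda+\bZ I}$ are nonzero, and each is a finite direct sum of the $M_{\lambda+\bZ I}^{d\mu}$ (being finitely generated over $A$), so only finitely many $M_{\lambda+\bZ I}^{d\mu}$ are nonzero. For each such pair fix one weight $\sigma\in B_{(\lambda+\bZ I,\,d\mu)}$ and declare $\widehat M_\sigma:=M_{\lambda+\bZ I}^{d\mu}$ (as $A$-module, with the $U^0$-action it already carries, which by the previous paragraph is the one $(\widehat D)$ demands at degree $\sigma$); set $\widehat M_\sigma:=0$ for every unchosen $\sigma$, and let $\widehat M=\bigoplus_{\sigma\in X}\widehat M_\sigma$. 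Then $\widehat M\in\widehat{\sC_A''}$: $(\widehat A)$ is immediate, $(\widehat B)$ holds since finitely many pieces are nonzero and each $\widehat M_\sigma$ is a direct summand of a finitely generated $A$-module ($A$ Noetherian), $(\widehat C)$ is automatic because $U^0$ is concentrated in $X$-degree $0$, and $(\widehat D)$ holds by construction. Finally $\Upsilon_A(\widehat M)=M$: the underlying $U^0\otimes A$-modules are literally equal, and for each pair $(\lambda+\bZ I,d\mu)$ the partition property gives $\Upsilon_A(\widehat M)_{\lambda+\bZ I}^{d\mu}=\bigoplus_{\sigma\in B_{(\lambda+\bZ I,\,d\mu)}}\widehat M_\sigma=M_{\lambda+\bZ I}^{d\mu}$ (the block contains exactly one chosen weight), whence the $X/\bZ I$-gradings and the $(D)$-decompositions also agree. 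For the last sentence, each $\widehat M_\sigma$ is either some $M_{\lambda+\bZ I}^{d\mu}$ or $0$, so if all the former are free over $A$ then so are all the $\widehat M_\sigma$.

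I do not expect a genuine obstacle: the entire content is the observation that the fibres of $\Upsilon_A$ over the $(D)$-decomposition partition $X$ into nonempty blocks, after which one just concentrates each $(D)$-piece at a single weight of its block. The only subtlety is that the per-block choices of weight must be made coherently, which is exactly the disjointness of the blocks for distinct pairs. One could instead argue through the equivalence of Proposition~\ref{U0Equiv} and its analogue for $\widehat{\sC_A''}$, reducing to the evident lifting of a finitely generated $X/p\bZ I$-graded $A$-module (degreewise-free if it started degreewise-free) to a finitely generated $X$-graded one along $X\twoheadrightarrow X/p\bZ I$; but the direct construction above is self-contained.
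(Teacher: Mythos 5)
Your construction is correct, and at its core it is the same trick as the paper's: choose one weight in each fibre of $X$ over the index set of homogeneous pieces, concentrate the whole piece there, and put zero elsewhere. The difference is packaging. The paper first transports the problem through Proposition~\ref{U0Equiv} and its $X$-graded analogue (Lemma 2.5 of \cite{AJS}) to the categories of finitely generated $X/p\bZ I$-graded and $X$-graded $A$-modules, where the lift is the evident one along a set-theoretic section $f:X/p\bZ I\to X$; this is exactly the alternative you mention in your closing sentence. You instead build $\widehat{M}$ directly in $\widehat{\sC_A''}$, indexing the fibres by the pairs $(\lambda+\bZ I,d\mu)$ of the (D)-decomposition and verifying conditions (\^{A})--(\^{D}) by hand. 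The two indexings agree: under the standing assumption $\bZ I\cap pX=p\bZ I$ your blocks $B_{(\lambda+\bZ I,\,d\mu)}$ are exactly the $p\bZ I$-cosets, so up to the choice of representatives you produce the same object as the paper. What your route buys is self-containedness --- it invokes neither the forgetful equivalences nor even the identity $\bZ I\cap pX=p\bZ I$, only the partition of $X$ by the pairs and the fact that $\widetilde{\mu}$ depends only on $d\mu$ --- at the cost of the explicit axiom checks; the paper's route buys brevity by reducing to a transparent statement about graded $A$-modules. Your finiteness argument (finitely many nonzero (D)-pieces, since each $M_{\lambda+\bZ I}$ is a finitely generated $A$-module and the pieces are $A$-stable summands) and the freeness claim at the end are both sound.
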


\begin{proof}
	
	We have already observed in Proposition~\ref{U0Equiv} that $\sC_A''$ is equivalent (via the forgetful functor) to the category $\sG$ of finitely-generated $X/p\bZ I$-graded $A$-modules, and Lemma 2.5 in \cite{AJS} shows that $\widehat{\sC_A''}$ is equivalent (also via the forgetful functor) to the category $\widehat{\sG}$ of finitely-generated $X$-graded $A$-modules.
	
	There is a natural morphism $\Upsilon_{A,\sG}:\widehat{\sG}\to\sG$ such that $\Upsilon_{A,\sG}(M)=M$ as $A$-modules and such that $\Upsilon_{A,\sG}(M)_{\lambda+p\bZ I}=\bigoplus_{\sigma\in\lambda+p\bZ I} M_\sigma$ for each $\lambda+p\bZ I\in X/p\bZ I$. It is straightforward to see that $\Upsilon_A$ and $\Upsilon_{A,\sG}$ are compatible with the equivalences of categories induced by the forgetful functors. Therefore, it is enough to show that, for each $M\in\sG$, there exists $\widehat{M}\in\widehat{\sG}$ with $\Upsilon_{A,\sG}(\widehat{M})=M$.
	
	Define a map $f:X/p\bZ I\to X$ which, for each $\lambda+p\bZ I\in X/p\bZ I$, picks a representative $\lambda\in\lambda+p\bZ I$ (using the axiom of choice). Then, define $\widehat{M}\in\widehat{\sG}$ to be equal to $M$ as an $A$-module, and to have $X$-grading $$\widehat{M}_\sigma=\twopartdef{M_{\sigma+p\bZ I}}{\sigma\in f(X/p\bZ I),}{0}{\mbox{otherwise.}}$$ This clearly lies in $\widehat{\sG}$, and it is straightforward to see that $\Upsilon_{A,\sG}(\widehat{M})=M$.
	
	Finally, it is clear from the construction that if each $M_{\lambda+p\bZ I}$ is free over $A$ then each $\widehat{M}_\sigma$ is free over $A$.
	
\end{proof}

\subsection{Induction from parabolic subcategories}\label{Sec4.4}

The induction functors discussed so far are all analogous to functors defined in \cite{AJS}. For $\chi$ non-zero, however, we may also define some new functors (which coincide with the previously discussed functors when $I=\emptyset$). In order to do this, we start by defining categories from which we can induce.

Write $\sC_A^I$ for the category corresponding to $\sC_A$ for the Lie algebra $\fg_I$ with $\chi_I=\chi\vert_{\fg_I}$ (i.e. the category of $U^I\otimes A$-modules with an $X/\bZ I$-grading and the relevant conditions). Similarly, we write $\sC_A^{I,+}$ for the category corresponding to $\sC_A$ for $\fp=\fg_I\oplus\fu^{+}$ (i.e. the category of $U^IU_I^{+}\otimes A$-modules with an $X/\bZ I$-grading and the relevant conditions). Note that $\sC_A^I$ has enough projectives by the same argument as for $\sC_A$.

Let $M\in\sC_A^{I,+}$ and define $\Gamma_{A,\chi}:\sC_A^{I,+}\to\sC_A$ by $$\Gamma_{A,\chi}(M)=U_\chi\otimes_{U^I U_I^{+}}M.$$ This is a $U_\chi$-module via left multiplication and a right $A$-module via the $A$-action on $M$. We now want to see that $\Gamma_{A,\chi}(M)$ lies in $\sC_A$. For the $X/\bZ I$-grading, we note that as $\bK$-vector spaces, we have $$U_\chi\cong U_I^{-}\otimes U^I U_I^{+}$$ and so $$\Gamma_{A,\chi}(M)\cong U_I^{-}\otimes M.$$

We then define, for $\lambda+\bZ I\in X/\bZ I$, $$\Gamma_{A,\chi}(M)_{\lambda+\bZ I}=\bigoplus_{\nu+\bZ I\in X/\bZ I} (U_I^{-})_{\nu+\bZ I}\otimes M_{\lambda-\nu+\bZ I}$$ and, for $\mu \in \lambda+\bZ I+pX$, we set 
$$\Gamma_{A,\chi}(M)_{\lambda+\bZ I}^{d\mu}=\bigoplus_{\nu+\bZ I\in X/\bZ I}\bigoplus_{\substack{d\sigma\in \fh^{*} \\ \sigma\in \nu+\bZ I +pX}} (U_I^{-})_{\nu+\bZ I}^{d\sigma}\otimes M_{\lambda-\nu+\bZ I}^{d(\lambda-\sigma)}.$$


We can show that this indeed defines an object in $\sC_A$ by using an almost identical argument to the argument that $Z_{A,\chi}$ send $\sC_A'$ to $\sC_A$ used in Subsection~\ref{Sec4.2}. Since $\Gamma_{A,\chi}$ clearly sends morphisms to morphisms, it is a functor $\Gamma_{A,\chi}:\sC_A^{I,+}\to\sC_A$.
%

For baby Verma modules, we saw that each $M\in\sC_A''$ may viewed as an object in $\sC_A'$ via the surjection $U^0U^{+}\twoheadrightarrow U^0$. A similar argument shows that each $M\in\sC_A^I$ may be viewed as an object in $\sC_A^{I,+}$, using the surjection $U^I U_I^{+}\twoheadrightarrow U^I$. This does indeed give an object of $\sC_A^{I,+}$: conditions (A), (B) and (D) are obvious, and condition (C) follows from the fact that each $(U^I U_I^{+})_{\lambda+\bZ I}$ with $\lambda\notin\bZ I$ lies in the kernel of the surjection.


We may also define the functor $\Phi_A^{I,+}:\sC_A^I\to\sC_A^{I,+}$ by $$\Phi_A^{I,+}(M)=U^IU_I^{+}\otimes_{U^I} M.$$ Similar to the above, one can show that $\Phi_A^{I,+}(M)\in\sC_A^{I,+}$. Note that, unless $I=\emptyset$, we do not have that $U^I\subseteq (U_\chi)_{0+\bZ I}^{d0}$. Therefore, the proof that $\Phi_A^{I,+}$ is well-defined, and the definition of the gradings and (D)-decompositions, more closely resemble the analogous proofs and definitions for $\Gamma_{A,\chi}$ than for $\Phi_A'$.

Finally, we define the functor:
$$\Phi_A^I=\Gamma_{A,\chi}\circ\Phi_A^{I,+}:\sC_A^I\to\sC_A,\qquad M\mapsto U_\chi\otimes_{U^I}M.$$ We may easily check the following:

\begin{prop}\label{FrobRecPara}
	Let $L\in\sC_A^I$, $M\in\sC_A^{I,+}$ and $N\in\sC_A$. Then $$\Hom_{\sC_{A}}(\Gamma_{A,\chi}(M),N)\cong\Hom_{\sC_{A}^{I,+}}(M,N),$$
	$$\Hom_{\sC_A^{I,+}}(\Phi_A^{I,+}(L),M)\cong\Hom_{\sC_{A}^I}(L,M)$$ and $$\Hom_{\sC_A}(\Phi_A^I(L),N)\cong\Hom_{\sC_A^I}(L,N).$$
\end{prop}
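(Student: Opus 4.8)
The plan is to establish the three adjunction-type isomorphisms by the standard tensor-hom adjunction, making sure at each stage that the grading and (D)-decomposition data transport correctly. I would begin with the first isomorphism, $\Hom_{\sC_A}(\Gamma_{A,\chi}(M),N)\cong\Hom_{\sC_A^{I,+}}(M,N)$. Forgetting all grading conditions, $\Gamma_{A,\chi}(M)=U_\chi\otimes_{U^IU_I^{+}}M$ is ordinary induction of modules over a ring extension, so the usual adjunction gives a bijection between $U_\chi\otimes A$-module maps $U_\chi\otimes_{U^IU_I^{+}}M\to N$ and $U^IU_I^{+}\otimes A$-module maps $M\to \Res N$, sending $f$ to $m\mapsto f(1\otimes m)$ with inverse $g\mapsto(u\otimes m\mapsto ug(m))$. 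The content is to check this bijection restricts to the subsets of morphisms in the respective categories: a $U_\chi\otimes A$-map $f$ is a morphism in $\sC_A$ iff it preserves the $(\lambda+\bZ I,d\mu)$-pieces, and since $\Gamma_{A,\chi}(M)_{\lambda+\bZ I}^{d\mu}$ is generated over $U_I^{-}$ by the image of $M_{\lambda+\bZ I}^{d\mu}$ (from the explicit $\bK$-space description $\Gamma_{A,\chi}(M)\cong U_I^{-}\otimes M$ and the displayed formula for its graded pieces, taking $\nu\in\bZ I$ there), $f$ preserves gradings iff $f(1\otimes M_{\lambda+\bZ I}^{d\mu})\subseteq N_{\lambda+\bZ I}^{d\mu}$ iff the corresponding $g=f(1\otimes-)$ is a morphism in $\sC_A^{I,+}$. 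Conversely a $\sC_A^{I,+}$-morphism $g$ extends to a graded map because $(U_\chi)_{\sigma+\bZ I}^{d\nu}$ carries $N_{\lambda+\bZ I}^{d\mu}$ into $N_{\sigma+\lambda+\bZ I}^{d(\mu+\nu)}$ by condition (C) and (the last clause of) condition (D).

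Next I would prove $\Hom_{\sC_A^{I,+}}(\Phi_A^{I,+}(L),M)\cong\Hom_{\sC_A^I}(L,M)$ in exactly the same manner, now with the ring extension $U^I\subseteq U^IU_I^{+}$ and the $\bK$-space identification $\Phi_A^{I,+}(L)\cong U_I^{+}\otimes L$. Here I would lean on the remark already made in the text that the well-definedness of $\Phi_A^{I,+}$ and its graded pieces were set up in analogy with $\Gamma_{A,\chi}$ rather than with $\Phi_A'$; the same explicit description of $\Phi_A^{I,+}(L)_{\lambda+\bZ I}^{d\mu}$ shows this space is generated over $U_I^{+}$ by the image of $L_{\lambda+\bZ I}^{d\mu}$, so the tensor-hom bijection again restricts appropriately. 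Then the third isomorphism is immediate by composition: $\Phi_A^I=\Gamma_{A,\chi}\circ\Phi_A^{I,+}$ by definition, so
\[
\Hom_{\sC_A}(\Phi_A^I(L),N)=\Hom_{\sC_A}(\Gamma_{A,\chi}(\Phi_A^{I,+}(L)),N)\cong\Hom_{\sC_A^{I,+}}(\Phi_A^{I,+}(L),N)\cong\Hom_{\sC_A^I}(L,N),
\]
using the first isomorphism with $M=\Phi_A^{I,+}(L)$ and then the second with $M=N$ (viewed in $\sC_A^{I,+}$ via $U^IU_I^{+}\twoheadrightarrow U^I$, as was done for baby Verma modules). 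This also shows the three functors $\Gamma_{A,\chi}$, $\Phi_A^{I,+}$, $\Phi_A^I$ are left adjoint to the respective forgetful/restriction functors.

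The main obstacle, such as it is, is purely bookkeeping: verifying that the ungraded tensor-hom bijection really does carry morphisms of $\sC_A^{I,+}$ onto morphisms of $\sC_A$ (and similarly for the other pair), i.e. that graded-ness of $f$ is detected on the generating subspace $1\otimes M_{\lambda+\bZ I}^{d\mu}$ and that graded-ness of $g$ propagates to all of $U_\chi\otimes_{U^IU_I^{+}}M$ under the action. This is where one must use the explicit graded-piece formulas for $\Gamma_{A,\chi}(M)$ and $\Phi_A^{I,+}(L)$ together with conditions (C) and (D) in the target; none of it is deep, which is presumably why the paper says these "may easily be checked," so I would state the argument for the first isomorphism in a line or two and simply note that the second is identical and the third follows formally by composition.
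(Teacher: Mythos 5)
Your proof is correct and is exactly the routine Frobenius-reciprocity argument the paper has in mind when it says the proposition "may easily be checked": the ungraded tensor–hom adjunction restricted to graded morphisms via conditions (C) and (D), with the third isomorphism obtained by composing the first two through $\Phi_A^I=\Gamma_{A,\chi}\circ\Phi_A^{I,+}$. One small slip: in the composition step $N$ is viewed in $\sC_A^{I,+}$ by restricting along the subalgebra inclusion $U^IU_I^{+}\subseteq U_\chi$, not via the surjection $U^IU_I^{+}\twoheadrightarrow U^I$ (that surjection is used in the opposite direction, to inflate objects of $\sC_A^I$ to $\sC_A^{I,+}$), but this does not affect the argument.
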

\begin{cor}
	The functors $\Gamma_{A,\chi}$, $\Phi_A^{I,+}$ and $\Phi_A^I$ are exact and map projectives to projectives. In particular, $\sC_A^{I,+}$ has enough projectives.
\end{cor}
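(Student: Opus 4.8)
The plan is to argue exactly as in Lemma~\ref{PhiProj}, exploiting on the one hand the freeness coming from the triangular decomposition $\fg=\fu^{-}\oplus\fg_I\oplus\fu^{+}$, and on the other hand the adjunctions recorded in Proposition~\ref{FrobRecPara}.

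\textbf{Exactness.} Each of the three functors has the shape $N\mapsto B\otimes_C N$ for algebras $C\subseteq B$ with $B$ free as a right $C$-module: for $\Gamma_{A,\chi}$ we use $U_\chi\cong U_I^{-}\otimes_{\bK}(U^IU_I^{+})$ as a right $U^IU_I^{+}$-module, which is the $\bK$-vector space identity quoted in Subsection~\ref{Sec4.4}; for $\Phi_A^{I,+}$ we use $U^IU_I^{+}\cong U^I\otimes_{\bK}U_I^{+}$ as a right $U^I$-module, a PBW-type statement following from $\fg=\fu^{-}\oplus\fg_I\oplus\fu^{+}$; and $\Phi_A^I=\Gamma_{A,\chi}\circ\Phi_A^{I,+}$. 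Tensoring over $C$ with a free right $C$-module is exact on underlying modules, and kernels, cokernels and images in $\sC_A$, $\sC_A^{I,+}$ and $\sC_A^I$ are computed on the underlying $A$-modules (via the $X/p\bZ I$-graded descriptions as in Lemma~\ref{XZIvXpZI}), so all three functors are exact.

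\textbf{Preservation of projectives.} By Proposition~\ref{FrobRecPara}, $\Gamma_{A,\chi}$, $\Phi_A^{I,+}$ and $\Phi_A^I$ are left adjoint, respectively, to the restriction functors $\sC_A\to\sC_A^{I,+}$, $\sC_A^{I,+}\to\sC_A^I$ and $\sC_A\to\sC_A^I$ induced by the algebra inclusions $U^IU_I^{+}\hookrightarrow U_\chi$, $U^I\hookrightarrow U^IU_I^{+}$ and $U^I\hookrightarrow U_\chi$. One first checks that these restriction functors genuinely land in the stated subcategories: conditions (A), (B), (D) concern only the grading, the $A$-action and the $U^0$-action, none of which changes, while condition (C) for the smaller algebra is inherited from condition (C) for the larger one. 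These restriction functors are evidently exact, since they leave the underlying modules untouched; and a left adjoint of an exact functor sends projectives to projectives (apply $\Hom(F(P),-)\cong\Hom(P,G(-))$ with $P$ projective and $G$ exact). Hence $\Gamma_{A,\chi}$, $\Phi_A^{I,+}$ and $\Phi_A^I$ all map projectives to projectives.

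\textbf{Enough projectives in $\sC_A^{I,+}$.} Given $M\in\sC_A^{I,+}$, restrict it to $M'\in\sC_A^I$; since $\sC_A^I$ has enough projectives, choose a projective $P\in\sC_A^I$ with a surjection $P\twoheadrightarrow M'$. Then $\Phi_A^{I,+}(P)$ is projective in $\sC_A^{I,+}$, and under $\Hom_{\sC_A^{I,+}}(\Phi_A^{I,+}(P),M)\cong\Hom_{\sC_A^I}(P,M')$ the chosen surjection corresponds to a morphism $\Phi_A^{I,+}(P)\to M$; since it factors as $\Phi_A^{I,+}(P)\to\Phi_A^{I,+}(M')\to M$ through the counit $U^IU_I^{+}\otimes_{U^I}M\to M$, $u\otimes m\mapsto um$ (surjective because $1\otimes m\mapsto m$), composed with the surjection $\Phi_A^{I,+}(P)\to\Phi_A^{I,+}(M')$ obtained from the exact functor $\Phi_A^{I,+}$, it is surjective. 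Thus $\sC_A^{I,+}$ has enough projectives.

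The only genuinely non-formal points, and hence the main potential obstacles, are verifying that the restriction functors land in the stated subcategories (the content being condition (C)) and checking that the relevant algebra extensions are free — and therefore flat — in a way compatible with the gradings, so that exactness may legitimately be detected on underlying modules; everything else is the standard adjoint-functor formalism.
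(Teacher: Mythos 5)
Your proposal is correct and follows essentially the same route the paper intends: exactness comes from the freeness of $U_\chi$ over $U^IU_I^{+}$ and of $U^IU_I^{+}$ over $U^I$ (PBW), preservation of projectives from the adjunctions of Proposition~\ref{FrobRecPara} against exact restriction functors, and enough projectives in $\sC_A^{I,+}$ by inducing a projective cover from $\sC_A^I$ and composing with the counit, exactly as in Lemma~\ref{PhiProj} and the AJS Lemma 2.7 pattern. The only cosmetic point is that right-$U^I$-freeness of $U^IU_I^{+}$ is most transparently seen from the factorisation $U^IU_I^{+}\cong U_I^{+}\otimes_{\bK}U^I$ (basis vectors on the left), but this is an immediate reordering of your PBW statement and does not affect the argument.
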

%
%


Given $\lambda\in X$, let us write $Z_{A,I,\chi}(\lambda)\in\sC_A^I$ for the {\bf baby Verma module} in $\sC_A^I$ corresponding to $\lambda$. It is then easy to check that, viewing $Z_{A,I,\chi}(\lambda)$ as an object of $\sC_A^{I,+}$, $$\Gamma_{A,\chi}(Z_{A,I,\chi}(\lambda))\cong Z_{A,\chi}(\lambda)\in\sC_A.$$

Furthermore, for any $M\in\sC_A^I$ with $M_{\lambda+\bZ I}=M$, one may straightforwardly check that $$\Phi_A^{I,+}(M)_{\lambda+\bZ I}\cong M\in\sC_A^I$$ and similarly, for any $M\in\sC_A^{I,+}$ with $M_{\lambda+\bZ I}=M$, we have $$\Gamma_{A,\chi}(M)_{\lambda+\bZ I}\cong M\in\sC_A^{I,+}.$$

The following lemma is a direct analogue of Proposition 2.9 in \cite{AJS}.

\begin{lemma}\label{ProjResI}
	Let $M$ be a module in $\sC_A^{I,+}$ that is projective in $\sC_A^I$. Then $M$ has a projective resolution $P_\bullet$ in $\sC^{I,+}$ such that, for each $N\in\sC_A^{I,+}$, 
	$$\Hom_{\sC_A^{I,+}}(P_i,N)=0\quad \mbox{for all}\,\, i>r,$$
	for some $r\geq 0$ depending on $N$.
\end{lemma}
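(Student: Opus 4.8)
The plan is to imitate the proof of Proposition 2.9 in \cite{AJS}, building the reduced relative bar resolution of $M$ for the ring extension $U^I\hookrightarrow U^IU_I^+$ and controlling its terms by means of the $X/\bZ I$-grading. First I would record the structure of $E:=U^IU_I^+$. By the PBW theorem for $\fp=\fg_I\oplus\fu^+$, multiplication gives $\bK$-linear isomorphisms $U^I\otimes_\bK U_I^+\cong E\cong U_I^+\otimes_\bK U^I$, so $E$ is free of finite rank $\dim_\bK U_I^+$ both as a left and as a right $U^I$-module. The algebra $U_I^+=U_0(\fu^+)$ is finite-dimensional, and it carries a $\bZ_{\geq 0}$-grading via $\mathrm{ht}_I$ (the sum of the coefficients of the simple roots lying in $\Pi\setminus I$), with degree-zero part $\bK$ and nilpotent augmentation ideal $J=\bigoplus_{j\geq 1}(U_I^+)_j$. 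Since $\fu^+$ is an ideal of $\fp$, the subalgebra $U^I$ normalises $U_I^+$ and $J$ inside $E$; consequently $U^IJ=JU^I$ is a two-sided ideal of $E$ with $E/U^IJ\cong U^I$, there is a decomposition $E=U^I\oplus U^IJ$ of $(U^I,U^I)$-bimodules, and $U^IJ$ is again free of finite rank as a left and as a right $U^I$-module. Finally, in the $X/\bZ I$-grading inherited from $U_\chi$, the ideal $U^IJ$ lives in degrees $\lambda+\bZ I$ with $\lambda$ in the root lattice and $\mathrm{ht}_I(\lambda)\geq 1$.

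Next I would set, for $i\geq 0$,
$$P_i \;=\; E\otimes_{U^I}\,\underbrace{U^IJ\otimes_{U^I}\cdots\otimes_{U^I}U^IJ}_{i}\,\otimes_{U^I}M ,$$
with $M$ on the right regarded as an object of $\sC_A^I$ by $\Res$; the differentials are the usual alternating sums of the maps multiplying two adjacent factors (using that $U^IJ$ is an ideal of $E$ and that $M$ is an $E$-module), and the augmentation $P_0\to M$ is the counit of the adjunction $(\Phi_A^{I,+},\Res)$. Since $U^IJ$ is a finite-dimensional $(U^I,U^I)$-bimodule and $M\in\sC_A^{I,+}$, each iterated tensor product $(U^IJ)^{\otimes_{U^I}i}\otimes_{U^I}M$ is an object of $\sC_A^I$, with grading and (D)-decomposition inherited from those of $U^IJ$ and $M$; hence $P_i=\Phi_A^{I,+}\big((U^IJ)^{\otimes_{U^I}i}\otimes_{U^I}M\big)$, and every differential, and the augmentation, is a morphism in $\sC_A^{I,+}$. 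Exactness of $P_\bullet\to M\to 0$ is the standard normalisation fact: $P_\bullet$ is the normalised subquotient of the contractible canonical bar complex $E\otimes_{U^I}E\otimes_{U^I}\cdots\otimes_{U^I}M$, and normalisation is a quasi-isomorphism because $E=U^I\oplus U^IJ$ as bimodules and $E$ is $U^I$-free.

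It then remains to establish projectivity of the $P_i$ and the vanishing statement. The functor $U^IJ\otimes_{U^I}(-)$ is a well-defined exact endofunctor of $\sC_A^I$ (exact since $U^IJ$ is free as a right $U^I$-module), and it has the exact right adjoint $\Hom_{U^I}(U^IJ,-)$ (well defined since $U^IJ$ is finite-dimensional, exact since $U^IJ$ is free as a left $U^I$-module); hence $U^IJ\otimes_{U^I}(-)$ preserves projectives in $\sC_A^I$. As $M$ is projective in $\sC_A^I$ by hypothesis, induction gives that each $(U^IJ)^{\otimes_{U^I}i}\otimes_{U^I}M$ is projective in $\sC_A^I$, and therefore $P_i$ is projective in $\sC_A^{I,+}$ because $\Phi_A^{I,+}$ preserves projectives (the Corollary after Proposition~\ref{FrobRecPara}). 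For the vanishing, the first step shows that every $X/\bZ I$-degree occurring in $P_i$ has the form $\mu+\nu$ with $\mu$ a degree of $M$ and $\nu$ in the root lattice with $\mathrm{ht}_I(\nu)\geq i$. A given $N\in\sC_A^{I,+}$ is supported in only finitely many degrees, and morphisms in $\sC_A^{I,+}$ preserve the grading, so $\Hom_{\sC_A^{I,+}}(P_i,N)=0$ whenever no degree of $P_i$ is a degree of $N$; since only finitely many of the differences ``(degree of $N$)$-$(degree of $M$)'' lie in the root lattice, and those have bounded $\mathrm{ht}_I$, this holds for all $i>r$ with $r=r(N)$ depending only on the supports of $M$ and $N$.

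The step I expect to be the main obstacle is the projectivity of the $P_i$. In \cite{AJS} this is easy because the base category $\sC_A''$ satisfies ``projective $\iff$ projective over $A$'', so tensoring with the augmentation ideal visibly preserves projectivity; the category $\sC_A^I$ admits no such cheap criterion, and one must instead exploit the adjoint pair $\big(U^IJ\otimes_{U^I}(-),\,\Hom_{U^I}(U^IJ,-)\big)$ on $\sC_A^I$, which rests on $U^IJ$ being $U^I$-free on both sides. The remaining issues — that the iterated bimodule tensor products satisfy conditions (A)--(D), that the reduced bar complex is exact, and that its maps are morphisms in $\sC_A^{I,+}$ — are routine though somewhat lengthy bookkeeping.
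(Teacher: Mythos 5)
Your complex is, in substance, the same resolution the paper intends: the proof of Lemma~\ref{ProjResI} follows Proposition 2.9 of \cite{AJS}, which builds $P_\bullet$ step by step as $P_i=\Phi_A^{I,+}(M^{(i)})$ with $M^{(i+1)}=\ker(P_i\twoheadrightarrow M^{(i)})$, and unwinding that iteration using the bimodule splitting $U^IU_I^{+}=U^I\oplus U^IJ$ yields exactly your reduced relative bar complex, with the same height-function argument (the paper's $h:X/\bZ I\to\bZ$, your $\mathrm{ht}_I$) giving the vanishing of $\Hom_{\sC_A^{I,+}}(P_i,N)$ for large $i$. Where you genuinely add something is the step you single out yourself: in \cite{AJS} the kernels automatically stay projective in the base category because projectivity in $\sC_A''$ is just $A$-projectivity, whereas here one must know that the kernel of the counit, which is isomorphic in $\sC_A^I$ to $U^IJ\otimes_{U^I}M^{(i)}$, remains projective in $\sC_A^I$; the paper's sketch does not address this, and your argument via the adjoint pair $\bigl(U^IJ\otimes_{U^I}-,\ \Hom_{U^I}(U^IJ,-)\bigr)$, whose right adjoint is exact because $U^IJ$ is $U^I$-free, is a clean way to supply it, valid for arbitrary $A$ with no hypothesis on $\pi$. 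So your route buys a complete, self-contained proof where the paper only gestures at \cite{AJS}; the paper's iterative phrasing buys brevity and a direct parallel with the source.

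One justification does need repair: $U^IJ$ is not finite-dimensional over $\bK$ (it is free of finite rank over the infinite-dimensional algebra $U^I$, which contains all of $U^0$), so the two places where you invoke finite-dimensionality — that the iterated tensor products $(U^IJ)^{\otimes_{U^I}i}\otimes_{U^I}M$ lie in $\sC_A^I$, and that $\Hom_{U^I}(U^IJ,-)$ is a well-defined endofunctor of $\sC_A^I$ — must instead be justified by the finite freeness you state elsewhere: $U^IJ$ is free of finite rank over $U^I$ on both sides, on homogeneous weight-vector bases coming from $J$. This gives that each graded piece of $(U^IJ)^{\otimes_{U^I}i}\otimes_{U^I}M$ is a finite direct sum of graded pieces of $M$ (so conditions (A)--(D) hold), and that $\Hom_{U^I}(U^IJ,N)\cong\Hom_{\bK}(J,N)$ as graded $A$-modules, so it again lies in $\sC_A^I$ and the adjunction restricts to grading- and (D)-decomposition-preserving morphisms. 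With that substitution the argument is sound.
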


\begin{proof}
	The proof of this lemma works in much the same way as the proof of Proposition 2.9 in \cite{AJS}. We briefly summarise the key differences. We use a group homomorphism $h:X/\bZ I\to \bZ$ with $h(\alpha+\bZ I)>0$ for all $\alpha\in R^{+}\setminus\bZ I$. One needs to employ the functor $\Phi_A^{I,+}$ instead of the functor denoted $\Phi_A'$ in \cite{AJS}. Finally, one has to use the fact that, regarded as a $\bK$-vector space, $\Phi_A^{I,+}(M)$ is the direct sum of all $(U_I^{+})_{\mu+\bZ I}\otimes M_{\lambda+\bZ I}$, the fact that $(U_I^{+})_{0+\bZ I}=\bK$ and the fact that $h(\nu+\bZ I)> 0$ for all other $\nu+\bZ I$ with $(U_I^{+})_{\nu+\bZ I}\neq 0$. 
	
	%
	%
	%
	%
	
\end{proof}

To conclude this section, note that we may also define, analogously to $\sC_A^{I,+}$, the category $\sC_A^{I,-}$, which is the category of $U_I^{-}U^I\otimes A$-modules with an $X/\bZ I$-grading and the relevant conditions (which corresponds to $\fp'=\fu^{-}\oplus\fg_I$). We may then similarly define functors $$\Gamma'_{A,\chi}:\sC_A^{I,-}\to\sC_A,\qquad M\mapsto U_\chi\otimes_{U_I^{-}U^I} M,$$
and
$$\Phi_A^{I,-}:\sC_A^I\to\sC_A^{I,-},\qquad M\mapsto U_I^{-}U^I\otimes_{U^I} M.$$ In particular, we also have $\Phi_A^I=\Gamma'_{A,\chi}\circ \Phi_A^{I,-}$. These also satisfy the Frobenius reciprocities discussed in Proposition~\ref{FrobRecPara}. Furthermore, any $M\in\sC_A^I$ may be viewed as a module in $\sC_A^{I,-}$ by letting $\fu^{-}$ act trivially. This is permissible since $\chi(\fu^{-})=0$. 

For most of this paper, we focus our attention on the category $\sC_A^{I,+}$ and the related functors defined earlier, rather than on $\sC_A^{I,-}$ and the functors just defined. Nonetheless, it will occasionally be useful to have this other category and these other functors as well. Everything we say going forward about the category $\sC_A^{I,+}$ and its associated functors will also apply for $\sC_A^{I,-}$ (suitably adjusted).

\section{Miscellaneous}\label{Sec5}
\subsection{The category $\sC_A^I$}\label{Sec5.1}

We have been talking here about the category $\sC_A^I$ a fair amount, so let us understand it a little more. We observe that $(U^I)_{0+\bZ I}=U^I$ and $(U^I)_{\lambda+\bZ I}=0$ if $\lambda+\bZ I\neq 0+\bZ I$.

Let $M\in \sC_A^I$. Then, by definition, $M=\bigoplus_{\lambda+\bZ I\in X/\bZ I}M_{\lambda+\bZ I}$ as $A$-modules. The observation about the graded structure of $U^I$ then clearly shows that each $M_{\lambda+\bZ I}$ lies in $\sC_A^I$, so the decomposition $M=\bigoplus_{\lambda+\bZ I\in X/\bZ I}M_{\lambda+\bZ I}$ is a decomposition in $\sC_A^I$.

If we write $\sC_A^I(\lambda+\bZ I)$ for the full subcategory of objects $M$ in $\sC_A^I$ with $M_{\lambda+\bZ I}=M$, then this observation can be rewritten as $$\sC_{A}^I=\bigoplus_{\lambda+\bZ I}\sC_A^I(\lambda+\bZ I).$$

Now, suppose that $\lambda\in X$ and $M$ is a $U^I\otimes A$-module with a decomposition $$M=\bigoplus_{\substack{d\mu\in\fh^{*}\\ \mu\in\lambda+\bZ I+pX}} M^{d\mu}$$ such that $$sm=m\pi(\widetilde{\mu}(s))$$ for each $s\in U^0$, $\mu\in\lambda+\bZ I+pX$ and $m\in M^{d\mu}$. Then $M$ can be made into an object in $\sC_A^I(\lambda+\bZ I)$ by placing it entirely in grade $\lambda+\bZ I$ and setting $M_{\lambda+\bZ I}^{d\mu}=M^{d\mu}$. This works, in particular, for the baby Verma modules $Z_{A,I,\chi}(d\lambda)$. (Note that we write $Z_{A,I,\chi}(d\lambda)$ instead of $Z_{A,I,\chi}(\lambda)$ here, since the ungraded $U^I\otimes A$-module depends only on the derivative). Furthermore, since $$M^{d\mu}=\{m\in M\,\vert\, hm=m\pi(\widetilde{\mu}(h))\,\mbox{for all}\, h\in\fh\},$$ any $U^I\otimes A$-module homomorphism $M\to N$ sends each $M^{d\mu}$ to $N^{d\mu}$. Thus, we also get a morphism in $\sC_A^I(\lambda+\bZ I)$. 

Conversely, if $M\in\sC_A(\lambda+\bZ I)$ then $M$ is clearly a $U^I\otimes A$-module with a decomposition as above, setting $M^{d\mu}=M_{\lambda+\bZ I}^{d\mu}$. Furthermore, each morphism in $\sC_A^I(\lambda+\bZ I)$ clearly induces a $U^I\otimes A$-module homomorphism. The category $\sC_A^I(\lambda+\bZ I)$ is then precisely the category of $U^I\otimes A$-modules with a decomposition as above.

\subsection{Truncations}\label{Sec5.2}

The following section should be compared with Chapter 3 of \cite{AJS}.

For each $\nu+\bZ I\in X/\bZ I$, we define $\sC_A(\leq\nu+\bZ I)$ to be the full subcategory of $\sC_A$ consisting of all $M\in \sC_A$ with the property that $M_{\lambda+\bZ I}\neq 0$ only if $\lambda+\bZ I\leq \nu+\bZ I$. It is easy to see that $\sC_A(\leq\nu+\bZ I)$ is closed under submodules, quotients, and extensions, and that it contains $Z_{A,\chi}(\lambda)$ for each $\lambda+\bZ I\in X/\bZ I$ with $\lambda+\bZ I\leq \nu+\bZ I$.


Applying a similar argument to the one in Section 3.6 in \cite{AJS}, one can show that $M/\bigcap_{j\in J} M_j\in \sC_A(\leq\lambda+\bZ I)$ for any collection of $M_j\in\sC_A$ with the property that $M/M_j\in\sC_A(\leq\lambda+\bZ I)$. We may therefore define, for each $M\in \sC_A$, the submodule $O^{\lambda+\bZ I}M$ to be the intersection of all submodules $M'$ of $M$ with the property that $M/M'\in\sC_A(\leq\lambda+\bZ I)$. Then $M/O^{\lambda+\bZ I}M\in\sC_A(\leq\lambda+\bZ I)$, and we will denote this object by $T^{\lambda+\bZ I} M$.

Suppose now that $M\in \sC_A$, $N\in\sC_A(\leq\lambda+\bZ I)$ and that $f:M\to N$ is a morphism in $\sC_A$. It follows that $M/\ker(f)\in \sC_A(\leq\lambda+\bZ I)$ and thus that $O^{\lambda+\bZ I} M\subseteq \ker(f)$. This implies that the map $$T^{\lambda+\bZ I}:\sC_A\to\sC_A(\leq\lambda+\bZ I)$$ is a functor, with the property that for all $M\in\sC_A$ and $N\in \sC_A(\leq\lambda+\bZ I)$ there is an isomorphism $$\Hom_{\sC_A(\leq\lambda+\bZ I)}(T^{\lambda+\bZ I}M,N)\cong \Hom_{\sC_A}(M,N).$$ In other words, $T^{\lambda+\bZ I}$ is left adjoint to the (exact) inclusion functor $\sC_A(\leq\lambda+\bZ I)\to\sC_A$. This proves the following lemma:
\begin{lemma}\label{TruncProj}
	Let $M\in\sC_A$ and $\lambda+\bZ I\in X/\bZ I$. If $M$ is projective in $\sC_A$ then $T^{\lambda+\bZ I} M$ is projective in $\sC_A(\leq\lambda+\bZ I)$.
\end{lemma}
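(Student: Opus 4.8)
The plan is to exploit the adjunction established just above the statement: $T^{\lambda+\bZ I}$ is left adjoint to the inclusion functor $\iota\colon\sC_A(\leq\lambda+\bZ I)\to\sC_A$, with a natural isomorphism
$$\Hom_{\sC_A(\leq\lambda+\bZ I)}(T^{\lambda+\bZ I}M,N)\;\cong\;\Hom_{\sC_A}(M,\iota N)$$
for $M\in\sC_A$ and $N\in\sC_A(\leq\lambda+\bZ I)$. A left adjoint of an exact functor preserves projectives, and this is precisely the situation here, so the argument is essentially formal.

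In detail, I would first note that $\iota$ is exact: since $\sC_A(\leq\lambda+\bZ I)$ is closed under submodules and quotients, kernels and cokernels of morphisms between its objects are computed as in $\sC_A$, so a sequence is exact in $\sC_A(\leq\lambda+\bZ I)$ if and only if its image under $\iota$ is exact in $\sC_A$; in particular $\iota$ takes epimorphisms to epimorphisms. Now let $g\colon N\twoheadrightarrow N'$ be any epimorphism in $\sC_A(\leq\lambda+\bZ I)$. Then $\iota(g)\colon\iota N\twoheadrightarrow\iota N'$ is an epimorphism in $\sC_A$, and since $M$ is projective in $\sC_A$ the induced map $\Hom_{\sC_A}(M,\iota N)\to\Hom_{\sC_A}(M,\iota N')$ is surjective. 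By naturality of the adjunction isomorphism in the second variable, the square relating this map to $\Hom_{\sC_A(\leq\lambda+\bZ I)}(T^{\lambda+\bZ I}M,N)\to\Hom_{\sC_A(\leq\lambda+\bZ I)}(T^{\lambda+\bZ I}M,N')$ commutes, and since the vertical maps are isomorphisms, the latter map is also surjective. As $g$ was an arbitrary epimorphism in $\sC_A(\leq\lambda+\bZ I)$, this shows $T^{\lambda+\bZ I}M$ is projective in $\sC_A(\leq\lambda+\bZ I)$.

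There is no real obstacle here; the only points requiring a word of care are the exactness of $\iota$ (which follows from closure under sub- and quotient objects, already recorded above) and the observation that "projective in $\sC_A(\leq\lambda+\bZ I)$" refers to lifting against epimorphisms internal to that subcategory, which by exactness of $\iota$ are exactly the epimorphisms of $\sC_A$ between such objects.
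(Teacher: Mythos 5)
Your proof is correct and follows exactly the paper's route: the paper establishes that $T^{\lambda+\bZ I}$ is left adjoint to the exact inclusion $\sC_A(\leq\lambda+\bZ I)\to\sC_A$ and deduces the lemma from this, which is precisely the formal argument you spell out in detail.
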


We now want to talk about $Z$-filtrations. We will need the following lemma (cf. Lemma 2.14(a) in \cite{AJS}):

\begin{lemma}
	Let $\lambda\in X$. If $\Ext(Z_{A,\chi}(\lambda),M)\neq 0$ then $\lambda+\bZ I\leq \mu+\bZ I$ for some $\mu+\bZ I\in X/\bZ I$ with $M_{\mu+\bZ I}^{d\mu}\neq 0$.
\end{lemma}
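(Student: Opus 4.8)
The plan is to compute $\Ext^1$ in $\sC_A$ via a projective presentation of $Z_{A,\chi}(\lambda)$ and read off the constraint on weights that appears. First I would invoke Corollary~\ref{ZZFilt}, which gives a projective object $\Phi_A(\lambda)\in\sC_A$ together with a surjection $\Phi_A(\lambda)\twoheadrightarrow Z_{A,\chi}(\lambda)$; let $K$ be its kernel, so we have a short exact sequence $0\to K\to\Phi_A(\lambda)\to Z_{A,\chi}(\lambda)\to 0$ in $\sC_A$. Applying $\Hom_{\sC_A}(-,M)$ and using that $\Phi_A(\lambda)$ is projective, we get that $\Ext^1_{\sC_A}(Z_{A,\chi}(\lambda),M)$ is a quotient of $\Hom_{\sC_A}(K,M)$. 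Hence if $\Ext^1(Z_{A,\chi}(\lambda),M)\neq 0$ then $\Hom_{\sC_A}(K,M)\neq 0$, so it suffices to control which weights can appear in $K$ and then apply the standard fact (established in the discussion preceding Proposition~\ref{ZSurj}) that a nonzero map out of a module forces the existence of a matching nonzero weight space, together with the weight-space bookkeeping of condition~(D).

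The key step is therefore to understand $K$. Since $\Phi_A(\lambda)\cong Z_{A,\chi}(\Phi_A'(\lambda))$ and $Z_{A,\chi}$ is exact, $K$ is $Z_{A,\chi}$ applied to the kernel of $\Phi_A'(\lambda)\to A^\lambda$ in $\sC_A'$. Working in $\widehat{\sC_A'}$ via $\Upsilon_A$ (using Proposition~\ref{commdiag} and the fact that $\widehat{\Phi_A'}(\lambda)=U^0U^+\otimes_{U^0}A^\lambda$ has one-dimensional weight spaces concentrated in weights $\lambda+\sum m_i\alpha_i$ with $m_i\ge 0$, $\alpha_i\in R^+$), the kernel of $\widehat{\Phi_A'}(\lambda)\to A^\lambda$ is supported in weights $\mu$ with $\mu+\bZ I>\lambda+\bZ I$, i.e. $\mu+\bZ I=\lambda+\sum m_i\alpha_i+\bZ I$ with not all $m_i=0$. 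Applying $Z_{A,\chi}$ (which as a $\bK$-space tensors with $U^-$, lowering weights by sums of positive roots) shows that every weight appearing in $K$ is $\le\lambda+\bZ I$ but $\neq\lambda+\bZ I$... more precisely, every $\gamma+\bZ I$ with $K_{\gamma+\bZ I}\neq 0$ satisfies $\gamma+\bZ I<\nu+\bZ I$ for some $\nu+\bZ I$ with $\nu>\lambda$ strictly in the ordering, but the cleaner statement I need is simply: $\gamma+\bZ I\le \nu+\bZ I$ for some $\nu+\bZ I$ lying strictly above $\lambda+\bZ I$. Since a nonzero element of $\Hom_{\sC_A}(K,M)$ restricts to a nonzero map on some weight space $K_{\gamma+\bZ I}^{d\gamma}$, there must be a $\mu+\bZ I$ with $M_{\mu+\bZ I}^{d\mu}\neq 0$ and $\gamma+\bZ I\le\mu+\bZ I$; chasing the inequalities and using transitivity of $\le$ gives $\lambda+\bZ I\le\mu+\bZ I$ (the ordering even gives $<$, but $\le$ is what is claimed), as required.

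The main obstacle I expect is bookkeeping rather than conceptual: one must be careful that $\Ext$ in the statement means $\Ext^1$ computed in the abelian category $\sC_A$ (equivalently $\widetilde{\sC_A}$, which has enough projectives by the lemmas in Section~\ref{Sec4.1}), and that the passage through $\Upsilon_A$ and $Z_{A,\chi}$ correctly tracks both the $X/\bZ I$-grading and the $d\mu$-decomposition of condition~(D) — in particular that the argument producing a matching weight space from a nonzero morphism, used in the text before Proposition~\ref{ZSurj}, applies verbatim to a nonzero element of $\Hom_{\sC_A}(K,M)$. A secondary subtlety is ensuring that "the weights appearing in $K$ lie below something strictly above $\lambda+\bZ I$" is stated with exactly the right strictness so that the final inequality $\lambda+\bZ I\le\mu+\bZ I$ comes out; since the claimed conclusion is only $\le$, any reasonable version of the weight estimate suffices.
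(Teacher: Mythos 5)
Your reduction loses exactly the information the lemma needs, and the final inequality chase points the wrong way. Two concrete problems. First, the weight estimate on the kernel is wrong when $I\neq\emptyset$: the kernel of $\widehat{\Phi_A'}(\lambda)\to A^\lambda$ sits in $X$-weights $\lambda+\sum m_i\alpha_i$ with some $m_i>0$, but modulo $\bZ I$ these need not be \emph{strictly} above $\lambda+\bZ I$ (take $\alpha_i\in I$; then $\lambda+\alpha_i+\bZ I=\lambda+\bZ I$) --- this is precisely the subtlety the paper flags in its own proof. Second, and fatally, after applying $Z_{A,\chi}$ the kernel $K\subseteq\Phi_A(\lambda)$ acquires weight spaces in degrees that are \emph{not} $\geq\lambda+\bZ I$, since tensoring with $U^{-}$ pushes weights down by sums of positive roots (e.g.\ elements of the form $e_{-\beta'}\otimes e_\beta\otimes 1$ contribute degrees $\lambda+\beta-\beta'+\bZ I$, which can lie below or be incomparable to $\lambda+\bZ I$). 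So from $\Hom_{\sC_A}(K,M)\neq 0$ you only learn $M_{\gamma+\bZ I}^{d\gamma}\neq 0$ for \emph{some} weight $\gamma$ occurring in $K$, and the available inequality is of the shape $\gamma+\bZ I\leq\nu+\bZ I$ for some $\nu$ above $\lambda$ --- which does not yield $\lambda+\bZ I\leq\gamma+\bZ I$. Indeed the reduction $\Ext^1\neq 0\Rightarrow\Hom_{\sC_A}(K,M)\neq 0$ is too lossy: that $\Hom$ can be nonzero for $M$ concentrated entirely in degrees far below $\lambda+\bZ I$, where the lemma's conclusion fails (and $\Ext^1$ vanishes, as it must).

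To repair the argument along your lines you would have to show that, when $M$ has no nonzero weight space in any degree $\geq\lambda+\bZ I$, every morphism $K\to M$ extends to $\Phi_A(\lambda)$ (so the connecting map kills all of $\Hom(K,M)$); but proving that is essentially the paper's direct argument, which you should use instead: assume (contrapositive) that $\lambda+\bZ I\nleq\mu+\bZ I$ whenever $M_{\mu+\bZ I}\neq 0$, take any extension $0\to M\to N\to Z_{A,\chi}(\lambda)\to 0$, lift the standard generator to some $w_\lambda\in N_{\lambda+\bZ I}^{d\lambda}$, and observe that for every $\alpha\in R^{+}$ the element $e_\alpha w_\lambda$ lies in $M_{\lambda+\alpha+\bZ I}^{d(\lambda+\alpha)}$, which is forced to vanish by the hypothesis because $\lambda+\bZ I\leq\lambda+\alpha+\bZ I$ always (including the case $\alpha\in I$, where $\lambda+\alpha+\bZ I=\lambda+\bZ I$ and reflexivity of $\leq$ applies). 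Then $w_\lambda$ is a highest-weight vector of weight $(\lambda+\bZ I,d\lambda)$ killed by $\fn^{+}$, so by the $\Hom$-description preceding Proposition~\ref{ZSurj} it defines a splitting $Z_{A,\chi}(\lambda)\to N$, whence $\Ext(Z_{A,\chi}(\lambda),M)=0$.
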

\begin{proof} 
	Suppose $\lambda+\bZ I\nleq \mu+\bZ I$ for all $\mu+\bZ I\in X/\bZ I$ with $M_{\mu+\bZ I}\neq 0$. Let $$0\to M\to N\to Z_{A,\chi}(\lambda)\to 0$$ be an extension of $Z_{A,\chi}(\lambda)$ by $M$. Write $v_\lambda$ for the standard generator of $Z_{A,\chi}(\lambda)$ and pick an element $w_\lambda\in N$ which maps to $v_\lambda$. In particular, we may assume $w_\lambda\in N_{\lambda+\bZ I}^{d\lambda}$. 
	
	For each $\alpha\in R^{+}$, $e_\alpha w_\lambda\in N_{\lambda+\alpha+\bZ I}^{d(\lambda+\alpha)}$. If $M_{\lambda+\alpha+\bZ I}^{d(\lambda+\alpha)}\neq 0$ then $\lambda+\bZ I\nleq \lambda+\alpha+\bZ I$, which clearly cannot happen (although, unlike the $\chi=0$ case, it can happen that $\lambda+\bZ I=\lambda+\alpha+\bZ I$). Hence $M_{\lambda+\alpha+\bZ I}^{d(\lambda+\alpha)}=0$. But, $e_\alpha w_\lambda$ is in the kernel of the map $N\to Z_{A,\chi}(\lambda)$ (as $e_\alpha v_\lambda=0$), which is precisely $M$. In particular, $e_\alpha w_\lambda\in M_{\lambda+\alpha+\bZ I}^{d(\lambda+\alpha)}=0$ and so $e_\alpha w_\lambda=0$. 
	
	There therefore exists $w_\lambda\in N_{\lambda+\bZ I}^{d\lambda}$ with $e_\alpha w=0$ for all $\alpha\in R$ and $w_\lambda\mapsto v_\lambda$. Hence, the extension is split, so $\Ext(Z_{A,\chi}(\lambda),M)= 0$.
\end{proof}

\begin{cor}
	Let $\lambda,\mu\in X$. If $\Ext(Z_{A,\chi}(\lambda),Z_{A,\chi}(\mu))\neq 0$ then $\lambda+\bZ I\leq \mu+\bZ I$.
\end{cor}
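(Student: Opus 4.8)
The plan is to deduce this corollary directly from the preceding lemma. First I would observe that the module $M = Z_{A,\chi}(\mu)$ is concentrated in degrees $\leq \mu + \bZ I$ (this is immediate from the definition of the baby Verma module as $U_\chi \otimes_{U^0 U^+} A^\mu$, since $U^-$ lives in degrees that are non-positive combinations of simple roots modulo $\bZ I$, so $Z_{A,\chi}(\mu)_{\nu + \bZ I} \neq 0$ forces $\nu + \bZ I \leq \mu + \bZ I$). In particular, the only $\nu + \bZ I$ with $Z_{A,\chi}(\mu)_{\nu+\bZ I}^{d\nu} \neq 0$ satisfy $\nu + \bZ I \leq \mu + \bZ I$.

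Now suppose $\Ext(Z_{A,\chi}(\lambda), Z_{A,\chi}(\mu)) \neq 0$. Applying the preceding lemma with $M = Z_{A,\chi}(\mu)$, there exists $\nu + \bZ I \in X/\bZ I$ with $Z_{A,\chi}(\mu)_{\nu+\bZ I}^{d\nu} \neq 0$ and $\lambda + \bZ I \leq \nu + \bZ I$. By the observation above, $\nu + \bZ I \leq \mu + \bZ I$. Since $\leq$ is transitive on $X/\bZ I$, we conclude $\lambda + \bZ I \leq \mu + \bZ I$, as required.

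There is no real obstacle here; the only point requiring a small amount of care is confirming that $Z_{A,\chi}(\mu)$ is supported in degrees $\leq \mu + \bZ I$, which follows from the explicit $\bK$-linear isomorphism $Z_{A,\chi}(M) \cong U^- \otimes_\bK M$ recorded in Section~\ref{Sec4.2} together with the fact that $U^-$ is spanned by monomials $e_{-\beta_r}^{a_r} \cdots e_{-\beta_1}^{a_1}$, each of which lies in degree $-\sum a_i \beta_i + \bZ I \leq 0 + \bZ I$. Combined with $A^\mu$ being concentrated in degree $\mu + \bZ I$, condition (C) for $Z_{A,\chi}(\mu)$ gives the claim.
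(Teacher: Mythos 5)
Your argument is correct and is exactly the intended deduction: the paper states this corollary without proof as an immediate consequence of the preceding lemma, applied with $M=Z_{A,\chi}(\mu)$ together with the standard fact (from $Z_{A,\chi}(\mu)\cong U^{-}\otimes A^{\mu}$ and condition (C)) that the baby Verma module is supported in degrees $\leq\mu+\bZ I$. Nothing further is needed.
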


This corollary in particular gives us the power to reorder somewhat the terms of a $Z$-filtration, as we now see. Suppose $M\in \sC_A$ has a $Z$-filtration $$0=M_0\subset M_1\subset M_2\subset\cdots\subset\cdots M_n=M$$ with sections $M_i/M_{i-1}\cong Z_{A,\chi}(\lambda_i)$ for $\lambda_i\in X$.

Let us assume that there exist $j<i$ with $\lambda_i+\bZ I\nleq\lambda_j+\bZ I$. In particular, there exists $k$ with $\lambda_{k+1}+\bZ I\nleq\lambda_{k}+\bZ I$. We then have a short exact sequence $$0\to \frac{M_k}{M_{k-1}}\to\frac{M_{k+1}}{M_{k-1}}\to \frac{M_{k+1}}{M_k}\to 0.$$

By the above lemma, we see that $\Ext(Z_{A,\chi}(\lambda_{k+1}),Z_{A,\chi}(\lambda_k))= 0$, and so the extension splits. We may therefore swap the order in which $Z_{A,\chi}(\lambda_k)$ and $Z_{A,\chi}(\lambda_{k+1})$ appear in the filtration.

We obtain the following results, comparable to Lemma 2.14(c) and Lemma 3.7(b) in \cite{AJS}.

\begin{prop}
	Let $M\in \sC_A$ have a $Z$-filtration, with notation as above. Then $M$ has a $Z$ filtration with the property that $j<i$ implies $\lambda_j+\bZ I\leq \lambda_i+\bZ I$.
\end{prop}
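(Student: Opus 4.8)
The plan is to induct on the length $n$ of the $Z$-filtration. The base cases $n=0,1$ are trivial, so suppose $n\geq 2$ and that the result holds for all modules in $\sC_A$ with a shorter $Z$-filtration. Given the filtration $0=M_0\subset M_1\subset\cdots\subset M_n=M$ with sections $M_i/M_{i-1}\cong Z_{A,\chi}(\lambda_i)$, the idea is to use the "bubble-sort" move described in the paragraph immediately preceding the statement: whenever two consecutive filtration subquotients are ordered the wrong way, i.e. $\lambda_{k+1}+\bZ I\nleq\lambda_k+\bZ I$, the short exact sequence $0\to M_k/M_{k-1}\to M_{k+1}/M_{k-1}\to M_{k+1}/M_k\to 0$ splits by the preceding corollary (since $\Ext(Z_{A,\chi}(\lambda_{k+1}),Z_{A,\chi}(\lambda_k))=0$), so we may replace $M_k$ by the preimage in $M_{k+1}$ of the complementary copy of $Z_{A,\chi}(\lambda_{k+1})$, producing a new $Z$-filtration of the same length in which the roles of $\lambda_k$ and $\lambda_{k+1}$ have been swapped.

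The cleanest way to package this is to first isolate a \emph{maximal} section: among $\lambda_1+\bZ I,\ldots,\lambda_n+\bZ I$, since $X/\bZ I$ is partially ordered and there are finitely many terms, there is some index $i$ with $\lambda_i+\bZ I$ maximal, meaning $\lambda_i+\bZ I\nleq\lambda_j+\bZ I$ for every $j\neq i$ with $\lambda_j+\bZ I\neq\lambda_i+\bZ I$. Applying the transposition move repeatedly (each application moving $Z_{A,\chi}(\lambda_i)$ one step to the right past $Z_{A,\chi}(\lambda_{i+1})$, legitimately because $\lambda_{i+1}+\bZ I\nleq\lambda_i+\bZ I$ by maximality) we obtain a $Z$-filtration of $M$ whose top section is $Z_{A,\chi}(\lambda_i)$, i.e. $M/M_{n-1}'\cong Z_{A,\chi}(\lambda_i)$ for a new filtration, with $M_{n-1}'$ carrying a $Z$-filtration of length $n-1$ whose sections are the remaining $\lambda_j$ (in some order). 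By the inductive hypothesis, $M_{n-1}'$ admits a $Z$-filtration with $j<i$ implying $\lambda_j+\bZ I\leq\lambda_i+\bZ I$ among \emph{its} sections; appending the top section $Z_{A,\chi}(\lambda_i)$ gives a $Z$-filtration of $M$ with the required ordering property, since $\lambda_i+\bZ I$ is $\geq$ every other $\lambda_j+\bZ I$ that is comparable to it, and the relevant condition only constrains comparable pairs.

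The only genuinely delicate point is bookkeeping: one must check that after a transposition move the resulting chain really is a filtration by \emph{subobjects in $\sC_A$} with the stated subquotients, which amounts to noting that the splitting of the three-term sequence is a splitting in $\sC_A(\leq\nu+\bZ I)$ for a suitable $\nu$ (hence in $\sC_A$), and that the new intermediate submodule sits correctly between $M_{k-1}$ and $M_{k+1}$. This is the same verification already implicit in the paragraph before the statement, so it is routine; I would simply remark that the $\Ext$ vanishing from the corollary, together with the fact (Lemma~\ref{TruncProj}'s surrounding discussion) that $\sC_A(\leq\nu+\bZ I)$ is closed under extensions, makes the splitting internal to the category. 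Nothing here requires $A$ to be a field or local. I expect essentially no obstacle beyond organising the induction so that the transpositions are applied in a valid order, which the "extract a maximal section first" strategy handles cleanly.
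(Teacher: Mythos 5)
There is a genuine gap, and it sits exactly at the step you declare ``legitimate\ldots{} by maximality''. The splitting criterion supplied by the corollary is asymmetric: the class of the short exact sequence $0\to M_k/M_{k-1}\to M_{k+1}/M_{k-1}\to M_{k+1}/M_k\to 0$ lies in $\Ext(Z_{A,\chi}(\lambda_{k+1}),Z_{A,\chi}(\lambda_k))$, so splitting is guaranteed only when the \emph{upper} coset satisfies $\lambda_{k+1}+\bZ I\nleq\lambda_k+\bZ I$ relative to the \emph{lower} one. When you push your maximal section $Z_{A,\chi}(\lambda_i)$ upward past the section immediately above it, $\lambda_i$ is the lower weight, so what you need is $\lambda_{i+1}+\bZ I\nleq\lambda_i+\bZ I$; maximality of $\lambda_i+\bZ I$ gives the opposite non-inequality $\lambda_i+\bZ I\nleq\lambda_{i+1}+\bZ I$ (unless the cosets coincide) and says nothing about the one you need. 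If $\lambda_{i+1}+\bZ I<\lambda_i+\bZ I$ the extension may well be nonsplit -- this is precisely the configuration the corollary permits -- and then $Z_{A,\chi}(\lambda_i)$ cannot be moved above $Z_{A,\chi}(\lambda_{i+1})$ at all. Already a nonsplit extension $0\to Z_{A,\chi}(\mu)\to M\to Z_{A,\chi}(\lambda)\to 0$ with $\lambda+\bZ I<\mu+\bZ I$ defeats the strategy: $M$ admits no filtration with $Z_{A,\chi}(\mu)$ on top. This also shows that the inequality in the statement has to be read in the direction consistent with the swap paragraph preceding it and with Proposition~\ref{ZFiltOrd}: the $\Ext$ vanishing drives the larger cosets to the \emph{bottom} of a $Z$-filtration, not to the top, and your argument inherits exactly this reversal.

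The repair is to run your induction the other way round, which is what the paper's preceding paragraph implicitly does. Take a section whose coset is maximal among $\lambda_1+\bZ I,\ldots,\lambda_n+\bZ I$ and move it \emph{down}: in each such swap the maximal section is the upper term, so the required vanishing $\lambda_i+\bZ I\nleq\mu+\bZ I$ really does follow from maximality (any failure forces $\mu+\bZ I=\lambda_i+\bZ I$, in which case no swap is needed), and the section descends until it meets one of equal coset; induction on the length of the filtration then produces an ordering in which $j<i$ never gives $\lambda_j+\bZ I<\lambda_i+\bZ I$ -- equivalently, one may move a minimal coset to the top. Your remaining points (the splitting being internal to $\sC_A$, closure of the truncated subcategories under extensions, no hypotheses on $A$ beyond the standing ones) are fine; the defect is solely the direction of the sort, but as written it is fatal to the key step.
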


\begin{prop}\label{ZFiltOrd}
	Let $M\in \sC_A$ have a $Z$-filtration, with notation as above. Let $\lambda+\bZ I\in X/\bZ I$. Then $M$ has a $Z$ filtration with the property that there exists $k$ such that $\lambda_i+\bZ I\nleq \lambda+\bZ I$ for all $i\leq k$ and $\lambda_i+\bZ I\leq \lambda+\bZ I$ for all $i>k$.
\end{prop}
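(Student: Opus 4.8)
The plan is to reorder the given $Z$-filtration by repeated use of the swapping construction set up just before the previous two propositions: whenever a $Z$-filtration of $M$ has adjacent sections $Z_{A,\chi}(\lambda_k)$ and $Z_{A,\chi}(\lambda_{k+1})$ with $\lambda_{k+1}+\bZ I\nleq\lambda_k+\bZ I$, the relevant short exact sequence splits (by the corollary above), so $M$ admits a $Z$-filtration in which these two sections are interchanged. Call an index $i$ \emph{small} if $\lambda_i+\bZ I\leq\lambda+\bZ I$ and \emph{big} otherwise; the goal is to produce a $Z$-filtration of $M$ in which every big index precedes every small one.

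The observation that connects this to the swapping construction is transitivity of the partial order on $X/\bZ I$: if, in some $Z$-filtration of $M$, the index $k$ is small and $k+1$ is big, then $\lambda_{k+1}+\bZ I\nleq\lambda_k+\bZ I$. Indeed, were $\lambda_{k+1}+\bZ I\leq\lambda_k+\bZ I$, then combining this with $\lambda_k+\bZ I\leq\lambda+\bZ I$ would give $\lambda_{k+1}+\bZ I\leq\lambda+\bZ I$, contradicting that $k+1$ is big. Hence the swapping construction applies to every adjacent ``small-then-big'' pair, and performing the swap moves a big section strictly earlier than a small one while leaving the multiset of sections (and therefore the small/big labels attached to them) unchanged.

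I would then induct on the integer $N$ attached to a $Z$-filtration, namely the number of pairs $j<i$ with $j$ small and $i$ big. If $N=0$, no small index precedes a big one; since the small and big indices partition $\{1,\dots,n\}$, the big indices are exactly $\{1,\dots,k\}$ and the small ones exactly $\{k+1,\dots,n\}$, where $k$ is the number of big indices, which is precisely the assertion. If $N>0$, a standard minimality argument (choose a small-then-big pair $j<i$ with $i-j$ least; any index strictly between $j$ and $i$ would yield a shorter such pair) produces an \emph{adjacent} small-then-big pair, at positions $k,k+1$ say; applying the swapping construction there gives a new $Z$-filtration of $M$ with associated integer $N-1$, and the result follows by induction.

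The only point that needs care — and it is the main, if minor, obstacle — is verifying that the swap decreases $N$ by exactly one, rather than possibly creating new inversions elsewhere. This is a short piece of bookkeeping over the pairs touched by the swap: for an index $a<k$ the combined contribution of $(a,k)$ and $(a,k+1)$ to $N$ equals $1$ precisely when $a$ is small, both before and after the swap; symmetrically for indices $b>k+1$ via the pairs $(k,b)$ and $(k+1,b)$; every other pair is unaffected; and the pair $(k,k+1)$ itself goes from contributing $1$ to contributing $0$. Hence $N$ drops by exactly one, so the induction terminates.
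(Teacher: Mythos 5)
Your proof is correct and follows essentially the same route as the paper: the paper's (implicit) argument is exactly the adjacent-swap construction via the vanishing of $\Ext(Z_{A,\chi}(\lambda_{k+1}),Z_{A,\chi}(\lambda_k))$, with the reordering details left to the reader by analogy with \cite{AJS}. Your transitivity observation and the inversion-count induction simply make that bookkeeping explicit, and they are accurate.
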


\begin{prop}
	Let $M\in \sC_A$ have a $Z$-filtration. Then $T^{\lambda+\bZ I}M$ has a $Z$-filtration.
\end{prop}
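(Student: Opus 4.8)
The plan is to reduce to the situation in which the $Z$-filtration of $M$ is already compatible with the truncation, using Proposition~\ref{ZFiltOrd}, and then to exploit that $T^{\lambda+\bZ I}$, being left adjoint to the inclusion $\sC_A(\le\lambda+\bZ I)\hookrightarrow\sC_A$, is right exact. Concretely, fix $\lambda+\bZ I\in X/\bZ I$ and apply Proposition~\ref{ZFiltOrd}: after reordering we may assume $M$ has a $Z$-filtration $0=M_0\subset M_1\subset\cdots\subset M_n=M$ with sections $M_i/M_{i-1}\cong Z_{A,\chi}(\lambda_i)$, together with an index $k$ such that $\lambda_i+\bZ I\nleq\lambda+\bZ I$ for $i\le k$ while $\lambda_i+\bZ I\le\lambda+\bZ I$ for $i>k$. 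The submodule $M_k$ and the quotient $M/M_k$ then carry the induced $Z$-filtrations, and I would analyse $T^{\lambda+\bZ I}$ on the short exact sequence $0\to M_k\to M\to M/M_k\to 0$.

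The two inputs are (i) $T^{\lambda+\bZ I}M_k=0$, and (ii) $M/M_k\in\sC_A(\le\lambda+\bZ I)$, so that $T^{\lambda+\bZ I}(M/M_k)=M/M_k$. For (i), the key point is that $T^{\lambda+\bZ I}Z_{A,\chi}(\mu)=0$ whenever $\mu+\bZ I\nleq\lambda+\bZ I$: the baby Verma module $Z_{A,\chi}(\mu)$ is cyclic, generated by $v_\mu\in Z_{A,\chi}(\mu)_{\mu+\bZ I}^{d\mu}$, so any nonzero quotient is nonzero in degree $\mu+\bZ I$ and hence cannot lie in $\sC_A(\le\lambda+\bZ I)$; thus the only submodule $N'$ with $Z_{A,\chi}(\mu)/N'\in\sC_A(\le\lambda+\bZ I)$ is $Z_{A,\chi}(\mu)$ itself, forcing $O^{\lambda+\bZ I}Z_{A,\chi}(\mu)=Z_{A,\chi}(\mu)$. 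Inducting up the filtration of $M_k$ and applying the right exact functor $T^{\lambda+\bZ I}$ to $0\to M_{i-1}\to M_i\to Z_{A,\chi}(\lambda_i)\to 0$ (with $i\le k$) then yields $T^{\lambda+\bZ I}M_k=0$. For (ii), each section $Z_{A,\chi}(\lambda_i)$ with $i>k$ lies in $\sC_A(\le\lambda+\bZ I)$ as recorded in Subsection~\ref{Sec5.2}, and that subcategory is closed under extensions, so $M/M_k\in\sC_A(\le\lambda+\bZ I)$; any object of this subcategory is fixed by $T^{\lambda+\bZ I}$ (take $N'=0$ in the definition of $O^{\lambda+\bZ I}$).

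Finally, applying $T^{\lambda+\bZ I}$ to $0\to M_k\to M\to M/M_k\to 0$ gives the exact sequence $0=T^{\lambda+\bZ I}M_k\to T^{\lambda+\bZ I}M\to M/M_k\to 0$; as the first map is zero, the second is both injective (by exactness) and surjective (by right exactness), hence an isomorphism. Since $M/M_k$ carries the induced $Z$-filtration with sections $Z_{A,\chi}(\lambda_{k+1}),\ldots,Z_{A,\chi}(\lambda_n)$, so does $T^{\lambda+\bZ I}M$. I expect the only mildly delicate point to be the bookkeeping in step (i), in particular verifying that cyclicity of the baby Verma modules persists in this graded, $A$-linear setting and that the right exactness of $T^{\lambda+\bZ I}$ is legitimately available (which it is, being a left adjoint); the rest is formal once Proposition~\ref{ZFiltOrd} is in hand.
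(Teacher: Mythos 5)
Your proof is correct and is essentially the paper's argument: both reduce via Proposition~\ref{ZFiltOrd} to a filtration split at an index $k$ and then identify $T^{\lambda+\bZ I}M$ with $M/M_k$, which carries the induced $Z$-filtration. The only cosmetic difference is that you phrase the key vanishing as $T^{\lambda+\bZ I}M_k=0$ (via cyclicity of the baby Verma modules and right exactness of the left adjoint $T^{\lambda+\bZ I}$), whereas the paper shows $O^{\lambda+\bZ I}M=M_k$ directly using $\Hom_{\sC_A}(Z_{A,\chi}(\mu),N)=0$ for $N\in\sC_A(\leq\lambda+\bZ I)$ and $\mu+\bZ I\nleq\lambda+\bZ I$; by adjunction these are the same statement.
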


\begin{proof}
	We may assume, as in Proposition~\ref{ZFiltOrd}, that $M$ has a $Z$-filtration with the property that there exists $k$ such that $\lambda_i+\bZ I\nleq \lambda+\bZ I$ for all $i\leq k$ and $\lambda_i+\bZ I\leq \lambda+\bZ I$ for all $i>k$.
	
	Under this assumption, we clearly have that $M/M_k\in\sC_A(\leq\lambda+\bZ I)$, so $O^{\lambda+\bZ I}M\subseteq M_k$. Conversely, for each $N\in\sC_A(\leq\lambda+\bZ I)$ and each $\mu+\bZ I\in X/\bZ I$ we get $\mu+\bZ I\nleq\lambda+\bZ I$ only if $\Hom_{\sC_A}(Z_{A,\chi}(\mu),N)=0$, since the standard generator of $Z_{A,\chi}(\mu)$ must map to an element of $N_{\mu+\bZ I}^{d\mu}=0$. This particularly means that $\Hom_{\sC_A}(M_k,M/O^{\lambda+\bZ I}M)=0$ and so $M_k\subseteq O^{\lambda+\bZ I}M$. 
	
	Hence $M_k=O^{\lambda+\bZ I}M$. This easily shows that $T^{\lambda+\bZ I}M=M/O^{\lambda+\bZ I}M=M/M_k$ has a $Z$-filtration.
\end{proof}

\subsection{Extension of scalars}\label{Sec5.3}

Maintaining our usual notation for $A$, let $A'$ be an $A$-algebra. Clearly $A'$ is also a $U^0$-algebra via the structure map $\pi:U^0\to A\to A'$. It will be important for us to understand how the categories $\sC_A$ and $\sC_{A'}$ can be compared, which we do in this subsection. Much of this works the same way as in Section 3.1 of \cite{AJS}. Similar to \cite{AJS}, we define the category $\sG\sC_A$ to be defined in the same way as the category $\sC_A$, but where we replace condition (B) with condition (F):
\begin{enumerate}
	\item[(F)] The set of all $\mu+\bZ I\in X/\bZ I $ with $M_{\mu+\bZ I}\neq 0$ is finite.
\end{enumerate}
In other words, we drop the requirement that each $M_{\lambda+\bZ I}$ is finitely-generated over $A$.

We would like to define a restriction functor $\sC_{A'}\to\sC_A$. However, if $A'$ is not finitely-generated over $A$ then restriction would clearly not preserve the finite-generation of the $M_{\lambda+\bZ I}$. Thus, in general, the restriction functor is a map $\sG\sC_{A'}\to\sG\sC_A$. However, if we assume $A'$ is finitely-generated over $A$ then we indeed have a restriction map $\sC_A\to\sC_{A'}$.

Going in the other direction, we need an extension of scalars. Given $M\in \sG\sC_A$ we define $M\otimes_A A'\in\sG\sC_{A'}$ in the obvious way as a  $U_\chi\otimes A'$-module, which we equip with grading given by $(M\otimes_{A}A')_{\lambda+\bZ I}=M_{\lambda+\bZ I}\otimes_{A} A'$ and (D)-decomposition given by $(M\otimes_{A}A')_{\lambda+\bZ I}^{d\mu}=M_{\lambda+\bZ I}^{d\mu}\otimes_{A} A'$. It is straightforward to check that these objects lie in $\sG\sC_{A'}$, and furthermore to check that $M\otimes_{A}A'\in\sC_{A'}$ whenever $M\in\sC_A$. So we may also discuss the extension of scalars functor $\sC_A\to\sC_{A'}$.

If $M,N\in\sG\sC_{A}$ then there exists an isomorphism $$\Hom_{\sG\sC_A}(M,N)\xrightarrow{\sim}\Hom_{\sG\sC_{A'}}(M\otimes_{A}A',N).$$ So extension of scalars is left adjoint to the restriction functor. We may of course define similar maps for $\sC_A'$, $\sC_A''$, $\sC_A^I$ and $\sC_A^{I,+}$, with similar adjointness properties.

Given $M\in \sC_A^I$, we see that there exists a canonical isomorphism $$\Phi_A^I(M)\otimes_{A}A'=(U_\chi\otimes_{U^I} M)\otimes_{A} A'=U_\chi\otimes_{U^I} (M\otimes_{A} A')=\Phi_{A'}^I(M\otimes_{A} A').$$ Similarly, given $M\in\sC_A'$, 
$$Z_{A,\chi}(M)\otimes_{A}A'=(U_\chi\otimes_{U^0U^{+}} M)\otimes_{A} A'=U_\chi\otimes_{U^0U^{+}} (M\otimes_{A} A')=Z_{A',\chi}(M\otimes_{A} A').$$ We may also derive analogous results for $\Phi_A$, $\Phi_A'$, $\Phi_A^{I,+}$ and $\Gamma_{A,\chi}$.

\begin{lemma}\label{projscal}
	Let $M\in\sC_A$. If $M$ is projective in $\sC_A$ then $M\otimes_{A} A'$ is projective in $\sC_{A'}$.
\end{lemma}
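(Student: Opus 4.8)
The plan is to prove this via the adjunction between extension of scalars and restriction, together with the characterization of projectivity by a lifting property. Recall from Subsection~\ref{Sec5.3} that extension of scalars $- \otimes_A A' : \sC_A \to \sC_{A'}$ is left adjoint to the restriction functor $\sC_{A'} \to \sC_A$ (when $A'$ is finitely generated over $A$; in general one works with $\sG\sC$, but the argument is the same). I want to show that if $M$ is projective in $\sC_A$, then $\Hom_{\sC_{A'}}(M \otimes_A A', -)$ is exact.

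First I would take a short exact sequence $0 \to N' \to N \to N'' \to 0$ in $\sC_{A'}$ and observe that, since the morphisms in $\sC_{A'}$ and $\sC_A$ are just grading-preserving module homomorphisms, restricting along $A \to A'$ sends this to a short exact sequence $0 \to N' \to N \to N'' \to 0$ in $\sC_A$ (exactness is checked on underlying vector spaces, or equivalently in $\widetilde{\sC_A}$, and restriction does not change the underlying spaces). Then I would apply the adjunction isomorphism $\Hom_{\sC_{A'}}(M \otimes_A A', -) \cong \Hom_{\sC_A}(M, -)$, which is natural, to convert the sequence $\Hom_{\sC_{A'}}(M \otimes_A A', N') \to \Hom_{\sC_{A'}}(M \otimes_A A', N) \to \Hom_{\sC_{A'}}(M \otimes_A A', N'')$ into the sequence $\Hom_{\sC_A}(M, N') \to \Hom_{\sC_A}(M, N) \to \Hom_{\sC_A}(M, N'')$, where on the right-hand side the modules are regarded via restriction. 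Since $M$ is projective in $\sC_A$, this latter sequence is exact (indeed the right-hand map is surjective), and hence so is the former. Therefore $M \otimes_A A'$ is projective in $\sC_{A'}$.

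The main technical point to verify is simply that restriction of scalars along $A \to A'$ really does carry short exact sequences in $\sC_{A'}$ to short exact sequences in $\sC_A$ — i.e. that it is exact — but this is immediate because the underlying objects and maps are unchanged and exactness in either category is detected on underlying $\bK$-spaces (using the equivalence with $\widetilde{\sC_A}$ and the fact that $A$ is Noetherian, so that the requisite finite-generation conditions are preserved when $A'$ is module-finite over $A$; otherwise one passes through $\sG\sC$). Thus there is no real obstacle: the content is entirely formal, resting on the adjointness stated in Subsection~\ref{Sec5.3} and the characterization of projectives by the lifting property. This mirrors the argument of the corresponding statement in \cite{AJS}.
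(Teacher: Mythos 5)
Your argument is correct, and agrees with the paper's, in the case where $A'$ is finitely generated as an $A$-module, so that restriction genuinely is a functor $\sC_{A'}\to\sC_A$ and the adjunction can be used at the level of $\sC$. But the lemma is stated for an arbitrary $A$-algebra $A'$, and there your parenthetical ``in general one works with $\sG\sC$, but the argument is the same'' hides the one non-formal step. If $A'$ is not module-finite over $A$, restricting a short exact sequence in $\sC_{A'}$ only produces a short exact sequence in $\sG\sC_A$ (the graded pieces are no longer finitely generated over $A$), and the adjunction of Subsection~\ref{Sec5.3} identifies $\Hom_{\sC_{A'}}(M\otimes_A A',N)$ with $\Hom_{\sG\sC_A}(M,N)$, the target being viewed via restriction. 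Exactness of the latter functor therefore requires $M$ to be projective in the \emph{larger} category $\sG\sC_A$, not merely in $\sC_A$; this is not automatic, since projectivity in $\sC_A$ only provides liftings against surjections whose graded components are $A$-finitely generated.

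This missing implication --- that $M$ projective in $\sC_A$ forces $M$ projective in $\sG\sC_A$ --- is precisely the first step of the paper's proof, obtained by adapting Remark 2.7 of \cite{AJS} (for instance, a projective object of $\sC_A$ is a direct summand of some $\Phi_A(P)$ with $P$ projective in $\sC_A''$, and such objects remain projective in $\sG\sC_A$ because the adjunction of Lemma~\ref{PhiProj} and the $A$-module characterisation of projectives persist in the $\sG\sC$ setting). Once this step is supplied, the remainder of your argument --- exactness of restriction $\sG\sC_{A'}\to\sG\sC_A$, the adjunction, and the observation that projectivity in $\sG\sC_{A'}$ passes to the full subcategory $\sC_{A'}$ --- is exactly the proof given in the paper.
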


\begin{proof}
	A similar argument to that of Remark 2.7 in \cite{AJS} shows that if $M$ projective in $\sC_A$, then $M$ projective in $\sG\sC_A$. Now, we argue as in Lemma 3.1 in \cite{AJS}: $M$ being projective in $\sC_{A}$ implies $M$ being projective in $\sG\sC_A$. The adjointness property then tells us that $M\otimes_{A}A'$ is projective in $\sG\sC_{A'}$. It is then clearly projective in $\sC_{A'}$.
	

\end{proof}

\begin{lemma}
	Let $M\in\sC_A$ have a $Z$-filtration. Then $M\otimes_{A}A'$ has a $Z$-filtration.
\end{lemma}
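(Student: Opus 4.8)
The plan is to transfer the $Z$-filtration along the extension of scalars functor, using that this functor is exact and compatible with $Z_{A,\chi}$. First I would take a $Z$-filtration $0=M_0\subset M_1\subset\cdots\subset M_r=M$ with $M_i/M_{i-1}\cong Z_{A,\chi}(\lambda_i)$ for some $\lambda_i\in X$. Applying $-\otimes_A A'$ to each short exact sequence $0\to M_{i-1}\to M_i\to Z_{A,\chi}(\lambda_i)\to 0$, I would invoke exactness of extension of scalars (noted in Subsection~\ref{Sec5.3}, or equivalently one checks it directly on objects of $\sG\sC_A$, since it is left adjoint to restriction and in fact exact as $-\otimes_A A'$ is applied to the underlying modules which form split exact sequences of $\bK$-vector spaces). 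This yields a chain $0=M_0\otimes_A A'\subseteq M_1\otimes_A A'\subseteq\cdots\subseteq M_r\otimes_A A'=M\otimes_A A'$ of subobjects in $\sC_{A'}$ (the inclusions remain injective by exactness), with successive quotients $(M_i\otimes_A A')/(M_{i-1}\otimes_A A')\cong (M_i/M_{i-1})\otimes_A A'\cong Z_{A,\chi}(\lambda_i)\otimes_A A'$.

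The remaining step is to identify $Z_{A,\chi}(\lambda_i)\otimes_A A'$ with the baby Verma module $Z_{A',\chi}(\lambda_i)$ in $\sC_{A'}$. The compatibility $Z_{A,\chi}(M)\otimes_A A'\cong Z_{A',\chi}(M\otimes_A A')$ was recorded in Subsection~\ref{Sec5.3}, so it suffices to observe that $A^{\lambda}\otimes_A A'\cong (A')^{\lambda}$ as objects of $\sC_{A'}''$, which is immediate from the definition of $A^\lambda$ (it is $A$ with right multiplication, placed in the appropriate graded and $(D)$-components, and tensoring up over $A$ just replaces $A$ by $A'$ everywhere while the $U^0$-action formula $sa=a\pi(\widetilde\mu(s))$ is preserved since the structure map $U^0\to A'$ factors through $U^0\to A$). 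Hence $Z_{A,\chi}(\lambda_i)\otimes_A A'=Z_{A,\chi}(A^{\lambda_i})\otimes_A A'\cong Z_{A',\chi}(A^{\lambda_i}\otimes_A A')\cong Z_{A',\chi}((A')^{\lambda_i})=Z_{A',\chi}(\lambda_i)$, and the filtration above is a $Z$-filtration of $M\otimes_A A'$.

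I do not anticipate a serious obstacle here: the only mild point to be careful about is exactness of $-\otimes_A A'$, which is not flatness of $A'$ over $A$ but rather the observation that all the relevant short exact sequences in $\sC_A$ split as sequences of graded $\bK$-vector spaces (indeed, restricted to each $M_{\lambda+\bZ I}^{d\mu}$, which are $A$-modules, and one only needs the sequence $0\to (M_{i-1})_{\lambda+\bZ I}^{d\mu}\to (M_i)_{\lambda+\bZ I}^{d\mu}\to Z_{A,\chi}(\lambda_i)_{\lambda+\bZ I}^{d\mu}\to 0$ to behave well after $\otimes_A A'$), or alternatively one simply cites the exactness asserted in Subsection~\ref{Sec5.3}. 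Everything else is a routine bookkeeping of gradings and $(D)$-decompositions, which are handled by the already-established naturality of the constructions under extension of scalars.

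\begin{proof}
	Let $0=M_0\subset M_1\subset\cdots\subset M_r=M$ be a $Z$-filtration, so each $M_i/M_{i-1}\cong Z_{A,\chi}(\lambda_i)$ for some $\lambda_i\in X$. Since extension of scalars $-\otimes_A A':\sC_A\to\sC_{A'}$ is exact (see Subsection~\ref{Sec5.3}), applying it to the short exact sequences $0\to M_{i-1}\to M_i\to Z_{A,\chi}(\lambda_i)\to 0$ produces short exact sequences
	$$0\to M_{i-1}\otimes_A A'\to M_i\otimes_A A'\to Z_{A,\chi}(\lambda_i)\otimes_A A'\to 0$$
	in $\sC_{A'}$. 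In particular the maps $M_{i-1}\otimes_A A'\to M_i\otimes_A A'$ are injective, so we obtain a chain of subobjects
	$$0=M_0\otimes_A A'\subseteq M_1\otimes_A A'\subseteq\cdots\subseteq M_r\otimes_A A'=M\otimes_A A'$$
	with $(M_i\otimes_A A')/(M_{i-1}\otimes_A A')\cong Z_{A,\chi}(\lambda_i)\otimes_A A'$.

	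It remains to identify $Z_{A,\chi}(\lambda_i)\otimes_A A'$ with $Z_{A',\chi}(\lambda_i)$. From the definition of $A^\lambda\in\sC_A''$ and the fact that the structure map $U^0\to A'$ factors through $U^0\to A$, one checks directly that $A^{\lambda}\otimes_A A'\cong (A')^{\lambda}$ in $\sC_{A'}''$. Using the compatibility of $Z_{A,\chi}$ with extension of scalars recorded in Subsection~\ref{Sec5.3}, we get
	$$Z_{A,\chi}(\lambda_i)\otimes_A A'=Z_{A,\chi}(A^{\lambda_i})\otimes_A A'\cong Z_{A',\chi}(A^{\lambda_i}\otimes_A A')\cong Z_{A',\chi}((A')^{\lambda_i})=Z_{A',\chi}(\lambda_i).$$
	Hence the chain above is a $Z$-filtration of $M\otimes_A A'$.
\end{proof}
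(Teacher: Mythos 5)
Your overall strategy is the same as the paper's: tensor the filtration up along $-\otimes_A A'$ and identify the sections via $Z_{A,\chi}(\lambda_i)\otimes_A A'\cong Z_{A',\chi}(\lambda_i)$, which is exactly the compatibility recorded in Subsection~\ref{Sec5.3} (and your identification $A^{\lambda}\otimes_A A'\cong (A')^{\lambda}$ is fine). The problem is the step where you assert that extension of scalars $-\otimes_A A':\sC_A\to\sC_{A'}$ is exact ``(see Subsection~\ref{Sec5.3})''. The paper does not claim this, and it is false in general: Subsection~\ref{Sec5.3} only establishes that extension of scalars is \emph{left adjoint} to restriction, hence right exact, and since $A'$ need not be flat over $A$, injectivity of the maps $M_{i-1}\otimes_A A'\to M_i\otimes_A A'$ is not automatic (e.g.\ with $A$ local and $A'$ its residue field, tensoring the inclusion of $\fm M$ into $M$ typically kills injectivity). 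Your fallback argument -- that the sequences split as (graded) $\bK$-vector spaces -- does not repair this, because the tensor product is over $A$, not over $\bK$; a $\bK$-splitting says nothing about $\Tor_1^A$.

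The missing ingredient, which is precisely what the paper's short proof uses, is that any module with a $Z$-filtration is \emph{free over $A$}: each baby Verma module is isomorphic to $U^{-}\otimes_{\bK}A$ as an $A$-module, so each $Z_{A,\chi}(\lambda_i)$, and hence each $M_i$, is $A$-free. In particular the short exact sequences $0\to M_{i-1}\to M_i\to Z_{A,\chi}(\lambda_i)\to 0$ split as sequences of $A$-modules (the quotient is projective over $A$), so they remain exact after applying $-\otimes_A A'$, and the rest of your argument then goes through verbatim. So the gap is localized and easily fixed, but as written the justification for exactness is both misattributed to the paper and mathematically insufficient.
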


\begin{proof}
	Since $M$ has a $Z$-filtration it is free over $A$. Therefore, any short exact sequence where all of the terms have a $Z$-filtration splits over $A$. Since $Z_{A,\chi}(\mu)\otimes_{A} A'=Z_{A',\chi}(\mu)$, the result follows.
\end{proof}

\begin{cor}
	Let $M\in\sC_A$ have a $Z$-filtration, and let $A'$ be an $A$-algebra. Then there is an equality $T^{\lambda+\bZ I}(M\otimes_A A')=T^{\lambda+\bZ I}(M)\otimes_A A'$.
\end{cor}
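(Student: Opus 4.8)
The plan is to combine the two immediately preceding results—namely, that a $Z$-filtered $M$ extends to a $Z$-filtered $M \otimes_A A'$, and the earlier description (in the proof of Proposition~\ref{ZFiltOrd} and the proposition following it) of how $T^{\lambda+\bZ I}$ interacts with a suitably ordered $Z$-filtration. First I would invoke the reordering result to arrange the $Z$-filtration $0=M_0\subset M_1\subset\cdots\subset M_n=M$ so that there is an index $k$ with $\lambda_i+\bZ I\nleq\lambda+\bZ I$ for $i\leq k$ and $\lambda_i+\bZ I\leq\lambda+\bZ I$ for $i>k$; by the proof of the preceding proposition this forces $O^{\lambda+\bZ I}M=M_k$ and hence $T^{\lambda+\bZ I}M=M/M_k$.

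Next I would extend scalars. Since $M$ is free over $A$ (a $Z$-filtration gives a filtration with free factors $Z_{A,\chi}(\lambda_i)\cong U^{-}\otimes A^{\lambda_i}$), each step of the filtration splits over $A$, so tensoring the filtration with $A'$ over $A$ yields a filtration $0=M_0\otimes_A A'\subset\cdots\subset M_n\otimes_A A'=M\otimes_A A'$ whose sections are $Z_{A,\chi}(\lambda_i)\otimes_A A'=Z_{A',\chi}(\lambda_i)$, using the identity $Z_{A,\chi}(\mu)\otimes_A A'=Z_{A',\chi}(\mu)$ noted in Subsection~\ref{Sec5.3}. Crucially, this new filtration inherits the same ordering property with respect to $\lambda+\bZ I$ (the indices $\lambda_i$ are unchanged). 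So the preceding proposition applied over $A'$ gives $T^{\lambda+\bZ I}(M\otimes_A A')=(M\otimes_A A')/(M_k\otimes_A A')$.

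Finally I would identify the two sides: $(M\otimes_A A')/(M_k\otimes_A A')\cong (M/M_k)\otimes_A A'$ by right-exactness of $-\otimes_A A'$ (and the fact that the grading and (D)-decomposition of the quotient are computed degreewise, matching the construction of extension of scalars in Subsection~\ref{Sec5.3}), which is precisely $T^{\lambda+\bZ I}(M)\otimes_A A'$. Hence the claimed equality.

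The main obstacle is making sure that the reordering of the $Z$-filtration is genuinely compatible with extension of scalars—that is, that after tensoring with $A'$ one still has $O^{\lambda+\bZ I}(M\otimes_A A')=M_k\otimes_A A'$ rather than something larger. This follows because the two containments in the proof of the preceding proposition depend only on (i) which $\lambda_i$ appear above versus below $\lambda+\bZ I$ in the ordering, and (ii) the vanishing $\Hom_{\sC_{A'}}(Z_{A',\chi}(\mu),N)=0$ when $\mu+\bZ I\nleq\lambda+\bZ I$ and $N\in\sC_{A'}(\leq\lambda+\bZ I)$—and both of these are formal consequences of the ordered $Z$-filtration of $M\otimes_A A'$ over $A'$, which we have already produced. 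So the argument goes through without needing any genuinely new input beyond bookkeeping.
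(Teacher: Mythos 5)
Your argument is correct and is exactly the intended route: the paper states this as an immediate consequence of the preceding lemma (a $Z$-filtration of $M$ tensors to one of $M\otimes_A A'$) together with the identification $O^{\lambda+\bZ I}M=M_k$ for a filtration ordered as in Proposition~\ref{ZFiltOrd}, and your proof carries out precisely those steps, including the key check that $O^{\lambda+\bZ I}(M\otimes_A A')=M_k\otimes_A A'$ via the same two containments over $A'$. No gaps.
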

%
%
%
%
%
%
%
%

\subsection{Duality}\label{Sec5.4}

Let $A$ be a commutative Noetherian $U^0$-algebra with structure map $\pi:U^0\to A$. If $M$ is a module in $\sC_A$, the $A$-module $\Hom_A(M,A)$ may be equipped with a $U_{-\chi}\otimes A$-module structure as follows. Given $x\in \fg$, $f\in \Hom_A(M,A)$ and $m\in M$, we define $$(x\cdot f)(m)=f(-x\cdot m).$$ It is straightforward to check that $x\cdot f\in\Hom_{A}(M,A)$ and that this defines a $U_{-\chi}$-module structure on $\Hom_{A}(M,A)$ which clearly commutes with the $A$-module structure. 

Furthermore, we may equip $\Hom_{A}(M,A)$ with an $X/\bZ I$-grading such that $$\Hom_{A}(M,A)_{\lambda+\bZ I}=\{f\in\Hom_{A}(M,A)\,\vert\,f(M_{\sigma+\bZ I})=0\,\mbox{ for all }\,\sigma+\bZ I\neq -\lambda+\bZ I\}$$ for each $\lambda+\bZ I\in X/\bZ I$. It is straightforward to see that $$\Hom_A(M,A)=\bigoplus_{\lambda+\bZ I\in X/\bZ I}\Hom_{A}(M,A)_{\lambda+\bZ I}.$$ Similarly, given $\lambda+\bZ I\in X/\bZ I$ and $\mu\in \lambda+\bZ I+pX$, we may define $$\Hom_A(M,A)_{\lambda+\bZ I}^{d\mu}=\{f\in \Hom_A(M,A)_{\lambda+\bZ I}\,\vert\,f(M_{\sigma+\bZ I}^{d\omega})=0\,\,\mbox{for all}\,\,\sigma+\bZ I\neq -\lambda+\bZ I\,\,\mbox{and}\,\, d\omega\neq -d\mu\},$$ so that $$\Hom_A(M,A)_{\lambda+\bZ I}=\bigoplus_{\substack{d\mu\in\fh^{*}\\ \mu\in \lambda+\bZ I+pX}}\Hom_{A}(M,A)_{\lambda+\bZ I}^{d\mu}.$$ 

In order to describe in which category the object $\Hom_{A}(M,A)$ lies, we must introduce some notation. Since we use the notation $\sC_A$ to denote a category of $U_\chi\otimes A$-modules, we shall denote by $\overline{\sC_A}$ the analogous category of $U_{-\chi}\otimes A$-modules. Note that, although $-\chi$ is not in standard Levi form as it has been described in this paper, since it still has the quality that it is only non-zero on a set of negative simple roots, all the results in this paper work equally well for $\overline{\sC_A}$ as they do for $\sC_A$. We also denote by $\overline{A}$ the $U^0$-algebra which is equal to $A$ as a $\bK$-algebra, but whose structure map $\overline{\pi}:U^0\to\overline{A}$ is defined by extending the assignment  $\overline{\pi}(h)=-\pi(h)$ for all $h\in\fh$.

\begin{prop}
	Let $M\in \sC_A$. Then $\Hom_{A}(M,A)$ lies in $\overline{\sC_{\overline{A}}}$.
\end{prop}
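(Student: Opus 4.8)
The plan is to verify that $\Hom_A(M,A)$, equipped with the $U_{-\chi}$-action and the $X/\bZ I$-grading and weight-space decomposition introduced just above, satisfies the evident analogues of conditions (A)--(D) defining $\overline{\sC_{\overline{A}}}$; since $\overline{A}=A$ as a $\bK$-algebra, the only place the passage to $\overline{A}$ matters is condition (D), which refers to the structure map. Recall first that the antipode $S$ of $U(\fg)$, with $S(x)=-x$ for $x\in\fg$, descends to an anti-isomorphism $U_{-\chi}\to U_\chi$ (since $S(e_\alpha^p-(-\chi)(e_\alpha)^p)=(-1)^p(e_\alpha^p-\chi(e_\alpha)^p)$), so that the action may be written $(u\cdot f)(m)=f(S(u)m)$ for $u\in U_{-\chi}$. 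The direct-sum decompositions of $\Hom_A(M,A)$ recorded above are immediate from $M$ having only finitely many nonzero graded pieces, each itself a finite sum of weight spaces, which moreover identifies $\Hom_A(M,A)_{\lambda+\bZ I}^{d\mu}\cong\Hom_A\bigl(M_{-\lambda+\bZ I}^{d(-\mu)},A\bigr)$.

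Conditions (A), (B) and (C) are then routine. The right $A$-action on $\Hom_A(M,A)$ is postcomposition with multiplication in $A$, which preserves every weight space (giving (A)); since $M$ has finitely many nonzero graded pieces so does $\Hom_A(M,A)$, and $\Hom_A(M,A)_{\lambda+\bZ I}\cong\Hom_A(M_{-\lambda+\bZ I},A)$ is finitely generated over the Noetherian ring $A$ because $M_{-\lambda+\bZ I}$ is (giving (B)). For (C) it is enough to treat the algebra generators of $U_{-\chi}$: for $\alpha\in R$ and $f$ supported (as a functional on $M$) in degree $-\lambda+\bZ I$, $(e_\alpha\cdot f)(m)=-f(e_\alpha m)$ vanishes unless $m\in M_{\tau+\bZ I}$ with $\tau+\alpha+\bZ I=-\lambda+\bZ I$, so $e_\alpha\cdot f$ has degree $(\lambda+\alpha)+\bZ I$, matching $e_\alpha\in(U_{-\chi})_{\alpha+\bZ I}$, while each $h_i$ preserves the degree. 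As every element of $(U_{-\chi})_{\sigma+\bZ I}$ is a combination of products of these generators whose $\bZ I$-degrees sum to $\sigma$, and the action of a product is the composite of the actions of its factors, (C) follows.

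The heart of the matter is condition (D), and the bookkeeping there is the one genuinely delicate point, which I expect to be the main obstacle to get right. Fix $f\in\Hom_A(M,A)_{\lambda+\bZ I}^{d\mu}$, $s\in U^0$ and $m\in M$. If $m$ lies in a weight space other than $M_{-\lambda+\bZ I}^{d(-\mu)}$, then $(s\cdot f)(m)=f(S(s)m)=0$, since by condition (D) for $M$ the element $S(s)m$ stays in the weight space of $m$, on which $f$ vanishes. If instead $m\in M_{-\lambda+\bZ I}^{d(-\mu)}$, condition (D) for $M$ gives $S(s)m=m\,\pi(\widetilde{-\mu}(S(s)))$, so
\[
(s\cdot f)(m)=f\bigl(m\,\pi(\widetilde{-\mu}(S(s)))\bigr)=f(m)\,\pi\bigl(\widetilde{-\mu}(S(s))\bigr).
\]
Checking on the generators $h\in\fh$ the identities $\widetilde{-\mu}\circ S=S\circ\widetilde{\mu}$ and $\pi\circ S=\overline{\pi}$ as maps on $U^0$ (both sides being algebra homomorphisms agreeing on $\fh$) gives $\pi(\widetilde{-\mu}(S(s)))=\overline{\pi}(\widetilde{\mu}(s))$, hence $s\cdot f=f\cdot\overline{\pi}(\widetilde{\mu}(s))$ --- exactly the relation demanded in $\overline{\sC_{\overline{A}}}$. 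The remaining clauses of (D) --- that $e_\alpha\cdot f\in\Hom_A(M,A)_{(\lambda+\alpha)+\bZ I}^{d(\mu+\alpha)}$ for $\alpha\in R$, and that the right $A$-action preserves each weight space --- follow from short weight computations as in (C) and (A). The subtlety to watch is precisely the one just exploited: $f$ sits in degree $\lambda+\bZ I$ but is evaluated on $M_{-\lambda+\bZ I}$, so the relevant weight of $M$ is $-\mu$, not $\mu$, and it is this sign, routed through the antipode, that converts $\pi$ into $\overline{\pi}$ and thereby forces the target category to be $\overline{\sC_{\overline{A}}}$ rather than $\overline{\sC_A}$.
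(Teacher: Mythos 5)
Your proposal is correct and follows essentially the same route as the paper: a direct verification of conditions (A)--(D) for $\Hom_A(M,A)$, with the key point in (D) being that $f$ in degree $\lambda+\bZ I$ is evaluated on the weight space of $-\mu$, so the computation $-\pi(\widetilde{-\mu}(h))=\overline{\pi}(\widetilde{\mu}(h))$ forces the target to be $\overline{\sC_{\overline{A}}}$. Your packaging via the antipode and the identities $\widetilde{-\mu}\circ S=S\circ\widetilde{\mu}$, $\pi\circ S=\overline{\pi}$ is just a tidier way of organizing the same calculation the paper does on elements $h\in\fh$.
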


\begin{proof}
	We need to check conditions (A), (B), (C) and (D) in $\overline{\sC_{\overline{A}}}$. Condition (A) is easy to see, and condition (B) is straightforward once one observes that $\Hom_{A}(M,A)=\Hom_{A}(M_{\lambda+\bZ I},A)$ as an $A$-module. One may check condition (C) by direct computation, so all that remains is to check condition (D). It is obvious that the (D)-decomposition summands should be the ones described above, so we check that $U^0$ acts appropriately on them. Let $h\in\fh$, $\lambda+\bZ I\in X/\bZ I$, $\mu\in\lambda+\bZ I+pX$, $f\in\Hom_{A}(M,A)_{\lambda+\bZ I}^{d\mu}$ and $m\in M$. Then $$m=\sum_{\sigma+\bZ I\in X/\bZ I}\sum_{\substack{d\omega\in\fh^{*}\\ \omega\in\sigma+\bZ I +pX}} m_{\sigma+\bZ I}^{d\omega}$$ and so we have 	\begin{equation*}
		\begin{split}
			(h\cdot f)(m) & =\sum_{\sigma+\bZ I\in X/\bZ I}\sum_{\substack{d\omega\in\fh^{*}\\ \omega\in\sigma+\bZ I +pX}} f(-h\cdot m_{\sigma+\bZ I}^{d\omega}) \\ & =\sum_{\sigma+\bZ I\in X/\bZ I}\sum_{\substack{d\omega\in\fh^{*}\\ \omega\in\sigma+\bZ I +pX}} f( m_{\sigma+\bZ I}^{d\omega})\cdot(-\pi(\widetilde{\omega}(h))) \\ & =f(m_{-\lambda+\bZ I}^{-d\mu})\cdot(-\pi(\widetilde{-\mu}(h))) \\ & = f(m)\overline{\pi}(\widetilde{\mu}(h))
		\end{split}
	\end{equation*}
	as required, noting that $-\pi(\widetilde{-\mu}(h))=-\pi(h-d\mu(h))=-\pi(h)+d\mu(h)=\overline{\pi}(\widetilde{\mu}(h))$. This is then enough to conclude the result.
\end{proof}

This in particular gives us a contravariant functor $\sC_A\to\overline{\sC_{\overline{A}}}$. Furthermore, if $M$ is free over $A$ we have $$\Hom_A(\Hom_A(M,A),A)\cong M$$ in $\sC_A$ (noting easily that $\overline{\overline{\sC_{\overline{\overline{A}}}}}=\sC_A$).

\begin{cor}\label{HomDual}
	Let $A=F$ be a field, and let $M,N\in\sC_F$. Then $$\Hom_{\sC_F}(M,N)\cong \Hom_{\overline{\sC_{\overline{F}}}}(\Hom_{F}(N,F),\Hom_{F}(M,F)).$$
\end{cor}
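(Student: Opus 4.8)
The plan is to show that the contravariant duality functor $D\colon\sC_F\to\overline{\sC_{\overline F}}$, $M\mapsto\Hom_F(M,F)$, established in the preceding proposition, induces the claimed isomorphism on Hom-spaces, i.e.\ that $D$ is fully faithful. Since $F$ is a field, every object of $\sC_F$ is automatically free (hence projective) over $F$, so by the remark immediately before the corollary we have a natural isomorphism $D^2=\Hom_F(\Hom_F(-,F),F)\cong\mathrm{id}$ on $\sC_F$, and likewise $\overline D{}^2\cong\mathrm{id}$ on $\overline{\sC_{\overline F}}$ (using $\overline{\overline{\sC_{\overline{\overline F}}}}=\sC_F$). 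A functor with a quasi-inverse up to natural isomorphism is an equivalence, hence fully faithful; applying this to the pair $(D,\overline D)$ gives exactly the assertion.

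First I would record the map explicitly: given $\varphi\in\Hom_{\sC_F}(M,N)$, define $D(\varphi)\colon\Hom_F(N,F)\to\Hom_F(M,F)$ by $D(\varphi)(f)=f\circ\varphi$. One checks this is $U_{-\chi}$-linear (from $(x\cdot f)(m)=f(-x\cdot m)$ and $\varphi$ being $U_\chi$-linear), $\overline F$-linear, and grading-preserving (it sends $\Hom_F(N,F)_{\lambda+\bZ I}^{d\mu}$ into $\Hom_F(M,F)_{\lambda+\bZ I}^{d\mu}$, since $\varphi$ preserves the $X/\bZ I$-grading and the (D)-decomposition), so $D(\varphi)$ is a genuine morphism in $\overline{\sC_{\overline F}}$. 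Functoriality of $D$ is immediate. This gives the $F$-linear map $\Hom_{\sC_F}(M,N)\to\Hom_{\overline{\sC_{\overline F}}}(\Hom_F(N,F),\Hom_F(M,F))$.

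Next I would apply the same construction with $\overline F$, $\overline{\sC_{\overline F}}$ in place of $F$, $\sC_F$ to obtain $\overline D\colon\overline{\sC_{\overline F}}\to\sC_F$ and the reverse arrow $\Hom_{\overline{\sC_{\overline F}}}(\Hom_F(N,F),\Hom_F(M,F))\to\Hom_{\sC_F}(\Hom_F(\Hom_F(M,F),F),\Hom_F(\Hom_F(N,F),F))$. Then I would invoke the natural isomorphism $\mathrm{ev}_M\colon M\xrightarrow{\sim}\Hom_F(\Hom_F(M,F),F)$ from the earlier remark (valid because $M$ is free over $F$) to transport this last Hom-space back to $\Hom_{\sC_F}(M,N)$. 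Naturality of $\mathrm{ev}$ gives that the composite of the two maps, in both orders, is the identity, which proves the isomorphism.

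The only mildly delicate point — and the step I would be most careful about — is verifying that the evaluation map $\mathrm{ev}_M$ is a morphism \emph{in $\sC_F$} (not merely an $F$-linear bijection) and is natural in $M$, so that the square relating $\varphi$, $D^2(\varphi)$, $\mathrm{ev}_M$ and $\mathrm{ev}_N$ commutes; this is what lets one conclude that the two Hom-level maps are mutually inverse. Everything else is a routine unwinding of the definitions of the $U_{-\chi}$-action and the grading on $\Hom_F(-,F)$ given in the preceding proposition.
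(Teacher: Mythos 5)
Your proposal is correct and follows essentially the same route as the paper: the corollary is deduced from the preceding proposition (duality is a contravariant functor $\sC_A\to\overline{\sC_{\overline A}}$) together with the remark that $\Hom_A(\Hom_A(M,A),A)\cong M$ in $\sC_A$ for $M$ free over $A$, which over a field applies to every object, making the duality an anti-equivalence and hence fully faithful. The point you flag about the evaluation map being a morphism in $\sC_F$ is exactly what that remark supplies, so there is no gap.
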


Another construction will be necessary in order to understand duality in the category $\sC_A$. As in \cite{Jan5}, the Lie algebra $\fg$ and the algebraic group $G$ may be equipped with compatible automorphisms, both of which we shall call $\tau$. The map $\tau$ preserves $\fh$ and $T$, induces $\chi\circ\tau^{-1}=-\chi$ and induces $-w_I$ on $X$, where $w_I$ is the longest element in $W_I$, the Weyl group corresponding to $R_I$ (this will be explained a little more in Subsection 2). This therefore induces an isomorphism $\tau:U_{\chi}\to U_{-\chi}$, and so an isomorphism $\tau^{-1}:U_{-\chi}\to U_\chi$, which sends $U^0$ to $U^0$ (and $\fh$ to $\fh$).

We denote by $\,^{\tau}A$ the commutative Noetherian $U^0$-algebra which is equal to $A$ as a $\bK$-algebra but which has structure map $\,^{\tau}\pi:U^0\to\,^{\tau}A$ extended from $\,^{\tau}\pi(h)=\pi(\tau^{-1}(h))$ for all $h\in\fh$. We then define, for each $M\in\sC_A$, the $U_{-\chi}\otimes A$-module $\,^{\tau}M$ which has the same $A$-module structure as $M$ but with the $U_{-\chi}$ action given by $x\cdot m=\tau^{-1}(x)m$ for $x\in U_{-\chi}$ and $m\in M$.

\begin{prop}
	If $M\in \sC_A$ then $\,^{\tau}M\in\overline{\sC_{\,^{\tau}A}}$.
\end{prop}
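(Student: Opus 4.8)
The plan is to verify that ${}^{\tau}M$ satisfies the four defining conditions (A)--(D) of the category $\overline{\sC_{{}^{\tau}A}}$, which is the category of $U_{-\chi}\otimes{}^{\tau}A$-modules with an $X/\bZ I$-grading and the appropriate analogues of these conditions (keeping in mind that $-\chi$ is only nonzero on negative simple roots, so the relevant set of simple roots is $-I$, or equivalently one works with $\bZ I$ unchanged). The key point is that the grading on ${}^{\tau}M$ must be twisted by the map $-w_I$ that $\tau$ induces on $X$. Explicitly, I would define $({}^{\tau}M)_{\lambda+\bZ I} \coloneqq M_{-w_I(\lambda)+\bZ I}$ (noting that $w_I$ preserves $\bZ I$, so this is well-defined on cosets) and, for $\mu\in\lambda+\bZ I+pX$, set $({}^{\tau}M)_{\lambda+\bZ I}^{d\mu} \coloneqq M_{-w_I(\lambda)+\bZ I}^{d(-w_I(\mu))}$. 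One first checks that this genuinely gives a direct sum decomposition indexed correctly, using that $-w_I$ is a bijection of $X$ preserving $\bZ I$ and $pX$.

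First I would handle conditions (A) and (B), which are immediate: the ${}^{\tau}A$-action on ${}^{\tau}M$ coincides with the $A$-action on $M$ and $A={}^{\tau}A$ as $\bK$-algebras, so preservation of the grading and the finiteness conditions transfer directly from $M\in\sC_A$. For condition (C), I would use that $\tau:U_\chi\to U_{-\chi}$ sends a root vector $e_\alpha$ to a scalar multiple of $e_{\tau(\alpha)}$ where $\tau$ acts on roots via $-w_I$ (more precisely, $\tau^{-1}$ sends $(U_{-\chi})_{\sigma+\bZ I}$ to $(U_\chi)_{-w_I(\sigma)+\bZ I}$): then for $x\in(U_{-\chi})_{\sigma+\bZ I}$ and $m\in({}^{\tau}M)_{\lambda+\bZ I} = M_{-w_I(\lambda)+\bZ I}$ we have $x\cdot m = \tau^{-1}(x)m \in M_{-w_I(\sigma)-w_I(\lambda)+\bZ I} = M_{-w_I(\sigma+\lambda)+\bZ I} = ({}^{\tau}M)_{\sigma+\lambda+\bZ I}$, as needed. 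The analogous statement for the $d\mu$-refinement follows the same way.

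The main work is condition (D): I need to show that for $s\in U^0$ and $m\in({}^{\tau}M)_{\lambda+\bZ I}^{d\mu} = M_{-w_I(\lambda)+\bZ I}^{d(-w_I(\mu))}$, the identity $s\cdot m = m\,{}^{\tau}\pi(\widetilde{\mu}(s))$ holds, where $s\cdot m = \tau^{-1}(s)m$. Since $m\in M_{-w_I(\lambda)+\bZ I}^{d(-w_I(\mu))}$, condition (D) for $M$ gives $\tau^{-1}(s)m = m\,\pi(\widetilde{-w_I(\mu)}(\tau^{-1}(s)))$, so it suffices to check the algebra identity $\pi(\widetilde{-w_I(\mu)}(\tau^{-1}(s))) = {}^{\tau}\pi(\widetilde{\mu}(s))$ in $A = {}^{\tau}A$. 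By definition ${}^{\tau}\pi(h) = \pi(\tau^{-1}(h))$ for $h\in\fh$, and it is enough to check the identity on $s = h\in\fh$. There, $\widetilde{\mu}(h) = h + d\mu(h)$, so ${}^{\tau}\pi(\widetilde{\mu}(h)) = {}^{\tau}\pi(h) + d\mu(h) = \pi(\tau^{-1}(h)) + d\mu(h)$; on the other side, $\widetilde{-w_I(\mu)}(\tau^{-1}(h)) = \tau^{-1}(h) + d(-w_I(\mu))(\tau^{-1}(h)) = \tau^{-1}(h) + (-w_I(\mu))(d\tau^{-1}(h))$, and one reduces to the compatibility $(-w_I(\mu))(d\tau^{-1}(h)) = d\mu(h)$, i.e.\ that the differential of the action of $\tau$ on $X$ is dual/adjoint to $\tau^{-1}$ on $\fh$ — which is exactly the statement that $\tau$ on $G$ and $\tau$ on $\fg$ are compatible automorphisms inducing $-w_I$ on $X$. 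I would cite \cite{Jan5} for this compatibility. The last clause of (D), namely that $e_\alpha$ shifts the $d\mu$-grading correctly and that $A$ preserves each $({}^{\tau}M)_{\lambda+\bZ I}^{d\mu}$, follows from the corresponding clause for $M$ together with the grading twist, as in the (C) computation. Assembling these checks shows ${}^{\tau}M\in\overline{\sC_{{}^{\tau}A}}$. The one subtle point I expect to require care is tracking the $-w_I$ twist consistently through all four conditions (in particular not confusing $-w_I(\lambda)$ with $-\lambda$), since unlike the duality $\Hom_A(-,A)$ the automorphism $\tau$ does not simply negate weights.
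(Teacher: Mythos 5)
Your proposal is correct and takes essentially the same route as the paper's proof: the paper sets $(\,^{\tau}M)_{\lambda+\bZ I}=M_{-\lambda+\bZ I}$ and $(\,^{\tau}M)_{\lambda+\bZ I}^{d\mu}=M_{-\lambda+\bZ I}^{d(\tau^{-1}(\mu))}$, which agrees with your $-w_I$-twisted definition since $\tau^{-1}$ induces $-w_I$ on $X$ and $-w_I\lambda+\bZ I=-\lambda+\bZ I$. The verifications are the same: condition (C) via $\tau^{-1}(\fg_\alpha)=\fg_{-w_I\alpha}$ with $-w_I\alpha+\bZ I=-\alpha+\bZ I$, and condition (D) via the computation $\pi(\widetilde{\tau^{-1}(\mu)}(\tau^{-1}(h)))=\pi(\tau^{-1}(h))+d\mu(h)=\,^{\tau}\pi(\widetilde{\mu}(h))$ using the compatibility of $\tau$ on $G$ and $\fg$ from \cite{Jan5}.
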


\begin{proof}
	We give $\,^{\tau}M$ an $X/\bZ I$-grading by setting $$(\,^{\tau}M)_{\lambda+\bZ I}=M_{-\lambda+\bZ I}$$ for $\lambda+\bZ I\in X/\bZ I$ and we define the summands of the (D)-decomposition by $$(\,^{\tau}M)_{\lambda+\bZ I}^{d\mu}=M_{-\lambda+\bZ I}^{d(\tau^{-1}(\mu))}$$ for $\mu\in\lambda+\bZ I+pX$. Let us ensure that the right-hand-side makes sense, i.e. that $\tau^{-1}(\mu)\in -\lambda+\bZ I+pX$ for $\mu\in \lambda+\bZ I+pX$. Suppose $\mu=\lambda+\kappa+p\delta$. Then \begin{equation*}
		\begin{split}\tau^{-1}(\mu) & =\tau^{-1}(\lambda)+\tau^{-1}(\kappa) +\tau^{-1}(p\delta) \\ & = -w_I\lambda-w_I\kappa+p\tau^{-1}(\delta) \\ & = -\lambda+(\lambda-w_I\lambda) - \kappa - (w_I\kappa-\kappa) -p\tau^{-1}(\delta) \in -\lambda+\bZ I+pX
		\end{split}
	\end{equation*}
	since $w_I\lambda-\lambda\in\bZ I$. The same argument shows that $\tau(\mu)\in -\lambda+\bZ I+pX$ for $\mu\in \lambda+\bZ I+pX$ (since $\tau$ and $\tau^{-1}$ induce the same maps on $X$). Hence, $\mu\mapsto \tau^{-1}(\mu)$ is a permutation of $-\lambda+\bZ I+pX$, and we have $$(\,^{\tau}M)_{\lambda+\bZ I}=\bigoplus_{\substack{d\mu\in\fh^{*}\\ \mu\in\lambda+\bZ I+pX}} (\,^{\tau}M)_{\lambda+\bZ I}^{d\mu}$$ as required.
	
	It is clear that $\,^\tau M$ satisfies conditions (A) and (B).  For condition (C), we must show that $(U_{\chi})_{\sigma+\bZ I}(\,^\tau M)_{\lambda+\bZ I}\subseteq (\,^\tau M)_{\sigma+\lambda+\bZ I}$ for all $\sigma+\bZ I,\lambda+\bZ I\in X/\bZ I$. This will be clear from the description of the action and the grading once we observe that $\tau^{-1}((U_\chi)_{\sigma+\bZ I})\subseteq (U_\chi)_{-\sigma+\bZ I}$. This holds since if $x\in \fh$ then $\tau^{-1}(x)\in\fh$ and if $x\in \fg_{\alpha}$ then $\tau^{-1}(x)\in \fg_{-w_I\alpha}$ (by \cite{Jan5}). Noting that $\alpha-w_I\alpha\in\bZ I$, we see that $-w_I\alpha+\bZ I=-\alpha+\bZ I$, and so $\tau^{-1}(x)\in \fg_{-\alpha+\bZ I}$. 
	
	All that remains is therefore to check that $U^0$ acts the right way on each $(\,^{\tau}M)_{\lambda+\bZ I}^{d\mu}$. Let $\lambda+\bZ I\in X/\bZ I$, $\mu\in \lambda+\bZ I+pX$, $m\in(\,^{\tau}M)_{\lambda+\bZ I}^{d\mu}=M_{-\lambda+\bZ I}^{d(\tau^{-1}(\mu))}$, and $h\in\fh$. Then $$h\cdot m=\tau^{-1}(h)m=m\pi(\widetilde{\tau^{-1}(\mu)}(\tau^{-1}(h)).$$ What is $\pi(\widetilde{\tau^{-1}(\mu)}(\tau^{-1}(h))$? Recall that $\widetilde{\mu}(h)=\mu+d\mu(h)$ for any $h\in \fh$. So $$\pi(\widetilde{\tau^{-1}(\mu)}(\tau^{-1}(h))= \pi\circ\tau^{-1}(h) + d(\tau^{-1}(\mu))(\tau^{-1}(h)).$$ But $d(\tau^{-1}(\mu))(\tau^{-1}(h))=d\mu(\tau\circ\tau^{-1}(h))=d\mu(h)$. Furthermore, recall that $\pi\circ\tau^{-1}(h)=\,^{\tau}\pi(h)$. Hence, we conclude that $$\pi(\widetilde{\tau^{-1}(\mu)}(\tau^{-1}(h))=\,^{\tau}\pi(h)+d\mu(h)=\,^{\tau}\pi(\widetilde{\mu}(h)).$$ The result follows.
	
\end{proof}
Therefore, $M\mapsto \,^{\tau}M$ is a covariant functor $\sC_A\to\overline{\sC_{\,^{\tau}A}}$. Let write $\bD A$ for the $U^0$-algebra with the same $\bK$-algebra structure as $A$ but with structure map $\bD\pi=\,^\tau(\overline{\pi}):U^0\to A$ extended from $h\mapsto -\pi(\tau^{-1}(h))$ for $h\in\fh$. Combining the two functors just discussed, we hence get a functor $$\bD:\sC_A\to\sC_{\bD A}.$$ Similarly, we write $\overline{\bD} A$ for the $U^0$-algebra with the same $\bK$-algebra structure as $A$ but with structure map $\overline{\bD}\pi:U^0\to A$ extended from $h\mapsto -\pi(\tau(h))$ for $h\in\fh$. Then a similar argument to the above shows that there is a functor $$\overline{\bD}:\sC_A\to\sC_{\overline{\bD}A},\qquad M\mapsto \Hom_{A}(\,^{\tau^{-1}}M,A),$$ where $\,^{\tau^{-1}}M$ denotes the $U_{-\chi}\otimes A$-module with $x\in U_{-\chi}$ acting on  $\,^{\tau^{-1}}M$ as $\tau(x)\in U_\chi$ acts on $M$. It is clear that $\overline{\bD}\bD A=A=\bD\overline{\bD} A$. When $A=F$ is a field, we also have that $\bD\overline{\bD}(M)\cong M\cong\overline{\bD}\bD(M)$ in $\sC_F$. (This is in fact an application of the result that $\bD\overline{\bD}(M)\cong M\cong\overline{\bD}\bD(M)$ in $\sC_A$ for any $M\in\sC_A$ which is free over $A$). Thus we get the following result.

\begin{prop}\label{Antiequiv}
	If $A=F$ is a field, the functor $\bD:\sC_A\to\sC_{\bD A}$ is an anti-equivalence of categories.
\end{prop}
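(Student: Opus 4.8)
The plan is to show that $\bD:\sC_F\to\sC_{\bD F}$ is an equivalence of categories by exhibiting an explicit quasi-inverse, namely the functor $\overline{\bD}:\sC_{\bD F}\to\sC_{\overline{\bD}\bD F}=\sC_F$. The key structural facts have essentially all been assembled in the preceding discussion: we know $\bD$ is a well-defined covariant functor $\sC_A\to\sC_{\bD A}$ (being the composite of $M\mapsto\,^{\tau}M$ with $N\mapsto\Hom_A(N,A)$), that $\overline{\bD}$ is a well-defined functor in the opposite direction, and that $\overline{\bD}\bD A=A=\bD\overline{\bD}A$ as $U^0$-algebras, so the composites $\overline{\bD}\bD$ and $\bD\overline{\bD}$ are endofunctors of $\sC_F$. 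Finally, we are told that when $A=F$ is a field, $\bD\overline{\bD}(M)\cong M\cong\overline{\bD}\bD(M)$ in $\sC_F$ for every $M\in\sC_F$, as a special case of the corresponding statement for $M\in\sC_A$ free over $A$ (which holds automatically here since every module over a field is free).

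First I would unwind the two composites at the level of $U_\chi\otimes F$-modules, checking that $\overline{\bD}\bD(M)=\Hom_F(\,^{\tau^{-1}}(\,^{\tau}\Hom_F(M,F)),F)$ simplifies: the composite $\,^{\tau^{-1}}\circ\,^{\tau}$ is the identity on the underlying space with $x\in U_{-\chi}$ acting as $\tau(\tau^{-1}(x))=x$, so $\overline{\bD}\bD(M)=\Hom_F(\Hom_F(M,F),F)$, the ordinary double $F$-dual. Since $M$ is finite-dimensional over $F$ (conditions (B) and (F) give that each $M_{\lambda+\bZ I}$ is finitely generated, hence finite-dimensional over a field, and there are finitely many nonzero graded pieces), the canonical evaluation map $M\to\Hom_F(\Hom_F(M,F),F)$ is an isomorphism of $F$-vector spaces, and one checks it is compatible with the $U_\chi$-action and with the $X/\bZ I$-gradings and (D)-decompositions as they were defined above — this compatibility check is exactly what is being asserted by the parenthetical remark in the text, and I would simply cite it. The same runs for $\bD\overline{\bD}(M)\cong M$. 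Naturality of the evaluation isomorphism in $M$ is routine and gives natural isomorphisms $\overline{\bD}\bD\cong\Id_{\sC_F}$ and $\bD\overline{\bD}\cong\Id_{\sC_F}$.

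It then remains only to observe that $\bD$ is \emph{contravariant}: it is the composite of the contravariant functor $\Hom_F(-,F)$ with the covariant functor $\,^{\tau}(-)$, hence contravariant. A contravariant functor admitting a contravariant quasi-inverse is precisely an anti-equivalence of categories, so $\bD:\sC_F\to\sC_{\bD F}$ is an anti-equivalence, with quasi-inverse $\overline{\bD}$. This completes the proof.

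The only place that requires genuine care rather than bookkeeping is verifying that the double-dual evaluation map respects all the extra structure — the left $U_\chi$-action (up to the two sign twists coming from $\chi\mapsto-\chi\mapsto\chi$ and the $\tau$-twists), the $X/\bZ I$-grading, and the (D)-decomposition with its $U^0$-action via the twisted structure map $\bD\bD\pi=\pi$. Since the text has already set up the preceding two propositions so that each of $M\mapsto\,^{\tau}M$ and $M\mapsto\Hom_A(M,A)$ is checked to land in the correct category with the correct structure maps ($\,^{\tau}A$, $\overline{A}$ and their iterates composing back to $A$), this compatibility is forced, and the main obstacle is essentially just carefully tracking that the sign twists and the $w_I$-twists on $X$ cancel in pairs — which they do by construction, since $\tau^{-1}\circ\tau=\Id$ on $\fg$ and hence on $\fh$, $X$, and $\fg^*$. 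I would present this as a short argument citing the established facts rather than redoing the verifications.
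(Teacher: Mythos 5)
Your proposal is correct and matches the paper's own (largely implicit) argument: the paper likewise establishes $\overline{\bD}\bD A=A=\bD\overline{\bD}A$ and invokes the natural isomorphisms $\bD\overline{\bD}(M)\cong M\cong\overline{\bD}\bD(M)$ for $A$-free (hence, over a field, all) objects, which come from the twist-cancellation and the finite-dimensional double-dual evaluation map exactly as you describe. So $\overline{\bD}$ is a contravariant quasi-inverse to the contravariant $\bD$, giving the anti-equivalence, just as in the paper.
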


\section{Properties of baby Verma modules}\label{Sec6}

We have already seen how baby Verma modules $Z_{A,\chi}(\lambda)$ can be constructed in the category $\sC_A$. As these are such important modules, we would like to understand a little bit more about their structure in $\sC_A$. We do so in this section. 

\subsection{Irreducibility}\label{Sec6.1}

Of particular importance in this area of study is the case when $A=F$ is a field. If $F=\bK$, the usual results of \cite{Jan4,Jan} apply, but even when $F$ is not necessarily algebraically closed we can derive some of the same results. One result which is of central importance in the study of $\sC_\bK$ is the fact that each baby Verma module has a unique irreducible quotient. We see the same here.

\begin{prop}\label{UniqIrredQuot}
	Let $\lambda\in X$. Then the baby Verma module $Z_{F,\chi}(\lambda)$ has a unique irreducible quotient.
\end{prop}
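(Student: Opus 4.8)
The plan is to mimic the classical argument for baby Verma modules over $U_\chi(\fg)$ (see \cite[Section 10]{Jan} or \cite{Jan4}), adapting it to the graded, $\sC_F$ setting. The key point is that $Z_{F,\chi}(\lambda)$ has a largest proper submodule, which I obtain by controlling the top graded piece.

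First I would identify the distinguished weight space. By construction $Z_{F,\chi}(\lambda)\cong U^{-}\otimes_F F^\lambda$ as a graded vector space, and the generator $v_\lambda=1\otimes 1$ lies in $Z_{F,\chi}(\lambda)_{\lambda+\bZ I}^{d\lambda}$. Using the triangular decomposition $U_\chi=U_I^-\otimes U^I\otimes U_I^+$ and the fact that $\chi|_{\fg_I}$ is regular nilpotent, the $\lambda+\bZ I$-graded piece $Z_{F,\chi}(\lambda)_{\lambda+\bZ I}$ is isomorphic (as a module for $U^I$) to the baby Verma module $Z_{F,I,\chi}(d\lambda)\in\sC_F^I$; indeed this is exactly the observation that $\Gamma_{A,\chi}(Z_{A,I,\chi}(\lambda))\cong Z_{A,\chi}(\lambda)$ combined with the identity $\Gamma_{A,\chi}(M)_{\lambda+\bZ I}\cong M$ for $M$ concentrated in degree $\lambda+\bZ I$. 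Now the crucial input is that, since $\chi|_{\fg_I}$ is regular nilpotent and $F$ a field, $Z_{F,I,\chi}(d\lambda)$ is already irreducible in $\sC_F^I$ (this is the well-known fact that baby Verma modules for a regular nilpotent $p$-character are simple; it holds for the not-necessarily-algebraically-closed field $F$ because the relevant module is a cyclic module over a matrix algebra over $F$ — concretely $U^I$ modulo its action on $Z_{F,I,\chi}(d\lambda)$ is a full matrix ring over the appropriate local ring, by the PBW count $\dim = p^{|R_I^+|}$ matching the PI-degree).

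Next I would argue that any proper submodule $N\subsetneq Z_{F,\chi}(\lambda)$ satisfies $N_{\lambda+\bZ I}=0$. If $N_{\lambda+\bZ I}\neq 0$ then, being a $U^I\otimes F$-submodule of the irreducible object $Z_{F,\chi}(\lambda)_{\lambda+\bZ I}\cong Z_{F,I,\chi}(d\lambda)$, it would equal the whole thing, hence contain $v_\lambda$, hence $N=Z_{F,\chi}(\lambda)$ since $v_\lambda$ generates — contradiction. (Here one uses that submodules in $\sC_F$ respect the $X/\bZ I$-grading and the $U^I$-action, so $N_{\lambda+\bZ I}$ is genuinely a subobject in $\sC_F^I(\lambda+\bZ I)$; one must also recall, via the partial order on $X/\bZ I$ described in Section~\ref{Sec2}, that $\lambda+\bZ I$ is the unique maximal weight of $Z_{F,\chi}(\lambda)$, so nothing maps out of it that would cause trouble.) Therefore the sum of all proper submodules still has zero $(\lambda+\bZ I)$-component, so is itself proper: call it $\mathrm{rad}\, Z_{F,\chi}(\lambda)$. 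The quotient $Z_{F,\chi}(\lambda)/\mathrm{rad}\, Z_{F,\chi}(\lambda)$ is then the unique irreducible quotient, which I denote $L_{F,\chi}(\lambda)$.

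The main obstacle, I expect, is the irreducibility of $Z_{F,I,\chi}(d\lambda)$ over a general field $F$: the standard reference (e.g. \cite{Jan}) assumes $F=\bK$ algebraically closed and uses that simple modules are absolutely simple. Over a non-closed $F$ one cannot invoke this directly; instead I would argue via dimension count — $Z_{F,I,\chi}(d\lambda)$ has $F$-dimension $p^{|R_I^+|}$ (times $\dim_F F^\lambda = 1$), which equals the PI-degree / square root of $\dim_F U^I_\zeta$ for the appropriate central character $\zeta$, so by the double-centraliser / Artin–Wedderburn structure of $U^I$ acting on this module (all the $e_\alpha$, $\alpha\in R_I^+$, act nilpotently and the $h_i$ act by fixed scalars in $F$), the image of $U^I$ in $\End_F(Z_{F,I,\chi}(d\lambda))$ is the full matrix algebra, forcing irreducibility. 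Alternatively — and this is cleaner — I would reduce to the algebraically closed case by base change: $Z_{F,I,\chi}(d\lambda)\otimes_F\overline F \cong Z_{\overline F, I,\chi}(d\lambda)$ (using the base-change compatibility of $Z$ recorded in Subsection~\ref{Sec5.3}), which is irreducible over $\overline F$ by \cite{Jan}; and a module whose base change to $\overline F$ is simple is automatically simple, since any proper $F$-submodule would base-change to a proper $\overline F$-submodule. This second route avoids any delicate PI-theory and fits the paper's toolkit, so it is the one I would write up.
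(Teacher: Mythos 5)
Your skeleton (identify $Z_{F,\chi}(\lambda)_{\lambda+\bZ I}\cong Z_{F,I,\chi}(\lambda)$, show every proper subobject has vanishing $(\lambda+\bZ I)$-component, take the sum of all proper subobjects) would work \emph{if} you had the irreducibility of $Z_{F,I,\chi}(\lambda)$ in $\sC_F^I$, but that input is exactly where the gap lies. Proposition~\ref{UniqIrredQuot} is stated for an arbitrary field $F$ with arbitrary structure map $\pi:U^0\to F$; no hypothesis like $\pi(h_\alpha)=0$ for $\alpha\in R_I$ is in force in Section~\ref{Sec6}. For general $\pi$ the module $Z_{F,I,\chi}(\lambda)$ is not even a $U_\chi(\fg_I)\otimes F$-module: the central element $h^p-h^{[p]}$ of $U^I$ acts on each (D)-summand by $\pi(\widetilde{\mu}(h^p-h^{[p]}))=\pi(h)^p-\pi(h^{[p]})$, which is independent of $\mu$ but need not vanish (already for $F=\bK$ and $\pi(h_\alpha)\notin\bF_p$ it is the nonzero scalar $\pi(h_\alpha)^p-\pi(h_\alpha)$). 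Hence your preferred route --- base change to $\overline{F}$ and cite the simplicity of baby Verma modules for regular nilpotent $p$-characters from \cite{Jan} or \cite{FP} --- does not apply: after base change you have a baby Verma module for a $p$-character of $\fg_I$ with a nontrivial (possibly non-nilpotent) component on $\fh$ and with $\fh$-eigenvalues shifted by the arbitrary scalars $\pi(h_i)$, a situation those references do not cover. This is precisely why the paper proves irreducibility of the Levi baby Verma only later (Propositions~\ref{Irred} and \ref{AllIrreds}) and only under the standing assumption $\pi(h_\alpha)=0$, which is what makes the untwisting equivalence $\Theta_F$ available. Your alternative PI-degree sketch has the same defect: the claim that $U^I$ surjects onto a full matrix algebra in $\End_F(Z_{F,I,\chi}(\lambda))$ is essentially a restatement of the irreducibility you need, not a proof of it. So as written your argument only establishes the proposition under the extra hypothesis $\pi(h_\alpha)=0$ for $\alpha\in R_I$, which is strictly weaker than the statement.

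The paper's proof sidesteps $\pi$ entirely: restrict $Z_{F,\chi}(\lambda)$ to $U^{-}\otimes F=U_\chi(\fn^{-}\otimes F)$, where it is the regular module; since $\fn^{-}$ is the Lie algebra of a unipotent group, this algebra has a unique irreducible module over any field (Jantzen's argument, Section 3 and Theorem 10.2 of \cite{Jan}), so the regular module is its projective cover and has a unique maximal submodule, which then contains every proper subobject of $Z_{F,\chi}(\lambda)$ in $\sC_F$. If you want to rescue your approach at the stated level of generality, you would have to prove simplicity of $Z_{F,I,\chi}(\lambda)$ for arbitrary $\pi$ (for instance via Premet's divisibility theorem combined with regularity of the relevant $p$-character of $\fg_I$), which is much heavier machinery than the proposition requires.
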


\begin{proof}
	One way to prove this, which we will follow, is to show that there is a proper $U_\chi\otimes F$-submodule of $Z_{F,\chi}(\lambda)$ in which every submodule of $Z_{F,\chi}(\lambda)$ in $\sC_F$ lies. This will further follow if there is a proper $U^{-}\otimes F$-submodule of $Z_{F,\chi}(\lambda)$ in which every $U^{-}\otimes F$-submodule (and hence every $U_\chi\otimes F$-submodule) lies.
	
	As a $U^{-}\otimes F$-module, it is clear that $Z_{F,\chi}(\lambda)$ is isomorphic to $U^{-}\otimes F$. If $U^{-}\otimes F$ has a unique simple quotient as a $U^{-}\otimes F$-module, then the result will hold. In particular, this will hold if there is a unique irreducible $U^{-}\otimes F$-module such that $U^{-}\otimes F$ is the projective cover of that module. Observing that $U^{-}=U_\chi(\fn^{-})$ and $U_\chi(\fn^{-})\otimes F=U_\chi(\fn^{-}\otimes F)$, this then follows from the same argument in Section 3 and Theorem 10.2 in \cite{Jan}, except applied for fields which are not necessarily algebraically closed.
\end{proof}

Let us write $\Rad Z_{F,\chi}(\lambda)$ for the radical of $Z_{F,\chi}(\lambda)$ in $\sC_F$. Then we define the simple object $$L_{F,\chi}(\lambda)=\frac{Z_{F,\chi}(\lambda)}{\Rad Z_{F,\chi}(\lambda)}\in\sC_F.$$ Proposition~\ref{ZSurj} then shows that all the simple modules in $\sC_F$ are of the form $L_{F,\chi}(\lambda)$ for some $\lambda\in X$.

We know that the category $\sC_F$ has enough projectives and that all modules have finite length (as they are finite-dimensional as $F$-vector spaces). This in particular means that there is a one-to-one correspondence between simple modules and projective indecomposable modules in $\sC_F$. Thus, for $\lambda\in X$, we denote by $Q_{F,\chi}(\lambda)\in\sC_F$ the unique (up to isomorphism) projective indecomposable module in $\sC_F$ with $$\frac{Q_{F,\chi}(\lambda)}{\Rad(Q_{F,\chi}(\lambda))}\cong L_{F,\chi}(\lambda).$$

\subsection{Isomorphisms of baby Verma modules}\label{Sec6.2}

We would now like to briefly discuss when baby Verma modules are isomorphic. In order to do this, we first must recall some definitions relating to the Weyl group. Firstly, given $\alpha\in R$, we define $s_\alpha:X\to X$ as the reflection sending $\lambda\in X$ to $\lambda-\langle\lambda,\alpha^\vee\rangle\alpha$, where $\alpha^\vee$ is the coroot associated to $\alpha$. Furthermore, given $m\in\bZ$, we define $s_{\alpha,m}:X\to X$ to be the map $\lambda\mapsto\lambda-\langle\lambda,\alpha^\vee\rangle\alpha+m\alpha$. Then the {\bf Weyl group} $W$ is defined to be the subgroup of $\Aut_{\bZ}(X)$ generated by the elements $s_\alpha$ for $\alpha\in R$, and the {\bf affine Weyl group} $W_p$ is defined to be the subgroup of $\Aut_{\bZ}(X)$ generated by the elements $s_{\alpha,mp}$ for $\alpha\in R$ and $m\in\bZ$. Note also that, if $\alpha\in R$ and $m\in\bZ$ and we define $t_{m,\alpha}:X\to X$ by $\lambda\mapsto \lambda+m\alpha$, then $s_{\alpha,m}=t_{\alpha,m}\circ s_\alpha$, and so $W_p$ is also the subgroup generated by $W$ and $t_{\alpha,mp}$ for all $\alpha\in R$ and $m\in\bZ$.

Furthermore, we define the following parabolic subgroups: $W_I$ is defined to be the subgroup of $W$ generated by the elements $s_\alpha$ for $\alpha\in R_I$ (so, as above, it is the Weyl group corresponding to $R_I$), and $W_{I,p}$ is defined to be the subgroup of $W_p$ generated by the elements $s_{\alpha,mp}$ for $\alpha\in R_I$ and $m\in\bZ$. By definition, the groups $W$, $W_p$, $W_I$ and $W_{I,p}$ act on $X$. We also need another action, the dot-action, which is defined by $w\cdot \lambda=w(\lambda+\rho)-\rho$. Here $\rho\in X$ is the half sum of positive roots. Note further that these groups similarly act on $\fh^{*}$, both through the usual action and through the dot-action.

When $A=\bK$, we know (from Proposition 11.9 in \cite{Jan}) that $Z_{\bK,\chi}(\lambda)\cong Z_{\bK,\chi}(\mu)$ if and only if $\lambda\in W_{I,p}\cdot\mu$. How does this work for other $A$?

\begin{prop}\label{ImpliesIsom}
	Suppose that $A$ is a commutative, Noetherian $U^0$-algebra, with structure map $\pi:U^0\to A$. Suppose that $\pi(h_\alpha)=0$ for all $\alpha\in R\cap\bZ I$. Then $Z_{A,\chi}(\lambda)\cong Z_{A,\chi}(\mu)$ if $\lambda\in W_{I,p}\cdot\mu$.
\end{prop}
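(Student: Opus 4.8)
The plan is to reduce the statement to the case where $\mu$ and $\lambda$ differ by a single generator of $W_{I,p}$, i.e.\ either $\lambda = s_\alpha\cdot\mu$ for $\alpha\in R_I$ or $\lambda = \mu + mp\,\alpha$ for $\alpha\in R_I$ and $m\in\bZ$, since $W_{I,p}$ is generated by the $s_{\alpha,mp}$ with $\alpha\in R_I$. Once the isomorphism $Z_{A,\chi}(\lambda)\cong Z_{A,\chi}(\mu)$ is established for such pairs, an induction on the length of an element of $W_{I,p}$ carrying $\mu$ to $\lambda$ finishes the argument.

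For the single-step cases, I would work inside $\sC_A^I$ rather than $\sC_A$, using the identification $\Gamma_{A,\chi}(Z_{A,I,\chi}(\lambda))\cong Z_{A,\chi}(\lambda)$ recorded after Proposition~\ref{FrobRecPara}, together with exactness of $\Gamma_{A,\chi}$; it therefore suffices to prove $Z_{A,I,\chi}(\lambda)\cong Z_{A,I,\chi}(\mu)$ in $\sC_A^I$. This is where the hypothesis $\pi(h_\alpha)=0$ for all $\alpha\in R_I$ is used: it guarantees that, for the Levi $\fg_I$, the $p$-character $\chi_I$ is regular nilpotent and the relevant weights behave as in the regular case, so that the classical argument for isomorphism of baby Verma modules in the reduced enveloping algebra $U^I$ goes through over $A$. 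Concretely, for $\lambda=\mu+mp\,\alpha$ with $\alpha\in R_I$, the $(D)$-decomposition data of $A^\lambda$ and $A^\mu$ agree (because $d\lambda=d\mu$ once $mp\alpha\in pX$ and $\widetilde{\mu}=\widetilde{\mu+p\tau}$), so the underlying $U^I\otimes A$-modules $Z_{A,I,\chi}(d\lambda)$ and $Z_{A,I,\chi}(d\mu)$ coincide and only the $X/\bZ I$-grading shifts; but $mp\alpha\in p\bZ I\subseteq \bZ I$ shows the gradings agree too. For $\lambda=s_\alpha\cdot\mu$ with $\alpha$ a simple root in $I$, I would construct an explicit nonzero morphism $Z_{A,I,\chi}(\mu)\to Z_{A,I,\chi}(\lambda)$ by exhibiting a maximal vector of the correct weight: one takes the standard generator $v_\mu$ and applies a suitable power of $e_{-\alpha}$ (the power being $\langle\mu+\rho,\alpha^\vee\rangle$ reduced mod $p$, or a related quantity dictated by $\chi(e_{-\alpha})=1$), checks it is annihilated by $e_\beta$ for all $\beta\in R_I^+$ using the $\fsl_2$-type relations inside $U^I$ and the vanishing $\pi(h_\alpha)=0$, and notes its weight is $d(s_\alpha\cdot\mu)$; the resulting map and its analogue in the other direction are then shown to be mutually inverse (or one uses that a nonzero endomorphism composite of $Z$'s over a local piece is an isomorphism by a dimension/freeness count, since both modules are free of the same rank $p^{|R_I^+|}$ over $A$).

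The main obstacle I anticipate is the reflection case $\lambda=s_\alpha\cdot\mu$: verifying that the candidate maximal vector is genuinely killed by all positive root vectors of $\fg_I$, not just by $e_\alpha$, requires the commutation relations in $U^I$ and a careful check that the coefficients landing in $U^0$ act on $A$ as zero — this is exactly the point where $\pi(h_\beta)=0$ for $\beta\in R_I$ is indispensable, mirroring the role of $\chi$ being regular for $\fg_I$. A secondary subtlety is bookkeeping the difference between the weight $\mu\in X$ and its derivative $d\mu\in\fh^*$, and making sure the isomorphism respects the $X/\bZ I$-grading and the full $(D)$-decomposition (not merely the ungraded $U^I\otimes A$-module structure); here the facts $\bZ I\cap pX=p\bZ I$ (from Lemma~\ref{XZIvXpZI}) and $w_I\lambda-\lambda\in\bZ I$ used in Subsection~\ref{Sec5.4} will be the relevant inputs. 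Everything else — passing back up via $\Gamma_{A,\chi}$, and the induction on word length — is routine given the Frobenius reciprocities of Proposition~\ref{FrobRecPara} and exactness of the induction functors.
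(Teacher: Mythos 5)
Your proposal is correct and follows essentially the paper's own argument: the reduction to the generators (reflection $s_\alpha$ and translation by $p\alpha$ for $\alpha\in I$), the explicit maximal vector $e_{-\alpha}^{a+1}v_0$ with the key computation $e_\alpha e_{-\alpha}^{a+1}v_0=(a+1)e_{-\alpha}^{a}v_0\,\pi(h_\alpha)=0$, and the conclusion via a free-basis/rank count are exactly the steps in the paper's proof, the only difference being that the paper carries out the computation directly in $\sC_A$ rather than in $\sC_A^I$. Your detour through $\sC_A^I$ followed by $\Gamma_{A,\chi}$ is also legitimate and is in fact the alternative route the paper itself records later (Remark after Proposition~\ref{ZIsom}), where the Levi-level isomorphism is imported from the $\bK$-case via $\Theta_A^I$ instead of being recomputed.
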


\begin{proof}
	
	Let $\lambda \in X$ and let us fix $\alpha\in I$. It is enough to show that $Z_{A,\chi}(\lambda)\cong Z_{A,\chi}(\lambda+p\alpha)$ and $Z_{A,\chi}(\lambda)\cong Z_{A,\chi}(s_\alpha\cdot\lambda)$.
	
	For the former, write $v_0\in Z_{A,\chi}(\lambda)$ and $w_0\in Z_{A,\chi}(\lambda+p\alpha)$ for the standard generators, so that $v_0\in Z_{A,\chi}(\lambda)_{\lambda+\bZ I}^{d\lambda}$ and $w_0\in Z_{A,\chi}(\lambda+p\alpha)_{\lambda+p\alpha+\bZ I}^{d(\lambda+p\alpha)}=Z_{A,\chi}(\lambda+p\alpha)_{\lambda+\bZ I}^{d\lambda}$. Then, since $e_\beta v_0=0=e_\beta w_0$ for all $\beta\in R^{+}$, there are inverse homomorphisms $Z_{A,\chi}(\lambda)\leftrightarrow Z_{A,\chi}(\lambda+p\alpha)$ with $v_0\leftrightarrow w_0$. Thus, $Z_{A,\chi}(\lambda)\cong Z_{A,\chi}(\lambda+p\alpha)$.
	
	For the latter, suppose $\langle\lambda,\alpha^{\vee}\rangle =mp+a$ for $m\in\bZ$ and $0\leq a<p$, so that we have $s_\alpha\cdot\lambda=\lambda-(a+1)\alpha+mp\alpha$. We therefore want to define an isomorphism $f:Z_{A,\chi}(\lambda-(a+1)\alpha)\to Z_{A,\chi}(\lambda)$ (since, combined with the former observation, this gives an isomorphism $Z_{A,\chi}(s_\alpha\cdot\lambda)\to Z_{A,\chi}(\lambda)$). Let us write $w_0$ for the standard generator of $Z_{A,\chi}(\lambda-(a+1)\alpha)$ and $v_0$ for the standard generator of $Z_{A,\chi}(\lambda)$. Let us define $f:Z_{A,\chi}(\lambda-(a+1)\alpha)\to Z_{A,\chi}(\lambda)$ to be the map sending $w_0$ to $e_{-\alpha}^{a+1}v_0$. We need to ensure this is well defined. 
	
	Firstly, we know that $w_0\in Z_{A,\chi}(\lambda-(a+1)\alpha)_{\lambda-(a+1)\alpha+\bZ I}^{d(\lambda-(a+1)\alpha)}$. Furthermore, from conditions (C) and (D) of the definition of $\sC_A$ we have that $e_{-\alpha}^{a+1}v_0\in Z_{A,\chi}(\lambda)_{\lambda-(a+1)\alpha+\bZ I}^{d(\lambda-(a+1)\alpha)}$. 
	
	Next, suppose $\beta\neq\alpha$ for $\beta\in R^{+}$. Then, since $\alpha$ is simple, it easy to see that $e_\beta e_{-\alpha}^{a+1}v_0=0$.
	
	Finally, we want to look at $e_\alpha e_{-\alpha}^{a+1}v_0$. We may calculate that $$e_\alpha e_{-\alpha}^{a+1}v_0=e_{-\alpha}^av_0(\pi(\widetilde{\lambda}(h_\alpha))+\pi(\widetilde{(\lambda-\alpha)}(h_\alpha))+\cdots + \pi(\widetilde{(\lambda-a\alpha)}(h_\alpha)).$$ Note that $\pi(\widetilde{(\lambda-i\alpha)}(h_\alpha))=\pi(h_\alpha)+d(\lambda-i\alpha)(h_\alpha)$. Thus, 
	\begin{equation*}
		\begin{split}
			e_{\alpha}e_{-\alpha}^{a+1}v_0 & =e_{-\alpha}^av_0(\pi((a+1)h_\alpha)+d\lambda(h_\alpha)+d(\lambda-\alpha)(h_\alpha)+\cdots+d(\lambda-a\alpha)(h_\alpha)) \\
			& =e_{-\alpha}^av_0(\pi((a+1)h_\alpha))+e_{-\alpha}^av_0(\langle\lambda,\alpha^\vee\rangle+\langle\lambda-\alpha,\alpha^\vee\rangle+\cdots+\langle\lambda-a\alpha,\alpha^\vee\rangle)\\
			&
			=e_{-\alpha}^av_0(\pi((a+1)h_\alpha))+e_{-\alpha}^av_0((a+1)a-(1+2+\cdots +a)\langle\alpha,\alpha^\vee\rangle)\\
			&
			=e_{-\alpha}^av_0(\pi((a+1)h_\alpha))+e_{-\alpha}^av_0\left((a+1)a-2\left(\frac{a(a+1)}{2}\right)\right)\\
			&
			=(a+1)e_{-\alpha}^av_0(\pi(h_\alpha)).
		\end{split}
	\end{equation*}
	Here, we use the fact that $d\lambda(h_\alpha)=\langle \lambda,\alpha^\vee\rangle$. Since we assume that $\pi(h_\alpha)=0$, we get that $e_{\alpha}e_{-\alpha}^{a+1}v_0=0$.
	
	Hence, there is a well-defined homomorphism $f:Z_{A,\chi}(\lambda-(a+1)\alpha)\to Z_{A,\chi}(\lambda)$ in $\sC_A$. We need to show it is an isomorphism. Note that $e_{-\alpha}^{p-a-1}w_0$ maps to $v_0$, so $v_0$ lies in the image of $f$. In particular, $f$ is surjective.
	
	Furthermore, $Z_{A,\chi}(\lambda-(a+1)\alpha)$ is a free $A$-module with the following $A$-basis: As usual, we write $\{\beta_1,\ldots,\beta_r\}$ for the positive roots in $R$, although we now assume $\beta_1=\alpha$. The $A$-basis consists of the elements $$e_{-\beta_r}^{k_r}\cdots e_{-\beta_2}^{k_2}e_{-\beta_1}^{k_1}w_0$$ for $0\leq k_i<p$. Similarly, $Z_{A,\chi}(\lambda)$ is a free $A$-module with basis consisting of elements $$e_{-\beta_r}^{k_r}\cdots e_{-\beta_2}^{k_2}e_{-\beta_1}^{k_1}v_0$$ for $0\leq k_i<p$.
	
	From the above, we see that $$f(e_{-\beta_r}^{k_r}\cdots e_{-\beta_2}^{k_2}e_{-\beta_1}^{k_1}w_0)=e_{-\beta_r}^{k_r}\cdots e_{-\beta_2}^{k_2}e_{-\beta_1}^{l}v_0$$ where $k_1+a+1=mp+l$ for $m\in\bZ$ and $0\leq l<p$. In particular, $f$ sends an $A$-basis to an $A$-basis, and so is an isomorphism.
	
	We therefore conclude that $$Z_{A,\chi}(\lambda-(a+1)\alpha)\cong Z_{A,\chi}(\lambda),$$ and hence that, if $\mu\in W_{I,p}\cdot\lambda$, we have $$Z_{A,\chi}(\lambda)\cong Z_{A,\chi}(\mu).$$
	
\end{proof}

\begin{rmk}
	We shall soon develop a more general argument that will easily imply the result just proved. Nonetheless, we include the more explicit proof of this result to highlight where the requirement that $\pi(h_\alpha)=0$ for all $\alpha\in R\cap\bZ I$ has relevance.
\end{rmk}

\section{Regular nilpotent $p$-characters}\label{Sec7}
Let $A$ be a commutative, Noetherian $U^0$-algebra with structure map $\pi:U^0\to A$. As usual, we write $\sC_A$ for the category obtained from this $A$ and $\pi$. The algebraically-closed field $\bK$ is a commutative Noetherian $U^0$-algebra with structure map $\pi^\circ:U^0\to \bK$ sending each $h\in \fh$ to zero. This extends to a map $\pi^\circ:U^0\to\bK\hookrightarrow A$, since $A$ is a $\bK$-algebra. We then write $\sC_A^\circ$ for the category obtained from this $A$ and $\pi^\circ$.

{\bf We shall make the assumption throughout this section that $\pi(h_\alpha)=0$ for all $\alpha\in R$.}

\subsection{An equivalence of categories}\label{Sec7.1}

Notwithstanding the title of this section, we do not yet assume that $\chi$ is regular nilpotent (i.e. we do not assume $I=\Pi$), although our standing assumption in this section will be most meaningful when that is so. The key power of the assumption is that it gives us an equivalence of categories as in the following proposition.

\begin{prop}\label{ThetaEquiv}
	If $\pi(h_\alpha)=0$ for all $\alpha\in R$, there is an equivalence of categories $\Theta_A$ between $\sC_A^\circ$ and $\sC_A$.
\end{prop}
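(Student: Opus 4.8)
\emph{Plan.} The idea is to build $\Theta_A$ by twisting the action of $\fh$. Given $M\in\sC_A^\circ$, let $\Theta_A(M)$ be $M$ with the same underlying $A$-module, the same $X/\bZ I$-grading, the same (D)-decomposition, and the same action of every $e_\alpha$ ($\alpha\in R$), but with the action of $\fh$ replaced by
\[
h\star m \coloneqq hm+\pi(h)m\qquad(h\in\fh,\ m\in M),
\]
where $hm$ denotes the action in $\sC_A^\circ$; on morphisms $\Theta_A$ is the identity. Equivalently (and this is the cleaner way to organise the verifications), the hypothesis that $\pi(h_\alpha)=0$ for all $\alpha\in R$ is exactly what is needed for there to exist a graded $\bK$-algebra automorphism $\Psi$ of $U_\chi\otimes A$ which is the identity on $A$ and on each $e_\alpha$ and sends $h\mapsto h+\pi(h)$ for $h\in\fh$; then $\Theta_A$ is restriction of scalars along $\Psi$.

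First I would check that $\Theta_A(M)$ is a genuine $U_\chi\otimes A$-module. Since $A$ is central, $[h,h']=0$, and the $e_\alpha$-action is untouched, the only relations to examine are $[h,e_\alpha]=d\alpha(h)e_\alpha$, the relation $h_\alpha=[e_\alpha,e_{-\alpha}]$, and $e_\alpha^p=\chi(e_\alpha)^p$. The last is automatic. The bracket relation survives any linear shift of the $\fh$-action: $h\star(e_\alpha m)-e_\alpha(h\star m)=[h,e_\alpha]m+\pi(h)e_\alpha m-\pi(h)e_\alpha m=d\alpha(h)e_\alpha m$. The relation $h_\alpha=[e_\alpha,e_{-\alpha}]$ is the crucial one, and the only place the hypothesis enters: its right-hand side acts by the very operator it did in $\sC_A^\circ$, while $h_\alpha\star(-)$ acts by that operator plus multiplication by $\pi(h_\alpha)$, so consistency holds precisely because $\pi(h_\alpha)=0$. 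I expect this to be the main (if not deep) point of the proof. After this, verifying conditions (A)--(D) for $\Theta_A(M)\in\sC_A$ is routine: (A), (B), (C) are immediate as the $A$-action, the grading, and the $e_\alpha$-action are unchanged and $\fh\star M_{\lambda+\bZ I}\subseteq M_{\lambda+\bZ I}$; for (D) the summands are unchanged, and for $m\in M_{\lambda+\bZ I}^{d\mu}$ one computes $h\star m=hm+\pi(h)m=m\,\pi^\circ(\widetilde{\mu}(h))+\pi(h)m=m(d\mu(h)+\pi(h))=m\,\pi(\widetilde{\mu}(h))$, the general $s\in U^0$ following since $U^0$ is generated by $\fh$ and $\widetilde{\mu}$ is an algebra map; the remaining clauses of (D) are inherited from $\sC_A^\circ$. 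Finally, an $A$-linear, grading-preserving map commuting with each $e_\alpha$ and with the $\sC_A^\circ$-action of $\fh$ automatically commutes with $\star$, so $\Theta_A$ is a functor.

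To see $\Theta_A$ is an equivalence I would exhibit its inverse explicitly: the functor defined identically but with the shift $h\bullet m\coloneqq hm-\pi(h)m$, which by the same computations (again using $\pi(h_\alpha)=0$) sends $\sC_A$ into $\sC_A^\circ$. Since both functors are the identity on underlying $A$-modules and on morphisms, they are mutually inverse, so $\Theta_A$ is in fact an isomorphism of categories, in particular an equivalence. (In the automorphism language, the inverse is restriction of scalars along $\Psi^{-1}$, and $\Psi\Psi^{-1}=\Id$ gives the claim.)
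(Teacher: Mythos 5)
Your proposal is correct and is essentially the paper's argument: since on $M_{\lambda+\bZ I}^{d\mu}$ the $\sC_A^\circ$-action of $h$ is multiplication by $d\mu(h)$, your shift $h\star m=hm+m\pi(h)$ is exactly the paper's prescription $s\cdot m=m\pi(\widetilde{\mu}(s))$, and you isolate the same crucial point, namely that the relation $h_\alpha=[e_\alpha,e_{-\alpha}]$ forces $\pi(h_\alpha)=0$, with the inverse given by the opposite shift. The reformulation as restriction of scalars along the automorphism $h\mapsto h+\pi(h)$ of $U_\chi\otimes A$ is a tidy repackaging but not a genuinely different route.
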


\begin{proof}
	Let us first define $\Theta_A:\sC_A^\circ\to\sC_A$. For each $M\in \sC_A$, we define $\Theta_A(M)$ to be equal to $M$ as an $A$-module. We equip $\Theta_A(M)$ with an $X/\bZ I$-grading by setting $$\Theta_A(M)_{\lambda+\bZ I}=M_{\lambda+\bZ I}$$ and we define the (D)-decomposition summands by $$\Theta_A(M)_{\lambda+\bZ I}^{d\mu}=M_{\lambda+\bZ I}^{d\mu}$$ for $\lambda+\bZ I\in X/\bZ I$ and $\mu\in\lambda+\bZ I+pX$. What remains is to define the $U_\chi$-module structure on $\Theta_A(M)$. We do this in the following way: for $m\in \Theta_A(M)$ and $\alpha\in R$ we define $$e_\alpha \cdot m=e_\alpha m,$$ i.e. each $e_\alpha$ acts on $\Theta_A(M)$ as it does on $M$. For $s\in U^0$ and $m\in \Theta_A(M)_{\lambda+\bZ I}^{d\mu}$, we define $$s\cdot m=m\pi(\widetilde{\mu}(s)),$$ which we extend to an action on all of $M$ through the (D)-decomposition and the grading.
	
	We need to check that this defines a $U_\chi$-module structure on $\Theta_A(M)$. For this, it suffices to check that it gives a $\fg$-module structure such that each $e_\alpha^p$ acts as scalar multiplication by  $\chi(e_\alpha)^p$. This latter point is clear, and checking that $\Theta_A(M)$ is a $\fg$-module is mostly a straightforward case of checking that  commutators of Chevalley basis vectors act in the right way. The only complication in the calculation is in ensuring that $$e_\alpha\cdot(e_{-\alpha}\cdot m)-e_{-\alpha}\cdot(e_\alpha\cdot m)=h_\alpha\cdot m$$ for all $\alpha\in R$ and $m\in\Theta_A(M)$. It is enough to check this for $m\in M_{\lambda+\bZ I}^{d\mu}$. It is straightforward to see that the left-hand side of this equation is then $$m\pi^\circ(\widetilde{\mu}(h_\alpha))$$ while the right-hand side is $$m\pi(\widetilde{\mu}(h_\alpha)).$$ We note now that $$\widetilde{\mu}(h_\alpha)=h_\alpha+d\mu(h_\alpha)$$ and so $$\pi(\widetilde{\mu}(h_\alpha))=\pi(h_\alpha)+d\mu(h_\alpha)$$ and $$\pi^\circ(\widetilde{\mu}(h_\alpha))=\pi^\circ(h_\alpha)+d\mu(h_\alpha).$$ Since, by assumption, $\pi(h_\alpha)=0=\pi^\circ(h_\alpha)$, we indeed have equality.
	
	Hence, $\Theta_A(M)$ is a $U_\chi\otimes A$-module with an $X/\bZ I$-grading and a (D)-decomposition. That $\Theta_A(M)$ satisfies conditions (A), (B), (C), and (D) is easy to check. Therefore, we indeed have $\Theta_A(M)\in \sC_A$.
	
	It is also straightforward to see that any homomorphism $f:M\to N$ in $\sC_A^\circ$ induces a homomorphism $\Theta_A(f):\Theta_A(M)\to\Theta_A(N)$ such that $\Theta_A(f)(m)=f(m)$ for all $m\in M$. Hence, $$\Theta_A:\sC_A^\circ\to\sC_A$$ is a well-defined functor. 
	
	Similarly, we define $\Theta_A^{-1}:\sC_A\to\sC_A^\circ$ by setting $\Theta_A^{-1}(M)$ to have the same $A$-module structure, $X/\bZ I$-grading, (D)-decomposition, and $e_\alpha$-action ($\alpha\in R$) as $M$ does, and by defining the $U^0$ action such that $$sm=m\pi^\circ(\widetilde{\mu}(s))$$ for $s\in U^0$ and $m\in \Theta_A^{-1}(M)_{\lambda+\bZ I}^{d\mu}=M_{\lambda+\bZ I}^{d\mu}$. That this indeed gives an object of $\sC_A^\circ$ can be checked in much the same way that we saw that $\Theta_A(M)$ was in $\sC_A$. It is also a functor in the same way.
	
	All that remains is to see that $\Theta_A\circ\Theta_A^{-1}=\Id=\Theta_A^{-1}\circ\Theta_A$. This is obvious, with condition (D) ensuring that the $U^0$-actions coincide.
\end{proof}

\begin{prop}\label{ThetaComm}
	Let $A'$ be an $A$-algebra, and suppose $\pi(h_\alpha)=0$ for all $\alpha\in R$. For each $M\in\sC_A^\circ$, we have $$\Theta_A(M)\otimes_A A' = \Theta_{A'}(M\otimes_A A').$$
\end{prop}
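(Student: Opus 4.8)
The plan is to unwind both sides to the common underlying object $M\otimes_A A'$ and then verify that the two prescribed $U_\chi\otimes A'$-module structures, $X/\bZ I$-gradings and (D)-decompositions agree on the nose. First I would pin down the structure maps: writing $\iota\colon A\to A'$ for the structure map of $A'$, the object $\Theta_A(M)\otimes_A A'$ lies in $\sC_{A'}$ with structure map $\pi'\coloneqq\iota\circ\pi\colon U^0\to A'$, whereas $M\otimes_A A'$ lies in $\sC_{A'}^\circ$ (with structure map $U^0\to\bK\hookrightarrow A'$), so that $\Theta_{A'}(M\otimes_A A')$ again lies in $\sC_{A'}$ with structure map $\pi'$. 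Thus both sides live in the same category. Moreover the hypothesis $\pi(h_\alpha)=0$ for all $\alpha\in R$ is inherited by $\pi'$, so $\Theta_{A'}$ is indeed defined, and by the discussion of extension of scalars in Subsection~\ref{Sec5.3} together with the definitions of $\Theta_A$ and $\Theta_{A'}$, the gradings and (D)-decompositions of both sides are obtained from those of $M$ by applying $-\otimes_A A'$; explicitly $M_{\lambda+\bZ I}^{d\mu}\otimes_A A'$ in grade $(\lambda+\bZ I, d\mu)$ in each case. Hence these pieces of structure coincide.

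Next I would check the $U_\chi$-action. For $\alpha\in R$, the element $e_\alpha$ acts on $\Theta_A(M)\otimes_A A'$ as it does on $\Theta_A(M)$ (extension of scalars), which by definition of $\Theta_A$ is as it acts on $M$; on the other side $e_\alpha$ acts on $\Theta_{A'}(M\otimes_A A')$ as it does on $M\otimes_A A'$ (definition of $\Theta_{A'}$), hence again as it acts on $M$. So the $e_\alpha$-actions agree. For $s\in U^0$ and a homogeneous $m\otimes a'\in M_{\lambda+\bZ I}^{d\mu}\otimes_A A'$, on the first object we compute $s\cdot(m\otimes a')=(s\cdot m)\otimes a'=(m\,\pi(\widetilde{\mu}(s)))\otimes a'=(m\otimes a')\cdot\iota(\pi(\widetilde{\mu}(s)))=(m\otimes a')\,\pi'(\widetilde{\mu}(s))$, which is precisely how $s$ acts on $\Theta_{A'}(M\otimes_A A')$ by definition of $\Theta_{A'}$. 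Since $\fg$ is generated by the $e_\alpha$ and $\fh\subseteq U^0$, this determines the whole $U_\chi$-action and the two actions coincide.

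Putting these together, $\Theta_A(M)\otimes_A A'$ and $\Theta_{A'}(M\otimes_A A')$ are literally the same object of $\sC_{A'}$, which is the claimed equality. I do not expect a real obstacle here: the argument is a bookkeeping exercise, and the only points requiring attention are keeping track of the several structure maps ($\pi$, $\pi^\circ$ and their pushforwards along $\iota$) and noting that the standing assumption of this section persists after extension of scalars so that $\Theta_{A'}$ is legitimately defined — both of which are immediate.
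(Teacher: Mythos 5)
Your proposal is correct and follows essentially the same route as the paper's proof: identify both sides with the underlying $U_\chi\otimes A'$-module $M\otimes_A A'$, match the gradings and (D)-decompositions termwise, and verify the $e_\alpha$- and $U^0$-actions agree via the computation $s(m\otimes a')=(m\pi(\widetilde{\mu}(s)))\otimes a'=(m\otimes a')\pi'(\widetilde{\mu}(s))$. Your extra remark that the hypothesis $\pi(h_\alpha)=0$ passes to $\pi'$, so that $\Theta_{A'}$ is indeed defined, is a sensible bookkeeping point that the paper leaves implicit.
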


\begin{proof}
	It is easy to see that $\Theta_A(M)\otimes_A A'$ and $\Theta_{A'}(M\otimes_A A')$ are equal as $A'$-modules (as both are equal to $M\otimes_A A'$), that $$(\Theta_A(M)\otimes_A A')_{\lambda+\bZ I}=\Theta_{A'}(M\otimes_A A')_{\lambda+\bZ I}$$ for all $\lambda+\bZ I\in X/\bZ I$, and that $$(\Theta_A(M)\otimes_A A')_{\lambda+\bZ I}^{d\mu}=\Theta_{A'}(M\otimes_A A')_{\lambda+\bZ I}^{d\mu}$$ for all $\mu\in \lambda+\bZ I +pX$. It is also easy to see that each $e_\alpha$ for $\alpha\in R$ acts the same on $\Theta_A(M)\otimes_A A'$ and $\Theta_{A'}(M\otimes_A A')$. Finally, let $m\otimes a$ lie in the $\bK$-vector space $(M\otimes_{A'} A)_{\lambda+\bZ I}^{d\mu}$ and let $s\in U^0$. Let us write $\pi:U^0\to A$ for the structure map of $A$ and $\widehat{\pi}:U^0\to A'$ for the induced structure map of $A'$. If we view $m\otimes a$ as an element of $(\Theta_A(M)\otimes_A A')_{\lambda+\bZ I}^{d\mu}=\Theta_A(M)_{\lambda+\bZ I}^{d\mu}\otimes_{A} A'$ then we have 
	$$s(m\otimes a)=(sm)\otimes a=(m\pi(\widetilde{\mu}(s)))\otimes a=m\otimes(a\widehat{\pi}(\widetilde{\mu}(s))).$$ On the other hand, if we view $m\otimes a\in \Theta_{A'}(M\otimes_A A')_{\lambda+\bZ I}^{d\mu}$ then we have $$s(m\otimes a)=(m\otimes a)\widehat{\pi}(\widetilde{\mu}(s))=m\otimes(a\widehat{\pi}(\widetilde{\mu}(s)))$$
	as required.
\end{proof}

Given $\lambda\in X$, we may easily see that $Z_{\bK,\chi}(\lambda)\otimes_{\bK} A\in\sC_A^\circ$. Since $\pi(h_\alpha)=0=\pi^\circ(h_\alpha)$ for all $\alpha\in R$, one can show that $$\Theta_A(Z_{\bK,\chi}(\lambda)\otimes_{\bK}A)=Z_{A,\chi}(\lambda).$$ 

Hence, we conclude that, for $\lambda,\mu\in X$,  $$\Hom_{\sC_A}(Z_{A,\chi}(\lambda),Z_{A,\chi}(\mu))\cong \Hom_{\sC_A^\circ}(Z_{\bK,\chi}(\lambda)\otimes_{\bK} A,Z_{\bK,\chi}(\mu)\otimes_{\bK} A)$$ as $A$-modules, and, more generally, $$\Ext^i_{\sC_A}(Z_{A,\chi}(\lambda),Z_{A,\chi}(\mu))\cong \Ext^i_{\sC_A^\circ}(Z_{\bK,\chi}(\lambda)\otimes_{\bK} A,Z_{\bK,\chi}(\mu)\otimes_{\bK} A)$$ for all $i\geq 0$.

The proof of Lemma 3.2 in \cite{AJS} works exactly the same way here, and so, since $A$ is flat over $\bK$, we get that $$\Ext^i_{\sC_A^\circ}(Z_{\bK,\chi}(\lambda)\otimes_{\bK} A,Z_{\bK,\chi}(\mu)\otimes_{\bK} A)\cong \Ext^i_{\sC_\bK}(Z_{\bK,\chi}(\lambda),Z_{\bK,\chi}(\mu))\otimes_{\bK} A$$ for all $i\geq 0$. So we have $$\Ext^i_{\sC_A}(Z_{A,\chi}(\lambda),Z_{A,\chi}(\mu))\cong \Ext^i_{\sC_\bK}(Z_{\bK,\chi}(\lambda),Z_{\bK,\chi}(\mu))\otimes_{\bK} A$$ for all $i\geq 0$ and all $\lambda,\mu\in X$.

If $A$ is a local algebra, with residue field $F$, then we similarly obtain $$\Ext^i_{\sC_F}(Z_{F,\chi}(\lambda),Z_{F,\chi}(\mu))\cong \Ext^i_{\sC_F^\circ}(Z_{\bK,\chi}(\lambda),Z_{\bK,\chi}(\mu))\otimes_{\bK} F$$ for all $i\geq 0$ and all $\lambda,\mu\in X$. Hence there is an $F$-isomorphism $$\Ext^i_{\sC_F}(Z_{F,\chi}(\lambda),Z_{F,\chi}(\mu))\cong \Ext^i_{\sC_A}(Z_{A,\chi}(\lambda),Z_{A,\chi}(\mu))\otimes_{A} F$$ for all $i\geq 0$.

This means, in particular, that if $\Ext^i_{\sC_F}(Z_{F,\chi}(\lambda),Z_{F,\chi}(\mu))=0$ then $$\Ext^i_{\sC_A}(Z_{A,\chi}(\lambda),Z_{A,\chi}(\mu))=\Ext^i_{\sC_A}(Z_{A,\chi}(\lambda),Z_{A,\chi}(\mu))\fm$$ where $\fm$ is the unique maximal ideal of $A$. If $\Ext^i_{\sC_A}(Z_{A,\chi}(\lambda),Z_{A,\chi}(\mu))$ is finitely-generated over $A$ then by Nakayama's lemma this implies that $\Ext^i_{\sC_A}(Z_{A,\chi}(\lambda),Z_{A,\chi}(\mu))=0$. Since each $\Ext$ is a quotient of a submodule of a finitely-generated $A$-module, and $A$ is a Noetherian ring, we indeed have $A$-finite-generation of $\Ext^i_{\sC_A}(Z_{A,\chi}(\lambda),Z_{A,\chi}(\mu))$.

In conclusion, we have the following theorem.

\begin{theorem}
	Let $A$ be a commutative, Noetherian, local $U^0$-algebra with structure map $\pi:U^0\to A$, and let $F$ be the residue field of $A$. Suppose that $\pi(h_\alpha)=0$ for all $\alpha\in R$. Then $\lambda,\mu\in X$ lie in the same block over $A$ if and only if they lie in the same block over $F$.
\end{theorem}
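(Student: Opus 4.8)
The plan is to reduce the whole statement to the $\Ext$-comparison established in the paragraphs immediately preceding the theorem, combined with Nakayama's lemma. Recall that for all $i\ge 0$ and all $\lambda,\mu\in X$ there is an isomorphism of $F$-modules
$$\Ext^i_{\sC_A}(Z_{A,\chi}(\lambda),Z_{A,\chi}(\mu))\otimes_A F\;\cong\;\Ext^i_{\sC_F}(Z_{F,\chi}(\lambda),Z_{F,\chi}(\mu)),$$
and that each $\Ext^i_{\sC_A}(Z_{A,\chi}(\lambda),Z_{A,\chi}(\mu))$ is finitely generated over $A$. Writing $\fm$ for the maximal ideal of $A$, the left-hand side is the quotient of $\Ext^i_{\sC_A}(Z_{A,\chi}(\lambda),Z_{A,\chi}(\mu))$ by $\fm\,\Ext^i_{\sC_A}(Z_{A,\chi}(\lambda),Z_{A,\chi}(\mu))$, so by Nakayama's lemma it vanishes precisely when $\Ext^i_{\sC_A}(Z_{A,\chi}(\lambda),Z_{A,\chi}(\mu))$ does. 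Hence, for every fixed triple $(\lambda,\mu,i)$,
$$\Ext^i_{\sC_A}(Z_{A,\chi}(\lambda),Z_{A,\chi}(\mu))=0\quad\Longleftrightarrow\quad\Ext^i_{\sC_F}(Z_{F,\chi}(\lambda),Z_{F,\chi}(\mu))=0.$$

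Next I would bring in the description of the blocks of $\sC_R$, for $R$ a commutative Noetherian local $U^0$-algebra, in terms of baby Verma modules: two weights $\lambda,\mu\in X$ lie in the same block over $R$ if and only if there is a finite chain $\lambda=\nu_0,\nu_1,\dots,\nu_k=\mu$ in $X$ such that for each $j$ at least one of $\Ext^{i}_{\sC_R}(Z_{R,\chi}(\nu_j),Z_{R,\chi}(\nu_{j+1}))$ and $\Ext^{i}_{\sC_R}(Z_{R,\chi}(\nu_{j+1}),Z_{R,\chi}(\nu_j))$ is nonzero for some $i\ge 0$. This is the analogue of the corresponding statement for the categories of Andersen--Jantzen--Soergel and of Jantzen, and is obtained in the usual way: every object of $\sC_R$ is a quotient of a projective object carrying a $Z$-filtration (the theorem of Section~\ref{Sec4.3}), blocks are closed under subquotients and extensions, each baby Verma module is indecomposable (being generated by a highest-weight vector and having endomorphism ring $R$ under the standing assumption of this section, via $\Theta_R$ of Proposition~\ref{ThetaEquiv}), a non-split extension of one baby Verma module by another is again indecomposable, and $\Ext$-groups between objects lying in distinct blocks vanish; assembling these identifies the connected components of the graph on $X$ just described with the blocks.

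Granting this, the theorem follows at once: by the first paragraph the graph on $X$ attached to $A$ and the one attached to $F$ have identical edge sets, hence identical connected components, so $\lambda$ and $\mu$ lie in the same block over $A$ if and only if they lie in the same block over $F$. The only non-formal ingredient is the block description of the second paragraph; if one instead takes that description as the \emph{definition} of ``same block over $R$'', the statement is just the content of the first paragraph. I expect this block characterization --- specifically, the verification that no block relations are lost by restricting attention to $\Ext$-groups between baby Verma modules --- to be the only point that requires genuine care.
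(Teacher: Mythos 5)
Your first paragraph is precisely the paper's argument: the isomorphism $\Ext^i_{\sC_A}(Z_{A,\chi}(\lambda),Z_{A,\chi}(\mu))\otimes_A F\cong\Ext^i_{\sC_F}(Z_{F,\chi}(\lambda),Z_{F,\chi}(\mu))$ combined with $A$-finite generation and Nakayama's lemma, and the theorem then follows at once because the paper (following Andersen--Jantzen--Soergel) takes the equivalence relation on $X$ generated by non-vanishing of these $\Ext$-groups between baby Verma modules as what ``lying in the same block'' means. Your second paragraph, checking that this baby-Verma $\Ext$-characterization agrees with a more intrinsic notion of block, is additional care the paper does not spell out, but it is consistent with the intended reading and does not change the approach.
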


Another result we can get from this equivalence of categories is the following.

\begin{prop}\label{ZRegIsom}
	Suppose that $A$ be a commutative Noetherian $U^0$-algebra, with structure map $\pi:U^0\to A$ such that $\pi(h_\alpha)=0$ for all $\alpha\in R$. Then $Z_{A,\chi}(\lambda)\cong Z_{A,\chi}(\mu)$ implies $\lambda\in W_{p}\cdot\mu$.
\end{prop}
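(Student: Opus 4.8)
The plan is to reduce to the corresponding statement over $\bK$, exploiting the equivalence $\Theta_A\colon\sC_A^\circ\to\sC_A$ of Proposition~\ref{ThetaEquiv} and the flatness of $A$ over $\bK$. Using the identification $\Theta_A(Z_{\bK,\chi}(\nu)\otimes_\bK A)=Z_{A,\chi}(\nu)$ together with the chain of isomorphisms assembled just before the statement, one has an isomorphism of $A$-modules
$$\Hom_{\sC_A}(Z_{A,\chi}(\lambda),Z_{A,\chi}(\mu))\;\cong\;\Hom_{\sC_\bK}(Z_{\bK,\chi}(\lambda),Z_{\bK,\chi}(\mu))\otimes_\bK A.$$
So if $Z_{A,\chi}(\lambda)\cong Z_{A,\chi}(\mu)$ then the left-hand side is nonzero, and since $A\neq 0$ this forces $\Hom_{\sC_\bK}(Z_{\bK,\chi}(\lambda),Z_{\bK,\chi}(\mu))\neq 0$.

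Next I would observe that a nonzero morphism $Z_{\bK,\chi}(\lambda)\to Z_{\bK,\chi}(\mu)$ in $\sC_\bK$ has nonzero image $N$, so $Z_{\bK,\chi}(\lambda)$ and $Z_{\bK,\chi}(\mu)$ share a composition factor and hence lie in the same block of $\sC_\bK$; concretely, $N$ is a quotient of $Z_{\bK,\chi}(\lambda)$, so by Proposition~\ref{UniqIrredQuot} it has $L_{\bK,\chi}(\lambda)$ among its composition factors, and it is a submodule of $Z_{\bK,\chi}(\mu)$, whence $[Z_{\bK,\chi}(\mu):L_{\bK,\chi}(\lambda)]\neq 0$. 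The linkage principle for Jantzen's category $\sC$ with $\chi$ in standard Levi form (see \cite{Jan4} and \cite[\S11]{Jan}; recall that $\sC_\bK$ is exactly this category) then gives $\lambda\in W_p\cdot\mu$, as required.

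The $\Hom$-comparison and the ``nonzero map yields a common section'' step are routine. The step requiring care is the appeal to the linkage principle: one should invoke the version valid for $\chi$ in standard Levi form that bounds the composition factors of a baby Verma module via the dot-action of the affine Weyl group $W_p$ on $X$ (rather than only on $\fh^{*}$), so that the conclusion $\lambda\in W_p\cdot\mu$ is obtained at the level of $X$, as is needed here. It is harmless, and consistent with Proposition~\ref{ImpliesIsom}, that over $\bK$ one in fact gets the sharper conclusion $\lambda\in W_{I,p}\cdot\mu$: for a general $A$ with $\pi(h_\alpha)=0$ for all $\alpha\in R$ only the weaker $W_p$-linkage is to be expected.
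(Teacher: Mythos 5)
Your proof is correct, but after the reduction to $\bK$ it follows a genuinely different route from the paper's. The Hom base-change isomorphism you use is exactly the one recorded immediately before the proposition, so the first step coincides; from there, however, the paper never passes through composition factors. Instead it applies $\Theta_A$ to obtain $Z_{\bK,\chi}(\lambda)\otimes_{\bK}A\cong Z_{\bK,\chi}(\mu)\otimes_{\bK}A$ in $\sC_A^\circ$, lets the invariants $U(\fg)^G\cong U(\fh)^{W_\bullet}$ act on these $A$-forms, deduces $\cen_{d\lambda}=\cen_{d\mu}$ and hence $d\lambda\in W\cdot d\mu$ by Kac--Weisfeiler, and then upgrades this to $\lambda\in W_p\cdot\mu$ by a lattice argument using $\lambda+\bZ I=\mu+\bZ I$ and the fact (from Jantzen's standard hypotheses) that the relevant intersection with $pX$ is $p$ times the lattice. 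Your route -- nonzero map between baby Vermas over $\bK$, hence $[Z_{\bK,\chi}(\mu):L_{\bK,\chi}(\lambda)]\neq 0$ via the unique maximal submodule, then the graded linkage principle -- is sound, and the intermediate step is correctly justified by Proposition~\ref{UniqIrredQuot}. The one point to watch is the one you flag yourself: you must cite the $X$-level, $W_p$-dot-action form of the linkage principle for the graded category with $\chi$ in standard Levi form (it is available in Jantzen's treatment), not merely the $\fh^{*}$-level statement; that graded statement is itself proved by essentially the same central-character-plus-lattice argument that the paper writes out, so your proof outsources to the literature what the paper does by hand. The trade-off: your argument is shorter and modular, while the paper's is self-contained, needing only Kac--Weisfeiler at the level of $\fh^{*}$, and makes transparent exactly where the hypothesis $\pi(h_\alpha)=0$ (used only to invoke $\Theta_A$) and Jantzen's standard assumptions enter.
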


\begin{proof}
	Since $Z_{A,\chi}(\lambda)\cong Z_{A,\chi}(\mu)$ in $\sC_A$, we may use the equivalence of categories $\Theta_A$ to conclude that $$Z_{\bK,\chi}(\lambda)\otimes_{\bK} A\cong Z_{\bK,\chi}(\mu)\otimes_{\bK} A$$ in $\sC_A^\circ$. Now, observing as in \cite[Theorem 9.3]{Jan} (referencing \cite{KW}) that $U(\fg)^G\cong U(\fh)^{W_\bullet}$, we write $\cen_{d\lambda}:U(\fg)^G\to \bK$ for the algebra homomorphism sending $u$ to $u(d\lambda)$, where we view $u\in U(\fg)^G\cong U(\fh)^{W_\bullet}$ as a polynomial function on $\fh^{*}$. Then we have $uz=\cen_{d\lambda}(u)z$ for all $u\in U(\fg)^G$ and $z\in Z_{\bK,\chi}(\lambda)$, and so $uz=\cen_{d\lambda}(u)z$ for all $u\in U(\fg)^G$ and $z\in Z_{\bK,\chi}(\lambda)\otimes_{\bK} A$. Since $Z_{\bK,\chi}(\lambda)\otimes_{\bK} A\cong Z_{\bK,\chi}(\mu)\otimes_{\bK} A$, we therefore conclude that $\cen_{d\lambda}(u)=\cen_{d\mu}(u)$ for all $u\in U(\fg)^G$. Thus, $\cen_{d\lambda}=\cen_{d\mu}$ and so $d\lambda\in W\cdot d\mu$ by Corollary 9.4 in \cite{Jan} (see also \cite{KW}).
	
	We therefore conclude that $\lambda\in W\cdot\mu +pX$, and we know that $\lambda+\bZ I=\mu+\bZ I$ (as both are maximal in the partial order among those elements of $X/\bZ I$ with the property that, say, $Z_{A,\chi}(\lambda)_{\sigma+\bZ I}\neq 0$). As in \cite[Prop 11.9]{Jan}, we pick $w\in W$ such that $\lambda-w\cdot\mu\in pX$. Then $\mu-w\cdot\mu\in \bZ I$ since $w\in W$. Hence, $\lambda-w\cdot\mu = \lambda- \mu +\mu- w\cdot\mu\in\bZ I$. Therefore, $\lambda-w\cdot\mu \in \bZ I\cap pX$, which is equal to $p\bZ I$ as in Section 11.2 in \cite{Jan} (which relies upon Jantzen's standard assumptions). Hence, $\lambda\in W_{p}\cdot\mu$, as required.
\end{proof}
\subsection{Projective covers}\label{Sec7.2}

In this subsection, we do now assume that $\chi$ is regular nilpotent, which we recall means that $I=\Pi$.

Suppose for the moment that $A=F$ is a field, and let $\lambda\in X$. Then we have already observed that $\Theta_F(Z_{\bK,\chi}(\lambda)\otimes_{\bK} F)=Z_{F,\chi}(\lambda)$, so we can derive the following. 

\begin{prop}\label{Irred}
	Let $\lambda\in X$, and suppose $\chi$ is regular nilpotent (i.e. $I=\Pi$). Let $F$ be a field which is a $U^0$-algebra via $\pi:U^0\to F$, with the property that $\pi(h_\alpha)=0$ for all $\alpha\in R$. Then $Z_{\bK,\chi}(\lambda)\otimes_{\bK} F$ is irreducible in $\sC_F^\circ$ and  $Z_{F,\chi}(\lambda)$ is irreducible in $\sC_F$. 
\end{prop}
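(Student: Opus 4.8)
The plan is to reduce everything to the classical fact that baby Verma modules are simple when the $p$-character is regular nilpotent, and then to transport this first along base change to $F$ and next along the equivalence $\Theta_F$ of Proposition~\ref{ThetaEquiv}. To begin, I would pin down the $\bK$-case. The $U(\fg)$-module underlying $Z_{\bK,\chi}(\lambda)$ is the ordinary baby Verma module $Z_\chi(d\lambda)$; since $e_\alpha^p$ acts on it by $\chi(e_\alpha)^p$ and each $h^p-h^{[p]}$ acts by $0$, it is a finite-dimensional module over the reduced enveloping algebra $U_\chi(\fg)$. As $I=\Pi$ we have $R^{+}\setminus R_I^{+}=\emptyset$, so $\chi$ is regular nilpotent, and by the theory of baby Verma modules in standard Levi form (see \cite{Jan4} and \cite[Ch.~10--11]{Jan}) the module $Z_\chi(d\lambda)$ is simple over $U_\chi(\fg)$. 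Because $\bK$ is algebraically closed, Schur's lemma then gives $\End_{U_\chi(\fg)}(Z_\chi(d\lambda))=\bK$, i.e.\ $Z_\chi(d\lambda)$ is absolutely simple.

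Next I would base change. An absolutely simple module over a finite-dimensional $\bK$-algebra stays simple under any field extension (concretely, $U_\chi(\fg)/\ann(Z_\chi(d\lambda))\cong M_n(\bK)$ base changes to $M_n(F)$, whose unique simple module is $Z_\chi(d\lambda)\otimes_\bK F$), so $Z_\chi(d\lambda)\otimes_\bK F$ is simple over $U_\chi(\fg)\otimes_\bK F$. As recorded just before the statement, $Z_{\bK,\chi}(\lambda)\otimes_\bK F$ lies in $\sC_F^\circ$, and its underlying $U_\chi\otimes_\bK F$-module is $Z_\chi(d\lambda)\otimes_\bK F$, on which the action factors through $U_\chi(\fg)\otimes_\bK F$. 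A nonzero subobject of $Z_{\bK,\chi}(\lambda)\otimes_\bK F$ in $\sC_F^\circ$ is in particular a nonzero $U_\chi\otimes_\bK F$-submodule, hence all of it; so $Z_{\bK,\chi}(\lambda)\otimes_\bK F$ is irreducible in $\sC_F^\circ$, which is the first assertion.

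Finally I would invoke $\Theta_F$. The hypothesis $\pi(h_\alpha)=0$ for all $\alpha\in R$ is exactly what makes $\Theta_F:\sC_F^\circ\to\sC_F$ an equivalence of categories, and by the identity $\Theta_F(Z_{\bK,\chi}(\lambda)\otimes_\bK F)=Z_{F,\chi}(\lambda)$ noted above, applying this equivalence (which carries simple objects to simple objects) yields that $Z_{F,\chi}(\lambda)$ is irreducible in $\sC_F$.

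The substantive ingredient is the classical simplicity of $Z_\chi(d\lambda)$ in the regular nilpotent case; the rest is bookkeeping. The step to be careful about is the passage from ungraded to graded: one must note that subobjects in $\sC_F^\circ$ are genuine $U_\chi\otimes_\bK F$-submodules, so that absolute simplicity of the ungraded module really does force irreducibility in the graded category, and that the base-change step genuinely uses $\bK=\overline{\bK}$ (ordinary, as opposed to absolute, simplicity over a non-closed field need not survive a field extension). An alternative, staying inside the graded categories throughout, is to cite the simplicity of $Z_{\bK,\chi}(\lambda)$ in $\sC_\bK$ directly and combine it with the $\Hom$--base-change isomorphism (the analogue of \cite[Lemma~3.2]{AJS} already used in this section) together with a composition-length count; but routing through the ordinary baby Verma is shorter.
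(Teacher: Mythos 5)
Your proposal is correct and follows essentially the same route as the paper: reduce via the equivalence $\Theta_F$ and the identity $\Theta_F(Z_{\bK,\chi}(\lambda)\otimes_{\bK}F)=Z_{F,\chi}(\lambda)$ to irreducibility of $Z_{\bK,\chi}(\lambda)\otimes_{\bK}F$ as an (ungraded) $U_\chi(\fg)\otimes F$-module, then use simplicity of the baby Verma module for regular nilpotent $\chi$ together with absolute irreducibility, which holds since $\End_{U_\chi(\fg)}(Z_{\bK,\chi}(\lambda))=\bK$ by algebraic closedness. The only cosmetic difference is that you spell out the Wedderburn/base-change argument where the paper cites \cite[Prop.~9.2.5]{W}, and you cite Jantzen where the paper cites \cite[Theorem 4.2]{FP}.
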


\begin{proof}
	As just mentioned, $Z_{F,\chi}(\lambda)=\Theta_F(Z_{\bK,\chi}(\lambda)\otimes_{\bK}F).$ Since $\Theta_F$ is an equivalence of categories, it is enough to show that $Z_{\bK,\chi}(\lambda)\otimes_{\bK}F$ is irreducible in $\sC_F^\circ$. Furthermore, as any proper submodule of $Z_{\bK,\chi}(\lambda)\otimes_{\bK}F$ in $\sC_F^\circ$ is also a proper submodule of $Z_{\bK,\chi}(\lambda)\otimes_{\bK}F$ as a $U_\chi(\fg)\otimes F$-module, it is enough to prove that $Z_{\bK,\chi}(\lambda)\otimes_{\bK} F$ is irreducible as a $U_\chi(\fg)\otimes F$-module. (Note here that in $\sC_F^\circ$ all modules are $U_\chi(\fg)\otimes F$-modules, not just $U_\chi\otimes F$-modules).
	
	Since $\chi$ is regular nilpotent, we know that $Z_{\bK,\chi}(\lambda)$ is an irreducible $U_\chi(\fg)$-module \cite[Theorem 4.2]{FP}. So the result will follow if $Z_{\bK,\chi}(\lambda)$ is an absolutely irreducible $U_\chi(\fg)$-module (recalling that a $U_\chi(\fg)$-module $M$ is called {\bf absolutely irreducible} if $M\otimes F$ is an irreducible $U_\chi(\fg)\otimes F$-module for all field extensions $\bK\subseteq F$). Because $Z_{\bK,\chi}(\lambda)$ is finite-dimensional, it follows from Proposition 9.2.5 in \cite{W} that $Z_{\bK,\chi}(\lambda)$ is absolutely irreducible if $\End_{U_\chi(\fg)}(Z_{\bK,\chi}(\lambda))=\bK$. This follows since $\bK$ is algebraically closed.
\end{proof}

As in Section~\ref{Sec6.1}, let us write $Q_{\bK,\chi}(\lambda)$ for the projective cover of $Z_{\bK,\chi}(\lambda)$ in $\sC_\bK$, write $Q_{F,\chi}(\lambda)$ for the projective cover of $Z_{F,\chi}(\lambda)$ in $\sC_F$, and write $Q_{F^\circ,\chi}(\lambda)$ for the projective cover of $Z_{\bK,\chi}(\lambda)\otimes_{\bK} F$ in $\sC_F^\circ$.

\begin{lemma}\label{ProjCovCirc}
	Let $\lambda\in X$, and suppose $\pi(h_\alpha)=0$ for all $\alpha\in R$. Then $Q_{F^\circ,\chi}(\lambda)\cong Q_{\bK,\chi}(\lambda)\otimes_{\bK} F$ in $\sC_F^\circ$.
\end{lemma}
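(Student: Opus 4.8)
The plan is to recognise $Q_{\bK,\chi}(\lambda)\otimes_\bK F$ as a projective object of $\sC_F^\circ$ that surjects onto the simple module $Z_{\bK,\chi}(\lambda)\otimes_\bK F$, and then to show it is indecomposable; an indecomposable projective admitting a surjection onto a simple module is precisely the projective cover of that simple, so this identifies it with $Q_{F^\circ,\chi}(\lambda)$. The point requiring care is that extension of scalars preserves projectivity automatically but not indecomposability, so the real content lies in controlling endomorphisms after base change. Concretely: $F$, viewed as a $U^0$-algebra via $\pi^\circ\colon U^0\to\bK\hookrightarrow F$, is a $\bK$-algebra whose $U^0$-structure is the pushforward of that on $\bK$, so extension of scalars along $\bK\to F$ is exactly the functor $\sC_\bK\to\sC_F^\circ$, $M\mapsto M\otimes_\bK F$, of Subsection~\ref{Sec5.3}, and by Lemma~\ref{projscal} the module $Q_{\bK,\chi}(\lambda)\otimes_\bK F$ is projective in $\sC_F^\circ$. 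Tensoring the defining surjection $Q_{\bK,\chi}(\lambda)\twoheadrightarrow Z_{\bK,\chi}(\lambda)$ with the flat $\bK$-module $F$ gives a surjection onto $Z_{\bK,\chi}(\lambda)\otimes_\bK F$, which is simple by Proposition~\ref{Irred}. Since $Q_{F^\circ,\chi}(\lambda)$ is the projective cover of that simple, I would lift this surjection through the projective cover map (using projectivity of $Q_{\bK,\chi}(\lambda)\otimes_\bK F$); the lift is surjective because the kernel of $Q_{F^\circ,\chi}(\lambda)\to Z_{\bK,\chi}(\lambda)\otimes_\bK F$ is superfluous, hence splits. Thus $Q_{F^\circ,\chi}(\lambda)$ is a nonzero direct summand of $Q_{\bK,\chi}(\lambda)\otimes_\bK F$, and it remains only to prove the latter is indecomposable.

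For indecomposability I would compute the endomorphism ring. The argument behind Lemma~3.2 of \cite{AJS} — which this section already invokes for $\Ext$ of baby Verma modules — applies verbatim to $\Hom$ of the finite-dimensional module $Q_{\bK,\chi}(\lambda)$ with itself, yielding a $\bK$-algebra isomorphism $\End_{\sC_F^\circ}(Q_{\bK,\chi}(\lambda)\otimes_\bK F)\cong\End_{\sC_\bK}(Q_{\bK,\chi}(\lambda))\otimes_\bK F$. Since $Q_{\bK,\chi}(\lambda)$ is indecomposable in the finite-length category $\sC_\bK$ and $\bK$ is algebraically closed, $E\coloneqq\End_{\sC_\bK}(Q_{\bK,\chi}(\lambda))$ is a finite-dimensional local $\bK$-algebra with $E/\Rad E=\bK$; then $\Rad(E)\otimes_\bK F$ is a nilpotent ideal of $E\otimes_\bK F$ with quotient the field $(E/\Rad E)\otimes_\bK F=F$, so $E\otimes_\bK F$ is local. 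Hence $Q_{\bK,\chi}(\lambda)\otimes_\bK F$ is indecomposable, and combined with the preceding paragraph its summand complementary to $Q_{F^\circ,\chi}(\lambda)$ vanishes, giving $Q_{\bK,\chi}(\lambda)\otimes_\bK F\cong Q_{F^\circ,\chi}(\lambda)$.

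The step I expect to be the main obstacle is making the $\Hom$/$\End$ base-change isomorphism rigorous in the graded categorical setting — that $\Hom$ in $\sC$ commutes with $-\otimes_\bK F$ — which rests on finite-dimensionality of $Q_{\bK,\chi}(\lambda)$ and flatness of $F$ over $\bK$, exactly as in \cite{AJS}. If one prefers to avoid endomorphism rings, there is an alternative route: using the description of a truncation $\sC_\bK(\leq\nu+\bZ I)$ as the module category of a finite-dimensional $\bK$-algebra, the fact that $\bK$ is algebraically closed gives $\Rad(M\otimes_\bK F)=\Rad(M)\otimes_\bK F$ for $M$ in such a truncation, so $Q_{\bK,\chi}(\lambda)\otimes_\bK F$ has simple head $Z_{\bK,\chi}(\lambda)\otimes_\bK F$ and is therefore seen directly to be an indecomposable projective cover. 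Either way the key inputs are Lemma~\ref{projscal}, Proposition~\ref{Irred}, and a base-change statement along the field extension $F/\bK$ whose validity depends on $\bK$ being algebraically closed.
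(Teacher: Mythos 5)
Your proposal is correct, but it takes a genuinely different route from the paper. The paper's own proof leaves the graded world: it identifies $\sC_F^\circ$ with a weight-space category $\sX$ sitting inside the category $\sM_F$ of all finite-dimensional $U_\chi(\fg)\otimes F$-modules, shows $Q_{\bK,\chi}(\lambda)\otimes_{\bK}F$ is projective in $\sM_F$ and surjects onto the projective cover of $Z_{\bK,\chi}(\lambda)\otimes_{\bK}F$ there, and then forces that surjection to be an isomorphism by a dimension count, computing both sides to be $p^{\dim\fn^{+}}|W\cdot d\lambda|$ via Proposition 11.18 and the $F$-version of Lemma 10.9 of \cite{Jan} (symmetry of $U_\chi(\fg)\otimes F$), with Hom base change for baby Verma modules used to match the orbit counts; finally it transports the conclusion back to $\sC_F^\circ$. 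You instead stay inside $\sC_F^\circ$: Lemma~\ref{projscal} and Proposition~\ref{Irred} make $Q_{\bK,\chi}(\lambda)\otimes_{\bK}F$ a projective object surjecting onto the simple $Z_{\bK,\chi}(\lambda)\otimes_{\bK}F$, the lifting/splitting argument exhibits $Q_{F^\circ,\chi}(\lambda)$ as a nonzero direct summand, and indecomposability follows from locality of $\End_{\sC_\bK}(Q_{\bK,\chi}(\lambda))\otimes_{\bK}F$, which uses that $\bK$ is algebraically closed (so the residue division algebra is $\bK$) together with flat base change of graded Hom for finite-dimensional modules; the latter is routine since $\Hom_{\sC_\bK}(M,N)$ is cut out of $\Hom_{\bK}(M,N)$ by finitely many linear conditions, and the paper itself invokes exactly this kind of base change elsewhere (e.g.\ in the proof of Proposition~\ref{QIisom}). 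What each approach buys: yours avoids Jantzen's dimension formulas and the auxiliary categories $\sM_F$ and $\sX$ altogether (and with them the small implicit step that a projective cover in $\sM_F$ lying in $\sX$ remains one in $\sX\simeq\sC_F^\circ$), at the cost of justifying the End base change and the locality argument; the paper's route yields as a by-product the explicit dimension and $Z$-filtration multiplicity of $Q_{F^\circ,\chi}(\lambda)$, which it reuses in the discussion following Corollary~\ref{projAF}.
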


\begin{proof}
	
	We observed in Section~\ref{Sec5.1} that $$\sC_F^\circ=\bigoplus_{\lambda+\bZ I\in X/\bZ I} \sC_F^\circ(\lambda+\bZ I)$$ where $\sC_F^\circ(\lambda+\bZ I)$ may be identified as the category of $F$-finite-dimensional $U_\chi(\fg)\otimes F$-modules $M$ which have an $F$-vector space decomposition  $$M=\bigoplus_{\substack{d\mu\in\fh^{*}\\ \mu\in\lambda+\bZ I+pX}}M^{d\mu}$$ such that $$hm=md\mu(h)$$ for all $m\in M^{d\mu}$ and $h\in\fh$. Note here that we may consider $U_\chi(\fg)\otimes F$-modules rather than $U_\chi\otimes F$-modules because the $U^0$-algebra structure on $F$ is extended from $\bK$. 
	
	Let us write $\sX$ for the category of $F$-finite-dimensional $U_\chi(\fg)\otimes F$-modules $M$ which have a decomposition $$M=\bigoplus_{\sigma\in\Lambda_0} M^\sigma$$ where $$\Lambda_0=\{\sigma:\fh\to\bK\,\vert\,\sigma(h)^p=\sigma(h^{[p]})\,\mbox{for all}\, h\in\fh\}$$ and where $$hm=m\sigma(h)$$ for all $m\in M^\sigma$ and $h\in \fh$. Morphisms in this category are $U_\chi(\fg)\otimes F$-module homomorphisms which preserve the decomposition. It is then straightforward to see that $\sX$ is the direct sum of $\sC_F^\circ(\kappa+\bZ I)$ over all $\kappa+\bZ I\in X/\bZ I$. In other words, $\sX$ is an equivalent category to $\sC_F^\circ$.
	
	Now, it is clear that $Q_{F^\circ,\chi}(\lambda)$ is the projective cover of $Z_{F^\circ,\chi}(\lambda)\in \sX$ and that $Q_{\bK,\chi}(\lambda)\otimes_{\bK} F$ is projective in $\sX$, since all of these objects lie in $\sC_F^\circ(\lambda+\bZ I)$.
	
	Let us now write $\sM_F$ for the category of all $F$-finite-dimensional $U_\chi(\fg)\otimes F$-modules, and $\sM_\bK$ for the category of all $\bK$-finite-dimensional $U_\chi(\fg)$-modules. One can check without any difficulty that $\sX$ is a full subcategory of $\sM_F$. 
	
	Let us write $\widetilde{Q}_{F^\circ,\chi}(\lambda)$ for the projective cover of $Z_{F^\circ,\chi}(\lambda)$ in $\sM_F$. Since $Q_{\bK,\chi}(\lambda)$ is a $U_\chi(\fg)$-module, $Q_{\bK,\chi}(\lambda)\otimes_{\bK} F\in \sM_F$. Furthermore, one can show (using an easier version of Lemma 3.1(a) in \cite{AJS}) that $Q_{\bK,\chi}(\lambda)\otimes_{\bK} F$ is projective in $\sM_F$.
	
	Since this is so, and since $\widetilde{Q}_{F^\circ,\chi}(\lambda)$ is the projective cover of $Z_{F^\circ,\chi}(\lambda)$ in $\sM_F$, there exists a surjection in $\sM_F$ from $Q_{\bK,\chi}(\lambda)\otimes_{\bK} F$ onto $\widetilde{Q}_{F^\circ,\chi}(\lambda)$. Now, Proposition 11.18 in \cite{Jan} show that $$\dim_F(Q_{\bK,\chi}(\lambda)\otimes_{\bK} F)=p^{\dim\fn^{+}}\left| W\cdot d\lambda\right|.$$ Furthermore, the proof of Lemma 10.9 in \cite{Jan} works just as well for $U_\chi(\fg)\otimes F=U_\chi(\fg\otimes F)$-modules (for example, that $U_\chi(\fg)\otimes F$ is symmetric follows from the fact that $U_\chi(\fg)$ is). In other words, we have $$\dim_F(\widetilde{Q}_{F^\circ,\chi}(\lambda))=p^{\dim\fn^{+}}\sum_{\mu\in\Lambda_0}[Z_{F^\circ,\chi}(d\lambda):Z_{F^\circ,\chi}(d\mu)].$$ Clearly $[Z_{F^\circ,\chi}(d\lambda):Z_{F^\circ,\chi}(d\mu)]$ equals 1 if $Z_{F^\circ,\chi}(d\lambda)\cong Z_{F^\circ,\chi}(d\mu)$ and 0 otherwise. Observing that $$\Hom_{\sM_\bK}(Z_{\bK,\chi}(d\lambda), Z_{\bK,\chi}(d\mu))\otimes_{\bK} F\cong \Hom_{\sM_F}(Z_{F^\circ,\chi}(d\lambda), Z_{F^\circ,\chi}(d\mu))$$ we get that $Z_{F^\circ,\chi}(d\lambda)\cong Z_{F^\circ,\chi}(d\mu)$ if and only if $Z_{\bK,\chi}(d\lambda)\cong Z_{\bK,\chi}(d\mu)$. Hence $[Z_{F^\circ,\chi}(d\lambda):Z_{F^\circ,\chi}(d\mu)]=1$ if and only if $d\lambda\in W\cdot d\mu$, and is zero if and only if not. Hence, 
	$$\dim_F(\widetilde{Q}_{F^\circ,\chi}(\lambda))=p^{\dim\fn^{+}}\left|W\cdot d\lambda\right|=\dim_F(Q_{\bK,\chi}(\lambda)\otimes_{\bK} F).$$ Therefore, we have $$\widetilde{Q}_{F^\circ,\chi}(\lambda)\cong Q_{\bK,\chi}(\lambda)\otimes_{\bK} F.$$
	
	Hence, $Q_{\bK,\chi}(\lambda)\otimes_{\bK} F$ is the projective cover of $Z_{F^\circ,\chi}(\lambda)$ in $\sM_F$. Since $Q_{\bK,\chi}(\lambda)\otimes_{\bK} F$ in fact lies in $\sX$, it is also the projective cover of $Z_{F^\circ,\chi}(\lambda)$ in $\sX$. Therefore, $Q_{\bK,\chi}(\lambda)\otimes_{\bK} F\cong Q_{F^\circ,\chi}(\lambda)$ in $\sX$, and so $$Q_{\bK,\chi}(\lambda)\otimes_{\bK} F\cong Q_{F^\circ,\chi}(\lambda)$$ in $\sC_F^\circ$.
	
\end{proof}

\begin{cor}\label{projAF}
	Let $\lambda\in X$ and suppose $\pi(h_\alpha)=0$ for all $\alpha\in R$. Suppose also that $A$ is local with residue field $F$. Then there exists a projective module $Q_{A,\chi}(\lambda)\in\sC_A$ with $Q_{A,\chi}(\lambda)\otimes_A F\cong Q_{F,\chi}(\lambda)$.
\end{cor}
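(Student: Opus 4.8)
The plan is to set $Q_{A,\chi}(\lambda)\coloneqq\Theta_A\big(Q_{\bK,\chi}(\lambda)\otimes_{\bK}A\big)$ and to verify the two required properties, exploiting the equivalence $\Theta_A\colon\sC_A^\circ\to\sC_A$ of Proposition~\ref{ThetaEquiv} together with its compatibility with base change in Proposition~\ref{ThetaComm}.

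For projectivity, the starting point is that $Q_{\bK,\chi}(\lambda)$ is projective in $\sC_\bK$, which coincides with $\sC_\bK^\circ$ since the structure map of $\bK$ is the counit $h\mapsto 0$. As $A$ is a $\bK$-algebra and the $U^0$-algebra structure on $A$ relevant to $\sC_A^\circ$ is precisely $\pi^\circ\colon U^0\to\bK\hookrightarrow A$, Lemma~\ref{projscal} (applied with $\bK$ in the role of $A$ and our $A$ in the role of $A'$) shows that $Q_{\bK,\chi}(\lambda)\otimes_{\bK}A$ is projective in $\sC_A^\circ$. Since $\Theta_A$ is an equivalence of categories it preserves projective objects, so $Q_{A,\chi}(\lambda)$ is projective in $\sC_A$.

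For the base change, I would compute, using Proposition~\ref{ThetaComm} and associativity of the tensor product,
$$Q_{A,\chi}(\lambda)\otimes_A F \;=\; \Theta_A\big(Q_{\bK,\chi}(\lambda)\otimes_{\bK}A\big)\otimes_A F \;\cong\; \Theta_F\big((Q_{\bK,\chi}(\lambda)\otimes_{\bK}A)\otimes_A F\big) \;=\; \Theta_F\big(Q_{\bK,\chi}(\lambda)\otimes_{\bK}F\big).$$
By Lemma~\ref{ProjCovCirc}, $Q_{\bK,\chi}(\lambda)\otimes_{\bK}F\cong Q_{F^\circ,\chi}(\lambda)$ in $\sC_F^\circ$, i.e. the projective cover of $Z_{\bK,\chi}(\lambda)\otimes_{\bK}F$. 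Because $\Theta_F$ is an equivalence of categories it sends projective covers to projective covers, and since $\Theta_F\big(Z_{\bK,\chi}(\lambda)\otimes_{\bK}F\big)=Z_{F,\chi}(\lambda)$ (as observed at the start of this subsection), $\Theta_F\big(Q_{F^\circ,\chi}(\lambda)\big)$ is the projective cover of $Z_{F,\chi}(\lambda)$ in $\sC_F$. Finally, as $\chi$ is regular nilpotent, Proposition~\ref{Irred} gives $Z_{F,\chi}(\lambda)=L_{F,\chi}(\lambda)$, so this projective cover is exactly $Q_{F,\chi}(\lambda)$, yielding $Q_{A,\chi}(\lambda)\otimes_A F\cong Q_{F,\chi}(\lambda)$.

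I do not expect a serious obstacle here: the whole argument is assembled from results already in place, and the only points requiring care are bookkeeping ones — tracking which $U^0$-algebra structure ($\pi$ versus $\pi^\circ$) is active at each stage, checking that the base-change hypotheses of Lemma~\ref{projscal} and Proposition~\ref{ThetaComm} really apply when the base ring is $\bK$, and invoking the (standard) fact that an equivalence of categories transports projective covers, which is legitimate since the notion of a superfluous subobject is categorical.
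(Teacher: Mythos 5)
Your proposal is correct and follows essentially the same route as the paper: define $Q_{A,\chi}(\lambda)=\Theta_A(Q_{\bK,\chi}(\lambda)\otimes_{\bK}A)$, get projectivity from Lemma~\ref{projscal} plus the equivalence $\Theta_A$, and obtain the base-change isomorphism by combining Proposition~\ref{ThetaComm}, Lemma~\ref{ProjCovCirc}, and the fact that an equivalence carries projective covers to projective covers. Your extra remark invoking Proposition~\ref{Irred} to identify $Z_{F,\chi}(\lambda)$ with $L_{F,\chi}(\lambda)$ is just the bookkeeping the paper leaves implicit in its Section~\ref{Sec7.2} notation.
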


\begin{proof}
	Define $Q_{A,\chi}(\lambda)=\Theta_A(Q_{\bK,\chi}(\lambda)\otimes_{\bK}A)$. This is projective since $Q_{\bK,\chi}(\lambda)\otimes_{\bK}A$ is projective in $\sC_A^\circ$ by Lemma~\ref{projscal} and $\Theta_A$ is an equivalence of categories. We then have \begin{equation*} \begin{split} Q_{A,\chi}(\lambda)\otimes_A F & =\Theta_A(Q_{\bK,\chi}(\lambda)\otimes_{\bK}A)\otimes_A F  =\Theta_F(Q_{\bK,\chi}(\lambda)\otimes_{\bK}A\otimes_A F)
			\\ &		
			\cong\Theta_F(Q_{\bK,\chi}(\lambda)\otimes_{\bK} F)\cong\Theta_F(Q_{F^\circ,\chi}(\lambda))=Q_{F,\chi}(\lambda).
		\end{split}
	\end{equation*}
	Here, the second equality comes from Proposition~\ref{ThetaComm}, the fourth equality comes from Proposition~\ref{ProjCovCirc}, and the fifth equality comes from the fact that an equivalence of categories must map projective covers to projective covers.
\end{proof}

It is important to observe that when $A$ is not local we may still define $$Q_{A,\chi}(\lambda)\coloneqq\Theta_A(Q_{\bK,\chi}(\lambda)\otimes_{\bK}A),$$ which will still be a projective module in $\sC_A$.

Note that $Q_{A,\chi}(\lambda)$ lies in $\sC_A(\lambda+\bZ I)$ since $Q_{\bK,\chi}(\lambda)\in\sC_\bK(\lambda+\bZ I)$. Furthermore, from this construction it is clear that, as $Q_{\bK,\chi}(\lambda)$ has a $Z$-filtration in which the only module appearing is $Z_{\bK,\chi}(\lambda)$, which appears $\left| W\cdot d\lambda\right|$ times \cite[Prop. 11.18]{Jan}, the module $Q_{A,\chi}(\lambda)$ also has a $Z$-filtration in which the only module appearing is $Z_{A,\chi}(\lambda)$, similarly appearing exactly $\left| W\cdot d\lambda\right|$ times. In particular, each $Q_{A,\chi}(\lambda)$ is free over $A$ with rank $p^{\dim\fn^{+}}\left| W\cdot d\lambda\right|$.

Furthermore, if $Z_{A,\chi}(\mu)$ is a factor in any $Z$-filtration of $Q_{A,\chi}(\lambda)$, then there must exist a non-zero homomorphism $Z_{A,\chi}(\mu)\to Z_{A,\chi}(\lambda)$, using the just discussed $Z$-filtration. Since we have  $\Hom_{\sC_A}(Z_{A,\chi}(\mu), Z_{A,\chi}(\lambda))\cong \Hom_{\sC_\bK}(Z_{\bK,\chi}(\mu), Z_{\bK,\chi}(\lambda))\otimes_{\bK}A$, we are able to conclude that $\Hom_{\sC_\bK}(Z_{\bK,\chi}(\mu), Z_{\bK,\chi}(\lambda))\neq 0$ and so $Z_{\bK,\chi}(\mu)\cong Z_{\bK,\chi}(\lambda)$, which implies $Z_{A,\chi}(\mu)\cong Z_{A,\chi}(\lambda)$.

\begin{prop}\label{QIisom}
	Let $\lambda,\mu\in X$ and suppose $\pi(h_\alpha)=0$ for all $\alpha\in R$. Then $Q_{A,\chi}(\lambda)\cong Q_{A,\chi}(\mu)$ if and only if $\lambda\in W_p\cdot\mu$.
\end{prop}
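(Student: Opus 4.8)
The plan is to prove the two directions separately. For the ``if'' direction, suppose $\lambda\in W_p\cdot\mu$. Using the equivalence $\Theta_A$ of Proposition~\ref{ThetaEquiv} together with the identity $Q_{A,\chi}(\lambda)=\Theta_A(Q_{\bK,\chi}(\lambda)\otimes_{\bK}A)$, it suffices to show $Q_{\bK,\chi}(\lambda)\otimes_{\bK}A\cong Q_{\bK,\chi}(\mu)\otimes_{\bK}A$ in $\sC_A^\circ$. Since $\chi$ is regular nilpotent, Proposition~\ref{ImpliesIsom} (whose hypothesis $\pi(h_\alpha)=0$ for $\alpha\in R_I=R$ is exactly our standing assumption, and which for $A=\bK$ is Proposition 11.9 in \cite{Jan}) gives $Z_{\bK,\chi}(\lambda)\cong Z_{\bK,\chi}(\mu)$; actually, more directly, $\lambda\in W_p\cdot\mu = W_{I,p}\cdot\mu$ since $I=\Pi$, so $Z_{A,\chi}(\lambda)\cong Z_{A,\chi}(\mu)$ by Proposition~\ref{ImpliesIsom}. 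Since $Q_{A,\chi}(\nu)$ is, by the discussion preceding this proposition, the projective cover of $Z_{A,\chi}(\nu)$ when $A$ is (say) local, and in general is $\Theta_A(Q_{\bK,\chi}(\nu)\otimes_{\bK}A)$ with $Q_{\bK,\chi}(\nu)$ the projective cover of the irreducible $Z_{\bK,\chi}(\nu)$, an isomorphism $Z_{\bK,\chi}(\lambda)\cong Z_{\bK,\chi}(\mu)$ forces $Q_{\bK,\chi}(\lambda)\cong Q_{\bK,\chi}(\mu)$ and hence $Q_{A,\chi}(\lambda)\cong Q_{A,\chi}(\mu)$.

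For the ``only if'' direction, suppose $Q_{A,\chi}(\lambda)\cong Q_{A,\chi}(\mu)$ in $\sC_A$. Applying $\Theta_A^{-1}$ gives $Q_{\bK,\chi}(\lambda)\otimes_{\bK}A\cong Q_{\bK,\chi}(\mu)\otimes_{\bK}A$ in $\sC_A^\circ$. I would then extract the $Z$-filtration information: by the remarks just before the statement, $Q_{A,\chi}(\lambda)$ has a $Z$-filtration in which only $Z_{A,\chi}(\lambda)$ appears (exactly $|W\cdot d\lambda|$ times), and similarly for $\mu$. Comparing these filtrations — for instance by looking at the top grade $\lambda+\bZ I$, which must equal $\mu+\bZ I$, and by using that $Z_{A,\chi}(\mu)$ occurs as a filtration factor of $Q_{A,\chi}(\lambda)$ — together with the already-established fact that a filtration factor $Z_{A,\chi}(\nu)$ of $Q_{A,\chi}(\lambda)$ must satisfy $Z_{A,\chi}(\nu)\cong Z_{A,\chi}(\lambda)$, yields $Z_{A,\chi}(\lambda)\cong Z_{A,\chi}(\mu)$. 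Then Proposition~\ref{ZRegIsom} (applicable since $\pi(h_\alpha)=0$ for all $\alpha\in R$) gives $\lambda\in W_p\cdot\mu$.

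Alternatively, and perhaps more cleanly, for the ``only if'' direction one can argue via $\Hom$-spaces: from $Q_{A,\chi}(\lambda)\cong Q_{A,\chi}(\mu)$ one gets, after tensoring with a residue field or using the base-change isomorphisms $\Ext^i_{\sC_A}(Z_{A,\chi}(-),Z_{A,\chi}(-))\cong\Ext^i_{\sC_\bK}(Z_{\bK,\chi}(-),Z_{\bK,\chi}(-))\otimes_{\bK}A$ established in Section~\ref{Sec7.1}, that $Z_{\bK,\chi}(\mu)$ appears in a $Z$-filtration of $Q_{\bK,\chi}(\lambda)$, whence $\Hom_{\sC_\bK}(Z_{\bK,\chi}(\mu),Z_{\bK,\chi}(\lambda))\neq 0$; since $Z_{\bK,\chi}(\lambda)$ is irreducible (Proposition~\ref{Irred}), this gives $Z_{\bK,\chi}(\mu)\cong Z_{\bK,\chi}(\lambda)$, and then Proposition~\ref{ZRegIsom} (or directly Proposition 11.9 in \cite{Jan}) completes it.

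I expect the main obstacle to be the bookkeeping in the ``only if'' direction: making rigorous the claim that $Z_{A,\chi}(\mu)$ genuinely appears as a factor of any $Z$-filtration of $Q_{A,\chi}(\lambda)$ once $Q_{A,\chi}(\lambda)\cong Q_{A,\chi}(\mu)$. The cleanest route is to note that $Q_{A,\chi}(\mu)$, having a $Z$-filtration with top factor $Z_{A,\chi}(\mu)$ (in the maximal grade $\mu+\bZ I$), surjects onto $Z_{A,\chi}(\mu)$; transporting this surjection across the isomorphism $Q_{A,\chi}(\lambda)\cong Q_{A,\chi}(\mu)$ gives a nonzero map $Q_{A,\chi}(\lambda)\to Z_{A,\chi}(\mu)$, and then a weight/grading comparison (the top grade of $Q_{A,\chi}(\lambda)$ is $\lambda+\bZ I$, forcing $\lambda+\bZ I=\mu+\bZ I$) plus the established Hom-computation $\Hom_{\sC_A}(Z_{A,\chi}(\mu),Z_{A,\chi}(\lambda))\cong\Hom_{\sC_\bK}(Z_{\bK,\chi}(\mu),Z_{\bK,\chi}(\lambda))\otimes_{\bK}A$ and irreducibility of $Z_{\bK,\chi}(\lambda)$ finishes the argument. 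Everything else is a direct application of results already in hand.
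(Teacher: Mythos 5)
Your proposal is correct and follows essentially the same route as the paper: both directions reduce to $\bK$ via $\Theta_A$ and the base-change isomorphisms of Hom-spaces, use the $Z$-filtrations of $Q_{\bK,\chi}(\lambda)$ and $Q_{\bK,\chi}(\mu)$ whose sections are all copies of the irreducible baby Verma modules, and then invoke the $W_p$-linkage of baby Verma modules over $\bK$ (Proposition~\ref{ZRegIsom}) for the ``only if'' part and the projective-cover argument over $\bK$ for the ``if'' part. The paper's ``only if'' direction works directly with $\Hom_{\sC_\bK}(Q_{\bK,\chi}(\lambda),Q_{\bK,\chi}(\mu))\neq 0$ rather than transporting the surjection onto $Z_{A,\chi}(\mu)$ across the isomorphism, but this is a cosmetic difference, not a different method.
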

\begin{proof}
	For the forward implication, we have that $$\Hom_{\sC_A}(Q_{A,\chi}(\lambda),Q_{A,\chi}(\mu))\cong \Hom_{\sC_\bK}(Q_{\bK,\chi}(\lambda),Q_{\bK,\chi}(\mu))\otimes_{\bK} A$$ as in the previous subsection. Hence, $\Hom_{\sC_\bK}(Q_{\bK,\chi}(\lambda),Q_{\bK,\chi}(\mu))\neq 0$. Since $Q_{\bK,\chi}(\lambda)$ has a filtration all of whose sections are isomorphic to $Z_{\bK,\chi}(\lambda)$, and $Q_{\bK,\chi}(\mu)$ has similarly, we conclude that $$\Hom_{\sC_\bK}(Z_{\bK,\chi}(\lambda),Z_{\bK,\chi}(\mu))\neq 0,$$ implying (as these baby Verma modules are irreducible) that $Z_{\bK,\chi}(\lambda)\cong Z_{\bK,\chi}(\mu)$ and so $\lambda\in W_p\cdot\mu$.
	
	Conversely, $\lambda\in W_p\cdot\mu$ implies $Z_{\bK,\chi}(\lambda)\cong Z_{\bK,\chi}(\mu)$, and so $Q_{\bK,\chi}(\lambda)\cong Q_{\bK,\chi}(\mu)$. This then easily implies that $Q_{A,\chi}(\lambda)\cong Q_{A,\chi}(\mu)$.
\end{proof}

Let us therefore write $\Lambda$ for the set of $W_p$-dot-orbits on $X$, so that elements of $\Lambda$ enumerate the pairwise non-isomorphic $Q_{A,\chi}(\lambda)$ and also, by pairing Proposition~\ref{ImpliesIsom} with Proposition~\ref{ZRegIsom}, the pairwise non-isomorphic baby Verma modules $Z_{A,\chi}(\lambda)$ (recalling that throughout this subsection $\chi$ is regular nilpotent). We shall also use the notation $\Lambda$ for a fundamental domain of the $W_p$-dot-action on $X$, so that we may view elements of $\Lambda$ as being in $X$.

We may now observe that these projective $Q_{A,\chi}(\lambda)$ are ubiquitous in $\sC_A$.

\begin{lemma}\label{RegProjCov}
	Let $M\in\sC_A$ and suppose $\pi(h_\alpha)=0$ for all $\alpha\in R$. Then there exists $Q\in\sC_A$ with $Q\twoheadrightarrow M$ and such that $$Q=\bigoplus_{\mu\in \Lambda} Q_{A,\chi}(\mu)^{m_\mu}$$ for some $m_\mu\geq 0$.
\end{lemma}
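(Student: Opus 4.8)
The plan is to transport the problem to the category $\sC_A^\circ$ via the equivalence $\Theta_A$ of Proposition~\ref{ThetaEquiv}, and there obtain everything by base change from $\sC_\bK$. Since $\Theta_A:\sC_A^\circ\to\sC_A$ is an equivalence of categories and $Q_{A,\chi}(\mu)=\Theta_A(Q_{\bK,\chi}(\mu)\otimes_\bK A)$ by definition, it suffices to prove: every $N\in\sC_A^\circ$ admits a surjection, in $\sC_A^\circ$, from a finite direct sum of copies of the modules $Q_{\bK,\chi}(\mu)\otimes_\bK A$ with $\mu\in\Lambda$. Applying $\Theta_A$ to such a surjection then yields the claim.

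First I would cover $N$ by induced modules. Restricting $N$ to $\sC_A''^\circ$ and using that $\sC_A''^\circ$ is equivalent to the category of finitely generated $X/p\bZ I$-graded $A$-modules (Proposition~\ref{U0Equiv}), together with condition (B) for $N$ (only finitely many nonzero graded pieces, each finitely generated over $A$, so $N$ is finitely generated over $A$), we may choose finitely many $\lambda_1,\dots,\lambda_k\in X$ and a surjection $\bigoplus_{i=1}^k A^{\lambda_i}\twoheadrightarrow N$ in $\sC_A''^\circ$. Since $\Phi_A^\circ$ (induction, which is right exact) carries this to a surjection and the natural map $\Phi_A^\circ(N)=U_\chi\otimes_{U^0}N\to N$, $u\otimes n\mapsto un$, is itself surjective, composing gives a surjection $\bigoplus_{i=1}^k\Phi_A^\circ(\lambda_i)\twoheadrightarrow N$ in $\sC_A^\circ$.

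Next I would identify each $\Phi_A^\circ(\lambda_i)$. By the base-change compatibility of the induction functors recorded in Subsection~\ref{Sec5.3}, applied along $\bK\hookrightarrow A$, we have $\Phi_A^\circ(\lambda_i)\cong\Phi_\bK(\lambda_i)\otimes_\bK A$ (note that base-changing the structure map $\pi^\circ:U^0\to\bK$ along $\bK\hookrightarrow A$ gives precisely the structure map defining $\sC_A^\circ$). Now $\Phi_\bK(\lambda_i)$ is projective in $\sC_\bK$ by Lemma~\ref{PhiProj} and Corollary~\ref{ProjA}; since $\sC_\bK$ has enough projectives and all its objects have finite length (they are finite-dimensional over $\bK$), Krull--Schmidt applies and $\Phi_\bK(\lambda_i)$ is a finite direct sum of indecomposable projectives. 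As $\chi$ is regular nilpotent, the simple objects of $\sC_\bK$ are exactly the baby Verma modules $Z_{\bK,\chi}(\mu)=L_{\bK,\chi}(\mu)$ by Proposition~\ref{Irred}, so the indecomposable projectives of $\sC_\bK$ are precisely the $Q_{\bK,\chi}(\mu)$, $\mu\in\Lambda$. Hence $\Phi_\bK(\lambda_i)\cong\bigoplus_{\mu\in\Lambda}Q_{\bK,\chi}(\mu)^{m_{\mu,i}}$ for integers $m_{\mu,i}\geq 0$, almost all zero. Tensoring with $A$ over $\bK$ commutes with finite direct sums, so $\Phi_A^\circ(\lambda_i)\cong\bigoplus_{\mu\in\Lambda}(Q_{\bK,\chi}(\mu)\otimes_\bK A)^{m_{\mu,i}}$, and therefore $\bigoplus_{i=1}^k\Phi_A^\circ(\lambda_i)\cong\bigoplus_{\mu\in\Lambda}(Q_{\bK,\chi}(\mu)\otimes_\bK A)^{m_\mu}$ with $m_\mu=\sum_i m_{\mu,i}$, a finite sum. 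Composing with the surjection of the previous paragraph and applying $\Theta_A$ produces the required surjection $Q=\bigoplus_{\mu\in\Lambda}Q_{A,\chi}(\mu)^{m_\mu}\twoheadrightarrow M$.

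The one step carrying genuine (if standard) content is the identification in the third paragraph: that projectivity of $\Phi_\bK(\lambda_i)$ in $\sC_\bK$ forces it to be a finite direct sum of the $Q_{\bK,\chi}(\mu)$. This relies on Krull--Schmidt in the finite-length category $\sC_\bK$ and, crucially, on regular nilpotence of $\chi$ to identify the simple objects of $\sC_\bK$ with the baby Verma modules, which is exactly what makes the indecomposable projectives coincide with the $Q_{\bK,\chi}(\mu)$. Everything else is bookkeeping with the functors $\Theta_A$, $\Phi_A^\circ$ and base change already established above.
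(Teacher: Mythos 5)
Your proof is correct and follows essentially the same route as the paper: reduce to $\sC_\bK$ via the equivalence $\Theta_A$ and base change along $\bK\hookrightarrow A$, use that inducing a projective from the $U^0$-level yields a projective in $\sC_\bK$, and decompose that projective into the $Q_{\bK,\chi}(\mu)$, $\mu\in\Lambda$. The only (harmless) organisational difference is that you cover the module by the explicit graded free objects $A^{\lambda_i}$ and apply $\Theta_A$ only at the very end, whereas the paper takes an arbitrary $A$-free projective cover $Q$ in $\sC_A''$, descends it to some $P\in\sC_\bK''$, and checks $\Phi_A(Q)\cong\Theta_A(\Phi_\bK(P)\otimes_\bK A)$ directly.
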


\begin{proof}
	
	Let $P\in\sC_\bK''$. This means that $P$ is an $X/p\bZ I$-graded finite-dimensional $\bK$-module. Then $P\otimes_{\bK} A\in (\sC_A^\circ)''$ is free as an $A$-module (where $(\sC_A^\circ)''$ is the category analogous to $\sC_A''$ for $A$ with the structure map $U^0\to\bK\hookrightarrow A$). Conversely, suppose $Q\in (\sC_A^\circ)''$, so that $Q$ is an $X/p\bZ I$-graded finitely-generated $A$-module, and suppose that $Q$ is free (of finite rank) over $A$. Let $x_1,\ldots,x_n$ be a free basis of $A$ (where each $x_i$ is homogeneous). Then $\bigoplus_{i=1}^n x_i\bK$ is an $X/p\bZ I$-graded finite-dimensional $\bK$-module, with $ (\bigoplus_{i=1}^n x_i\bK)\otimes_{\bK} A\cong Q$. In other words, every $Q\in (\sC_A^\circ)''$ which is free over $A$ is isomorphic to an object of the form $N\otimes_{\bK} A\in(\sC_A^\circ)''$ for some $N\in\sC_\bK''$.
	
	Now, let $M\in\sC_A$. When we view $M$ as an element in $\sC_A''$, there exists a projective module $Q\in\sC_A''$ with $Q\twoheadrightarrow M$ (since $\sC_A''$ has enough projectives). We may assume $Q$ is free over $A$ (since $Q\in\sC_A''$ projective implies that each $Q_{\lambda+\bZ I}^{d\mu}$ is projective over $A$ so is a direct summand of a free module). Hence, there exists $P\in\sC_\bK''$ with $P\otimes_{\bK}A\cong Q$. It is then straightforward to check that $$\Phi_A(Q)\cong\Theta_A(\Phi_\bK(P)\otimes_{\bK}A).$$
	
	Since $P$ is projective in $\sC_\bK''$, $\Phi_\bK(P)$ is projective in $\sC_\bK$. Hence, $$\Phi_\bK(P)=\bigoplus_{\mu\in \Lambda} Q_{\bK,\chi}(\mu)^{m_\mu}$$ for some $m_\mu\geq 0$. Therefore $$\Phi_\bK(P)\otimes_{\bK}A=\bigoplus_{\mu\in \Lambda} (Q_{\bF,\chi}(\mu)\otimes_{\bK} A)^{m_\mu}$$ and so $$\Phi_A(Q)\cong \bigoplus_{\mu\in \Lambda} Q_{A,\chi}(\mu)^{m_\mu}.$$ Since $Q\twoheadrightarrow M$ in $\sC_A''$, we have $\Phi_A(Q)\twoheadrightarrow M$ in $\sC_A$. We therefore get the result.
\end{proof}

In the following proposition we consider the extent to which such a decomposition of $Q$ is unique.

\begin{prop}\label{UniqDecomp}
	Suppose that $A$ is local with residue field $F$, and that $\pi(h_\alpha)=0$ for all $\alpha\in R$. Let $Q\in\sC_A$ with $$Q\cong Q_{A,\chi}(\lambda_1)\oplus\cdots \oplus Q_{A,\chi}(\lambda_r)\cong Q_{A,\chi}(\mu_1)\oplus\cdots\oplus Q_{A,\chi}(\mu_s).$$ Then $r=s$ and there exists $\sigma\in S_r$ such that $$Q_{A,\chi}(\lambda_i)\cong Q_{A,\chi}(\mu_{\sigma(i)})$$ in $\sC_A$ for all $1\leq i\leq r$.
\end{prop}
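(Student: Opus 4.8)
The plan is to reduce to the residue field $F$, where the Krull--Schmidt theorem is available, and then lift the resulting matching back up to $\sC_A$ using the isomorphism criteria already established. First I would apply the additive extension-of-scalars functor $-\otimes_A F:\sC_A\to\sC_F$ to the given pair of decompositions of $Q$. Since extension of scalars commutes with finite direct sums and, by Corollary~\ref{projAF}, $Q_{A,\chi}(\nu)\otimes_A F\cong Q_{F,\chi}(\nu)$ for each $\nu\in X$, this yields
$$Q_{F,\chi}(\lambda_1)\oplus\cdots\oplus Q_{F,\chi}(\lambda_r)\;\cong\;Q_{F,\chi}(\mu_1)\oplus\cdots\oplus Q_{F,\chi}(\mu_s)$$
in $\sC_F$. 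Each summand here is genuinely nonzero, since $Q_{A,\chi}(\nu)$ is free over $A$ of rank $p^{\dim\fn^{+}}|W\cdot d\nu|>0$, so no summand collapses.

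Next I would invoke the Krull--Schmidt theorem in $\sC_F$: every object of $\sC_F$ is finite-dimensional over $F$, hence of finite length, so $\sC_F$ is a Krull--Schmidt category (Fitting's lemma), and each $Q_{F,\chi}(\nu)$ is indecomposable, being by construction the projective cover of the simple module $L_{F,\chi}(\nu)$. Thus the two decompositions above must agree up to reordering: $r=s$ and there is $\sigma\in S_r$ with $Q_{F,\chi}(\lambda_i)\cong Q_{F,\chi}(\mu_{\sigma(i)})$ in $\sC_F$ for every $i$. To lift this to $\sC_A$, I would note that such an isomorphism carries radical to radical, hence induces an isomorphism of simple heads $L_{F,\chi}(\lambda_i)\cong L_{F,\chi}(\mu_{\sigma(i)})$. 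Since $\chi$ is regular nilpotent throughout this subsection, Proposition~\ref{Irred} shows $Z_{F,\chi}(\nu)$ is simple, so $L_{F,\chi}(\nu)=Z_{F,\chi}(\nu)$; thus $Z_{F,\chi}(\lambda_i)\cong Z_{F,\chi}(\mu_{\sigma(i)})$. Now $F$ is itself a commutative Noetherian $U^0$-algebra, via $U^0\to A\to F$, with $\pi(h_\alpha)=0$ for all $\alpha\in R$, so Proposition~\ref{ZRegIsom} applies and gives $\lambda_i\in W_p\cdot\mu_{\sigma(i)}$. Finally Proposition~\ref{QIisom} yields $Q_{A,\chi}(\lambda_i)\cong Q_{A,\chi}(\mu_{\sigma(i)})$ in $\sC_A$, which is the assertion.

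The only genuine content is the Krull--Schmidt input in $\sC_F$ together with checking that the decomposition transports faithfully under $-\otimes_A F$; everything else is a chain of citations, so I do not anticipate a serious obstacle. An alternative that stays entirely within $\sC_A$ would be to prove directly that $\End_{\sC_A}(Q_{A,\chi}(\nu))$ is local when $A$ is: from $\End_{\sC_A}(Q_{A,\chi}(\nu))\cong\End_{\sC_\bK}(Q_{\bK,\chi}(\nu))\otimes_\bK A$ and the fact that $\End_{\sC_\bK}(Q_{\bK,\chi}(\nu))$ is local with residue field $\bK$ (as $Q_{\bK,\chi}(\nu)$ is indecomposable in the finite-length category $\sC_\bK$), one sees that $\End_{\sC_A}(Q_{A,\chi}(\nu))$ is $A$ tensored with a local algebra of the shape $\bK\oplus(\text{nilpotent})$, hence local modulo a nilpotent ideal and therefore local; the Azumaya--Krull--Schmidt theorem then applies directly to the two decompositions of $Q$. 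The delicate point in that alternative is verifying that the $\Hom$-isomorphism respects composition, which the $F$-reduction route above sidesteps.
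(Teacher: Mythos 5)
Your proof is correct, and its first half coincides with the paper's: tensor both decompositions with the residue field $F$, use Corollary~\ref{projAF} to identify $Q_{A,\chi}(\nu)\otimes_A F$ with $Q_{F,\chi}(\nu)$, and apply Krull--Schmidt in the finite-length category $\sC_F$, where the $Q_{F,\chi}(\nu)$ are the projective indecomposables. Where you genuinely diverge is in lifting the matching back to $A$. The paper argues module-theoretically: from $Q_{F,\chi}(\lambda_i)\cong Q_{F,\chi}(\mu_{\sigma(i)})$ it lifts a direct-summand embedding (the analogue of Remark 4.18 in \cite{AJS}), writes $Q_{A,\chi}(\mu_{\sigma(i)})\cong Q_{A,\chi}(\lambda_i)\oplus M$, and kills $M$ because it is projective in $\sC_A$, hence free over the local ring $A$, while $M\otimes_A F=0$. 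You instead convert the isomorphism over $F$ into the combinatorial condition $\lambda_i\in W_p\cdot\mu_{\sigma(i)}$: isomorphic projective covers have isomorphic heads, the heads are the baby Verma modules themselves since $\chi$ is regular nilpotent (Proposition~\ref{Irred}), Proposition~\ref{ZRegIsom} applies over $F$ (a Noetherian $U^0$-algebra with $h_\alpha\mapsto 0$), and the ``if'' direction of Proposition~\ref{QIisom} --- which precedes this proposition and needs no locality --- returns $Q_{A,\chi}(\lambda_i)\cong Q_{A,\chi}(\mu_{\sigma(i)})$. This is a legitimate and arguably shorter route; its cost is that it leans on the regular-nilpotent classification results (\ref{ZRegIsom}, \ref{QIisom}), whereas the paper's summand-lifting-plus-Nakayama step is the more robust template it reuses later (e.g.\ in the proof of Theorem~\ref{ProjCovQ2}) and does not require the baby Verma modules to be irreducible. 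Your sketched alternative via local endomorphism rings and Azumaya's theorem would also work, but, as you yourself flag, only after verifying that the isomorphism $\Hom_{\sC_A}\cong\Hom_{\sC_\bK}\otimes_\bK A$ respects composition; your main argument does not need this.
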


\begin{proof}
	Since $Q_{A,\chi}(\lambda)\otimes_{A} F\cong Q_{F,\chi}(\lambda)$ by Corollary~\ref{projAF}, we have that  
	$$Q_{F,\chi}(\lambda_1)\oplus\cdots\oplus Q_{F,\chi}(\lambda_r)\cong Q_{F,\chi}(\mu_1)\oplus\cdots\oplus Q_{F,\chi}(\mu_s).$$ Since in $\sC_F$ the $Q_{F,\chi}(\lambda)$ are the projective covers of the irreducible modules $Z_{F,\chi}(\lambda)$ and since all objects in $\sC_F$ have finite length, we may apply the Krull-Schmidt theorem. Hence, $r=s$ and there exists $\sigma\in S_r$ such that $$Q_{F,\chi}(\lambda_i)\cong Q_{F,\chi}(\mu_{\sigma(i)})$$ in $\sC_F$ for all $1\leq i\leq r$.
	
	Now, $Q_{F,\chi}(\lambda_i)$ is (isomorphic to) a direct summand of $Q_{F,\chi}(\mu_{\sigma(i)})$, and we know $Q_{A,\chi}(\lambda_i)\otimes_{A} F\cong Q_{F,\chi}(\lambda_i)$ and $Q_{A,\chi}(\mu_{\sigma(i)})\otimes_{A} F\cong Q_{F,\chi}(\mu_{\sigma(i)})$. Hence, by a similar argument to that of Remark 4.18 in \cite{AJS}, we get that $Q_{A,\chi}(\lambda_i)$ is (isomorphic to) a direct summand of $Q_{A,\chi}(\mu_{\sigma(i)})$. 
	
	Thus, there exists $M\in\sC_A$ with $$Q_{A,\chi}(\mu_{\sigma(i)})\cong Q_{A,\chi}(\lambda_i)\oplus M,$$ and so $$Q_{F,\chi}(\mu_{\sigma(i)})\cong Q_{F,\chi}(\lambda_i)\oplus (M\otimes_{A} F),$$ implying that $M\otimes_{A} F=0$. But $M\in\sC_A$ is projective, as it is a direct summand of a projective module, and so $M$ is projective over $A$. (This follows by a similar argument to the proof of Lemma 2.7(c) in \cite{AJS}, using our $\Phi_A$ in place of the functor denoted $\Phi_A$ in \cite{AJS}). Since $M$ is finitely-generated projective over $A$ (as $A$ is Noetherian) and $A$ is local, it is free over $A$, and so $M\otimes_{A} F=0$ implies $M=0$. Therefore, 
	$$Q_{A,\chi}(\mu_{\sigma(i)})\cong Q_{A,\chi}(\lambda_i),$$
	as required.

	%
	%
\end{proof}

\begin{lemma}\label{QFiltDecomp}
	Suppose $A$ is local with residue field $F$, and suppose that $\pi(h_\alpha)=0$ for all $\alpha\in R$. Let $\lambda_1,\ldots,\lambda_r\in \Lambda$ and $P,N\in\sC_A$ with $$Q_{A,\chi}(\lambda_1)\oplus\cdots\oplus Q_{A,\chi}(\lambda_r)=P\oplus N$$ in $\sC_A$. Then $N$ and $P$ each decompose into direct sums of $Q_{A,\chi}(\lambda_i)$'s.
\end{lemma}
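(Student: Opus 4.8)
The plan is to deduce the lemma from the Krull--Schmidt--Azumaya theorem; essentially the only thing needing a real argument is that each $Q_{A,\chi}(\lambda)$ has a \emph{local} endomorphism ring in $\sC_A$, after which everything is formal.

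So the first step is to show that $\End_{\sC_A}(Q_{A,\chi}(\lambda))$ is local. Since in this subsection $\chi$ is regular nilpotent, $Q_{\bK,\chi}(\lambda)$ is the projective cover in $\sC_\bK$ of the simple module $Z_{\bK,\chi}(\lambda)$ (Proposition~\ref{Irred}), and every object of $\sC_\bK$ has finite length; hence $L:=\End_{\sC_\bK}(Q_{\bK,\chi}(\lambda))$ is a finite-dimensional local $\bK$-algebra, so $\Rad(L)$ is nilpotent, and since $\bK$ is algebraically closed $L/\Rad(L)\cong\bK$. Recalling that $Q_{A,\chi}(\lambda)=\Theta_A(Q_{\bK,\chi}(\lambda)\otimes_\bK A)$ and that $\Theta_A$ is an equivalence (Proposition~\ref{ThetaEquiv}), and using the base-change isomorphism $\Hom_{\sC_A^\circ}(N_1\otimes_\bK A,N_2\otimes_\bK A)\cong\Hom_{\sC_\bK}(N_1,N_2)\otimes_\bK A$ established in Section~\ref{Sec7.1} (the degree-zero case of the \cite[Lemma 3.2]{AJS} argument, valid as $A$ is flat over $\bK$), one obtains
$$\End_{\sC_A}(Q_{A,\chi}(\lambda))\;\cong\;\End_{\sC_A^\circ}(Q_{\bK,\chi}(\lambda)\otimes_\bK A)\;\cong\;L\otimes_\bK A.$$
Now $\Rad(L)\otimes_\bK A$ is a nilpotent two-sided ideal of $L\otimes_\bK A$ with quotient ring isomorphic to $A$; since a ring which is a nilpotent-ideal extension of a local ring is local, $L\otimes_\bK A$ is local, and hence so is $\End_{\sC_A}(Q_{A,\chi}(\lambda))$.

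The second step is formal. The category $\sC_A$ is additive and has split idempotents: for an idempotent $e\in\End_{\sC_A}(M)$ the submodule $eM$ inherits conditions (A)--(D) and $M\cong eM\oplus(1-e)M$ in $\sC_A$. Thus $Q:=Q_{A,\chi}(\lambda_1)\oplus\cdots\oplus Q_{A,\chi}(\lambda_r)$ is a finite direct sum of objects with local endomorphism rings, so by the Krull--Schmidt--Azumaya theorem this decomposition complements direct summands: for every direct summand $M$ of $Q$ there is a subset $J\subseteq\{1,\dots,r\}$ with $Q\cong M\oplus\bigoplus_{j\in J}Q_{A,\chi}(\lambda_j)$. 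Applying this with $M=N$ and comparing with $Q\cong P\oplus N$ gives $P\cong\bigoplus_{j\in J}Q_{A,\chi}(\lambda_j)$; applying it with $M=P$ gives $N\cong\bigoplus_{j\in J'}Q_{A,\chi}(\lambda_j)$ for the corresponding $J'$. This is the assertion.

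The only point requiring genuine care is the local-endomorphism-ring claim of the first step; this is where the standing hypotheses ($\chi$ regular nilpotent, $\pi(h_\alpha)=0$ for all $\alpha\in R$, $A$ local) all enter, through $\Theta_A$, the indecomposability of $Q_{\bK,\chi}(\lambda)$ in $\sC_\bK$, and base change from $\bK$. If one prefers to avoid invoking Azumaya's theorem, the second step can instead be carried out by induction on $r$: reduce modulo the residue field $F$, use Krull--Schmidt in $\sC_F$ to see that $Q_{F,\chi}(\lambda_i)$ is a direct summand of (say) $N\otimes_A F$ for some $i$, lift this to a direct summand $Q_{A,\chi}(\lambda_i)$ of $N$ exactly as in the proof of Proposition~\ref{UniqDecomp}, cancel $Q_{A,\chi}(\lambda_i)$ from both sides of $Q\cong P\oplus N$ (legitimate because its endomorphism ring is local), and apply the inductive hypothesis.
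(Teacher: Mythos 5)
Your argument is correct, but it proceeds along a genuinely different route from the paper's. The paper reduces modulo the residue field: it applies Krull--Schmidt in $\sC_F$ together with Proposition~\ref{UniqDecomp} to match the mod-$F$ decompositions of $P\otimes_A F$ and $N\otimes_A F$ against the $Q_{F,\chi}(\lambda_i)$, then lifts by hand, defining explicit maps $f_1,f_2$ from partial direct sums of the $Q_{A,\chi}(\lambda_i)$ to $P$ and $N$, proving surjectivity by Nakayama and concluding they are isomorphisms because both sides are free $A$-modules of the same finite rank. You instead establish that $\End_{\sC_A}(Q_{A,\chi}(\lambda))\cong \End_{\sC_\bK}(Q_{\bK,\chi}(\lambda))\otimes_\bK A$ is local (via Fitting/projective-cover considerations over $\bK$, base change along the flat extension $\bK\to A$, and the observation that a nilpotent-ideal extension of the local ring $A$ is local), and then invoke the Krull--Schmidt--Azumaya exchange property in the additive, idempotent-split category $\sC_A$. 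Both ingredients you use are legitimate here: the base-change isomorphism for $\Hom$ between the $Q$'s is exactly what the paper itself invokes in Proposition~\ref{QIisom}, and idempotents do split in $\sC_A$ since kernels and images exist and $A$ is Noetherian. What your approach buys is a cleaner, more structural statement -- in particular it shows as a byproduct that each $Q_{A,\chi}(\lambda)$ is indecomposable in $\sC_A$ with local endomorphism ring, which the paper's proof never makes explicit -- at the cost of importing Azumaya's theorem and a little noncommutative ring theory; the paper's proof is more elementary and self-contained, using only facts already on the table (projective covers in $\sC_F$, Nakayama, and freeness/rank over $A$). Your closing sketch of an induction avoiding Azumaya (lift a summand as in Proposition~\ref{UniqDecomp}, cancel using locality of the endomorphism ring) is essentially an interpolation between the two arguments and is also sound.
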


\begin{proof}
	Since we know, in $\sC_A$, that $$Q_{A,\chi}(\lambda_1)\oplus\cdots\oplus Q_{A,\chi}(\lambda_r)\cong P\oplus N$$ we see that, in $\sC_F$, we have $$Q_{F,\chi}(\lambda_1)\oplus\cdots\oplus Q_{F,\chi}(\lambda_r)\cong(P\otimes_{A} F)\oplus (N\otimes_{A} F).$$ Clearly $P\otimes_{A} F$ and $N\otimes_{A} F$ are projective in $\sC_F$, and since the $Q_{F,\chi}(\lambda_i)$ are the projective covers of the irreducible modules in $\sC_F$, each projective module in $\sC_F$ is a direct sum of some $Q_{F,\chi}(\lambda)$'s. Hence, there exist $\mu_1,\ldots,\mu_s\in \Lambda$ and $\tau_1,\ldots,\tau_t\in \Lambda$ such that $$P\otimes_{A} F\cong Q_{F,\chi}(\mu_1)\oplus\cdots\oplus Q_{F,\chi}(\mu_s)$$ and $$N\otimes_{A} F\cong Q_{F,\chi}(\tau_1)\oplus\cdots\oplus Q_{F,\chi}(\tau_t).$$ Therefore, we have $$Q_{F,\chi}(\lambda_1)\oplus\cdots\oplus Q_{F,\chi}(\lambda_r)\cong Q_{F,\chi}(\mu_1)\oplus\cdots\oplus Q_{F,\chi}(\mu_s)\oplus Q_{F,\chi}(\tau_1)\oplus\cdots\oplus Q_{F,\chi}(\tau_t).$$ Now, by Proposition~\ref{UniqDecomp} we may assume (permuting the terms if necessary) that $s+t=r$ and that $\lambda_1=\mu_1, \lambda_2=\mu_2,\ldots,\lambda_s=\mu_s,\lambda_{s+1}=\tau_1,\ldots,\lambda_{r-1}=\tau_{t-1}$ and $\lambda_r=\tau_t$.
	
	Let us define the map $f_1:Q_{A,\chi}(\lambda_1)\oplus\cdots\oplus Q_{A,\chi}(\lambda_s)\to P$ as the following composition
	$$Q_{A,\chi}(\lambda_1)\oplus\cdots\oplus Q_{A,\chi}(\lambda_s)\hookrightarrow Q_{A,\chi}(\lambda_1)\oplus\cdots\oplus Q_{A,\chi}(\lambda_r)\xrightarrow{\sim}P\oplus N\twoheadrightarrow P.$$ The above discussion then shows that $f_1\otimes 1:Q_{F,\chi}(\lambda_1)\oplus\cdots\oplus Q_{F,\chi}(\lambda_s)\to P\otimes_{A} F$ is an isomorphism in $\sC_F$. By Nakayama's lemma, we hence have that $f_1$ is surjective (as an $A$-module homomorphism, and so in $\sC_A$). We define $f_2:Q_{A,\chi}(\lambda_{s+1})\oplus\cdots\oplus Q_{A,\chi}(\lambda_r)\to N$ analogously, and it is surjective by the same argument. We then define the map $$f:Q_{A,\chi}(\lambda_1)\oplus\cdots\oplus Q_{A,\chi}(\lambda_r)\to P\oplus N,\qquad f=(f_1,f_2).$$ It is surjective since $f_1$ and $f_2$ are. Furthermore, both $Q_{A,\chi}(\lambda_1)\oplus\cdots\oplus Q_{A,\chi}(\lambda_r)$ and $P\oplus N$ are free $A$-modules with the same finite rank (since each $Q_{A,\chi}(\lambda_i)$ is and since the two modules are isomorphic by assumption). Any surjection between free $A$-modules of the same finite rank is an isomorphism, so $f$ is an isomorphism. Hence, $f_1$ and $f_2$ are isomorphisms, and we get the result.
\end{proof}

\section{Arbitrary standard Levi form}\label{Sec8}

In this section, let $A$ be a commutative, Noetherian $U^0$-algebra with structure map $\pi:U^0\to A$. As usual, we write $\sC_A$ for the category obtained from this $A$ and $\pi$. We assume $\chi$ is in standard Levi form, with associated subset $I$ of simple roots. 

{\bf We shall make the assumption throughout this section that $\pi(h_\alpha)=0$ for all $\alpha\in R_I=R\cap\bZ I$.}

\subsection{The module $Q_{A,\chi}^I(\lambda)$}\label{Sec8.1}

As an initial matter, under this assumption, we can characterise the irreducible modules in $\sC_A^I$ and $\sC_A^{I,+}$ when $A=F$ is a field.
\begin{prop}\label{AllIrreds}
	Let $A=F$, a field with the property that $\pi(h_\alpha)=0$ for all $\alpha\in R_I$. Let $\lambda\in X$. Then $Z_{F,I,\chi}(\lambda)$ is irreducible in both $\sC_F^I$ and $\sC_F^{I,+}$ (being viewed as an element in the latter by trivial extension). Furthermore, every irreducible object in $\sC_F^I$ and in $\sC_F^{I,+}$ is of the form $Z_{F,I,\chi}(\mu)$ for some $\mu\in X$.
\end{prop}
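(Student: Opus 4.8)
The plan is to reduce the statement in $\sC_F^{I,+}$ to the statement in $\sC_F^I$, and then to prove the latter using the regular nilpotent results from Section~\ref{Sec7}. For the reduction, recall that in Subsection~\ref{Sec5.1} we saw $\sC_F^I=\bigoplus_{\lambda+\bZ I}\sC_F^I(\lambda+\bZ I)$, and each $\sC_F^I(\lambda+\bZ I)$ is identified with the category of $U^I\otimes F$-modules carrying a suitable $\fh$-weight decomposition. An irreducible object of $\sC_F^{I,+}$ lives in a single $X/\bZ I$-grade $\lambda+\bZ I$ (since each graded piece is a subobject, using condition (C) and $(U^IU_I^+)_{0+\bZ I}\supseteq U^I$), and by the final displayed isomorphisms before Lemma~\ref{ProjResI} (namely $\Gamma_{A,\chi}(M)_{\lambda+\bZ I}\cong M$ for $M$ concentrated in grade $\lambda+\bZ I$, and the analogous statement for $\Phi_A^{I,+}$), restricting along $U^IU_I^+\twoheadrightarrow U^I$ should give an equivalence between the subcategory of $\sC_F^{I,+}$ of objects concentrated in a fixed grade and the corresponding subcategory of $\sC_F^I$; in particular irreducibles correspond, and $Z_{F,I,\chi}(\lambda)$ viewed in $\sC_F^{I,+}$ restricts to $Z_{F,I,\chi}(\lambda)\in\sC_F^I$. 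So it suffices to prove everything inside $\sC_F^I$.

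For $\sC_F^I$, the point is that $\chi_I=\chi|_{\fg_I}$ is regular nilpotent for $\fg_I$ (as $I$ contains all simple roots of $\fg_I$), and our standing assumption $\pi(h_\alpha)=0$ for all $\alpha\in R_I$ is exactly the hypothesis needed to invoke the results of Section~\ref{Sec7} with $\fg$ replaced by $\fg_I$. First I would note that $\sC_F^I(\lambda+\bZ I)$, being the category of $U^I\otimes F$-modules with the weight decomposition indexed by $d\mu$ with $\mu\in\lambda+\bZ I+pX$, is precisely $(\sC_F^I)^{\circ}$-type data for $\fg_I$; more directly, $\sC_F^I$ \emph{is} the ``$\sC_A$'' of Section~\ref{Sec7} applied to $\fg_I$, so Proposition~\ref{Irred} (applied to $\fg_I$) immediately gives that $Z_{F,I,\chi}(\lambda)$ is irreducible in $\sC_F^I$ for every $\lambda\in X$. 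For the ``furthermore'' part, by Proposition~\ref{ZSurj} (for $\fg_I$) every irreducible object in $\sC_F^I$ is a quotient of some $Z_{F,I,\chi}(\mu)$; since that baby Verma module is already irreducible, any irreducible quotient must equal it. This establishes both claims in $\sC_F^I$, and hence, via the first paragraph, in $\sC_F^{I,+}$.

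The main obstacle I anticipate is making the identification ``$\sC_F^I$ is literally the category $\sC_A^\circ$ (or $\sC_A$) of Section~\ref{Sec7} for the reductive group with Lie algebra $\fg_I$'' fully rigorous: one must check that the Levi $G_I$ (with maximal torus $T$) still satisfies Jantzen's standing assumptions, that $X(T)$ and the grading group $X/\bZ I$ match the ``$X$'' and ``$X/\bZ I$'' of the general setup with $I$ now equal to the full set of simple roots of $\fg_I$, and that $U^I$ really is the ``$U_{\chi_I}$'' of that setup (which was noted in Section~\ref{Sec3.1} via $U^I=U(\fg_I)/\langle e_\alpha^p-\chi(e_\alpha)^p\mid\alpha\in R_I\rangle$). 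Once this dictionary is in place, Proposition~\ref{Irred}, Lemma~\ref{ProjCovCirc} and Proposition~\ref{ZSurj} transfer verbatim. A secondary, more routine, point to verify carefully is that the restriction functor along $U^IU_I^+\twoheadrightarrow U^I$ genuinely gives an equivalence onto the relevant block-like subcategory (not merely a faithful functor), which follows from the two ``$M_{\lambda+\bZ I}\cong M$'' isomorphisms for $\Phi_A^{I,+}$ and $\Gamma_{A,\chi}$ recorded just before Lemma~\ref{ProjResI}, together with $\Phi_A^{I,+}$ being left adjoint to restriction; I would spell this out as a short lemma if needed, but it should not require any real computation.
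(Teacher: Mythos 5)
Your overall route is the same as the paper's: irreducibility of $Z_{F,I,\chi}(\lambda)$ in $\sC_F^I$ comes from Proposition~\ref{Irred} applied to $\fg_I$ (the paper makes exactly this transfer, having noted in Section~\ref{Sec8} that $\sC_A^I$ satisfies the standing hypotheses of Section~\ref{Sec7}), the classification of irreducibles in $\sC_F^I$ comes from Proposition~\ref{ZSurj}, and the $\sC_F^{I,+}$ statement is reduced to the $\sC_F^I$ one by showing an irreducible object of $\sC_F^{I,+}$ is concentrated in a single grade and then comparing with $\sC_F^I$. However, there is a genuine flaw in your justification of the key reduction step: you assert that an irreducible object of $\sC_F^{I,+}$ lives in a single $X/\bZ I$-grade ``since each graded piece is a subobject, using condition (C) and $(U^IU_I^{+})_{0+\bZ I}\supseteq U^I$.'' That reasoning is false: the algebra $U^IU_I^{+}$ is \emph{not} concentrated in grade $0+\bZ I$, since $e_\alpha\in (U^IU_I^{+})_{\alpha+\bZ I}$ for $\alpha\in R^{+}\setminus\bZ I$, so condition (C) does not make the graded pieces $U^IU_I^{+}$-stable. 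For instance, in $\Phi_F^{I,+}(Z_{F,I,\chi}(\lambda))=U^IU_I^{+}\otimes_{U^I}Z_{F,I,\chi}(\lambda)$ the grade-$(\lambda+\bZ I)$ piece is not a submodule, as $e_\alpha$ moves it into strictly higher grades.

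The claim you need is still true, but it requires the argument the paper uses: choose $\lambda+\bZ I$ with $M_{\lambda+\bZ I}\neq 0$ maximal, so that $M_{\mu+\bZ I}\neq 0$ forces $\mu+\bZ I\ngeq\lambda+\bZ I$ (for $\mu+\bZ I\neq\lambda+\bZ I$); then for $\alpha\in R^{+}\setminus R_I$ one has $e_\alpha M_{\lambda+\bZ I}\subseteq M_{\lambda+\alpha+\bZ I}=0$, so this particular graded piece \emph{is} a subobject in $\sC_F^{I,+}$, and irreducibility gives $M=M_{\lambda+\bZ I}$. Once that is in place, your observation that single-grade objects of $\sC_F^{I,+}$ carry a trivial $U_I^{+}$-action, so that the full subcategory of such objects is identified with $\sC_F^I(\lambda+\bZ I)$, finishes the argument and is essentially equivalent to what the paper does (the paper instead embeds a simple $Z_{F,I,\chi}(\nu)$ into $M$ in $\sC_F^I$ and checks the embedding is a morphism in $\sC_F^{I,+}$, then uses simplicity of both sides). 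Your flagged concern about the dictionary identifying $\sC_F^I$ with the Section~\ref{Sec7} setup for $\fg_I$ is legitimate but is exactly the identification the paper takes for granted, and $\pi(h_\alpha)=0$ for $\alpha\in R_I$ is indeed the hypothesis that makes it available; the reference to Lemma~\ref{ProjCovCirc} is not needed.
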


\begin{proof}
	We already know that each $Z_{F,I,\chi}(\lambda)$ is irreducible in $\sC_F^I$ by Proposition~\ref{Irred}. Since any submodule of $Z_{F,I,\chi}(\lambda)$ in $\sC_F^{I,+}$ would also be a submodule in $\sC_F^I$, it follows that it is also irreducible in $\sC_F^{I,+}$. That each irreducible object in $\sC_F^I$ is of this form follows from Proposition~\ref{ZSurj}. What remains is hence to show that the irreducible objects of $\sC_F^{I,+}$ are all of the form $Z_{F,I,\chi}(\mu)$ for $\mu\in X$.
	
	Let $M$ be irreducible in $\sC_F^{I,+}$. We may view $M$ as an element of $\sC_F^{I}$, where it necessarily has a composition series of $Z_{F,\chi,I}(\mu)$'s. As discussed in Section~\ref{Sec5.1}, $M$ has a decomposition $M=\bigoplus_{\lambda+\bZ I} M_{\lambda+\bZ I}$ in $\sC_F^I$. Let $\lambda+\bZ I\in X/\bZ I$ such that $M_{\lambda+\bZ I}\neq 0$ and such that $M_{\mu+\bZ I}\neq 0$ implies $\mu+\bZ I\ngeq\lambda+\bZ I$. This in particular means that $e_\alpha m=0$ for all $\alpha\in R^{+}\setminus R_I$ and $m\in M_{\lambda+\bZ I}$. This further means that $M_{\lambda+\bZ I}\in \sC_F^{I,+}$, and so since $M$ is irreducible in $\sC_F^{I,+}$ we have $M=M_{\lambda+\bZ I}$.
	
	In $\sC_F^{I}$ there exists $\nu\in X$ such that $Z_{F,I,\chi}(\nu)$ embeds in $M=M_{\lambda+\bZ I}$, since all simple objects in $\sC_F^I$ are of this form. Let $f:Z_{F,I,\chi}(\nu)\to M$ be this inclusion. For $\alpha\in R^{+}\setminus R_I$, we observe that $f(e_\alpha v)\in f(Z_{F,I,\chi}(\nu)_{\nu+\alpha+\bZ I})=0$ and $e_\alpha f(v)\in M_{\nu+\alpha+\bZ I}=0$ for all $v\in Z_{F,I,\chi}(\nu)$ (observing that $\nu+\bZ I=\lambda+\bZ I$ by necessity, and that $Z_{F,\chi,I}(\nu)\in\sC_{F}^I(\lambda+\bZ I)$). Since $f$ is a morphism in $\sC_F^{I}$ with this property, it is in fact a morphism in $\sC_F^{I,+}$. Therefore, since $M$ and $Z_{F,\chi,I}(\nu)$ are simple in $\sC_F^{I,+}$, we conclude that $M\cong Z_{F,\chi,I}(\nu)$. This concludes the proof.

\end{proof}

Returning to the case of arbitrary $A$, we note in particular that $\sC_A^I$ satisfies the assumptions of the previous section. There is thus an equivalence of categories $$\Theta_A^I:(\sC_A^I)^\circ\xrightarrow{\sim}\sC_A^I$$ (where we write $(\sC_A^I)^\circ$ for what we called $\sC_A^\circ$ in Subsection~\ref{Sec7.1}). For $\lambda\in X$, we may therefore define the module $Q_{A,I,\chi}(\lambda)\in\sC_A^I$ as in Corollary~\ref{projAF}, namely, $$Q_{A,I,\chi}(\lambda)\coloneqq\Theta_A^I(Q_{\bK,I,\chi}(\lambda)\otimes_{\bK} A)\in\sC_A^I$$ where $Q_{\bK,I,\chi}(\lambda)\in \sC_\bK^I$ is the projective cover of $Z_{\bK,I,\chi}(\lambda)$ in $\sC_\bK^I$.

If $A$ is a local ring with residue field $F$, we once again have the property that $Q_{A,I,\chi}(\lambda)\otimes_A F\cong Q_{F,I,\chi}(\lambda)\in\sC_F^I$, where $Q_{F,I,\chi}(\lambda)$ is the projective cover of the simple baby Verma module $Z_{F,I,\chi}(\lambda)$ in $\sC_F^I$.

For any $A$ and $\lambda\in X$, we hence define the object $Q_{A,\chi}^I(\lambda)$ in $\sC_A$ as
$$Q_{A,\chi}^{I}(\lambda)\coloneqq\Gamma_{A,\chi}(Q_{A,I,\chi}(\lambda)),$$ recalling the definition of $\Gamma_{A,\chi}$ from Section~\ref{Sec4.4}, and viewing $Q_{A,I,\chi}$ as a module in $\sC_A^{I,+}$ via the surjection $U^{I}U_I^{+}\twoheadrightarrow U^I$.
If $A$ is local with residue field $F$, it is clear that
$$Q_{A,\chi}^{I}(\lambda)\otimes_A F\cong Q_{F,\chi}^{I}(\lambda),$$ but in what follows we don't assume $A$ is local unless explicitly mentioned.

\begin{prop}\label{ZIsom}
	Suppose that $\pi(h_\alpha)=0$ for all $\alpha\in R_I$. If $Z_{A,\chi}(\lambda)\cong Z_{A,\chi}(\mu)$ then $\lambda\in W_{I,p}\cdot\mu$.
\end{prop}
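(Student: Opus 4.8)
The plan is to deduce this from its Levi analogue. Recall from Section~\ref{Sec4.4} that $Z_{A,\chi}(\lambda)\cong\Gamma_{A,\chi}(Z_{A,I,\chi}(\lambda))$, where $Z_{A,I,\chi}(\lambda)\in\sC_A^I$ is concentrated in $X/\bZ I$-degree $\lambda+\bZ I$ and is viewed in $\sC_A^{I,+}$ with $U_I^{+}$ acting as zero, and that $\Gamma_{A,\chi}(M)_{\lambda+\bZ I}\cong M$ in $\sC_A^{I,+}$ whenever $M=M_{\lambda+\bZ I}$; in particular $Z_{A,\chi}(\lambda)_{\lambda+\bZ I}\cong Z_{A,I,\chi}(\lambda)$. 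Recall also that $\sC_A^I$ satisfies the running hypothesis of Section~\ref{Sec7}: $\chi_I=\chi\vert_{\fg_I}$ is in standard Levi form (indeed regular nilpotent) for $\fg_I$, and the assumption $\pi(h_\alpha)=0$ for all $\alpha\in R_I$ says exactly that $\pi$ kills $h_\alpha$ for every root $\alpha$ of $\fg_I$. Hence Proposition~\ref{ZRegIsom} is available for $\sC_A^I$.

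First I would observe that $Z_{A,\chi}(\lambda)\cong Z_{A,\chi}(\mu)$ forces $\lambda+\bZ I=\mu+\bZ I$: as a graded vector space $Z_{A,\chi}(\lambda)\cong U^{-}\otimes_{\bK}A^\lambda$, so $Z_{A,\chi}(\lambda)_{\sigma+\bZ I}\neq 0$ precisely when $\sigma+\bZ I\leq\lambda+\bZ I$, and since morphisms in $\sC_A$ preserve the $X/\bZ I$-grading the two modules have the same set of nonzero degrees, hence the same greatest element. Write $\kappa+\bZ I$ for this common degree. Any isomorphism $Z_{A,\chi}(\lambda)\to Z_{A,\chi}(\mu)$ in $\sC_A$ preserves the $X/\bZ I$-grading and the $(D)$-decomposition, so it restricts to an isomorphism of the $\kappa+\bZ I$-graded components. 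Because $e_\alpha$ with $\alpha\in R^{+}\setminus\bZ I$ maps the top component $Z_{A,\chi}(\lambda)_{\kappa+\bZ I}$ into strictly larger (hence zero) degrees, this restricted map is $U^IU_I^{+}\otimes A$-linear with $U_I^{+}$ acting as zero, so under the identification above it is an isomorphism $Z_{A,I,\chi}(\lambda)\cong Z_{A,I,\chi}(\mu)$ in $\sC_A^{I,+}$, equivalently in $\sC_A^I$.

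Finally, Proposition~\ref{ZRegIsom} applied to $\sC_A^I$ gives $\lambda\in W_{I,p}\cdot\mu$, where a priori the dot-action is that of the affine Weyl group of $\fg_I$ (which is exactly $W_{I,p}$) with respect to the half-sum $\rho_I$ of its positive roots. A short check reconciles this with the $\rho$-dot-action: the reflections $s_\alpha$ for $\alpha\in R_I$ fix $\rho-\rho_I$ since $\langle\rho-\rho_I,\alpha^\vee\rangle=0$ for $\alpha$ simple in $I$, while translations are insensitive to the choice of shift, so the two dot-actions of $W_{I,p}$ on $X$ agree. Thus $\lambda\in W_{I,p}\cdot\mu$, and together with Proposition~\ref{ImpliesIsom} this establishes the asserted equivalence. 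The only point requiring care is the middle step — that restriction to the top $X/\bZ I$-component really carries a $\sC_A$-isomorphism to an isomorphism of baby Verma modules in $\sC_A^I$ — and this is precisely what the recalled properties of $\Gamma_{A,\chi}$ deliver; the rest is routine.
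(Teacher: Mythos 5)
Your argument is correct and follows essentially the same route as the paper: the isomorphism forces $\lambda+\bZ I=\mu+\bZ I$, restricts to the top graded component to give $Z_{A,I,\chi}(\lambda)\cong Z_{A,I,\chi}(\mu)$ in $\sC_A^I$, and Proposition~\ref{ZRegIsom} applied to $\fg_I$ (where $\chi\vert_{\fg_I}$ is regular nilpotent and $\pi(h_\alpha)=0$ for all roots of $\fg_I$) yields $\lambda\in W_{I,p}\cdot\mu$. The only differences are that you make explicit details the paper leaves implicit (the vanishing of the $e_\alpha$-action, $\alpha\in R^{+}\setminus\bZ I$, on the top piece and the agreement of the $\rho$- and $\rho_I$-dot-actions of $W_{I,p}$), and your claim that $Z_{A,\chi}(\lambda)_{\sigma+\bZ I}\neq 0$ \emph{precisely} when $\sigma+\bZ I\leq\lambda+\bZ I$ is a harmless overstatement, since only the (correct) fact that $\lambda+\bZ I$ is the greatest nonzero degree is used.
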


\begin{proof}
	That $Z_{A,\chi}(\lambda)$ and $Z_{A,\chi}(\mu)$ are isomorphic in $\sC_A$ implies that $\lambda+\bZ I=\mu+\bZ I$ and so $Z_{A,\chi}(\lambda)_{\lambda+\bZ I}$ is isomorphic to $Z_{A,\chi}(\mu)_{\mu+\bZ I}$ in $\sC_A^I$. We know, as in Section~\ref{Sec4.4}, that $Z_{A,\chi}(\lambda)_{\lambda+\bZ I}\cong Z_{A,I,\chi}(\lambda)$ in $\sC_A^I$. Therefore, we have that $Z_{A,I,\chi}(\lambda)\cong Z_{A,I,\chi}(\mu)$ in $\sC_A^I$. By Proposition~\ref{ZRegIsom}, we therefore have that $\lambda\in W_{I,p}\cdot\mu$
\end{proof}

\begin{rmk}
	We already saw the converse to this proposition in Proposition~\ref{ImpliesIsom}. That result now follows easily since $\lambda\in W_{I,p}\cdot\mu$ implies $Z_{\bK,I,\chi}(\lambda)\cong Z_{\bK,I,\chi}(\mu)$, and so implies $Z_{\bK,I,\chi}(\lambda)\otimes_{\bK} A\cong Z_{\bK,I,\chi}(\mu)\otimes_{\bK} A$, and so implies $Z_{A,I,\chi}(\lambda)=\Theta_A^I(Z_{\bK,I,\chi}(\lambda)\otimes_{\bK} A)\cong \Theta_A^I(Z_{\bK,I,\chi}(\mu)\otimes_{\bK} A)=Z_{A,I,\chi}(\mu)$, which finally implies $Z_{A,\chi}(\lambda)\cong\Gamma_{A,\chi}(Z_{A,I,\chi}(\lambda))\cong \Gamma_{A,\chi}(Z_{A,I,\chi}(\mu))\cong Z_{A,\chi}(\mu)$.
\end{rmk}

\begin{cor}\label{ZIsomFull}
	Let $\lambda,\mu\in X$ and suppose that $\pi(h_\alpha)=0$ for all $\alpha\in R_I$. Then $Z_{A,\chi}(\lambda)\cong Z_{A,\chi}(\mu)$ if and only if $\lambda\in W_{I,p}\cdot\mu$.
\end{cor}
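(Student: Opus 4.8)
The plan is to obtain this corollary by simply assembling the two implications that have already been proved, so no new argument is needed. For the forward direction, suppose $Z_{A,\chi}(\lambda)\cong Z_{A,\chi}(\mu)$; then Proposition~\ref{ZIsom} gives immediately $\lambda\in W_{I,p}\cdot\mu$, since the hypothesis $\pi(h_\alpha)=0$ for all $\alpha\in R_I$ of the corollary is exactly what that proposition requires. (Recall that the content of Proposition~\ref{ZIsom} is that restricting the isomorphism to the top graded layer identifies $Z_{A,I,\chi}(\lambda)\cong Z_{A,I,\chi}(\mu)$ in $\sC_A^I$, at which point Proposition~\ref{ZRegIsom}, applied to the Levi $\fg_I$ for which $\chi|_{\fg_I}$ is regular nilpotent, yields the claim.)

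For the reverse direction, I would invoke Proposition~\ref{ImpliesIsom}, whose hypothesis is precisely that $\pi(h_\alpha)=0$ for all $\alpha\in R\cap\bZ I=R_I$, and which states that $\lambda\in W_{I,p}\cdot\mu$ implies $Z_{A,\chi}(\lambda)\cong Z_{A,\chi}(\mu)$. Alternatively --- and more in keeping with the methods of this section --- one can run the chain of isomorphisms indicated in the Remark following Proposition~\ref{ZIsom}: $\lambda\in W_{I,p}\cdot\mu$ forces $Z_{\bK,I,\chi}(\lambda)\cong Z_{\bK,I,\chi}(\mu)$ in $\sC_\bK^I$, hence $Z_{\bK,I,\chi}(\lambda)\otimes_\bK A\cong Z_{\bK,I,\chi}(\mu)\otimes_\bK A$; applying the equivalence $\Theta_A^I$ gives $Z_{A,I,\chi}(\lambda)\cong Z_{A,I,\chi}(\mu)$ in $\sC_A^I$; and applying $\Gamma_{A,\chi}$ together with the identification $\Gamma_{A,\chi}(Z_{A,I,\chi}(\nu))\cong Z_{A,\chi}(\nu)$ gives $Z_{A,\chi}(\lambda)\cong Z_{A,\chi}(\mu)$.

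Since both implications are already available, there is essentially no obstacle at this stage; the only thing to verify is that the hypothesis of the corollary matches the hypotheses of Propositions~\ref{ImpliesIsom} and~\ref{ZIsom}, which it does. If one wanted a genuinely self-contained argument rather than a citation of earlier results, the real difficulty would be the forward direction of Proposition~\ref{ZIsom}, which rests on the regular-nilpotent case Proposition~\ref{ZRegIsom} and ultimately on the description of the centre of $U(\fg_I)$; but that work has already been carried out, so here the proof is a one-line combination.
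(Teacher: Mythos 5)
Your proposal is correct and is exactly how the paper obtains this corollary: the forward implication is Proposition~\ref{ZIsom} and the converse is Proposition~\ref{ImpliesIsom} (equivalently, the chain of isomorphisms through $\Theta_A^I$ and $\Gamma_{A,\chi}$ given in the Remark), with the hypothesis $\pi(h_\alpha)=0$ for $\alpha\in R_I$ matching both. Nothing further is needed.
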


\begin{prop}
	Let $\lambda,\mu\in X$ and suppose that $\pi(h_\alpha)=0$ for all $\alpha\in R_I$. Then $Q_{A,\chi}^I(\lambda)\cong Q_{A,\chi}^I(\mu)$ if and only if $\lambda\in W_{I,p}\cdot\mu$.
\end{prop}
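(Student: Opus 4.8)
The plan is to push the problem down to the Levi category $\sC_A^I$, where it becomes exactly Proposition~\ref{QIisom}. Recall that $\sC_A^I$ is, by construction, the category ``$\sC_A$'' attached to the reductive datum producing $\fg_I$ together with $\chi_I=\chi\vert_{\fg_I}$, which is regular nilpotent; under this identification the role of $R$ is played by $R_I$, the affine Weyl group $W_p$ becomes the subgroup of $\Aut_{\bZ}(X)$ generated by the $s_{\alpha,mp}$ with $\alpha\in R_I$, namely $W_{I,p}$, and the hypothesis of Proposition~\ref{QIisom} (that $\pi(h_\alpha)=0$ for all roots) reads precisely as our standing assumption that $\pi(h_\alpha)=0$ for all $\alpha\in R_I$. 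Moreover $Q_{A,I,\chi}(\lambda)=\Theta_A^I(Q_{\bK,I,\chi}(\lambda)\otimes_{\bK}A)$ is exactly the object called ``$Q_{A,\chi}(\lambda)$'' in Section~\ref{Sec7.2} for this datum. Hence Proposition~\ref{QIisom} gives, for all $\lambda,\mu\in X$,
$$Q_{A,I,\chi}(\lambda)\cong Q_{A,I,\chi}(\mu)\ \text{in}\ \sC_A^I\iff\lambda\in W_{I,p}\cdot\mu.$$

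For the backward direction, suppose $\lambda\in W_{I,p}\cdot\mu$. Then $Q_{A,I,\chi}(\lambda)\cong Q_{A,I,\chi}(\mu)$ in $\sC_A^I$ by the displayed equivalence; viewing both as objects of $\sC_A^{I,+}$ via the surjection $U^IU_I^{+}\twoheadrightarrow U^I$ and applying the functor $\Gamma_{A,\chi}$ yields $Q_{A,\chi}^I(\lambda)=\Gamma_{A,\chi}(Q_{A,I,\chi}(\lambda))\cong\Gamma_{A,\chi}(Q_{A,I,\chi}(\mu))=Q_{A,\chi}^I(\mu)$.

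For the forward direction, suppose $Q_{A,\chi}^I(\lambda)\cong Q_{A,\chi}^I(\mu)$ in $\sC_A$. Since $Q_{A,I,\chi}(\nu)$ lies in $\sC_A^I(\nu+\bZ I)$ (because $Q_{\bK,I,\chi}(\nu)\in\sC_\bK^I(\nu+\bZ I)$), and $\Gamma_{A,\chi}(N)\cong U_I^{-}\otimes N$ as $\bK$-vector spaces with $(U_I^{-})_{\eta+\bZ I}\neq 0$ only when $\eta+\bZ I\leq 0+\bZ I$ and $(U_I^{-})_{0+\bZ I}=\bK$, the module $Q_{A,\chi}^I(\nu)$ is supported in degrees $\leq\nu+\bZ I$ and its degree-$(\nu+\bZ I)$ component is $Q_{A,I,\chi}(\nu)$ (the identification $\Gamma_{A,\chi}(M)_{\nu+\bZ I}\cong M$ for $M$ concentrated in degree $\nu+\bZ I$ is recorded in Section~\ref{Sec4.4}, and it is an isomorphism in $\sC_A^I$ since $U_I^{+}$ acts by zero on this component). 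Because morphisms in $\sC_A$ preserve the $X/\bZ I$-grading, an isomorphism $Q_{A,\chi}^I(\lambda)\cong Q_{A,\chi}^I(\mu)$ forces $\lambda+\bZ I=\mu+\bZ I$ and, restricting to this top component, $Q_{A,I,\chi}(\lambda)\cong Q_{A,I,\chi}(\mu)$ in $\sC_A^I$; the displayed equivalence then gives $\lambda\in W_{I,p}\cdot\mu$.

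The only delicate point is the bookkeeping in the forward direction: verifying that $\lambda+\bZ I$ really is the unique maximal degree supporting $Q_{A,\chi}^I(\lambda)$ (so the isomorphism may legitimately be restricted there) and that the resulting top component is genuinely the $\sC_A^I$-object $Q_{A,I,\chi}(\lambda)$ and not merely an abstract subquotient. Both facts follow immediately from the description of $\Gamma_{A,\chi}$ and of $U_I^{-}$ already given in Section~\ref{Sec4.4}, so no further input is needed.
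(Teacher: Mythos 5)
Your proposal is correct and follows essentially the same route as the paper: the backward direction applies $\Gamma_{A,\chi}$ to the isomorphism $Q_{A,I,\chi}(\lambda)\cong Q_{A,I,\chi}(\mu)$ coming from Proposition~\ref{QIisom} read in the Levi category $\sC_A^I$, and the forward direction restricts to the unique maximal graded piece $\lambda+\bZ I$, identifies it with $Q_{A,I,\chi}(\lambda)$ via the description of $\Gamma_{A,\chi}$ from Section~\ref{Sec4.4}, and invokes Proposition~\ref{QIisom} again. The extra bookkeeping you supply about the top degree and its identification is exactly the (implicit) content of the paper's one-line argument, so nothing further is needed.
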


\begin{proof}
	If $Q_{A,\chi}^I(\lambda)\cong Q_{A,\chi}^I(\mu)$ then we have $\lambda+\bZ I=\mu+\bZ I$ as the maximal element of $X/\bZ I$ appearing non-trivially in the grading. Then $Q_{A,I,\chi}(\lambda)\cong Q_{A,\chi}^I(\lambda)_{\lambda+\bZ I}\cong Q_{A,\chi}^I(\mu)_{\mu+\bZ I}\cong Q_{A,I,\chi}(\mu)$, which implies $\lambda\in W_{I,p}\cdot\mu$ by Proposition~\ref{QIisom}.
	
	Conversely, if $\lambda\in W_{I,p}\cdot\mu$ then $Q_{A,I,\chi}(\lambda)\cong Q_{A,I,\chi}(\mu)$ in $\sC_A^I$, which obviously implies that $Q_{A,\chi}^I(\lambda)\cong Q_{A,\chi}^I(\mu)$.
\end{proof}

We write $\Lambda_I$ for the set of $W_{I,p}$-dot-orbits on $X$, so that elements of $\Lambda$ index the pairwise non-isomorphic $Q_{A,\chi}^I(\lambda)$ and also the pairwise non-isomorphic baby Verma modules $Z_{A,\chi}(\lambda)$. As for the set $\Lambda$ in the previous section, we also use the notation $\Lambda_I$ for a fundamental domain of the $W_{I,p}$-dot-action on $X$, allowing us to view elements of $\Lambda_I$ as being in $X$. Note, of course, that if $\lambda$ and $\mu$ in $X$ lie in the same $W_{I,p}$-dot-orbit then  $\lambda+\bZ I=\mu+\bZ I$, so we may talk about $\lambda+\bZ I$ for $\lambda\in\Lambda_I$ without any ambiguity.

\begin{prop}\label{QIZFilt}
	Let $\lambda\in X$ and suppose that $\pi(h_\alpha)=0$ for all $\alpha\in R_I$. Then $Q_{A,\chi}^{I}(\lambda)$ has a $Z$-filtration.
\end{prop}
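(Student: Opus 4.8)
The plan is to reduce the claim, via the exact induction functor $\Gamma_{A,\chi}$, to the corresponding statement inside the ``regular nilpotent'' category $\sC_A^I$. First I would recall the observation made at the start of this section that $\sC_A^I$ is precisely an instance of the situation studied in Section~\ref{Sec7}: it is built from the Levi subalgebra $\fg_I$, for which $\chi|_{\fg_I}$ is regular nilpotent, and the running hypothesis $\pi(h_\alpha)=0$ for all $\alpha\in R_I$ is exactly the hypothesis required there (with the root system $R$ replaced by $R_I$). Consequently the discussion following Corollary~\ref{projAF}, read inside $\sC_A^I$, applies for our possibly-non-local $A$ and shows that $Q_{A,I,\chi}(\lambda)$ admits a filtration
$$0=N_0\subset N_1\subset\cdots\subset N_k=Q_{A,I,\chi}(\lambda)$$
in $\sC_A^I$ in which every section $N_i/N_{i-1}$ is isomorphic to $Z_{A,I,\chi}(\lambda)$.

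Next I would transport this filtration from $\sC_A^I$ to $\sC_A^{I,+}$ along the trivial extension coming from the surjection $U^IU_I^{+}\twoheadrightarrow U^I$. Since this procedure changes neither the underlying $\bK$-space, nor the $X/\bZ I$-grading and $(D)$-decomposition, nor the $U^I$-action (it merely lets $U_I^{+}$ act by zero), it takes $\sC_A^I$-submodules to $\sC_A^{I,+}$-submodules, is exact, and sends $Z_{A,I,\chi}(\lambda)$ to the object of $\sC_A^{I,+}$ denoted the same way. Thus $0=N_0\subset\cdots\subset N_k=Q_{A,I,\chi}(\lambda)$ remains a filtration in $\sC_A^{I,+}$ with each section isomorphic to $Z_{A,I,\chi}(\lambda)$. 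Finally I would apply the functor $\Gamma_{A,\chi}:\sC_A^{I,+}\to\sC_A$, which is exact by the corollary following Proposition~\ref{FrobRecPara}; this yields a filtration
$$0=\Gamma_{A,\chi}(N_0)\subset\Gamma_{A,\chi}(N_1)\subset\cdots\subset\Gamma_{A,\chi}(N_k)=\Gamma_{A,\chi}(Q_{A,I,\chi}(\lambda))=Q_{A,\chi}^I(\lambda)$$
in $\sC_A$, with $\Gamma_{A,\chi}(N_i)/\Gamma_{A,\chi}(N_{i-1})\cong\Gamma_{A,\chi}(N_i/N_{i-1})\cong\Gamma_{A,\chi}(Z_{A,I,\chi}(\lambda))\cong Z_{A,\chi}(\lambda)$, using the identification $\Gamma_{A,\chi}(Z_{A,I,\chi}(\lambda))\cong Z_{A,\chi}(\lambda)$ recorded after Lemma~\ref{ProjResI}. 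This exhibits the desired $Z$-filtration of $Q_{A,\chi}^I(\lambda)$ --- in fact one in which only $Z_{A,\chi}(\lambda)$ appears.

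I do not expect a genuine obstacle here; the argument is essentially bookkeeping with exact functors. The only two points needing (brief) care are: that the $Z$-filtration of $Q_{A,I,\chi}(\lambda)$ genuinely descends from Section~\ref{Sec7} for arbitrary $A$ (it does, since the relevant discussion after Corollary~\ref{projAF} explicitly covers the non-local case), and that passing from $\sC_A^I$ to $\sC_A^{I,+}$ preserves both submodules and the identity of the baby Verma sections (immediate, as nothing but the trivial $U_I^{+}$-action is added). Everything else follows from exactness of $\Gamma_{A,\chi}$.
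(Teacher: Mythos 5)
Your proposal is correct and is essentially the paper's own argument: obtain a $Z$-filtration of $Q_{A,I,\chi}(\lambda)$ in $\sC_A^I$ (you cite the discussion after Corollary~\ref{projAF} applied to $\fg_I$, while the paper rederives it via $\Theta_A^I$ and exactness of $-\otimes_{\bK}A$ from Jantzen's result --- the same content), pass to $\sC_A^{I,+}$ by trivial extension, and apply the exact functor $\Gamma_{A,\chi}$ together with $\Gamma_{A,\chi}(Z_{A,I,\chi}(\mu))\cong Z_{A,\chi}(\mu)$. The only quibble is a citation slip: that last identification is recorded in Subsection~\ref{Sec4.4} just before, not after, Lemma~\ref{ProjResI}.
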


\begin{proof}
	First, we show $Q_{A,I,\chi}(\lambda)$ has a $Z$-filtration in $\sC_A^I$. Since $Q_{A,I,\chi}(\lambda)=\Theta_A^I(Q_{\bK,I,\chi}(\lambda)\otimes_{\bK} A)$ and $\Theta_A^I(Z_{\bK,\chi}(\mu)\otimes_{\bK} A)=Z_{A,\chi}(\mu)$ for all $\mu\in X$, it is enough to show that $Q_{\bK,I,\chi}(\lambda)\otimes_{\bK} A\in\sC_A^\circ$ has a filtration with sections of the form $Z_{\bK,\chi}(\mu)\otimes_{\bK} A$ for $\mu\in X$.
	Since $-\otimes_{\bK}A$ is an exact functor $\sC_{\bK}\to\sC_A^\circ$, this follows from the fact that $Q_{\bK,I,\chi}(\lambda)$ has a $Z$-filtration in $\sC_\bK$ as in \cite[Prop. 11.18]{Jan}.
	
	Now, viewing $Q_{A,I,\chi}(\lambda)$ as an element of $\sC_A^{I,+}$ via trivial extension, it has a filtration in $\sC_A^{I,+}$ with sections of the form $Z_{A,I,\chi}(\mu)\in\sC_A^{I,+}$ for $\mu\in X$. The induction functor $\Gamma_{A,\chi}$ is an exact functor $\sC_A^{I,+}\to\sC_A$, so $Q_{A,\chi}^I(\lambda)=\Gamma_{A,\chi}(Q_{A,I,\chi}(\lambda))$ has a filtration with sections of the form $\Gamma_{A,\chi}(Z_{A,I,\chi}(\mu))\in\sC_A$ for $\mu\in X$. Since $\Gamma_{A,\chi}(Z_{A,\chi}(\mu))$ is equal to $Z_{A,\chi}(\mu)$ in $\sC_A$, the result follows.
\end{proof}

\begin{rmk}
	The proof of Proposition~\ref{QIZFilt} demonstrates the structure of a $Z$-filtration of $Q_{A,\chi}^{I}(\lambda)$. Namely, since the $Z$-filtration of $Q_{\bK,I,\chi}(\lambda)\in\sC_\bK^{I}$ consists of $\left| W_I\cdot d\lambda\right|$ factors each isomorphic to $Z_{\bK,I,\chi}(\lambda)$, the $Z$-filtration of $Q_{A,\chi}^{I}(\lambda)$ consists of $\left| W_I\cdot d\lambda\right|$ factors each isomorphic to $Z_{A,\chi}(\lambda)$. Furthermore, if $Z_{A,\chi}(\mu)$ is a factor of any $Z$-filtration of $Q_{A,\chi}^I(\lambda)$ then the above discussion shows that $\mu+\bZ I\leq \lambda+\bZ I$, while consideration of the ranks of $Q_{A,\chi}^I(\lambda)$ and $Q_{A,\chi}^I(\lambda)_{\lambda+\bZ I}$ as $A$-modules guarantees that $\mu+\bZ I\geq\lambda+\bZ I$. This then implies that $Z_{A,I,\chi}(\mu)$ appears in a $Z$-filtration of $Q_{A,I,\chi}(\lambda)$, proving that $Z_{A,I,\chi}(\mu)\cong Z_{A,I,\chi}(\lambda)$ and  $Z_{A,\chi}(\mu)\cong Z_{A,\chi}(\lambda)$.
\end{rmk}

\begin{dfn}
	Let $M\in \sC_A$. We say that $M$ has a {\bf $Q$-filtration} if it has a filtration in $\sC_A$ whose sections are all of the form $Q_{A,\chi}^I(\lambda)$ for $\lambda\in X$.
\end{dfn}

\begin{rmk}
	Proposition~\ref{QIZFilt} clearly implies that if $M\in\sC_A$ has a $Q$-filtration then it has a $Z$-filtration.	
\end{rmk}

\begin{prop}\label{ProjResQ}
	Let $M$ be a module in $\sC_A$ with a $Q$-filtration, and suppose that $\pi(h_\alpha)=0$ for all $\alpha\in R_I$. Then $M$ has a projective resolution $P_\bullet$ in $\sC_A$ such that, for each $N\in \sC_A$, there exists an integer $r\geq 0$ such that $$\Hom_{\sC_A}(P_i,N)=0\quad \mbox{for all}\,\, i\geq r.$$
\end{prop}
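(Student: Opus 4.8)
The plan is to reduce to Lemma~\ref{ProjResI}, transport the resolution it produces along the exact functor $\Gamma_{A,\chi}$, and then assemble the general case from a $Q$-filtration by dimension shifting. First I would treat $M=Q_{A,\chi}^{I}(\lambda)$ for a single $\lambda\in X$. Since $Q_{A,I,\chi}(\lambda)=\Theta_A^I(Q_{\bK,I,\chi}(\lambda)\otimes_{\bK}A)$ with $Q_{\bK,I,\chi}(\lambda)$ projective in $\sC_\bK^I$, Lemma~\ref{projscal} gives that $Q_{\bK,I,\chi}(\lambda)\otimes_{\bK}A$ is projective in $(\sC_A^I)^\circ$, and the equivalence $\Theta_A^I$ then shows $Q_{A,I,\chi}(\lambda)$ is projective in $\sC_A^I$. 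Viewing it as an object of $\sC_A^{I,+}$ via $U^IU_I^{+}\twoheadrightarrow U^I$, Lemma~\ref{ProjResI} supplies a projective resolution $P_\bullet$ of $Q_{A,I,\chi}(\lambda)$ in $\sC_A^{I,+}$ with $\Hom_{\sC_A^{I,+}}(P_i,N')=0$ for $i$ large, the bound depending on $N'\in\sC_A^{I,+}$. As $\Gamma_{A,\chi}$ is exact and sends projectives to projectives, $\Gamma_{A,\chi}(P_\bullet)$ is a projective resolution of $\Gamma_{A,\chi}(Q_{A,I,\chi}(\lambda))=Q_{A,\chi}^{I}(\lambda)$ in $\sC_A$. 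For $N\in\sC_A$, restrict $N$ to an object $N'$ of $\sC_A^{I,+}$ along $U^IU_I^{+}\hookrightarrow U_\chi$; by Proposition~\ref{FrobRecPara}, $\Hom_{\sC_A}(\Gamma_{A,\chi}(P_i),N)\cong\Hom_{\sC_A^{I,+}}(P_i,N')$, which vanishes once $i\geq r$, with $r$ depending only on $N'$ and hence only on $N$.

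For a general $M$ with a $Q$-filtration I would induct on the length of such a filtration; length $0$ is trivial and length $1$ is the case just treated. For the inductive step, choose $M'\subseteq M$ with $M/M'\cong Q_{A,\chi}^{I}(\lambda)$ for some $\lambda\in X$ and with $M'$ admitting a $Q$-filtration of one shorter length. By induction and the single-$Q$ case, $M'$ and $Q_{A,\chi}^{I}(\lambda)$ have projective resolutions in $\sC_A$ with the required $\Hom$-vanishing property; applying the horseshoe lemma in $\sC_A$ (abelian with enough projectives) to $0\to M'\to M\to Q_{A,\chi}^{I}(\lambda)\to 0$ produces a projective resolution of $M$ whose $i$-th term is the direct sum of the $i$-th terms of the two resolutions. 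Since $\Hom_{\sC_A}(-,N)$ turns this into a direct sum of the two $\Hom$-groups, it vanishes for all $i$ beyond the larger of the two bounds, giving the claim.

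The step I expect to be the main obstacle is making the first paragraph precise: one needs that restricting an object of $\sC_A$ along $U^IU_I^{+}\hookrightarrow U_\chi$ really yields an object of $\sC_A^{I,+}$ (so that the bound $r$ from Lemma~\ref{ProjResI} is genuinely a function of $N$ alone) and that applying $\Gamma_{A,\chi}$ to $P_\bullet$ preserves exactness; both follow from the exactness facts recorded around Proposition~\ref{FrobRecPara}, but they are the load-bearing inputs. The filtration induction and the direct-sum bookkeeping are routine.
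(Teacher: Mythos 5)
Your proposal is correct and follows essentially the same route as the paper: reduce to $M=Q_{A,\chi}^I(\lambda)$ by applying Lemma~\ref{ProjResI} to $Q_{A,I,\chi}(\lambda)$ viewed in $\sC_A^{I,+}$, transport the resolution along the exact, projective-preserving functor $\Gamma_{A,\chi}$ and use the Frobenius reciprocity of Proposition~\ref{FrobRecPara} for the $\Hom$-vanishing, then induct on the length of the $Q$-filtration. The only (cosmetic) difference is that the paper splices the resolutions of the sub and the quotient via the algebraic mapping cone, as in \cite{AJS} Lemma 2.15, whereas you invoke the horseshoe lemma; both produce termwise direct sums, so the bookkeeping for the vanishing bound is identical.
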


\begin{proof}
	We first prove this result for $M=Q_{A,\chi}^I(\lambda)$ with $\lambda\in X$. Since $Q_{A,I,\chi}(\lambda)$ is projective in $\sC_A^I$, when we extend it to a module in $\sC_A^{I,+}$ it satisfies the assumptions of Lemma~\ref{ProjResI}. Hence, there exists a resolution $\widetilde{P}_\bullet$ in $\sC_A^{I,+}$ with the property given in that lemma. Applying the functor $\Gamma_{A,\chi}$ to this resolution, we get a projective resolution $P_\bullet=\Gamma_{A,\chi}(\widetilde{P})_\bullet$ of $\Gamma_{A,\chi}(Q_{A,I,\chi}(\lambda))$ which, by Frobenius reciprocity (see Proposition~\ref{FrobRecPara}), satisfies the requirements of this lemma.
	
	As in \cite{AJS} Lemma 2.15, this then implies the result for all $M\in\sC_A$ with a $Q$-filtration using induction and the algebraic mapping cone.
\end{proof}

The following proposition is just Proposition 3.4 in \cite{AJS} adapted to our setting.

\begin{prop}\label{ExtQFilt}
	Let $M\in \sC_A$ have a $Q$-filtration, and let $N$ be a module in $\sC_A$ which is projective as an $A$-module. Suppose that $\pi(h_\alpha)=0$ for all $\alpha\in R_I$, and suppose further that for all maximal ideals $\fm$ of $A$ and all $i> 0$ $$\Ext_{\sC_{A/\fm}}^i(M\otimes_{A}A/\fm,N\otimes_{A} A/\fm)=0.$$ Then the following results hold.
	\begin{enumerate}
		\item The homomorphism space $\Hom_{\sC_A}(M,N)$ is projective as an $A$-module.
		\item For all $A$-algebras $A'$, $$\Ext_{\sC_{A'}}^i(M\otimes_A A',N\otimes_A A')=0\quad\mbox{for all}\,\, i>0.$$ 
		\item For all $A$-algebras $A'$, $$\Hom_{\sC_A}(M,N)\otimes_{A} A'\cong \Hom_{\sC_{A'}}(M\otimes_A A',N\otimes_A A').$$
	\end{enumerate}
\end{prop}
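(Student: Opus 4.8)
The plan is to follow the proof of \cite[Proposition 3.4]{AJS} closely. By Proposition~\ref{ProjResQ}, $M$ has a projective resolution $P_\bullet\to M$ in $\sC_A$ such that for every $N\in\sC_A$ there is some $r\geq 0$ with $\Hom_{\sC_A}(P_i,N)=0$ for all $i\geq r$. Fixing such a resolution, I would study the complex of $A$-modules $C^\bullet\coloneqq\Hom_{\sC_A}(P_\bullet,N)$, which by the choice of $P_\bullet$ is concentrated in degrees $0,1,\dots,r-1$ and whose cohomology is $H^i(C^\bullet)=\Ext^i_{\sC_A}(M,N)$. The first task is to record two structural facts. \emph{(i)} Each $C^i$ is a finitely-generated projective $A$-module: unwinding how projective objects of $\sC_A$ are built from the induction functors $\Phi_A$, $\Gamma_{A,\chi}$, $\Phi_A^{I,+}$ exactly as in \cite{AJS} (using Lemma~\ref{PhiProj} and the Frobenius reciprocities of Proposition~\ref{FrobRecPara}), $P_i$ is a direct summand of a module induced from a finitely-generated free $A$-module, so $\Hom_{\sC_A}(P_i,N)$ is a direct summand of a finite direct sum of homogeneous pieces $N_{\lambda+\bZ I}^{d\mu}$ of $N$; these are $A$-module direct summands of $N$, hence projective over $A$ since $N$ is, and finitely generated since $A$ is Noetherian. \emph{(ii)} For any $A$-algebra $A'$ the natural map gives $C^\bullet\otimes_A A'\cong\Hom_{\sC_{A'}}(P_\bullet\otimes_A A',N\otimes_A A')$, by the same reductions together with the base-change identities for the induction functors from Section~\ref{Sec5.3}; moreover $P_\bullet\otimes_A A'\to M\otimes_A A'$ is again a projective resolution in $\sC_{A'}$, its terms being projective by Lemma~\ref{projscal} and exactness being preserved because $M$ is free over $A$ (it has a $Z$-filtration by Proposition~\ref{QIZFilt}) and each $P_i$ is flat over $A$, so an easy induction makes all syzygies of $P_\bullet$ flat over $A$. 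Consequently $H^i(C^\bullet\otimes_A A')=\Ext^i_{\sC_{A'}}(M\otimes_A A',N\otimes_A A')$ for all $i$; in particular $H^i(C^\bullet\otimes_A(A/\fm))=\Ext^i_{\sC_{A/\fm}}(M\otimes_A A/\fm,N\otimes_A A/\fm)$, which vanishes for $i>0$ and all maximal ideals $\fm$ by hypothesis.

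Next I would run a descending induction on the length of $C^\bullet$. Suppose the top nonzero term of the current complex sits in degree $d\geq 1$; its top cohomology $H^d$ is the cokernel of $C^{d-1}\to C^d$. Since cokernels commute with $-\otimes_A(A/\fm)$, while $H^d(C^\bullet\otimes_A A/\fm)=\Ext^d_{\sC_{A/\fm}}(\dots)=0$ for all maximal $\fm$, we get $H^d\otimes_A(A/\fm)=0$ for all such $\fm$; as $H^d$ is finitely generated, Nakayama forces $H^d=0$, i.e. $C^{d-1}\to C^d$ is surjective. Because $C^d$ is projective over $A$ this surjection has an $A$-linear section, and since $\im(C^{d-2}\to C^{d-1})\subseteq\ker(C^{d-1}\to C^d)$ one splits off from $C^\bullet$ a contractible two-term summand (a copy of $C^d$ in degrees $d-1$ and $d$), leaving a complex of the same type but one shorter, whose new top term $\ker(C^{d-1}\to C^d)$ is again finitely-generated projective over $A$. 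Iterating down to a complex $0\to C^0\to W^1\to 0$ with $W^1$ finitely-generated projective over $A$, the same argument (now using $\Ext^1_{\sC_{A/\fm}}(\dots)=0$) shows $C^0\to W^1$ splits, so $C^0\cong H^0(C^\bullet)\oplus W^1$. In particular $\Hom_{\sC_A}(M,N)=H^0(C^\bullet)$ is a direct summand of $C^0=\Hom_{\sC_A}(P_0,N)$, hence finitely-generated projective over $A$; this is conclusion (1). Altogether this exhibits an $A$-linear isomorphism between $C^\bullet$ and the direct sum of $\Hom_{\sC_A}(M,N)$ concentrated in degree $0$ with a contractible complex of finitely-generated projective $A$-modules.

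Finally, since every splitting used above is $A$-linear, applying $-\otimes_A A'$ to this decomposition shows that $C^\bullet\otimes_A A'$ is the direct sum of $\Hom_{\sC_A}(M,N)\otimes_A A'$ in degree $0$ with a contractible complex; hence $H^i(C^\bullet\otimes_A A')=0$ for $i>0$ and $H^0(C^\bullet\otimes_A A')\cong\Hom_{\sC_A}(M,N)\otimes_A A'$. Reading this through the identifications of the first paragraph yields $\Ext^i_{\sC_{A'}}(M\otimes_A A',N\otimes_A A')=0$ for all $i>0$ (conclusion (2)) and $\Hom_{\sC_{A'}}(M\otimes_A A',N\otimes_A A')\cong\Hom_{\sC_A}(M,N)\otimes_A A'$ (conclusion (3)). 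I expect the main obstacle to be the bookkeeping behind facts \emph{(i)} and \emph{(ii)}: namely, verifying for $P$ projective in $\sC_A$ and $N$ projective over $A$ that $\Hom_{\sC_A}(P,N)$ is finitely-generated projective over $A$ and that its formation commutes with arbitrary base change. This is the analogue of \cite[Lemma 3.1]{AJS}; it is not deep, but it requires carefully tracking how projective objects of $\sC_A$ are assembled from free $A$-modules through the functors $\Phi_A$, $\Gamma_{A,\chi}$ and $\Phi_A^{I,+}$.
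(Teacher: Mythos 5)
Your proposal is correct and follows essentially the same route as the paper: the paper's proof simply invokes Proposition~\ref{ProjResQ}, the freeness of $M$ over $A$, the $A$-projectivity of the terms $P_i$, and then defers to the proof of Proposition 3.4 in \cite{AJS} (via the generalisation of their Proposition 3.3), which is precisely the complex-splitting/Nakayama argument you spell out, including your facts (i) and (ii) which are the content of that generalised Proposition 3.3.
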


\begin{proof}
	By Proposition~\ref{ProjResQ}, there exists a projective resolution $P_\bullet$ of $M$ in $\sC_A$ and an integer $r\geq 0$ such that $\Hom_{\sC_A}(P_i,N)=0$ for all $i\geq r$. Since $M$ has a $Q$-filtration it has a $Z$-filtration, and so is free over $A$. Furthermore, all the $P_i$ are projective over $A$ since they are projective in $\sC_A$ (recall this argument from the proof of Proposition~\ref{UniqDecomp}). 
	
	The proof then works exactly the same way as the proof of \cite{AJS} Proposition 3.4, once we observe that \cite{AJS} Proposition 3.3 generalises to our setting precisely.
\end{proof}

Now is a good time to note that as well as the equivalence of categories $$\Theta_A^I:(\sC_A^I)^\circ\xrightarrow{\sim}\sC_A^I$$ there is also an equivalence of categories $$\Theta_A^{I,+}:(\sC_A^{I,+})^\circ\xrightarrow{\sim}\sC_A^{I,+}$$ defined in the same way. This works since we don't need the actions of $e_\alpha e_{-\alpha}-e_{-\alpha}e_\alpha$ and $h_\alpha$ to coincide here for $\alpha\notin R_I$.

\begin{lemma}\label{ProjDec}
	Let $M\in \sC_{\bK}^I$ be projective, and suppose that $\pi(h_\alpha)=0$ for all $\alpha\in R_I$. Then $\Theta^I_A(\Phi_{\bK}^{I,+}(M)\otimes A)\in\sC_A^I$ is a direct sum of modules of the form $Q_{A,\chi,I}(\mu)$ for $\mu\in \Lambda_I$.
\end{lemma}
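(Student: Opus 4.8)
The plan is to reduce to the case that $M$ is indecomposable and then transport that case through the (additive) functors in play. Throughout I use that $\chi_I=\chi\vert_{\fg_I}$ is regular nilpotent, so that $\sC_\bK^I$ (and $(\sC_A^I)^\circ$, $\sC_A^I$) is subject to the theory of Section~\ref{Sec7} for the Levi $\fg_I$, our standing hypothesis $\pi(h_\alpha)=0$ for $\alpha\in R_I$ being exactly what that theory requires for $\fg_I$.

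First I would describe the projectives of $\sC_\bK^I$. All objects of $\sC_\bK^I$ are finite-dimensional over $\bK$, hence of finite length, and $\sC_\bK^I$ has enough projectives; so the Krull--Schmidt theorem applies and the indecomposable projectives are precisely the projective covers of the simple objects. By Proposition~\ref{Irred} and Proposition~\ref{ZSurj} (applied to $\fg_I$) the simple objects of $\sC_\bK^I$ are exactly the baby Verma modules $Z_{\bK,I,\chi}(\mu)$, and by Corollary~\ref{ZIsomFull} two of these are isomorphic iff their labels lie in the same $W_{I,p}$-dot-orbit. Hence the indecomposable projectives of $\sC_\bK^I$ are exactly the $Q_{\bK,I,\chi}(\mu)$, $\mu\in\Lambda_I$, pairwise non-isomorphic, and the given projective $M$ decomposes as $M\cong\bigoplus_{\mu\in\Lambda_I}Q_{\bK,I,\chi}(\mu)^{\oplus m_\mu}$ for suitable $m_\mu\geq 0$.

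Next, I need the composite operation ``apply $\Phi_\bK^{I,+}$, then restrict back to $\sC_\bK^I$'' to preserve projectivity. (The restriction is implicit in the statement: since $\Theta_A^I$ takes arguments in $(\sC_A^I)^\circ$, one must first restrict $\Phi_\bK^{I,+}(M)$ along $U^I\hookrightarrow U^IU_I^{+}$, which is a legitimate functor $\Res\colon\sC_\bK^{I,+}\to\sC_\bK^I$ because $U^I\subseteq(U^IU_I^{+})_{0+\bZ I}$.) Now $\Phi_\bK^{I,+}$ sends projectives to projectives, so $\Phi_\bK^{I,+}(M)$ is projective in $\sC_\bK^{I,+}$; and $\Res$ preserves projectives since it has an exact right adjoint, namely coinduction $N\mapsto\Hom_{U^I}(U^IU_I^{+},N)$, whose exactness follows from $U^IU_I^{+}$ being free as a left $U^I$-module by the PBW theorem for $\fp=\fg_I\oplus\fu^{+}$. (Alternatively: the reduced enveloping algebras here are self-injective, so $\sC_\bK^{I,+}$ and $\sC_\bK^I$ are Frobenius categories, and $\Res$, being right adjoint to the exact $\Phi_\bK^{I,+}$, preserves injectives, hence projectives.) Thus $\Res\Phi_\bK^{I,+}(M)$ is again projective in $\sC_\bK^I$, so by the previous paragraph $\Res\Phi_\bK^{I,+}(M)\cong\bigoplus_{\nu\in\Lambda_I}Q_{\bK,I,\chi}(\nu)^{\oplus m'_\nu}$ for suitable $m'_\nu\geq 0$.

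Finally, $-\otimes_\bK A$ and $\Theta_A^I$ are additive, and by the very definition of $Q_{A,I,\chi}(\nu)$ in Section~\ref{Sec8.1} we have $\Theta_A^I\big(Q_{\bK,I,\chi}(\nu)\otimes_\bK A\big)=Q_{A,I,\chi}(\nu)$; so
\[
\Theta_A^I\big(\Res\Phi_\bK^{I,+}(M)\otimes_\bK A\big)\cong\bigoplus_{\nu\in\Lambda_I}Q_{A,I,\chi}(\nu)^{\oplus m'_\nu},
\]
which is the claim. The only step that is not purely formal is the preservation of projectivity in the middle paragraph --- everything else is Krull--Schmidt together with additivity and the base-change identity already recorded in the excerpt --- so that is where I expect the real work to lie; if one wishes to avoid the adjoint-functor argument, the fallback is to unwind $\Res\Phi_\bK^{I,+}(Q_{\bK,I,\chi}(\mu))\cong U_0(\fu^{+})\otimes_\bK Q_{\bK,I,\chi}(\mu)$ as a $U^I$-module and use the $X/\bZ I$-grading of $\fu^{+}$ to exhibit it as a filtered object whose sections are grading-shifts of $Q_{\bK,I,\chi}(\mu)$, so that the filtration is forced to split.
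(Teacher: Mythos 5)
Your argument is essentially correct, and its skeleton matches the paper's: classify the projectives of $\sC_\bK^I$ as direct sums of the $Q_{\bK,I,\chi}(\mu)$, show that $\Phi_\bK^{I,+}(M)$ restricted back to $\sC_\bK^I$ is again projective, and then transport the decomposition through $-\otimes_\bK A$ and $\Theta_A^I$ using $\Theta_A^I(Q_{\bK,I,\chi}(\mu)\otimes_\bK A)=Q_{A,I,\chi}(\mu)$. The one genuinely different choice is how you establish the middle step. You stay inside the graded categories and invoke a graded coinduction functor $N\mapsto\Hom_{U^I}(U^IU_I^{+},N)$, exact because $U^IU_I^{+}$ is free over $U^I$, so that restriction (being its left adjoint) preserves projectives. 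The paper instead applies the forgetful functor to ordinary module categories: there $\Phi_\bK^{I,+}$ becomes induction $U_\chi(\fg_I)\to U_\chi(\fp)$, which sends projectives to projectives by ordinary Frobenius reciprocity, and since $U_\chi(\fp)$ is free over $U_\chi(\fg_I)$ the restriction of a projective $U_\chi(\fp)$-module is again projective over $U_\chi(\fg_I)$; one then transfers projectivity back to $\sC_\bK^I$ (cheap here, because within a block $\sC_\bK^I(\lambda+\bZ I)$ every $U^I$-module homomorphism automatically respects the (D)-decomposition, as noted in Section~\ref{Sec5.1}). Both routes rest on the same freeness fact; the paper's route buys you standard, already-available adjunctions at the cost of a graded-versus-ungraded comparison, while yours avoids the forgetful functor but leaves unverified that coinduction really lands in $\sC_\bK^{I,+}$ (grading, condition (D), and the adjunction on degree-zero, decomposition-preserving morphisms) — that verification is exactly where the content of your version sits, and you should either carry it out or fall back on the ungraded argument.

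One concrete correction: your proposed fallback is wrong as stated. As a $U^I$-module one has $\Phi_\bK^{I,+}(Q_{\bK,I,\chi}(\mu))\cong U_0(\fu^{+})\otimes_\bK Q_{\bK,I,\chi}(\mu)$ with $\fg_I$ acting on the first factor through the adjoint action, so the pieces cut out by the $X/\bZ I$-grading of $U_0(\fu^{+})$ are of the form $V\otimes Q_{\bK,I,\chi}(\mu)$ with $V$ a generally nontrivial finite-dimensional $\fg_I$-module (e.g.\ for $\fg=\sl_3$, $I=\{\alpha\}$, the degree $\beta+\bZ I$ part of $\fu^{+}$ is the two-dimensional module spanned by $e_\beta,e_{\alpha+\beta}$), not grading shifts of $Q_{\bK,I,\chi}(\mu)$. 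These pieces are still projective, but for a different reason (tensoring a projective $U_\chi(\fg_I)$-module with a finite-dimensional restricted module preserves projectivity), so the splitting argument you sketch needs that input rather than the one you give.
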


\begin{proof}
	Since $M\in \sC_{\bK}^I$ is projective, $\sF(M)$ is a projective $U^I$-module, where $\sF$ is the forgetful functor. We have that $\sF(M)$ is a projective $U_\chi(\fg_I)$-module, since all modules in $\sC_{\bK}^I$ are in fact $U_\chi(\fg_I)$-modules. Recall that we write $\fp$ for the parabolic $\fp=\fg_I\oplus\fu^{+}$.
	
	We have $\sF(\Phi_{\bK}^{I,+}(M))=\widetilde{\Phi}_{\bK}^{I,+}(\sF(M))$, where $$\widetilde{\Phi}_{\bK}^{I,+}:\Mod(U_\chi(\fg_I))\to\Mod(U_\chi(\fp)),\qquad M\mapsto U_\chi(\fp)\otimes_{U_\chi(\fg_I)} M = U^I U_I^{+}\otimes_{U^I} M.$$ is the induction functor. It is easy to check Frobenius reciprocity, i.e. that for $L$ a $U_\chi(\fg_I)$-module and $N$ a $U_\chi(\fp)$-module, we have $$\Hom_{U_\chi(\fp)}(\widetilde{\Phi}_{\bK}^{I,+}(L),N)=\Hom_{U_\chi(\fg_I)}(L,N).$$ Hence, $\widetilde{\Phi}_{\bK}^{I,+}:\Mod(U_\chi(\fg_I))\to\Mod(U_\chi(\fp))$ sends projectives to projectives, and so $\widetilde{\Phi}_{\bK}^{I,+}(\sF(M))$ is a projective $U_\chi(\fp)$-module.
	
	Since $U_\chi(\fp)$ is free over $U_\chi(\fg_I)$, we conclude that $\widetilde{\Phi}_{\bK}^{I,+}(\sF(M))$ is a projective $U_\chi(\fg_I)$-module. Since $\sF(\Phi_{\bK}^{I,+}(M))=\widetilde{\Phi}_{\bK}^{I,+}(\sF(M))$, we conclude that $\Phi_{\bK}^{I,+}(M)$ is projective in $\sC_\bK^I$.
	
	In particular, this means that $$\Phi_{\bK}^{I,+}(M)=\bigoplus_{\mu\in \Lambda_I} Q_{\bK,I,\chi}(\mu)^{m_\mu}$$ for some $m_\mu\geq 0$. This further means that $$\Phi_{\bK}^{I,+}(M)\otimes_{\bK} A=\bigoplus_{\mu\in \Lambda_I} Q_{\bK,I,\chi}(\mu)^{m_\mu}\otimes_{\bK} A$$ and so $$\Theta_A^{I,+}(\Phi_{\bK}^{I,+}(M)\otimes_{\bK} A)=\bigoplus_{\mu\in \Lambda_I} Q_{A,I,\chi}(\mu)^{m_\mu}.$$
	
	It is straightforward to check that $$\Theta_A^{I,+}(\Phi_{\bK}^{I,+}(M)\otimes_{\bK} A)=\Phi_A^{I,+}(\Theta_A^I(M\otimes_{\bK} A))$$ as elements of $\sC_A^I$. Hence, we conclude that $$\Phi_A^{I,+}(\Theta_A^I(M\otimes_{\bK} A))=\bigoplus_{\mu\in \Lambda_I} Q_{A,I,\chi}(\mu)^{m_\mu}.$$
\end{proof}

\begin{lemma}\label{PhiQFilt}
	Suppose that $\pi(h_\alpha)=0$ for all $\alpha\in R_I$. Let $M\in\sC_A^{I,+}$ (resp. in $\sC_A^I$) which has a filtration with sections of the form $Q_{A,I,\chi}(\lambda)$ for $\lambda\in X$. Then $\Gamma_{A,\chi}(M)$ (resp. $\Phi_A^I(M)$) has a $Q$-filtration.
\end{lemma}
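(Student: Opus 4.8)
The plan is to play the two induction functors $\Gamma_{A,\chi}$ and $\Phi_A^{I,+}$ off against the exactness properties recorded in the corollary to Proposition~\ref{FrobRecPara}, using the definition $Q_{A,\chi}^I(\lambda)=\Gamma_{A,\chi}(Q_{A,I,\chi}(\lambda))$ and the decomposition established in Lemma~\ref{ProjDec}. Consider first the case $M\in\sC_A^{I,+}$. Pick a filtration $0=M_0\subset M_1\subset\cdots\subset M_n=M$ in $\sC_A^{I,+}$ with $M_i/M_{i-1}\cong Q_{A,I,\chi}(\lambda_i)$, the latter regarded in $\sC_A^{I,+}$ via $U^IU_I^{+}\twoheadrightarrow U^I$ exactly as in the definition of $Q_{A,\chi}^I(\lambda_i)$. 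Since $\Gamma_{A,\chi}$ is exact, applying it term by term produces a filtration $0=\Gamma_{A,\chi}(M_0)\subset\cdots\subset\Gamma_{A,\chi}(M_n)=\Gamma_{A,\chi}(M)$ in $\sC_A$ whose $i$-th section is $\Gamma_{A,\chi}(M_i/M_{i-1})\cong\Gamma_{A,\chi}(Q_{A,I,\chi}(\lambda_i))=Q_{A,\chi}^I(\lambda_i)$. Thus $\Gamma_{A,\chi}(M)$ has a $Q$-filtration; this half is immediate from exactness and the definitions, and mirrors the proof of Proposition~\ref{QIZFilt}.

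For the case $M\in\sC_A^I$ I would write $\Phi_A^I=\Gamma_{A,\chi}\circ\Phi_A^{I,+}$ and proceed in two steps. Starting from a filtration of $M$ in $\sC_A^I$ with sections $Q_{A,I,\chi}(\lambda_i)$, apply the exact functor $\Phi_A^{I,+}$ to obtain a filtration of $\Phi_A^{I,+}(M)$ in $\sC_A^{I,+}$ with sections $\Phi_A^{I,+}(Q_{A,I,\chi}(\lambda_i))$. Since $Q_{A,I,\chi}(\lambda_i)=\Theta_A^I(Q_{\bK,I,\chi}(\lambda_i)\otimes_{\bK}A)$ with $Q_{\bK,I,\chi}(\lambda_i)$ projective in $\sC_\bK^I$, Lemma~\ref{ProjDec} identifies $\Phi_A^{I,+}(Q_{A,I,\chi}(\lambda_i))$ with a finite direct sum of modules $Q_{A,I,\chi}(\mu)$ in $\sC_A^{I,+}$. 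A finite direct sum of objects each carrying a filtration by $Q_{A,I,\chi}(\mu)$'s carries one itself, so, concatenating and refining, $\Phi_A^{I,+}(M)$ has a filtration in $\sC_A^{I,+}$ with sections of the form $Q_{A,I,\chi}(\mu)$. Feeding this into the first case gives that $\Phi_A^I(M)=\Gamma_{A,\chi}(\Phi_A^{I,+}(M))$ has a $Q$-filtration. Throughout, the standing assumption $\pi(h_\alpha)=0$ for $\alpha\in R_I$ is what makes $Q_{A,I,\chi}(\lambda)$ (via $\Theta_A^I$) and Lemma~\ref{ProjDec} available.

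The essential content is carried entirely by Lemma~\ref{ProjDec}; the remaining work is the usual bookkeeping — checking that the copies of $Q_{A,I,\chi}(\mu)$ furnished by Lemma~\ref{ProjDec} are precisely the trivially-extended objects to which $\Gamma_{A,\chi}$ is applied in the definition of $Q_{A,\chi}^I(\mu)$, and that a filtration whose sections themselves admit $Q_{A,I,\chi}$-filtrations refines to a genuine one. Neither is a real obstacle once Lemma~\ref{ProjDec} is in hand, so the only point needing care is keeping the conventions for "$\sC_A^I$ viewed inside $\sC_A^{I,+}$" straight across the two induction steps.
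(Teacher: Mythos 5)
The first half of your argument (the case $M\in\sC_A^{I,+}$) is exactly the paper's: exactness of $\Gamma_{A,\chi}$ plus $\Gamma_{A,\chi}(Q_{A,I,\chi}(\lambda))=Q_{A,\chi}^I(\lambda)$, and the overall architecture of the second half (apply $\Phi_A^{I,+}$, invoke Lemma~\ref{ProjDec}, then feed the result into the first half) is also the paper's; the paper merely simplifies your ``concatenate and refine'' step by noting that, the $Q_{A,I,\chi}(\lambda_i)$ being projective in $\sC_A^I$, one may assume $M=Q_{A,I,\chi}(\lambda)$ outright.

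However, there is a genuine gap at the step where you claim that ``Lemma~\ref{ProjDec} identifies $\Phi_A^{I,+}(Q_{A,I,\chi}(\lambda_i))$ with a finite direct sum of modules $Q_{A,I,\chi}(\mu)$ in $\sC_A^{I,+}$.'' Lemma~\ref{ProjDec} only gives that decomposition in $\sC_A^I$, i.e.\ after forgetting the $U_I^{+}$-action; the induced module $\Phi_A^{I,+}(Q_{A,I,\chi}(\lambda))$ carries a nontrivial $U_I^{+}$-action and is emphatically not a direct sum of trivially-extended $Q_{A,I,\chi}(\mu)$'s in $\sC_A^{I,+}$ (if it were, applying $\Gamma_{A,\chi}$ to it would already give a direct sum of $Q_{A,\chi}^I(\mu)$'s, whereas $\Gamma_{A,\chi}(\Phi_A^{I,+}(Q_{A,I,\chi}(\lambda)))=\Xi_{A,\chi}^I(\lambda)$ is in general only filtered by them). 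Since $Q_{A,\chi}^I(\mu)$ is by definition $\Gamma_{A,\chi}$ applied to the \emph{trivially extended} $Q_{A,I,\chi}(\mu)$, you need a filtration of $\Phi_A^{I,+}(Q_{A,I,\chi}(\lambda))$ \emph{in} $\sC_A^{I,+}$ with such sections before the first half can be used. This is not bookkeeping: the paper's proof supplies the missing argument by exploiting the $X/\bZ I$-grading. One takes $\mu_0+\bZ I$ maximal among the grades occurring in $\Phi_A^{I,+}(Q_{A,I,\chi}(\lambda))$; the summands $Q_{A,I,\chi}(\mu)$ of the $\sC_A^I$-decomposition with $\mu+\bZ I=\mu_0+\bZ I$ are then killed by every $e_\alpha$ with $\alpha\in R^{+}\setminus R_I$ (such $e_\alpha$ strictly raise the grade), hence are $\sC_A^{I,+}$-submodules isomorphic to trivially extended $Q_{A,I,\chi}(\mu)$'s; placing them at the bottom, passing to the quotient, and iterating yields the required $\sC_A^{I,+}$-filtration, after which your application of the first case goes through.
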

\begin{proof}
	If $M\in \sC_A^{I,+}$ has a filtration with sections of the form $Q_{A,I,\chi}(\lambda)$, then $\Gamma_{A,\chi}(M)$ has a $Q$-filtration since $\Gamma_{A,\chi}$ is exact and $\Gamma_{A,\chi}(Q_{A,I,\chi}(\lambda))=Q_{A,\chi}^I(\lambda)$.
	
	If $M\in \sC_A^I$ has a filtration with sections of the form $Q_{A,I,\chi}(\lambda)$, then we would like to show the same for $\Phi_A^{I,+}(M)$. Firstly, we observe that since each $Q_{A,I,\chi}(\lambda)$ is projective in $\sC_A^I$, we in fact have that $M$ is a direct sum of modules of the form $Q_{A,I,\chi}(\lambda)$. So we may assume $M=Q_{A,I,\chi}(\lambda)$. 
	
	We may then apply Lemma~\ref{ProjDec} to conclude that, in $\sC_A^I$, 
	$$\Phi_A^{I,+}(Q_{A,I,\chi}(\lambda))\cong\bigoplus_{\mu\in \Lambda_I} Q_{A,I,\chi}(\mu)^{m_\mu}$$ for some $m_\mu\geq 0$. Note that each $Q_{A,I,\chi}(\mu)^{m_\mu}$ lies entirely in grade $\mu+\bZ I$ and that $\Phi_A^{I,+}(Q_{A,I,\chi}(\lambda))$ is in $\sC_A^{I,+}$. Let $\mu_0+\bZ I$ be maximal with the property that $\Phi_A^{I,+}(Q_{A,I,\chi}(\lambda))_{\mu+\bZ I}\neq 0$. Then each $Q_{A,I,\chi}(\mu)^{m_\mu}$ in the decomposition with $\mu+\bZ I=\mu_0+\bZ I$ is a submodule of $\Phi_A^{I,+}(Q_{A,I,\chi}(\lambda))$ in $\sC_A^{I,+}$. We may hence put these at the bottom of a filtration, take the quotient, and iterate the process. At the end of this process, we have a filtration of $\Phi_A^{I,+}(Q_{A,I,\chi}(\lambda))$ in $\sC_A^{I,+}$ whose sections are all of the form $Q_{A,I,\chi}(\mu)$ (viewed as elements of $\sC_A^{I,+}$). We may then apply the first part of the result to conclude that $\Phi_A^I(M)$ has a $Q$-filtration.
	
\end{proof}

\begin{prop}\label{QHomIm}
	Let $M\in\sC_A$ and suppose that $\pi(h_\alpha)=0$ for all $\alpha\in R_I$. There exists a projective module $P\in\sC_A$ with a $Q$-filtration such that $M$ is a homomorphic image of $P$.
\end{prop}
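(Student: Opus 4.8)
The plan is to reduce the problem to the category $\sC_A^I$, where the standing hypothesis of this section becomes exactly the hypothesis needed to apply the results of Section~\ref{Sec7}, and then to transport the answer back to $\sC_A$ along the functor $\Phi_A^I=\Gamma_{A,\chi}\circ\Phi_A^{I,+}$.

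First I would restrict $M$ to a module $\overline{M}\in\sC_A^I$, forgetting the $U_\chi\otimes A$-action down to a $U^I\otimes A$-action; conditions (A), (B) are inherited, condition (C) holds because $(U^I)_{\sigma+\bZ I}=0$ unless $\sigma\in\bZ I$, and condition (D) holds because $\alpha\in R_I$ forces $\lambda+\alpha+\bZ I=\lambda+\bZ I$. Now $\fg_I$ together with $\chi_I=\chi|_{\fg_I}$ satisfies the hypotheses of Section~\ref{Sec7}: the pair is regular nilpotent, since $I$ is the full set of simple roots of $R_I$, and by assumption $\pi(h_\alpha)=0$ for all $\alpha\in R_I$, which is precisely the condition ``$\pi(h_\alpha)=0$ for all $\alpha\in R$'' read off for $\fg_I$. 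Hence Lemma~\ref{RegProjCov}, applied in $\sC_A^I$, produces a surjection $g\colon Q\twoheadrightarrow\overline{M}$ in $\sC_A^I$ with $Q=\bigoplus_{\mu\in\Lambda_I}Q_{A,I,\chi}(\mu)^{m_\mu}$ for some $m_\mu\geq 0$.

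Next, set $P\coloneqq\Phi_A^I(Q)\in\sC_A$. Each $Q_{A,I,\chi}(\mu)$ is projective in $\sC_A^I$ (as in Corollary~\ref{projAF}, via the equivalence $\Theta_A^I$ together with Lemma~\ref{projscal}), so $Q$ is projective in $\sC_A^I$; since $\Phi_A^I$ is exact and sends projectives to projectives, $P$ is projective in $\sC_A$. As a direct sum of copies of the $Q_{A,I,\chi}(\mu)$, the module $Q$ carries a filtration with sections of the form $Q_{A,I,\chi}(\lambda)$, so Lemma~\ref{PhiQFilt} shows that $P=\Phi_A^I(Q)$ has a $Q$-filtration. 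Finally, by Frobenius reciprocity (Proposition~\ref{FrobRecPara}) the surjection $g$ corresponds to a morphism $\widetilde{g}\colon P\to M$ in $\sC_A$, given explicitly by $\widetilde{g}(u\otimes q)=u\,g(q)$; its image therefore contains $U_\chi\cdot g(Q)=U_\chi\cdot M=M$, so $\widetilde{g}$ is surjective. Thus $P$ is the required projective module with a $Q$-filtration.

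The only point that needs genuine care—rather than being a formal consequence of the adjunction and the already-established properties of $\Phi_A^I$, $\Theta_A^I$ and $Q_{A,I,\chi}$—is the verification that passing from $(\fg,\chi)$ to $(\fg_I,\chi_I)$ converts ``standard Levi form with subset $I$'' into ``regular nilpotent'' and converts the hypothesis $\pi(h_\alpha)=0$ for $\alpha\in R_I$ into the full Section~\ref{Sec7} hypothesis for $\fg_I$, so that Lemma~\ref{RegProjCov} is legitimately available in $\sC_A^I$.
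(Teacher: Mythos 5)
Your proposal is correct and follows essentially the same route as the paper's proof: view $M$ in $\sC_A^I$, apply Lemma~\ref{RegProjCov} there (legitimate since $\chi|_{\fg_I}$ is regular nilpotent and $\pi(h_\alpha)=0$ for $\alpha\in R_I$), and push the resulting surjection from a direct sum of the $Q_{A,I,\chi}(\mu)$ through $\Phi_A^I$, using Lemma~\ref{PhiQFilt} for the $Q$-filtration and exactness/projectivity-preservation plus Frobenius reciprocity for the surjection onto $M$. Your extra care in verifying the restriction to $\sC_A^I$ and the applicability of the Section~\ref{Sec7} hypotheses only makes explicit what the paper leaves implicit.
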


\begin{proof}
	If we view $M$ as lying in $\sC_A^I$, then Lemma~\ref{RegProjCov} shows that  there exists $Q\in\sC_A^I$ with $$Q=\bigoplus_{\mu\in \Lambda_I} Q_{A,I,\chi}(\mu)^{m_\mu}$$ for some $m_\mu\geq 0$ with $Q\twoheadrightarrow M$ in $\sC_A^I$. Lemma~\ref{PhiQFilt} then implies that $P\coloneqq\Phi_A^I(Q)$ has a $Q$-filtration, and it is projective since $Q$ is projective in $\sC_A^I$ and $\Phi_A^I$ sends projectives to projectives. Furthermore, since $Q\twoheadrightarrow M$ in $\sC_A^I$, we get that $P=\Phi_A^I(Q)$ surjects onto $M$ in $\sC_A$.
\end{proof}

\begin{cor}
	Suppose that $\pi(h_\alpha)=0$ for all $\alpha\in R_I$. Let $M\in\sC_A$ have a $Q$-filtration. Then $M$ is projective in $\sC_A$ if and only if $M\otimes_{A} A/\fm$ is projective for all maximal ideals $\fm$ of $A$.
\end{cor}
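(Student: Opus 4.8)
The forward implication is immediate from Lemma~\ref{projscal}: if $M$ is projective in $\sC_A$, then for each maximal ideal $\fm$ the field $A/\fm$ is an $A$-algebra, so $M\otimes_A A/\fm$ is projective in $\sC_{A/\fm}$.

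For the converse, the plan is the standard one: realise $M$ as a quotient of a projective module and show that the resulting extension splits. First I would apply Proposition~\ref{QHomIm} to obtain a projective module $P\in\sC_A$ with a $Q$-filtration together with a surjection $P\twoheadrightarrow M$; write $N$ for the kernel, so that we have a short exact sequence $0\to N\to P\to M\to 0$ in $\sC_A$. Since a $Q$-filtration yields a $Z$-filtration, both $M$ and $P$ are free over $A$; in particular $M$ is projective over $A$, so this sequence splits $A$-linearly, whence $N$ is a direct summand of the free $A$-module $P$ and is therefore projective as an $A$-module.

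Next I would feed the pair $(M,N)$ into Proposition~\ref{ExtQFilt}: $M$ has a $Q$-filtration and $N$ is projective over $A$, so the only remaining hypothesis to verify is that $\Ext^i_{\sC_{A/\fm}}(M\otimes_A A/\fm,\,N\otimes_A A/\fm)=0$ for every maximal ideal $\fm$ and every $i>0$. This is where the assumption of the corollary enters: by hypothesis $M\otimes_A A/\fm$ is projective in $\sC_{A/\fm}$, and a projective object in a category with enough projectives has vanishing higher $\Ext$. Thus Proposition~\ref{ExtQFilt}(2), taken with $A'=A$, gives $\Ext^i_{\sC_A}(M,N)=0$ for all $i>0$. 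In particular $\Ext^1_{\sC_A}(M,N)=0$, so the sequence $0\to N\to P\to M\to 0$ splits, $M$ is a direct summand of the projective module $P$, and therefore $M$ is projective in $\sC_A$.

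I do not expect a genuine obstacle here: this is the direct analogue of the corresponding corollary in \cite{AJS}, and the whole argument rests on the machinery already assembled (Proposition~\ref{QHomIm} and Proposition~\ref{ExtQFilt}). The only points requiring care are the two hypothesis checks in the application of Proposition~\ref{ExtQFilt} --- that $N$ is $A$-projective, which uses the freeness over $A$ coming from the $Z$-filtrations of $M$ and $P$, and the vanishing of the residue-field $\Ext$ groups, which is exactly the assumed fibrewise projectivity of $M$.
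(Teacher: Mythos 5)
Your proof is correct and follows exactly the route the paper intends: the paper's own proof simply defers to Corollary 3.5 of \cite{AJS}, citing Lemma~\ref{projscal}, Proposition~\ref{ExtQFilt} and Proposition~\ref{QHomIm}, which is precisely the argument you have written out (surjection from a projective with a $Q$-filtration, $A$-projectivity of the kernel via the $Z$-filtration, and vanishing of $\Ext^1_{\sC_A}(M,N)$ from the fibrewise projectivity hypothesis). No gaps.
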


\begin{proof}
	The proof works the same way as that of Corollary 3.5 in \cite{AJS}, using our Proposition~\ref{projscal}, Proposition~\ref{ExtQFilt}, and Proposition~\ref{QHomIm} where relevant.
\end{proof}

Let us consider what $P$ is (or could be) in Proposition~\ref{QHomIm} when $M=Q_{A,\chi}^I(\lambda)$. There is a natural surjection $\Phi_A^{I,+}(Q_{A,I,\chi}(\lambda))\twoheadrightarrow Q_{A,I,\chi}(\lambda)$ in $\sC_A^{I,+}$. Hence $P=\Phi_A^I(Q_{A,I,\chi}(\lambda))=\Gamma_{A,\chi}(\Phi_A^{I,+}(Q_{A,I,\chi}(\lambda)))$ surjects onto $Q_{A,\chi}^I(\lambda)=\Gamma_{A,\chi}(Q_{A,I,\chi}(\lambda))$, and it is projective and has a $Q$-filtration by Lemma~\ref{PhiQFilt}.

For each $\lambda\in X$, let us therefore define $$\Xi_{A,\chi}^I(\lambda)=\Phi_A^I(Q_{A,I,\chi}(\lambda))\in\sC_A.$$ 

We note now that $\Phi_A^{I,+}(Q_{A,I,\chi}(\lambda))_{\lambda+\bZ I}=Q_{A,I,\chi}(\lambda)\in\sC_A^I$ (as in Section~\ref{Sec4.4}, since $Q_{A,I,\chi}(\lambda)_{\lambda+\bZ I}=Q_{A,I,\chi}(\lambda)$) and $\Phi_A^{I,+}(Q_{A,I,\chi}(\lambda))_{\mu+\bZ I}\neq 0$ implies $\mu+\bZ I\geq\lambda+\bZ I$. By the proof of Lemma~\ref{PhiQFilt}, $\Phi_A^{I,+}(Q_{A,I,\chi}(\lambda))$ has a filtration with factors of the form $Q_{A,I,\chi}(\mu)$. By the previous discussion, we must have $\mu+\bZ I>\lambda+\bZ I$ for each such $\mu$ which is not $\lambda$, and $Q_{A,I,\chi}(\lambda)$ appears exactly once (at the top). Hence, applying $\Gamma_{A,\chi}$, we conclude that $\Xi_{A,\chi}^I(\lambda)$ has a $Q$-filtration with $Q_{A,\chi}^I(\lambda)$ appearing exactly once (at the top) and with each other factor being of the form $Q_{A,\chi}^I(\mu)$ for $\mu\in X$ with $\mu+\bZ I>\lambda+\bZ I$.

\subsection{Projective covers}\label{Sec8.2}

We would now like to prove a result similar to Corollary~\ref{projAF} when $\chi$ is not necessarily regular nilpotent, i.e., to find, when $A$ is a local algebra with residue field $F$, a projective module $Q_{A,\chi}(\lambda)\in\sC_A$ with $Q_{A,\chi}(\lambda)\otimes_{A} F\cong Q_{F,\chi}(\lambda)$ in $\sC_F$. To do this, we need to establish some preliminary results.

\begin{prop}
	Suppose that $\pi(h_\alpha)=0$ for all $\alpha\in R_I$. Let $\lambda\in X$ and let $N\in\sC_A$ with $\lambda+\bZ I\not<\mu+\bZ I$ for all $\mu+\bZ I\in X/\bZ I$ with $N_{\mu+\bZ I}\neq 0$. Then $$\Ext_{\sC_A}^1(Q_{A,\chi}^I(\lambda),N)=0.$$
\end{prop}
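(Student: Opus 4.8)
The plan is to reduce the computation to an $\Ext$-group in $\sC_A^{I,+}$ and then establish the vanishing there by a direct splitting argument governed by the grading.

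First I would pass from $\sC_A$ to $\sC_A^{I,+}$. Recall that $Q_{A,\chi}^I(\lambda)=\Gamma_{A,\chi}(Q_{A,I,\chi}(\lambda))$, where $Q_{A,I,\chi}(\lambda)$ is projective in $\sC_A^I$ and is viewed in $\sC_A^{I,+}$ by trivial extension along $U^IU_I^{+}\twoheadrightarrow U^I$. Since $\Gamma_{A,\chi}$ is exact and sends projectives to projectives (the corollary following Proposition~\ref{FrobRecPara}) and $\sC_A^{I,+}$ has enough projectives, applying $\Gamma_{A,\chi}$ to a projective resolution $P_\bullet\to Q_{A,I,\chi}(\lambda)$ in $\sC_A^{I,+}$ produces a projective resolution of $Q_{A,\chi}^I(\lambda)$ in $\sC_A$. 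Combining this with the Frobenius reciprocity isomorphism $\Hom_{\sC_A}(\Gamma_{A,\chi}(-),N)\cong\Hom_{\sC_A^{I,+}}(-,N)$ of Proposition~\ref{FrobRecPara}, which is natural in the first variable, gives $\Ext^i_{\sC_A}(Q_{A,\chi}^I(\lambda),N)\cong\Ext^i_{\sC_A^{I,+}}(Q_{A,I,\chi}(\lambda),N)$ for all $i$, where on the right $N$ is restricted to $\sC_A^{I,+}$. It therefore suffices to prove $\Ext^1_{\sC_A^{I,+}}(Q_{A,I,\chi}(\lambda),N)=0$.

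Second, I would show every short exact sequence $0\to N\to E\to Q_{A,I,\chi}(\lambda)\to 0$ in $\sC_A^{I,+}$ splits. The module $Q_{A,I,\chi}(\lambda)$ is concentrated in grade $\lambda+\bZ I$, and taking the grade-$(\lambda+\bZ I)$ component of an object of $\sC_A^{I,+}$ is exact and, after restricting the action to $U^I$, yields an object of $\sC_A^I$ (as in Section~\ref{Sec5.1}, since $U^I$ lies in the degree-$0$ part of $U_\chi$ with respect to the $X/\bZ I$-grading). Hence we get a short exact sequence $0\to N_{\lambda+\bZ I}\to E_{\lambda+\bZ I}\to Q_{A,I,\chi}(\lambda)\to 0$ in $\sC_A^I$, which splits because $Q_{A,I,\chi}(\lambda)$ is projective in $\sC_A^I$; fix a section $s\colon Q_{A,I,\chi}(\lambda)\to E_{\lambda+\bZ I}\subseteq E$ in $\sC_A^I$. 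It remains to upgrade $s$ to a morphism in $\sC_A^{I,+}$, i.e. to check that $U_I^{+}$ annihilates the image of $s$ (recall $Q_{A,I,\chi}(\lambda)$ carries the trivial $U_I^{+}$-action). Since $U_I^{+}=U_\chi(\fu^{+})$ is generated by the $e_\alpha$ with $\alpha\in R^{+}\setminus\bZ I$, it is enough to see $e_\alpha s(v)=0$ for all such $\alpha$ and all $v$. For such $\alpha$ we have $e_\alpha s(v)\in E_{\lambda+\alpha+\bZ I}$, and because $\alpha\in R^{+}$ with $\alpha\notin\bZ I$ we have $\lambda+\bZ I<\lambda+\alpha+\bZ I$; in particular $\lambda+\alpha+\bZ I\neq\lambda+\bZ I$, so the image of $e_\alpha s(v)$ under $E\to Q_{A,I,\chi}(\lambda)$ lands in a vanishing graded piece and is $0$, whence $e_\alpha s(v)\in N_{\lambda+\alpha+\bZ I}$. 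But $\lambda+\bZ I<\lambda+\alpha+\bZ I$ forces $N_{\lambda+\alpha+\bZ I}=0$ by hypothesis on $N$, so $e_\alpha s(v)=0$. Thus $s$ is a morphism in $\sC_A^{I,+}$ splitting the sequence, giving $\Ext^1_{\sC_A^{I,+}}(Q_{A,I,\chi}(\lambda),N)=0$, and the proposition follows.

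I expect the only genuinely delicate point to be the bookkeeping in the second step: confirming that the grade-$(\lambda+\bZ I)$ truncation is exact and lands in $\sC_A^I$, that a $\sC_A^I$-morphism whose image is killed by the $e_\alpha$ ($\alpha\in R^{+}\setminus\bZ I$) is automatically a $\sC_A^{I,+}$-morphism, and the (routine) order-theoretic fact $\lambda+\bZ I<\lambda+\alpha+\bZ I$ for $\alpha\in R^{+}\setminus\bZ I$. Nothing here is deep; morally the argument just says that $Q_{A,I,\chi}(\lambda)$, sitting at the bottom of the grading support of $Q_{A,\chi}^I(\lambda)$, behaves like a projective object against modules whose grading support does not climb strictly above $\lambda+\bZ I$.
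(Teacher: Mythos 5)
Your proof is correct and is essentially the paper's argument: the paper also restricts an extension to its $\lambda+\bZ I$ graded piece, splits it there using projectivity of $Q_{A,I,\chi}(\lambda)$ in $\sC_A^I$, observes that the support hypothesis on $N$ (together with $\alpha\in R^{+}\setminus\bZ I$ forcing $\lambda+\bZ I<\lambda+\alpha+\bZ I$) upgrades the section to $\sC_A^{I,+}$, and then invokes the Frobenius reciprocity of Proposition~\ref{FrobRecPara}. The only cosmetic difference is that you first transport $\Ext^1$ to $\sC_A^{I,+}$ via the derived adjunction (legitimate, since $\Gamma_{A,\chi}$ is exact, preserves projectives, and $\sC_A^{I,+}$ has enough projectives) and split there, whereas the paper uses the Hom-level adjunction to extend the section to a splitting of the original sequence in $\sC_A$ directly.
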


\begin{proof}
	Let $$0\to N\xrightarrow{i} M\xrightarrow{p} Q_{A,\chi}^I(\lambda)\to 0$$ be an exact sequence in $\sC_A$. Then $$0\to N_{\lambda+\bZ I}\xrightarrow{i_{\lambda+\bZ I}} M_{\lambda+\bZ I}\xrightarrow{p_{\lambda+\bZ I}}Q_{A,\chi}^I(\lambda)_{\lambda+\bZ I}\to 0$$ is an exact sequence in $\sC_A^I$. Since $Q_{A,\chi}^I(\lambda)_{\lambda+\bZ I}\cong Q_{A,I,\chi}(\lambda)$ in $\sC_A^I$, the exact sequence in $\sC_A^I$ is $$0\to N_{\lambda+\bZ I}\xrightarrow{i_{\lambda+\bZ I}} M_{\lambda+\bZ I}\xrightarrow{p_{\lambda+\bZ I}} Q_{A,I,\chi}(\lambda)\to 0,$$ which splits since $Q_{A,I,\chi}(\lambda)$ is projective in $\sC_A^I$. Hence there exists a map $$f:Q_{A,I,\chi}(\lambda)\to M_{\lambda+\bZ I}\subseteq M$$ in $\sC_A^I$ with $p_{\lambda+\bZ I}\circ f=\Id$. Since $N_{\sigma+\bZ I}=Q_{A,\chi}^I(\lambda)_{\sigma+\bZ I}=0$ for all $\sigma+\bZ I\in X/\bZ I$ with $\sigma+\bZ I>\lambda+\bZ I$, we must have that $M_{\sigma+\bZ I}=0$ for all $\sigma+\bZ I\in X/\bZ I$ with $\sigma+\bZ I>\lambda+\bZ I$. Hence, $f$ is in fact a morphism in $\sC_A^{I,+}$.
	
	By Frobenius reciprocity, we get a morphism $$\widetilde{f}:Q_{A,\chi}^I(\lambda)=\Gamma_{A,\chi}(Q_{A,I,\chi}(\lambda))\to M$$ extending $f$. Given $u\otimes m\in Q_{A,\chi}^I(\lambda)$, we then get $$p\circ\widetilde{f}(u\otimes m)=p(uf(m))=u(pf(m))=u\otimes m.$$ Hence, our original exact sequence splits.
\end{proof}

\begin{cor}\label{QExt}
	Suppose that $\pi(h_\alpha)=0$ for all $\alpha\in R_I$. Let $\lambda,\mu\in X$ with $\lambda+\bZ I\not<\mu+\bZ I$. Then $$\Ext_{\sC_A}^1(Q_{A,\chi}^I(\lambda),Q_{A,\chi}^I(\mu))=0.$$
\end{cor}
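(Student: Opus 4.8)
The plan is to deduce this immediately from the preceding proposition by taking $N=Q_{A,\chi}^I(\mu)$. To do so I only need to verify the hypothesis of that proposition for this choice of $N$, namely that $\lambda+\bZ I\not<\nu+\bZ I$ for every $\nu+\bZ I\in X/\bZ I$ with $Q_{A,\chi}^I(\mu)_{\nu+\bZ I}\neq 0$.

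First I would pin down the support of $Q_{A,\chi}^I(\mu)$ in $X/\bZ I$. By Proposition~\ref{QIZFilt}, $Q_{A,\chi}^I(\mu)$ has a $Z$-filtration, and by the remark following that proposition every section of this filtration is isomorphic to $Z_{A,\chi}(\mu)$. Recall from Subsection~\ref{Sec5.2} that $\sC_A(\leq\mu+\bZ I)$ is closed under extensions and contains $Z_{A,\chi}(\mu)$; hence $Q_{A,\chi}^I(\mu)$ lies in $\sC_A(\leq\mu+\bZ I)$, i.e. $Q_{A,\chi}^I(\mu)_{\nu+\bZ I}\neq 0$ implies $\nu+\bZ I\leq\mu+\bZ I$. (Alternatively, one reads this off directly: $Q_{A,\chi}^I(\mu)=\Gamma_{A,\chi}(Q_{A,I,\chi}(\mu))\cong U_I^{-}\otimes_{\bK}Q_{A,I,\chi}(\mu)$ as $\bK$-vector spaces, with $Q_{A,I,\chi}(\mu)$ concentrated in degree $\mu+\bZ I$ and $U_I^{-}$ concentrated in degrees of the form $-\sum_i m_i\alpha_i+\bZ I$ with $m_i\geq 0$.)

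Now suppose, for a contradiction, that $\lambda+\bZ I<\nu+\bZ I$ for some $\nu+\bZ I$ with $Q_{A,\chi}^I(\mu)_{\nu+\bZ I}\neq 0$. By the previous paragraph and transitivity of the partial order on $X/\bZ I$ we would obtain $\lambda+\bZ I<\nu+\bZ I\leq\mu+\bZ I$, hence $\lambda+\bZ I<\mu+\bZ I$, contradicting the hypothesis $\lambda+\bZ I\not<\mu+\bZ I$. Therefore no such $\nu$ exists, so $N=Q_{A,\chi}^I(\mu)$ satisfies the hypothesis of the preceding proposition, and applying it yields $\Ext_{\sC_A}^1(Q_{A,\chi}^I(\lambda),Q_{A,\chi}^I(\mu))=0$.

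Since the argument is just bookkeeping with the partial order together with one invocation of the preceding proposition, there is no genuine obstacle here; the only point needing (minor) care is the identification of the support of $Q_{A,\chi}^I(\mu)$, which is immediate from its $Z$-filtration.
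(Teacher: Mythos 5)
Your argument is correct and is exactly the intended deduction: the paper states this corollary without proof as an immediate consequence of the preceding proposition, taking $N=Q_{A,\chi}^I(\mu)$. Your verification that the support of $Q_{A,\chi}^I(\mu)$ lies in degrees $\leq\mu+\bZ I$ (via the $Z$-filtration, or directly from $\Gamma_{A,\chi}(Q_{A,I,\chi}(\mu))\cong U_I^-\otimes Q_{A,I,\chi}(\mu)$) and the subsequent bookkeeping with the partial order is precisely the routine check the paper leaves to the reader.
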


\begin{cor}
	Let $M\in\sC_A$ have a $Q$-filtration, and suppose that $\pi(h_\alpha)=0$ for all $\alpha\in R_I$. Then $M$ has a (possibly different) $Q$-filtration $$0=M_0\subseteq M_1\subseteq M_2\subseteq\cdots\subseteq M_r=M$$ with sections $M_{i}/M_{i-1}\cong Q_{A,\chi}(\lambda_i)$ for $\lambda_i\in X$ such that $\lambda_i+\bZ I>\lambda_j+\bZ I$ implies $i<j$.
\end{cor}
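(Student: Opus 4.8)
The plan is to reorder the sections of a given $Q$-filtration one adjacent swap at a time, exactly as was done for $Z$-filtrations in Section~\ref{Sec5.2}, but now using Corollary~\ref{QExt} in place of the $\Ext$-vanishing lemma for baby Verma modules. Suppose $M$ has a $Q$-filtration $0=M_0\subseteq M_1\subseteq\cdots\subseteq M_r=M$ with $M_i/M_{i-1}\cong Q_{A,\chi}^I(\lambda_i)$. If the ordering condition fails, then there are $j<i$ with $\lambda_i+\bZ I>\lambda_j+\bZ I$, and hence (choosing such a pair with $i$ as small as possible, or arguing with a bubble-sort style descent) there must exist an index $k$ with $\lambda_{k+1}+\bZ I>\lambda_k+\bZ I$. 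Consider the short exact sequence
$$0\to \frac{M_k}{M_{k-1}}\to \frac{M_{k+1}}{M_{k-1}}\to \frac{M_{k+1}}{M_k}\to 0,$$
i.e. $0\to Q_{A,\chi}^I(\lambda_k)\to M_{k+1}/M_{k-1}\to Q_{A,\chi}^I(\lambda_{k+1})\to 0$. Since $\lambda_{k+1}+\bZ I>\lambda_k+\bZ I$ we have $\lambda_{k+1}+\bZ I\not<\lambda_k+\bZ I$, so Corollary~\ref{QExt} gives $\Ext^1_{\sC_A}(Q_{A,\chi}^I(\lambda_{k+1}),Q_{A,\chi}^I(\lambda_k))=0$ and the sequence splits. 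We may therefore replace the two steps $M_{k-1}\subseteq M_k\subseteq M_{k+1}$ by new submodules realising $Q_{A,\chi}^I(\lambda_{k+1})$ at the bottom and $Q_{A,\chi}^I(\lambda_k)$ on top, obtaining a new $Q$-filtration of $M$ in which $\lambda_k$ and $\lambda_{k+1}$ have been transposed.

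**Termination.** Each such swap strictly decreases a suitable measure of disorder — for instance, the number of inversions, i.e. pairs $(p,q)$ with $p<q$ but $\lambda_p+\bZ I<\lambda_q+\bZ I$ (note the strict inequalities; this count is finite and genuinely drops when we swap an adjacent pair with $\lambda_{k+1}+\bZ I>\lambda_k+\bZ I$, since only the pair $(k,k+1)$ changes relative order and the indices are otherwise untouched). Hence after finitely many swaps we reach a $Q$-filtration with no such bad adjacent pair, which is precisely the condition that $\lambda_i+\bZ I>\lambda_j+\bZ I$ implies $i<j$. (Here I am using that $X/\bZ I$ is only partially ordered, so ``bad adjacent pair'' means $\lambda_{k+1}+\bZ I>\lambda_k+\bZ I$ strictly; incomparable adjacent pairs are allowed and cause no obstruction to the conclusion as stated.)

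**Main obstacle.** The one point needing a little care is the combinatorial bookkeeping in the partially ordered setting: one must check that the absence of a \emph{strictly} ascending adjacent pair really does yield the stated global condition, and that the inversion count is a legitimate monotone quantity under the swap (it is, because the multiset $\{\lambda_1,\dots,\lambda_r\}$ is fixed and only an adjacent transposition of comparable elements is performed). I would phrase this exactly as in the corresponding argument preceding Proposition~\ref{ZFiltOrd}, invoking Corollary~\ref{QExt} at the key step, so no new technical difficulty arises beyond what is already in Section~\ref{Sec5.2}.

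Note: the statement as displayed writes the sections as $Q_{A,\chi}(\lambda_i)$, but in context (a module with a $Q$-filtration) these should read $Q_{A,\chi}^I(\lambda_i)$; the argument above is written for $Q_{A,\chi}^I(\lambda_i)$ accordingly.
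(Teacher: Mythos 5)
The paper states this corollary without proof, and your overall strategy --- reorder the sections by adjacent swaps, splitting the relevant extensions via Corollary~\ref{QExt} --- is clearly the intended route; the swap step itself and the inversion-count termination argument are fine. The genuine gap is the final local-to-global claim: in the partially ordered set $X/\bZ I$, the absence of a strictly ascending adjacent pair does \emph{not} imply the stated condition. For instance, if the sections occur in the order $Q_{A,\chi}^I(\mu_1),\,Q_{A,\chi}^I(\mu_2),\,Q_{A,\chi}^I(\mu_3)$ with $\mu_3+\bZ I>\mu_1+\bZ I$ and $\mu_2+\bZ I$ incomparable to both, then your procedure performs no swap at all (neither adjacent pair is strictly ascending), yet the non-adjacent pair $(\mu_1,\mu_3)$ violates the conclusion. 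So your parenthetical assertion that incomparable adjacent pairs ``cause no obstruction to the conclusion as stated'' is precisely where the argument breaks: strict inequalities do not propagate across incomparable intermediate terms, unlike in the $Z$-filtration argument of Subsection~\ref{Sec4.3}, where the stable local condition is a genuine inequality $\lambda_{k+1}+\bZ I\leq\lambda_k+\bZ I$ and transitivity does the work.

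The repair is to use the full strength of Corollary~\ref{QExt}, which gives splitting whenever $\lambda_{k+1}+\bZ I\not<\lambda_k+\bZ I$, in particular also when the two classes are incomparable. Either refine the partial order on the finitely many classes $\lambda_i+\bZ I$ occurring to a total order and bubble-sort the filtration into decreasing order for that total order: every required adjacent swap then has $\lambda_{k+1}+\bZ I\not<\lambda_k+\bZ I$ (a strict inequality in the partial order would persist in the refinement), so Corollary~\ref{QExt} applies, and the sorted filtration satisfies the corollary because the total order refines $\leq$. Alternatively, argue by induction on the length of the filtration: choose $i$ with $\lambda_i+\bZ I$ maximal among the occurring classes, move $Q_{A,\chi}^I(\lambda_i)$ to the bottom by successive adjacent swaps (each allowed, since maximality rules out $\lambda_i+\bZ I<\lambda_k+\bZ I$), and induct on $M/M_1$. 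You are right, incidentally, that the sections in the statement should read $Q_{A,\chi}^I(\lambda_i)$.
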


\begin{lemma}\label{QFiltLambda}
	Suppose that $\pi(h_\alpha)=0$ for all $\alpha\in R_I$. Let $M\in\sC_A$ have a $Q$-filtration, and suppose $\lambda+\bZ I\in X/\bZ I$ has the property that $M_{\lambda+\bZ I}\neq 0$ and $M_{\sigma+\bZ I}=0$ for all $\sigma+\bZ I\in X/\bZ I$ with $\sigma+\bZ I>\lambda+\bZ I$. Then $M$ has a (possibly different) $Q$-filtration $$0=M_0\subseteq M_1\subseteq M_2\subseteq\cdots\subseteq M_r=M$$ with sections $M_{i}/M_{i-1}\cong Q_{A,\chi}(\lambda_i)$ for $\lambda_i \in X$ with the property that there exists $1\leq k\leq r$ such that $$\lambda_i+\bZ I=\lambda+\bZ I,\quad \mbox{for all}\quad i\leq k$$
	and $$\lambda_i+\bZ I\neq\lambda+\bZ I\quad\mbox{implies}\quad i>k.$$
\end{lemma}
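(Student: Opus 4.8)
The plan is to run the standard ``bubble sort'' reordering on $Q$-filtrations, exactly parallel to the reshuffling of $Z$-filtrations in Section~\ref{Sec5.2}, with Corollary~\ref{QExt} in the role there played by the vanishing of $\Ext^1$ between baby Verma modules (here the typo-free reading of the statement is that the sections are of the form $Q_{A,\chi}^I(\lambda_i)$). First I would fix an arbitrary $Q$-filtration $0=M_0\subset M_1\subset\cdots\subset M_r=M$ with $M_i/M_{i-1}\cong Q_{A,\chi}^I(\lambda_i)$ and record the relevant support facts. By the structure of the $Z$-filtration of $Q_{A,\chi}^I(\mu)$ described in the remark following Proposition~\ref{QIZFilt}, the module $Q_{A,\chi}^I(\mu)$ is concentrated in grades $\nu+\bZ I$ with $\nu+\bZ I\leq\mu+\bZ I$, and its $(\mu+\bZ I)$-graded part is non-zero. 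Since each $Q_{A,\chi}^I(\lambda_i)$ is a subquotient of $M$, its $(\lambda_i+\bZ I)$-graded part is a non-zero subquotient of $M_{\lambda_i+\bZ I}$, so $M_{\lambda_i+\bZ I}\neq 0$; by the hypothesis on $M$ this forces $\lambda_i+\bZ I\not>\lambda+\bZ I$ for every $i$.

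Now suppose the filtration is not yet of the required shape. Then there is an index $j$ with $\lambda_j+\bZ I\neq\lambda+\bZ I$ and $\lambda_{j+1}+\bZ I=\lambda+\bZ I$. Consider the short exact sequence
\[
0\to M_j/M_{j-1}\to M_{j+1}/M_{j-1}\to M_{j+1}/M_j\to 0,
\]
that is, $0\to Q_{A,\chi}^I(\lambda_j)\to M_{j+1}/M_{j-1}\to Q_{A,\chi}^I(\lambda_{j+1})\to 0$. Since $\lambda_j+\bZ I\not>\lambda+\bZ I=\lambda_{j+1}+\bZ I$ we have $\lambda_{j+1}+\bZ I\not<\lambda_j+\bZ I$, so Corollary~\ref{QExt} gives $\Ext_{\sC_A}^1(Q_{A,\chi}^I(\lambda_{j+1}),Q_{A,\chi}^I(\lambda_j))=0$ and the sequence splits. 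Taking in $M_{j+1}$ the preimage $M_j'\supseteq M_{j-1}$ of the summand isomorphic to $Q_{A,\chi}^I(\lambda_{j+1})$ yields $M_j'/M_{j-1}\cong Q_{A,\chi}^I(\lambda_{j+1})$ and $M_{j+1}/M_j'\cong Q_{A,\chi}^I(\lambda_j)$; replacing $M_j$ by $M_j'$ interchanges the $j$-th and $(j+1)$-st sections and leaves the rest of the filtration untouched. Each such step strictly decreases the number of pairs $i<i'$ with $\lambda_i+\bZ I\neq\lambda+\bZ I$ and $\lambda_{i'}+\bZ I=\lambda+\bZ I$, so after finitely many steps one reaches a $Q$-filtration in which all sections with grade $\lambda+\bZ I$ precede all sections with grade $\neq\lambda+\bZ I$; letting $k$ be the number of the former gives the claimed filtration. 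Finally $k\geq 1$: a section $Q_{A,\chi}^I(\lambda_i)$ has non-zero $(\lambda+\bZ I)$-graded part only when $\lambda+\bZ I\leq\lambda_i+\bZ I$, which together with $\lambda_i+\bZ I\not>\lambda+\bZ I$ forces $\lambda_i+\bZ I=\lambda+\bZ I$; so if no section had grade $\lambda+\bZ I$ then $(M_i)_{\lambda+\bZ I}=(M_{i-1})_{\lambda+\bZ I}$ for all $i$ and hence $M_{\lambda+\bZ I}=0$, contradicting the hypothesis.

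The only real content is checking that Corollary~\ref{QExt} applies at each swap, which is immediate from the maximality hypothesis on $\lambda+\bZ I$; everything else is the routine inductive reordering already used for $Z$-filtrations. So I do not anticipate a genuine obstacle; the one point to handle carefully is the bookkeeping on the support of $Q_{A,\chi}^I(\mu)$ (concentrated in grades $\leq\mu+\bZ I$ with $\mu+\bZ I$ attained), which is what makes both $\lambda_i+\bZ I\not>\lambda+\bZ I$ and the lower bound $k\geq 1$ work.
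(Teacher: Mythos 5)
Your proposal is correct and follows essentially the same route as the paper: repeatedly split the extension of an adjacent pair $Q_{A,\chi}^I(\lambda_{j+1})$ over $Q_{A,\chi}^I(\lambda_j)$ using Corollary~\ref{QExt}, with the maximality of $\lambda+\bZ I$ in the support of $M$ (via the fact that each section $Q_{A,\chi}^I(\lambda_i)$ has non-zero top grade $\lambda_i+\bZ I$) guaranteeing the Ext-vanishing, and then swap. Your explicit termination count and the argument that $k\geq 1$ are just slightly more detailed versions of steps the paper asserts briefly, so there is nothing to add.
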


\begin{proof}
	Suppose not. By the assumptions on $\lambda+\bZ I$, there must exist $1\leq l\leq r$ with $\lambda_l+\bZ I=\lambda+\bZ I$. Then there exists $1\leq i<j\leq r$ such that $\lambda_i+\bZ I\neq\lambda+\bZ I$, and $\lambda_j+\bZ I =\lambda+\bZ I$. Therefore, there exists $1\leq i<r$ such that $\lambda_i+\bZ I\neq \lambda+\bZ I$ and $\lambda_{i+1}+\bZ I=\lambda+\bZ I$. There is then a short exact sequence $$0\to \frac{M_i}{M_{i-1}}\to \frac{M_{i+1}}{M_{i-1}}\to \frac{M_{i+1}}{M_i}\to 0$$ which we may of course write as $$0\to Q_{A,\chi}^I(\lambda_i)\to \frac{M_{i+1}}{M_{i-1}}\to Q_{A,\chi}^I(\lambda_{i+1})\to 0.$$ If $\Ext_{\sC_A}^i(Q_{A,\chi}^I(\lambda_{i+1}),Q_{A,\chi}^I(\lambda_i))\neq 0$, then by Corollary~\ref{QExt}, we have $$\lambda_i+\bZ I>\lambda_{i+1}+\bZ I=\lambda+\bZ I.$$ However, $M_{\lambda_i+\bZ I}\neq 0$ since $Q_{A,\chi}^I(\lambda_i)$ is a section of a $Q$-filtration of $M$ and $Q_{A,\chi}^I(\lambda_i)_{\lambda_i+\bZ I}\neq 0$. This contradicts our assumption on $\lambda+\bZ I$, and so we must have that $\Ext_{\sC_A}^i(Q_{A,\chi}^I(\lambda_{i+1}),Q_{A,\chi}^I(\lambda_i))=0$. In particular, the short exact sequence above splits, and so we may swap $Q_{A,\chi}^I(\lambda_i)$ and $Q_{A,\chi}^I(\lambda_{i+1})$ in the $Q$-filtration. Iterating this argument, we obtain the result.
\end{proof}

\begin{rmk}
	The proof of the previous lemma shows that, if $M$ has a $Q$-filtration, then $M$ also has a $Q$-filtration with the given property and with the same $Q_{A,\chi}^I(\lambda)$ appearing the same number of times in the new filtration as in the original one.
\end{rmk}

For the remainder of the subsection, we assume that $A$ is a local ring with residue field $F$. 

\begin{prop}\label{LeviUniqDecomp}
	Suppose that $\pi(h_\alpha)=0$ for all $\alpha\in R_I$. Suppose further that $A$ is local with residue field $F$, and let $M\in\sC_A$ have a $Q$-filtration. Let $$0=M_0\subseteq M_1\subseteq M_2\subseteq\cdots\subseteq M_r=M$$ and $$0=N_0\subseteq N_1\subseteq N_2\subseteq\cdots\subseteq N_s=N$$ be two $Q$-filtrations of $M$, with sections $M_{i}/M_{i-1}\cong Q_{A,\chi}(\lambda_i)$ and $N_{j}/N_{j-1}\cong Q_{A,\chi}(\mu_j)$ for $\lambda_i,\mu_j\in X$. Then $r=s$ and there exists $\sigma\in S_r$ such that $$Q_{A,\chi}^I(\lambda_i)\cong Q_{A,\chi}^I(\mu_{\sigma(i)})$$ for all $1\leq i\leq r$.
\end{prop}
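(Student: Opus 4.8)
The plan is to induct on $\operatorname{rank}_A M$ (finite, since a $Q$-filtered module is free over $A$), peeling off one ``top layer'' of the filtration at a time and reducing the bookkeeping of $Q$-filtration multiplicities in $\sC_A$ to the uniqueness-of-decomposition statement available in the regular-nilpotent subcategory $\sC_A^I$, namely Proposition~\ref{UniqDecomp} applied with $R$ replaced by $R_I$ (legitimate here, since our standing hypothesis $\pi(h_\alpha)=0$ for $\alpha\in R_I$ is precisely what Section~\ref{Sec7} requires for $\fg_I$, and $A$ is assumed local).

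If $M\neq 0$, I would first fix $\lambda+\bZ I$ maximal with $M_{\lambda+\bZ I}\neq 0$ and show that the sub-multiset of section classes $[Q_{A,\chi}^I(\nu)]$ with $\nu+\bZ I=\lambda+\bZ I$ occurring in a $Q$-filtration of $M$ depends only on $M$. The mechanism is the exact functor $(-)_{\lambda+\bZ I}\colon\sC_A\to\sC_A^I$: applied to a $Q$-filtration it produces a filtration of $M_{\lambda+\bZ I}$ whose sections are the $Q_{A,\chi}^I(\nu)_{\lambda+\bZ I}$. Here one uses that $Q_{A,\chi}^I(\nu)=\Gamma_{A,\chi}(Q_{A,I,\chi}(\nu))$ is, as a graded $\bK$-space, $U_I^-\otimes Q_{A,I,\chi}(\nu)$ with $Q_{A,I,\chi}(\nu)$ in degree $\nu+\bZ I$ and $U_I^-$ in non-positive degrees, together with the fact that a section of a $Q$-filtration of $M$ has leading degree $\le\lambda+\bZ I$ by maximality; it follows that $Q_{A,\chi}^I(\nu)_{\lambda+\bZ I}$ equals $Q_{A,I,\chi}(\nu)$ when $\nu+\bZ I=\lambda+\bZ I$ and is $0$ otherwise. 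Since the $Q_{A,I,\chi}(\nu)$ are projective in $\sC_A^I$, this filtration of $M_{\lambda+\bZ I}$ splits, so $M_{\lambda+\bZ I}\cong\bigoplus_\nu Q_{A,I,\chi}(\nu)$ in $\sC_A^I$; Proposition~\ref{UniqDecomp} in $\sC_A^I$ then makes this multiset an invariant of $M_{\lambda+\bZ I}$, hence of $M$, and the equivalences ``$Q_{A,I,\chi}(\nu)\cong Q_{A,I,\chi}(\nu')\iff\nu\in W_{I,p}\cdot\nu'\iff Q_{A,\chi}^I(\nu)\cong Q_{A,\chi}^I(\nu')$'' (Proposition~\ref{QIisom} in $\sC_A^I$ and the proposition describing isomorphisms of the $Q_{A,\chi}^I$) transport it back to the desired statement about the $Q_{A,\chi}^I$'s in degree $\lambda+\bZ I$.

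For the induction I would use Lemma~\ref{QFiltLambda} and the remark following it to replace each given $Q$-filtration, without altering section multiplicities, by one in which the sections of leading degree $\lambda+\bZ I$ sit at the bottom, forming a submodule $M_k$. I would then note that $M_k$ is generated as a $U_\chi\otimes A$-module by its degree-$(\lambda+\bZ I)$ part — because each $\Gamma_{A,\chi}(Q_{A,I,\chi}(\nu))$ is generated by its degree-$(\nu+\bZ I)$ part — and that this part is all of $M_{\lambda+\bZ I}$, since $M/M_k$ vanishes in degree $\lambda+\bZ I$ by the previous step. Hence $M_k=U_\chi\cdot M_{\lambda+\bZ I}$ is intrinsic to $M$ and is the same submodule for both reordered filtrations, while $M/M_k$ carries an inherited $Q$-filtration of strictly smaller $A$-rank. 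Applying the inductive hypothesis to $M/M_k$ and combining with the first step shows that the two original section multisets coincide (so in particular $r=s$), which is the assertion. The step I expect to be the main obstacle is the first one: identifying exactly which sections survive under $(-)_{\lambda+\bZ I}$ and checking that the surviving filtration of $M_{\lambda+\bZ I}$ splits into $Q_{A,I,\chi}(\nu)$'s requires simultaneously using the graded structure of $\Gamma_{A,\chi}$, projectivity in $\sC_A^I$, and the maximality of $\lambda+\bZ I$ — and it is precisely the point that lets Proposition~\ref{UniqDecomp}, the only available Krull--Schmidt-type input, do its work; everything afterwards is routine manipulation of filtrations.
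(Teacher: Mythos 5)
Your proof is correct and follows essentially the same route as the paper's: reorder the filtrations via Lemma~\ref{QFiltLambda}, identify $M_{\lambda+\bZ I}$ as a direct sum of modules $Q_{A,I,\chi}(\nu)$ in $\sC_A^I$ and invoke Proposition~\ref{UniqDecomp} there, then observe that the bottom piece of each reordered filtration is the intrinsic submodule $U_\chi M_{\lambda+\bZ I}$ and conclude by induction on the quotient (the paper inducts on $r+s$ rather than on $A$-rank, an immaterial difference). One small wording fix: a section can have leading degree incomparable to $\lambda+\bZ I$, so it is not true that all sections have leading degree $\leq\lambda+\bZ I$; what maximality actually rules out is a section with $\nu+\bZ I>\lambda+\bZ I$, which together with the upper-triangular grading of $\Gamma_{A,\chi}(Q_{A,I,\chi}(\nu))$ is exactly what your computation of $Q_{A,\chi}^I(\nu)_{\lambda+\bZ I}$ needs.
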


\begin{proof}
	Let us apply induction on $r+s$. If $r=s=1$, it is clear. Let $\lambda+\bZ I\in X/\bZ I$ have the property that $M_{\lambda+\bZ I}\neq 0$ and $M_{\kappa+\bZ I}=0$ for all $\kappa+\bZ I\in X/\bZ I$ with $\kappa+\bZ I>\lambda+\bZ I$. By Lemma~\ref{QFiltLambda}, we may assume that there exists $1\leq k_1\leq r$ and $1\leq k_2\leq s$ such that $\lambda_i+\bZ I=\lambda+\bZ I$ for all $i\leq k_1$, $\lambda_i+\bZ I\neq\lambda+\bZ I$ for all $i>k_1$, $\mu_j+\bZ I=\lambda+\bZ I$ for all $j\leq k_2$ and $\mu_j+\bZ I\neq \lambda+\bZ I$ for all $j>k_2$. As in the previous remark, we see that making this assumption doesn't change which $Q_{A,\chi}^I(\mu)$ appear or how many times, so this assumption is permissible.
	
	This implies, in particular, that $M_{\lambda+\bZ I}=(M_{k_1})_{\lambda+\bZ I}=(M_{k_2})_{\lambda+\bZ I}$. Since, if $\lambda_i+\bZ I=\lambda+\bZ I$, we have that $Q_{A,\chi}^I(\lambda_i)_{\lambda+\bZ I}=Q_{A,I,\chi}(\lambda_i)\in\sC_A^I$, and each $Q_{A,I,\chi}(\lambda_i)$ is projective in $\sC_A$, we get in $\sC_A^I$ that \begin{equation*}
		\begin{split} M_{\lambda+\bZ I} & \cong Q_{A,I,\chi}(\lambda_1)\oplus\cdots \oplus Q_{A,I,\chi}(\lambda_{k_1}) \\ &  \cong Q_{A,I,\chi}(\mu_1)\oplus\cdots \oplus Q_{A,I,\chi}(\mu_{k_2}).
		\end{split}
	\end{equation*}
	Proposition~\ref{UniqDecomp} then says that $k_1=k_2$ and that there exists $\omega\in S_{k_1}$ such that $$Q_{A,I,\chi}(\lambda_i)\cong Q_{A,I,\chi}(\mu_{\omega(i)})$$ for all $1\leq i\leq k_1$, and so $$Q_{A,\chi}^I(\lambda_i)\cong Q_{A,\chi}^I(\mu_{\omega(i)})$$ for all $1\leq i\leq k_1$.
	
	We now want to show that $M_{k_1}=N_{k_2}$. Let us write $L$ for the submodule $U_\chi M_{\lambda+\bZ I}$ of $M$ (i.e. the minimal submodule of $M$ containing $M_{\lambda+\bZ I}$). Note that $(U_\chi M_{\lambda+\bZ I})_{\nu+\bZ I}=(U_\chi)_{\nu-\lambda+\bZ I}M_{\lambda+\bZ I}=M_{\nu+\bZ I}\cap U_\chi M_{\lambda+\bZ I}$, and $$(U_\chi M_{\lambda+\bZ I})_{\nu+\bZ I}^{d\mu}=\sum_{\substack{d\epsilon\in\fh^{*}\\ \epsilon\in\lambda+\bZ I +pX}}(U_\chi)_{\nu-\lambda+\bZ I}^{d(\mu-\epsilon)}M_{\lambda+\bZ I}^{d\epsilon}=M_{\nu+\bZ I}^{d\mu}\cap U_\chi M_{\lambda+\bZ I},$$ so $L$ is indeed a submodule of $M$ in $\sC_A$. Since $M_{k_1}$ and $N_{k_2}$ are submodules of $M$, and we have $M_{\lambda+\bZ I}\subseteq M_{k_1}\cap N_{k_2}$, we clearly have $L\subseteq M_{k_1}$ and $L\subseteq N_{k_2}$.
	
	Conversely, we have that $M_1\cong Q_{A,\chi}^I(\lambda_1)$ lies inside $U_\chi\cdot (M_1)_{\lambda+\bZ I}$, that $M_2$ lies inside $M_1+U_\chi\cdot (M_2)_{\lambda+\bZ I}$ and so forth. In particular, we get that $M_{k_1}$ lies inside $U_\chi \cdot(M_{k_1})_{\lambda+\bZ I}=U_\chi M_{\lambda+\bZ I}$, so $M_{k_1}=L$. By the same argument, $N_{k_2}=L$, and so $M_{k_1}=N_{k_2}$.
	
	Now, $M/L$ has two $Q$ filtrations, one with $r-k_1$ factors and one with $s-k_2$ factors. The result follows by induction, since $k_1,k_2\geq 1$.

\end{proof}

\begin{prop}\label{LeviQFiltDecomp}
	Suppose that $A$ is a local ring with residue field $F$, and that $\pi(h_\alpha)=0$ for all $\alpha\in R_I$. Let $M\in\sC_A$ have a $Q$-filtration, and suppose $M=N\oplus P$ in $\sC_A$. Then $N$ and $P$ have $Q$-filtrations.
\end{prop}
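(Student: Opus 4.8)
The plan is to induct on the length of a $Q$-filtration of $M$ (well defined by Proposition~\ref{LeviUniqDecomp}), peeling off a maximal graded layer at each step and reducing the problem inside the Levi category $\sC_A^I$, where the regular nilpotent results of Section~\ref{Sec7} apply. So assume $M\neq 0$ and fix $\lambda+\bZ I\in X/\bZ I$ maximal with $M_{\lambda+\bZ I}\neq 0$ (possible by condition (B)). Using Lemma~\ref{QFiltLambda} I would choose a $Q$-filtration $0=M_0\subseteq\cdots\subseteq M_r=M$ with sections $M_i/M_{i-1}\cong Q_{A,\chi}^I(\lambda_i)$ and an index $k\geq 1$ such that $\lambda_i+\bZ I=\lambda+\bZ I$ exactly for $i\leq k$. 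As in the proof of Proposition~\ref{LeviUniqDecomp} this gives $M_k=U_\chi M_{\lambda+\bZ I}$, and restricting the filtration to grade $\lambda+\bZ I$ exhibits $M_{\lambda+\bZ I}$ as an object of $\sC_A^I$ with a filtration by the projective modules $Q_{A,\chi}^I(\lambda_i)_{\lambda+\bZ I}\cong Q_{A,I,\chi}(\lambda_i)$; hence $M_{\lambda+\bZ I}\cong\bigoplus_{i\leq k}Q_{A,I,\chi}(\lambda_i)$ in $\sC_A^I$, and in particular is free over $A$.

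From $M=N\oplus P$ one gets $M_{\lambda+\bZ I}=N_{\lambda+\bZ I}\oplus P_{\lambda+\bZ I}$ in $\sC_A^I$, so since $\chi|_{\fg_I}$ is regular nilpotent and $\pi(h_\alpha)=0$ for all $\alpha\in R_I$, Lemma~\ref{QFiltDecomp} applied to $\fg_I$ shows that $N_{\lambda+\bZ I}$ and $P_{\lambda+\bZ I}$ are themselves direct sums of modules $Q_{A,I,\chi}(\mu)$. The key step is then to identify the submodule of $M$ generated by the top layer with a parabolic induction. Viewing $M_{\lambda+\bZ I}$, $N_{\lambda+\bZ I}$, $P_{\lambda+\bZ I}$ in $\sC_A^{I,+}$ by trivial extension along $U^IU_I^+\twoheadrightarrow U^I$ (legitimate since $\lambda+\bZ I$ is maximal), the inclusion $M_{\lambda+\bZ I}\hookrightarrow M$ is a morphism in $\sC_A^{I,+}$, and Frobenius reciprocity (Proposition~\ref{FrobRecPara}) yields a map $\Gamma_{A,\chi}(M_{\lambda+\bZ I})\to M$ with image $M_k$. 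Both $\Gamma_{A,\chi}(M_{\lambda+\bZ I})\cong\bigoplus_{i\leq k}Q_{A,\chi}^I(\lambda_i)$ and $M_k$ carry $Q$-filtrations with the same sections, hence (each $Q_{A,\chi}^I(\lambda)$ being free over $A$ by Proposition~\ref{QIZFilt}) are free $A$-modules of the same finite rank, so this surjection of free $A$-modules of equal rank is an isomorphism.

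Since $\Gamma_{A,\chi}$ is additive and that isomorphism respects $M_{\lambda+\bZ I}=N_{\lambda+\bZ I}\oplus P_{\lambda+\bZ I}$, writing $L_N:=U_\chi N_{\lambda+\bZ I}\subseteq N$ and $L_P:=U_\chi P_{\lambda+\bZ I}\subseteq P$ I would conclude $M_k=L_N\oplus L_P$ with $L_N\cong\Gamma_{A,\chi}(N_{\lambda+\bZ I})$ and $L_P\cong\Gamma_{A,\chi}(P_{\lambda+\bZ I})$; as $N_{\lambda+\bZ I}$ and $P_{\lambda+\bZ I}$ are direct sums of $Q_{A,I,\chi}(\mu)$'s, Lemma~\ref{PhiQFilt} then shows $L_N$ and $L_P$ have $Q$-filtrations. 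To finish, $M/M_k=(N/L_N)\oplus(P/L_P)$ inherits a $Q$-filtration of length $r-k<r$ from $M_{k+1}/M_k\subseteq\cdots\subseteq M_r/M_k$, so by the induction hypothesis $N/L_N$ and $P/L_P$ have $Q$-filtrations; splicing these onto the $Q$-filtrations of $L_N$ and $L_P$ gives the desired $Q$-filtrations of $N$ and $P$.

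I expect the main obstacle to be the identification $U_\chi M_{\lambda+\bZ I}\cong\Gamma_{A,\chi}(M_{\lambda+\bZ I})$ together with its compatibility with the decomposition $M_{\lambda+\bZ I}=N_{\lambda+\bZ I}\oplus P_{\lambda+\bZ I}$: this is exactly what lets the regular nilpotent machinery (Lemmas~\ref{QFiltDecomp} and~\ref{PhiQFilt}) be transported from $\sC_A^I$ to $\sC_A$. The rank count that makes the relevant surjection an isomorphism rests on $Q$-filtered objects being free over $A$, which in turn uses that every $Q$-filtration refines to a $Z$-filtration (Proposition~\ref{QIZFilt}); locality of $A$ is used only through Lemmas~\ref{QFiltDecomp} and the structure established in Proposition~\ref{LeviUniqDecomp}.
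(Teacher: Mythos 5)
Your argument is correct, and it follows the same overall skeleton as the paper's proof (fix a maximal grade $\lambda+\bZ I$, reorder the filtration via Lemma~\ref{QFiltLambda}, decompose $M_{\lambda+\bZ I}=N_{\lambda+\bZ I}\oplus P_{\lambda+\bZ I}$ into $Q_{A,I,\chi}(\mu)$'s in $\sC_A^I$ using Lemma~\ref{QFiltDecomp}, then induct on the length of the $Q$-filtration), but the peeling mechanism is genuinely different. The paper peels off a single bottom factor: it argues, appealing to the proof of Lemma~\ref{QFiltDecomp}, that the summand $(M_1)_{\lambda+\bZ I}\cong Q_{A,I,\chi}(\lambda_1)$ embeds into (say) $N_{\lambda+\bZ I}$, and then uses that $Q_{A,I,\chi}(\lambda_1)$ generates $Q_{A,\chi}^I(\lambda_1)\cong M_1$ to conclude $M_1\subseteq N$ and pass to $M/M_1=(N/M_1)\oplus P$. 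You instead peel off the entire top layer $M_k=U_\chi M_{\lambda+\bZ I}$ at once, identifying it with $\Gamma_{A,\chi}(M_{\lambda+\bZ I})$ via Frobenius reciprocity (Proposition~\ref{FrobRecPara}) together with the rank/freeness count (a surjection of free $A$-modules of equal finite rank is an isomorphism, exactly as in the proof of Lemma~\ref{QFiltDecomp}), and then splitting it compatibly as $L_N\oplus L_P\cong\Gamma_{A,\chi}(N_{\lambda+\bZ I})\oplus\Gamma_{A,\chi}(P_{\lambda+\bZ I})$, each of which is $Q$-filtered by Lemma~\ref{PhiQFilt}. Your route is a bit longer but more self-contained: it avoids the paper's somewhat terse appeal to the internals of the proof of Lemma~\ref{QFiltDecomp} and to the ``generation'' step, at the cost of the extra (correct) identification $U_\chi M_{\lambda+\bZ I}\cong\Gamma_{A,\chi}(M_{\lambda+\bZ I})$, which is in the spirit of the argument already used in Proposition~\ref{LeviUniqDecomp}. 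One tiny remark: citing Proposition~\ref{LeviUniqDecomp} for well-definedness of the filtration length is unnecessary, since your induction can simply run over the length of a chosen $Q$-filtration; locality of $A$ really only enters through Lemma~\ref{QFiltDecomp}.
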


\begin{proof}
	Let $\lambda+\bZ I\in X/\bZ I$ be such that $M_{\lambda+\bZ I}\neq 0$ and $M_{\sigma+\bZ I}=0$ for all $\sigma+\bZ I\in X/\bZ I$ with $\sigma+\bZ I>\lambda+\bZ I$. By Lemma~\ref{QFiltLambda}, we may assume that the $Q$-filtration  $$0=M_0\subseteq M_1\subseteq M_2\subseteq\cdots\subseteq M_r=M,$$ with sections $M_{i}/M_{i-1}\cong Q_{A,\chi}(\lambda_i)$ for $\lambda_i \in X$, has the property that there exists $1\leq k\leq r$ such that $$\lambda_i+\bZ I=\lambda+\bZ I,\quad \mbox{for all}\quad i\leq k$$
	and $$\lambda_i+\bZ I\neq\lambda+\bZ I\quad\mbox{implies}\quad i>k.$$ 
	
	
	
	We have that $M_{\lambda+\bZ I}=N_{\lambda+\bZ I}\oplus P_{\lambda+\bZ I}$.  
	In $\sC_A^I$, we observe that $$M_{\lambda+\bZ I}=Q_{A,I,\chi}(\lambda_1)\oplus\cdots\oplus Q_{A,I,\chi}(\lambda_k)$$ since all of the $Q_{A,I,\chi}(\lambda_i)$ are projective. By Lemma~\ref{QFiltDecomp}, both $P_{\lambda+\bZ I}$ and $N_{\lambda+\bZ I}$ decompose into $Q_{A,I,\chi}(\mu)$'s in $\sC_A^I$. More specifically, one may see from the proof of Lemma~\ref{QFiltDecomp} that $(M_1)_{\lambda+\bZ I}=Q_{A,I,\chi}(\lambda_1)$ embeds into, say, $N_{\lambda+\bZ I}$. Since $N$ is a submodule of $M$ in $\sC_A$, and $Q_{A,I,\chi}(\lambda)$ generates $Q_{A,\chi}^I(\lambda)$ in $\sC_A$, we conclude that $M_1\subseteq N$, and so $M/M_1=(N/M_1)\oplus P$. The result follows by induction on the length of the $Q$-filtration.
\end{proof}

\begin{cor}\label{ProjQFilt}
	Suppose that $A$ is a local ring with residue field $F$ and that $\pi(h_\alpha)=0$ for all $\alpha\in R_I$. Any $M\in\sC_A$ which is projective in $\sC_A$ has a $Q$-filtration.
\end{cor}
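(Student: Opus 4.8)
The plan is to deduce this directly from the two main results established just above, namely Proposition~\ref{QHomIm} (every object of $\sC_A$ is a homomorphic image of a projective object with a $Q$-filtration) and Proposition~\ref{LeviQFiltDecomp} (over a local ring $A$ with residue field $F$, a direct summand of an object with a $Q$-filtration again has a $Q$-filtration). Both require only the standing hypothesis $\pi(h_\alpha)=0$ for all $\alpha\in R_I$, and the second additionally requires $A$ local with residue field $F$, which we are assuming here.

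First I would apply Proposition~\ref{QHomIm} to the given projective $M\in\sC_A$: this produces a projective $P\in\sC_A$ with a $Q$-filtration together with a surjection $p\colon P\twoheadrightarrow M$ in $\sC_A$. Since $M$ is projective in $\sC_A$, the identity map $M\to M$ lifts along $p$ to a morphism $s\colon M\to P$ with $p\circ s=\Id_M$; hence $s$ is a split monomorphism and $P\cong M\oplus\ker(p)$ in $\sC_A$. Now I would invoke Proposition~\ref{LeviQFiltDecomp} with this direct sum decomposition of $P$: since $P$ has a $Q$-filtration, both $M$ and $\ker(p)$ have $Q$-filtrations. In particular $M$ has a $Q$-filtration, which is the claim.

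There is no real obstacle here — all the substantive work (constructing projective covers with $Q$-filtrations, and the Krull–Schmidt-type uniqueness/decomposition statements over a local base) has already been carried out in Proposition~\ref{QHomIm}, Proposition~\ref{UniqDecomp}, Lemma~\ref{QFiltDecomp}, Lemma~\ref{QFiltLambda}, and Proposition~\ref{LeviQFiltDecomp}. The only point worth spelling out is the splitting of the surjection $P\twoheadrightarrow M$, which is immediate from projectivity of $M$ in $\sC_A$; everything else is a citation.
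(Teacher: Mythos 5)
Your argument is correct and is essentially identical to the paper's own proof: apply Proposition~\ref{QHomIm} to obtain a projective $P$ with a $Q$-filtration surjecting onto $M$, use projectivity of $M$ to split the surjection so that $M$ is a direct summand of $P$, and conclude via Proposition~\ref{LeviQFiltDecomp}. Nothing is missing.
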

\begin{proof}
	By Proposition~\ref{QHomIm}, there exists a projective module $P\in\sC_A$ with a $Q$-filtration such that $P$ surjects onto $M$ in $\sC_A$. Since $M$ is projective in $\sC_A$, the surjection splits and $M$ is a direct summand of $P$. The result then follows from Proposition~\ref{LeviQFiltDecomp}.
\end{proof}

We would like to know how many times a given $Q_{A,\chi}^I(\lambda)$ appears in  a $Q$-filtration of certain projective modules. In order to determine this, let us briefly discuss how the functor $\bD$ discussed earlier interacts with our standing assumptions in this section.

\begin{prop}
	Let $A$ be a commutative Noetherian $U^0$-algebra with structure map $\pi:U^0\to A$. Suppose $\pi(h_\alpha)=0$ for all $\alpha\in R_I$. Then $\overline{\pi}(h_\alpha)=\,^{\tau}\pi(h_\alpha)=\bD\pi(h_\alpha)=\overline{\bD}\pi(h_\alpha)=0$ for all $\alpha\in R_I$.
\end{prop}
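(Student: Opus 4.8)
The plan is to reduce all four identities to a single structural fact: that $\tau^{\pm 1}(h_\alpha)$ is a nonzero scalar multiple of $h_{w_I\alpha}$ whenever $\alpha\in R_I$. Granting this, the hypothesis $\pi(h_\beta)=0$ for all $\beta\in R_I$, together with the observation that $w_I\in W_I$ and $W_I$ stabilises $R_I$ (so $w_I\alpha\in R_I$), and the $\bK$-linearity of $\pi$, immediately yields $\pi(\tau^{-1}(h_\alpha))=\pi(\tau(h_\alpha))=0$. Substituting these into the definitions $\overline{\pi}(h)=-\pi(h)$, ${}^{\tau}\pi(h)=\pi(\tau^{-1}(h))$, $\bD\pi(h)=-\pi(\tau^{-1}(h))$ and $\overline{\bD}\pi(h)=-\pi(\tau(h))$ then gives all four vanishing statements at once; the case of $\overline{\pi}$ is anyway trivial.

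To establish the structural fact, I would recall from \cite{Jan5} --- exactly as it is invoked in Subsection~\ref{Sec5.4} --- that $\tau$, viewed as an automorphism of $\fg$, preserves $\fh$ and carries $\fg_\beta$ to $\fg_{-w_I\beta}$ for every $\beta\in R$, and that the same holds for $\tau^{-1}$ since it too induces $-w_I$ on $X$. Fixing $\alpha\in R_I$ and using that root spaces are one-dimensional, one may write $\tau^{-1}(e_\alpha)=c\,e_{-w_I\alpha}$ and $\tau^{-1}(e_{-\alpha})=c'\,e_{w_I\alpha}$ for some $c,c'\in\bK^{\times}$. Applying the Lie algebra homomorphism $\tau^{-1}$ to $h_\alpha=[e_\alpha,e_{-\alpha}]$ and using $h_{-\beta}=[e_{-\beta},e_\beta]=-h_\beta$ then gives $\tau^{-1}(h_\alpha)=-cc'\,h_{w_I\alpha}\in\bK h_{w_I\alpha}$, and the computation for $\tau$ in place of $\tau^{-1}$ is word for word the same.

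There is essentially no obstacle here: the only non-formal ingredient is the behaviour of $\tau^{\pm 1}$ on Chevalley basis vectors, which is already recorded in the paper, and everything else is a short definition-chase together with the elementary fact that a parabolic Weyl group preserves the corresponding subsystem of roots.
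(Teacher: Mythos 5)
Your proposal is correct and follows essentially the same route as the paper: both proofs reduce everything to the fact that $\tau^{\pm1}(h_\alpha)=\tau^{\pm1}([e_\alpha,e_{-\alpha}])$ is a nonzero scalar multiple of $h_{w_I\alpha}$ (using $\tau^{\pm1}(\fg_\beta)=\fg_{-w_I\beta}$ from \cite{Jan5}), then use $w_I\alpha\in R_I$ and the definitions of $\overline{\pi}$, $\,^{\tau}\pi$, $\bD\pi$, $\overline{\bD}\pi$ to conclude. Your write-up merely makes the scalars and the sign $h_{-\beta}=-h_\beta$ explicit, which the paper absorbs into the constant $k\in\bK^{*}$.
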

\begin{proof}
	This is clear for $\overline{\pi}$, and the result for $\bD\pi$ will follow from the result for $\,^{\tau}\pi$. The result for $\overline{\bD}\pi$ will be similar, so we only prove $\,^{\tau}\pi(h_\alpha)=0$ for all $\alpha\in R_I$. We have $$\tau^{-1}(h_\alpha)=\tau^{-1}([e_\alpha,e_{-\alpha}])=[\tau^{-1}(e_\alpha),\tau^{-1}(e_{-\alpha})]=k[e_{-w_I\alpha},e_{w_I\alpha}]=k h_{w_I\alpha}$$ for some $k\in\bK^{*}$ since $\tau^{-1}(\fg_\alpha)=\fg_{-w_I\alpha}$ by \cite{Jan5}. Since $\alpha\in R_I$, $w_I\alpha\in R_I$, and so $\pi(h_{w_I\alpha})=0$. Thus $\,^{\tau}\pi(h_\alpha)=\pi(\tau^{-1}(h_\alpha))=0$.
\end{proof}

In particular, once we assume $\pi(h_\alpha)=0$ for all $\alpha\in R_I$, all the results which need this assumption will also hold in categories over the modified $U^0$-algebras. We will use this without comment going forward. Furthermore, we note from the calculations in \cite{Jan5} that $\tau$ sends $U^I$ to $U^I$, $U_I^{+}$ to $U_I^{-}$, and vice versa. So the automorphism $\tau:\fg_I\to \fg_I$ which we use to define the functor $\bD:\sC_A^I\to\sC_{\bD A}^I$ is just the restriction of the automorphism $\tau:\fg\to\fg$ that we have been using throughout.

\begin{prop}\label{CommDiag1}
	Suppose $\pi(h_\alpha)=0$ for all $\alpha\in R_I$. The following diagram commutes:
	$$\xymatrix{
		(\sC_A^I)^\circ\ar@{->}[rr]^{\Theta_A^I} \ar@{->}[d]^{\bD} & & \sC_A^I \ar@{->}[d]^{\bD}& & \\
		(\sC_{\bD A}^I)^\circ \ar@{->}[rr]^{\Theta_{\bD A}^I} & & \sC_{\bD A}^I. & & \\
	}
	$$
\end{prop}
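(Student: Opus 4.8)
The plan is to show that the two functors $\bD\circ\Theta_A^I$ and $\Theta_{\bD A}^I\circ\bD$ from $(\sC_A^I)^\circ$ to $\sC_{\bD A}^I$ are literally equal, on objects and on morphisms. First I would dispose of some bookkeeping. The functor $\bD$ is defined on $(\sC_A^I)^\circ$, since the relevant structure map $\pi^\circ:U^0\to\bK\hookrightarrow A$ certainly kills every $h_\alpha$; moreover $\bD$ applied to the $U^0$-algebra $(A,\pi^\circ)$ returns $(A,\pi^\circ)$, because $\tau^{-1}(\fh)=\fh$ and $\pi^\circ$ vanishes on $\fh$. Hence $(\sC_{\bD A}^I)^\circ=(\sC_A^I)^\circ$, and $\bD$ does map $(\sC_A^I)^\circ$ into $(\sC_{\bD A}^I)^\circ$ as the diagram requires. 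Also, $\Theta_{\bD A}^I$ is defined, by the previous proposition, which supplies $\bD\pi(h_\alpha)=0$ for all $\alpha\in R_I$.

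The crux is an observation about how little data determines an object here. For $B$ any commutative Noetherian $U^0$-algebra whose structure map kills every $h_\alpha$ with $\alpha\in R_I$, an object of $\sC_B^I$ — and likewise of $(\sC_B^I)^\circ$, or of the $-\chi$-analogue — is completely determined by: (i) its underlying $B$-module; (ii) its $X/\bZ I$-grading; (iii) its $(D)$-decomposition; and (iv) the action of $e_\alpha$ for each $\alpha\in R_I$. Indeed $U^I=U(\fg_I)/\langle e_\alpha^p-\chi(e_\alpha)^p\mid\alpha\in R_I\rangle$ and $\fg_I$ is generated by $\fh$ together with the root vectors $e_\alpha$ ($\alpha\in R_I$), so the module structure is pinned down by the data in (iv) and by the action of $\fh$ — and the latter is forced by condition (D) and the structure map of $B$. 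Since the mere existence of $\bD$, $\Theta_A^I$ and $\Theta_{\bD A}^I$ already tells us both composites send an object to a genuine object of $\sC_{\bD A}^I$, it then suffices to check that the data (i)--(iv) of the two resulting objects agree.

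To do this I would combine two facts. First, $\Theta_B^I$, wherever it is defined, leaves the data (i)--(iv) unchanged, altering only the action of $\fh$ (to the one forced by condition (D) for the target structure map); this is immediate from the construction in Proposition~\ref{ThetaEquiv}. Second, writing $\bD$ as the composite of the twist $M\mapsto{}^{\tau}M$ with the linear dual $\Hom_B(-,B)$, I would check that the data (i)--(iv) of $\bD(M)$ are produced from the data (i)--(iv) of $M$ by a recipe that nowhere mentions the structure map of $B$: the underlying module of $\bD(M)$ is $\Hom_B(M,B)$; its grading and $(D)$-decomposition arise from those of $M$ through the fixed action of $\tau$ on $X$ (via $-w_I$) together with linear duality; and, because each $\tau^{\pm1}(e_\alpha)$ with $\alpha\in R_I$ is a nonzero scalar multiple of some $e_\gamma$, $\gamma\in R_I$, the action of $e_\alpha$ on $\bD(M)$ is recovered from the actions of the $e_\beta$ ($\beta\in R_I$) on $M$ via the contragredient formula. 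Chaining these: for $M\in(\sC_A^I)^\circ$, the data (i)--(iv) of $\bD(\Theta_A^I(M))$ equal this recipe applied to the data of $\Theta_A^I(M)$, hence to the data of $M$; these are by definition the data of $\bD(M)$ computed inside $(\sC_A^I)^\circ$; and those equal the data of $\Theta_{\bD A}^I(\bD(M))$. So $\bD(\Theta_A^I(M))=\Theta_{\bD A}^I(\bD(M))$. The morphism case is the same computation, since both $\Theta$'s act as the identity on underlying $B$-linear maps and $\bD$ acts on a morphism by the same twist-and-transpose recipe.

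I expect the only real obstacle to be the second fact just mentioned: one must verify, from the explicit definitions of $M\mapsto{}^{\tau}M$ and $\Hom_B(-,B)$, that none of the underlying module, the grading, the $(D)$-decomposition, or the $e_\alpha$-actions of $\bD(M)$ depend on the structure map of $B$ — so that the recipe is the same whether one works with $\pi^\circ$ or with $\pi$. Everything else is formal bookkeeping, and as an alternative one could instead write out the two composites explicitly and match all four pieces of data termwise, which is longer but wholly routine.
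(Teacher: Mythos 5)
Your proposal is correct and is essentially the paper's argument, made explicit: the paper's (much terser) proof likewise rests on the observations that $\bD(M)_{\lambda+\bZ I}^{d\mu}\cong\bD(M_{\lambda+\bZ I}^{-d(\tau^{-1}(\mu))})$ and that $\tau^{-1}(\fg_\alpha)\subseteq\fg_{-w_I\alpha}$ for $\alpha\in R_I$, i.e.\ that the effect of $\bD$ on the grading, the (D)-decomposition and the $e_\alpha$-actions is independent of the structure map, while $\Theta$ only changes the $\fh$-action, which condition (D) then forces to agree.
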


\begin{proof}
	It is not difficult to check that, for $\lambda+\bZ I\in X/\bZ I$, $\mu\in\lambda+\bZ I+pX$ and $M\in\sC_A^I$, we have $\bD(M)_{\lambda+\bZ I}^{d\mu}\cong\bD(M_{\lambda+\bZ I}^{-d(\tau^{-1}(\mu))})$ as $U^0\otimes A$-modules. With this in mind, the computation is straightforward, noting that $\tau^{-1}(\fg_\alpha)\subseteq \fg_{-w_I\alpha}$ for all $\alpha\in R_I$.
\end{proof}

Note that $\bD A=A$ if $\pi(h)=0$ for all $h\in\fh$, and so, if $A=F$ is a field and this holds, $\bD$ is a duality on $\sC_F^I$.

\begin{prop}\label{CommDiag2}
	The following diagram commutes:
	$$\xymatrix{
		\sC_\bK^I \ar@{->}[rr]^{-\otimes_{\bK} A} \ar@{->}[d]^{\bD} & & (\sC_A^I)^\circ \ar@{->}[d]^{\bD}& & \\
		\sC_{\bK}^I \ar@{->}[rr]^{-\otimes_{\bK} A} & & (\sC_{A}^I)^\circ. & & \\
	}
	$$
\end{prop}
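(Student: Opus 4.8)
The plan is to factor the duality functor $\bD$ as the composite of the linear-duality functor ($M\mapsto\Hom_{\bK}(M,\bK)$ over $\bK$, and $M\mapsto\Hom_{A}(M,A)$ over $A$) and the $\tau$-twist functor $M\mapsto\,^{\tau}M$, and then to check that each of these two pieces commutes with the base-change functor $-\otimes_{\bK}A$; splicing the two resulting natural isomorphisms together yields commutativity of the square. As a preliminary, I would note that since every structure map in sight kills $\fh$ and since $\tau^{-1}(\fh)=\fh$, we have $\overline{\bK}=\,^{\tau}\bK=\bD\bK=\bK$ and $\overline{A}=\,^{\tau}A=\bD A=A$ as $U^0$-algebras with structure map $\pi^\circ$, so that both vertical arrows of the diagram genuinely are endofunctors and the square is well-posed. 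I would also recall that objects of $\sC_{\bK}^I$ are finite-dimensional over $\bK$, which is what makes the duality step work.

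First I would handle the twist functor. For $N$ in any of the relevant categories over $\bK$, the object $\,^{\tau}N$ has the same underlying vector space as $N$, with module structure obtained by precomposing the original action with $\tau^{-1}$ (or $\tau$) and $(D)$-decomposition $(\,^{\tau}N)_{\lambda+\bZ I}^{d\mu}=N_{-\lambda+\bZ I}^{d(\tau^{-1}(\mu))}$. Since $(N\otimes_{\bK}A)_{\lambda+\bZ I}^{d\mu}=N_{\lambda+\bZ I}^{d\mu}\otimes_{\bK}A$, the identity map on the underlying $A$-module identifies $\,^{\tau}N\otimes_{\bK}A$ with $\,^{\tau}(N\otimes_{\bK}A)$, compatibly with the gradings, the $(D)$-decompositions and the twisted Lie action. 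This step is essentially definitional.

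Next I would handle the linear-duality functor. Because $M\in\sC_{\bK}^I$ is finite-dimensional over $\bK$ (equivalently, finitely generated projective over $\bK$), the canonical $A$-module homomorphism
$$\Hom_{\bK}(M,\bK)\otimes_{\bK}A\longrightarrow\Hom_{A}(M\otimes_{\bK}A,A),\qquad f\otimes a\mapsto\bigl(m\otimes b\mapsto f(m)ab\bigr),$$
is an isomorphism. It then remains to check that this isomorphism intertwines the two $U_{-\chi}$-actions, both of which are given by $(x\cdot g)(m)=g(-x\cdot m)$, and that it carries $\Hom_{\bK}(M,\bK)_{\lambda+\bZ I}^{d\mu}\otimes_{\bK}A$ onto $\Hom_{A}(M\otimes_{\bK}A,A)_{\lambda+\bZ I}^{d\mu}$; the latter is immediate from the description of these graded pieces as the functionals vanishing on $M_{\sigma+\bZ I}^{d\omega}$ whenever $(\sigma+\bZ I,d\omega)\neq(-\lambda+\bZ I,-d\mu)$, together again with $(M\otimes_{\bK}A)_{\sigma+\bZ I}^{d\omega}=M_{\sigma+\bZ I}^{d\omega}\otimes_{\bK}A$.

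Finally I would compose these two isomorphisms in the order prescribed by the definition of $\bD$ (dualize first, then twist, in accordance with $\bD\pi=\,^{\tau}(\overline{\pi})$), obtaining a natural isomorphism $\bD(M)\otimes_{\bK}A\xrightarrow{\sim}\bD(M\otimes_{\bK}A)$ in $(\sC_{A}^I)^\circ$, naturality in $M$ being inherited from that of the two factors. I do not expect any serious difficulty; the one point that needs a moment's care is the claim that linear duality commutes with base change, which rests squarely on the finite-dimensionality of objects of $\sC_{\bK}^I$, and the remainder is the same sort of routine compatibility bookkeeping with the $X/\bZ I$-grading and the $(D)$-decomposition that already appears in the proof of Proposition~\ref{CommDiag1}.
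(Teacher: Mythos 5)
Your proposal is correct and follows essentially the same route as the paper: the paper's proof consists precisely of the observation that $\Hom_{\bK}(M,\bK)\otimes_{\bK}A$ and $\Hom_{A}(M\otimes_{\bK}A,A)$ are canonically isomorphic as modules (which rests on the finite-dimensionality of $M$ over $\bK$), with the $\tau$-twist compatibility and the grading bookkeeping treated as routine. Your write-up simply makes explicit the same two ingredients the paper leaves implicit.
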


\begin{proof}
	This is straightforward once we observe that $\Hom_{A}(M\otimes_{\bK}A,A)$ and $\Hom_{\bK}(M,\bK)\otimes_{\bK}A$ are isomorphic as $U_\chi\otimes A$-modules.
\end{proof}

The following proposition tells us that the functor $\bD$ ``fixes'' irreducible objects in $\sC_F$.

\begin{prop}\label{DualIrred}
	Suppose that $A=F$ is a field and that $\pi(h_\alpha)=0$ for all $\alpha\in R_I$. Then $\bD(L_{F,\chi}(\lambda))\cong L_{\bD F,\chi}(\lambda)$ for all $\lambda\in X$.
\end{prop}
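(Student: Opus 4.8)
The plan is to exploit that $\bD\colon\sC_F\to\sC_{\bD F}$ is an anti-equivalence of categories (Proposition~\ref{Antiequiv}). Since $\bD F$ is again a field, $\bD(L_{F,\chi}(\lambda))$ is a simple object of $\sC_{\bD F}$, hence (by Propositions~\ref{ZSurj} and~\ref{UniqIrredQuot} applied over $\bD F$) isomorphic to $L_{\bD F,\chi}(\mu)$ for some $\mu\in X$. The whole task is then to show $\mu\in W_{I,p}\cdot\lambda$: granting this, Corollary~\ref{ZIsomFull} (whose hypothesis $\bD\pi(h_\alpha)=0$ for all $\alpha\in R_I$ holds, as noted above) gives $Z_{\bD F,\chi}(\mu)\cong Z_{\bD F,\chi}(\lambda)$, so their unique simple quotients $L_{\bD F,\chi}(\mu)$ and $L_{\bD F,\chi}(\lambda)$ coincide, which is the claim.

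First I would pin down the top $X/\bZ I$-grade of $\bD(L_{F,\chi}(\lambda))$. Unwinding the definitions of $\Hom_F(-,F)$ and of the twisting functor $M\mapsto{}^{\tau}M$ in Subsection~\ref{Sec5.4} shows that for every $M\in\sC_F$ and every $\sigma+\bZ I$ there is an $F$-module isomorphism $(\bD M)_{\sigma+\bZ I}\cong\Hom_F(M_{\sigma+\bZ I},F)$; in particular $\bD$ does not change the set of cosets on which a module is supported. Now $L_{F,\chi}(\lambda)$ is a quotient of $Z_{F,\chi}(\lambda)$, so it is supported on $\{\sigma+\bZ I\le\lambda+\bZ I\}$, and its $(\lambda+\bZ I)$-component is nonzero, the image of the canonical generator $v_\lambda$ being nonzero and homogeneous of degree $\lambda+\bZ I$. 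Hence $L_{\bD F,\chi}(\mu)=\bD(L_{F,\chi}(\lambda))$ has the same two properties, which forces $\mu+\bZ I=\lambda+\bZ I$.

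Next I would pass to the $(\lambda+\bZ I)$-graded component, which is an object of $\sC^I$. From $Z_{F,\chi}(\lambda)_{\lambda+\bZ I}\cong Z_{F,I,\chi}(\lambda)$ in $\sC_F^I$ (Subsection~\ref{Sec4.4}) and the irreducibility of $Z_{F,I,\chi}(\lambda)$ in $\sC_F^I$ (Proposition~\ref{Irred}, using $\pi(h_\alpha)=0$ for $\alpha\in R_I$), the $(\lambda+\bZ I)$-component of the proper submodule $\Rad Z_{F,\chi}(\lambda)$ vanishes, so $L_{F,\chi}(\lambda)_{\lambda+\bZ I}\cong Z_{F,I,\chi}(\lambda)$; similarly $L_{\bD F,\chi}(\mu)_{\mu+\bZ I}\cong Z_{\bD F,I,\chi}(\mu)$. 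Since $\bD\colon\sC_F\to\sC_{\bD F}$ is built from $\Hom_F(-,F)$ and the $\tau$-twist, and $\tau$ restricts to the automorphism of $\fg_I$ used to define $\bD\colon\sC_F^I\to\sC_{\bD F}^I$ (as recalled before Proposition~\ref{CommDiag1}), the formation of $\bD$ commutes with passing to a fixed graded component; combined with the isomorphisms above this yields $\bD(Z_{F,I,\chi}(\lambda))\cong(\bD L_{F,\chi}(\lambda))_{\lambda+\bZ I}\cong Z_{\bD F,I,\chi}(\mu)$ in $\sC_{\bD F}^I$.

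It therefore remains to prove $\bD(Z_{F,I,\chi}(\lambda))\cong Z_{\bD F,I,\chi}(\lambda)$ in $\sC_F^I$, which is the heart of the matter. Writing $Z_{F,I,\chi}(\lambda)=\Theta_F^I(Z_{\bK,I,\chi}(\lambda)\otimes_\bK F)$ and using the commutativity of $\bD$ with $\Theta^I$ (Proposition~\ref{CommDiag1}) and with $-\otimes_\bK F$ (Proposition~\ref{CommDiag2}), this reduces to showing $\bD(Z_{\bK,I,\chi}(\lambda))\cong Z_{\bK,I,\chi}(\lambda)$ in $\sC_\bK^I$. Here $\chi_I$ is regular nilpotent, so every simple object of $\sC_\bK^I$ supported on $\lambda+\bZ I$ is a baby Verma $Z_{\bK,I,\chi}(\nu)$ (Proposition~\ref{AllIrreds}); thus $\bD(Z_{\bK,I,\chi}(\lambda))\cong Z_{\bK,I,\chi}(\nu)$, and $d\nu$ is its highest $\fh$-weight. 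I would compute $d\nu$ from the description of $\bD$ on $\fh$-weight spaces: the $\fh$-weights of $\bD(M)$ are obtained from those of $M$ by applying $w_I$ (since $\tau$ acts as $-w_I$ on $X$), and as $w_I$ maps $R_I^+$ to $R_I^-$ it carries the lowest weight of $Z_{\bK,I,\chi}(\lambda)$ to the highest weight of $\bD(Z_{\bK,I,\chi}(\lambda))$; a short computation with the formal character of $Z_{\bK,I,\chi}(\lambda)$ and the $\rho$-shift identifies this weight as $w_I\cdot d\lambda$. Since $w_I\cdot\lambda\in W_{I,p}\cdot\lambda$, we conclude $\bD(Z_{\bK,I,\chi}(\lambda))\cong Z_{\bK,I,\chi}(\lambda)$, hence $\mu\in W_{I,p}\cdot\lambda$, finishing the proof. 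I expect this final weight bookkeeping — correctly locating the lowest weight of $Z_{\bK,I,\chi}(\lambda)$ and tracking the $\rho$-shift — to be the only real obstacle; alternatively it can simply be quoted, the fact that the duality fixes simple modules over $\bK$ being well known.
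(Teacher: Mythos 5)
Your proposal is correct and follows essentially the same route as the paper: restrict to the top graded component, identify $L_{F,\chi}(\lambda)_{\lambda+\bZ I}\cong Z_{F,I,\chi}(\lambda)$, use Propositions~\ref{CommDiag1} and~\ref{CommDiag2} to reduce to the self-duality $\bD(Z_{\bK,I,\chi}(\lambda))\cong Z_{\bK,I,\chi}(\lambda)$ over $\bK$, and conclude from irreducibility of $\bD(L_{F,\chi}(\lambda))$ together with the $W_{I,p}$-orbit classification of isomorphisms of baby Verma modules. The only real difference is that the paper simply quotes the self-duality over $\bK$ from \cite{Jan}, whereas you sketch the $w_I$/$\rho$-shift weight computation proving it (your computation is sound, and as you note, citing it suffices); your extra support/antisymmetry argument pinning down $\mu+\bZ I=\lambda+\bZ I$ is a harmless elaboration of a step the paper leaves implicit.
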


\begin{proof}
	Let $\lambda\in X$. Then one may easily check that $\bD(L_{F,\chi}(\lambda))_{\lambda+\bZ I}$ is isomorphic to $\bD(L_{F,\chi}(\lambda)_{\lambda+\bZ I})$ in $\sC_F^I$, since $\tau$ preserves $U^I$. We know that $L_{F,\chi}(\lambda)_{\lambda+\bZ I}\cong Z_{F,I,\chi}(\lambda)$ in $\sC_F$, since the irreducible $Z_{F,\chi}(\lambda)_{\lambda+\bZ I}=Z_{F,I,\chi}(\lambda)$ surjects onto the non-trivial $L_{F,\chi}(\lambda)_{\lambda+\bZ I}$. Then $\bD(Z_{F,I,\chi}(\lambda))=\bD(Z_{\bK,I,\chi}(\lambda))\otimes_{\bK} F$ from Propositions~\ref{CommDiag1} and \ref{CommDiag2}. By \cite{Jan}, we have $\bD(Z_{\bK,I,\chi}(\lambda))\cong Z_{\bK,I,\chi}(\lambda)$ in $\sC_\bK^I$, and so $\bD(Z_{F,I,\chi}(\lambda))\cong Z_{\bD F,I,\chi}(\lambda)$. We therefore have $\bD(L_{F,\chi}(\lambda))_{\lambda+\bZ I}\cong Z_{\bD F,I,\chi}(\lambda)$. Since $\bD(L_{F,\chi}(\lambda))$ is an irreducible object in $\sC_{\bD F}$ from the anti-equivalence of Proposition~\ref{Antiequiv}, we have $\bD(L_{F,\chi}(\lambda))\cong L_{\bD F,\chi}(\mu)$ for some $\mu\in X$. Then we have $Z_{\bD F,I,\chi}(\lambda)\cong Z_{\bD F,I,\chi}(\mu)$ and so $\lambda\in W_{I,p}\cdot\mu$ by Proposition~\ref{ZIsom}. Thus, $L_{\bD F,\chi}(\mu)\cong L_{\bD F,\chi}(\lambda)$, and so $\bD(L_{F,\chi}(\lambda))\cong L_{\bD F,\chi}(\lambda)$.
\end{proof}

We aim to generalise Corollary~\ref{projAF} to the current setting. This involves an approach similar to the proof of Proposition 4.18 in \cite{AJS}. Let $\nu+\bZ I\in X/\bZ I$. Recall that $Q_{F,\chi}(\lambda)\in\sC_F$ is the projective cover of the irreducible module $L_{F,\chi}(\lambda)\in\sC_F$. Let us write $Q_{F,\chi}^{\nu+\bZ I}(\lambda)$ for the object $T^{\nu+\bZ I}(Q_{F,\chi}(\lambda))\in\sC_F(\leq\nu+\bZ I)$. In order to understand the structure of the projective covers $Q_{F,\chi}(\lambda)$ and $Q_{F,\chi}^{\nu+\bZ I}(\lambda)$, we need the following proposition. Its proof, and that of the theorem following it, uses techniques similar to those of Nakano in \cite{Nak}.
\begin{prop}
	Suppose $\pi(h_\alpha)=0$ for all $\alpha\in R_I$. The module $Q_{F,\chi}^I(\lambda)$ is the projective cover of $Z_{F,I,\chi}(\lambda)$ in $\sC_F^{I,-}$.
\end{prop}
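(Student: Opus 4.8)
The plan is to exhibit $Q_{F,\chi}^I(\lambda)$ simultaneously as a projective object in $\sC_F^{I,-}$ and as a module with a suitable top, and then invoke the standard characterization of projective covers via essential surjections.

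First I would record that $Q_{F,\chi}^I(\lambda)=\Gamma_{A,\chi}(Q_{F,I,\chi}(\lambda))$, but that we may equally well write it as $\Gamma'_{A,\chi}(Q_{F,I,\chi}(\lambda))$ once we view $Q_{F,I,\chi}(\lambda)$ inside $\sC_F^{I,-}$ by letting $\fu^{-}$ act trivially (which is permitted since $\chi(\fu^{-})=0$); indeed, as $\bK$-vector spaces both are $U_I^{+}\otimes Q_{F,I,\chi}(\lambda)$ versus $U_I^{-}\otimes Q_{F,I,\chi}(\lambda)$ — the point is that $\Gamma'_{A,\chi}$ here plays the role of $\Gamma_{A,\chi}$ for the ``opposite parabolic'' $\fp'=\fu^{-}\oplus\fg_I$. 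The blanket remark at the end of Subsection~\ref{Sec4.4} — that everything proved for $\sC_A^{I,+}$ and its functors transfers to $\sC_A^{I,-}$ — then gives us: $\Gamma'_{F,\chi}$ is exact, maps projectives to projectives, and satisfies Frobenius reciprocity $\Hom_{\sC_F}(\Gamma'_{F,\chi}(M),N)\cong\Hom_{\sC_F^{I,-}}(M,N)$. Since $Q_{F,I,\chi}(\lambda)$ is projective in $\sC_F^I$, it is projective in $\sC_F^{I,-}$ (extending trivially, exactly as $\sC_A^I\hookrightarrow\sC_A^{I,+}$ preserves projectivity via the surjection $U_I^{-}U^I\twoheadrightarrow U^I$), hence $\Gamma'_{F,\chi}(Q_{F,I,\chi}(\lambda))=Q_{F,\chi}^I(\lambda)$ is projective in $\sC_F^{I,-}$.

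Next I would identify the head. Since $Q_{F,I,\chi}(\lambda)$ is the projective cover of $Z_{F,I,\chi}(\lambda)$ in $\sC_F^I$, its radical quotient is $Z_{F,I,\chi}(\lambda)$; extending trivially to $\sC_F^{I,-}$, and using Proposition~\ref{AllIrreds} together with the fact that $\Gamma'_{F,\chi}$ restricted to the grade $\lambda+\bZ I$ returns the original module (the $\sC_A^{I,-}$-analogue of $\Gamma_{A,\chi}(M)_{\lambda+\bZ I}\cong M$ for $M$ concentrated in grade $\lambda+\bZ I$), I would argue that $\Gamma'_{F,\chi}$ sends the surjection $Q_{F,I,\chi}(\lambda)\twoheadrightarrow Z_{F,I,\chi}(\lambda)$ to a surjection $Q_{F,\chi}^I(\lambda)\twoheadrightarrow \Gamma'_{F,\chi}(Z_{F,I,\chi}(\lambda))=Z_{F,\chi}(\lambda)$ (the latter identification being the $\sC_A^{I,-}$-version of $\Gamma_{A,\chi}(Z_{A,I,\chi}(\lambda))\cong Z_{A,\chi}(\lambda)$), whose kernel is contained in the radical because any maximal submodule of $Q_{F,\chi}^I(\lambda)$ must meet the top grade $\lambda+\bZ I$ non-trivially — here one uses that $Q_{F,\chi}^I(\lambda)$ is generated over $U_\chi$ by its $\lambda+\bZ I$-component (visible from the $\bK$-basis $U_I^{-}\otimes Q_{F,I,\chi}(\lambda)$) and that $(Q_{F,\chi}^I(\lambda))_{\lambda+\bZ I}\cong Q_{F,I,\chi}(\lambda)$ has simple head $Z_{F,I,\chi}(\lambda)$ in $\sC_F^I$. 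Combining the two facts: $Q_{F,\chi}^I(\lambda)$ is projective in $\sC_F^{I,-}$ and admits an essential surjection onto the irreducible $Z_{F,\chi}(\lambda)$, hence it is the projective cover.

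The main obstacle I anticipate is the verification that the surjection $Q_{F,\chi}^I(\lambda)\twoheadrightarrow Z_{F,\chi}(\lambda)$ is \emph{essential} in $\sC_F^{I,-}$, i.e. that $\mathrm{rad}\,Q_{F,\chi}^I(\lambda)$ really is the kernel. The subtlety is that $Z_{F,\chi}(\lambda)$ need not be irreducible in $\sC_F$ when $\chi$ is not regular nilpotent — but this is not actually needed: what matters is irreducibility of $Z_{F,I,\chi}(\lambda)$ in $\sC_F^I$ (Proposition~\ref{Irred}, valid under our standing assumption $\pi(h_\alpha)=0$ for $\alpha\in R_I$) and a grading argument showing every proper submodule of $Q_{F,\chi}^I(\lambda)$ in $\sC_F^{I,-}$ lies in the span of grades $\mu+\bZ I$ with $\mu+\bZ I\leq\lambda+\bZ I$ together with a proper $\sC_F^I$-submodule of the top piece. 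I would therefore spell out carefully that for $M\in\sC_F^{I,-}$ with $M_{\lambda+\bZ I}=M$ one has $\Hom_{\sC_F^{I,-}}(Q_{F,\chi}^I(\lambda),M)\cong\Hom_{\sC_F^I}(Q_{F,I,\chi}(\lambda),M)$, reducing the essential-surjection claim to the already-known fact that $Q_{F,I,\chi}(\lambda)\twoheadrightarrow Z_{F,I,\chi}(\lambda)$ is a projective cover in $\sC_F^I$, and then propagate this up using that $Q_{F,\chi}^I(\lambda)$ is $U_\chi$-generated in grade $\lambda+\bZ I$.
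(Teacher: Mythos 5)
There is a genuine gap, and it sits exactly where the real work of this proposition lies. Your first paragraph rests on two false claims. First, $Q_{F,\chi}^I(\lambda)=\Gamma_{F,\chi}(Q_{F,I,\chi}(\lambda))$ is \emph{not} the same as $\Gamma'_{F,\chi}(Q_{F,I,\chi}(\lambda))$: the two functors induce from opposite parabolics, so the first module is $U_I^{-}\otimes Q_{F,I,\chi}(\lambda)$ as a vector space and is supported in grades $\mu+\bZ I\leq\lambda+\bZ I$, while the second is $U_I^{+}\otimes Q_{F,I,\chi}(\lambda)$ and is supported in grades $\geq\lambda+\bZ I$; they are not isomorphic unless $I=\Pi$ (for the same reason your later identification $\Gamma'_{F,\chi}(Z_{F,I,\chi}(\lambda))\cong Z_{F,\chi}(\lambda)$ fails — the correct statement, used in the paper, is $\Gamma_{F,\chi}(Z_{F,I,\chi}(\lambda))\cong Z_{F,\chi}(\lambda)$). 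Second, trivial extension (inflation along $U_I^{-}U^I\twoheadrightarrow U^I$) does \emph{not} preserve projectivity: the inflated $Q_{F,I,\chi}(\lambda)$ has $\fu^{-}$ acting by zero, so as a $U_0(\fu^{-})$-module it is a sum of trivial modules, whereas any projective of $\sC_F^{I,-}$ is a direct summand of a module induced by $\Phi_F^{I,-}$ and hence free over the local, non-semisimple algebra $U_0(\fu^{-})$. The functors the paper shows send projectives to projectives are the inductions $\Phi_A^{I,\pm}$, $\Gamma_{A,\chi}$, $\Gamma'_{A,\chi}$, not inflation; and even granting your premises, $\Gamma'_{F,\chi}$ lands in $\sC_F$, so it would not give projectivity in $\sC_F^{I,-}$ anyway. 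As a result, the projectivity of $Q_{F,\chi}^I(\lambda)$ as an object of $\sC_F^{I,-}$ — half of the statement — is not established by your argument.

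The missing ingredient, which is the heart of the paper's proof, is the isomorphism in $\sC_F^{I,-}$
$$Q_{F,\chi}^I(\lambda)\;\cong\;\Phi_F^{I,-}(Q_{F,I,\chi}(\lambda)).$$
One gets it by applying Frobenius reciprocity to the inclusion $m\mapsto 1\otimes m$ of $Q_{F,I,\chi}(\lambda)$ into the top grade of $Q_{F,\chi}^I(\lambda)$, obtaining a map $\Phi_F^{I,-}(Q_{F,I,\chi}(\lambda))\to Q_{F,\chi}^I(\lambda)$ which is surjective because the top grade generates $Q_{F,\chi}^I(\lambda)$ over $U_I^{-}U^I$, and then comparing dimensions, both sides having dimension $p^{\dim\fu^{-}}\dim_F Q_{F,I,\chi}(\lambda)$. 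This one isomorphism delivers everything: projectivity in $\sC_F^{I,-}$ (since $\Phi_F^{I,-}$ preserves projectives and $Q_{F,I,\chi}(\lambda)$ is projective in $\sC_F^I$), and, by Frobenius reciprocity again, $\Hom_{\sC_F^{I,-}}(Q_{F,\chi}^I(\lambda),Z_{F,I,\chi}(\mu))\cong\Hom_{\sC_F^I}(Q_{F,I,\chi}(\lambda),Z_{F,I,\chi}(\mu))$, which is $F$ when $Z_{F,I,\chi}(\mu)\cong Z_{F,I,\chi}(\lambda)$ and $0$ otherwise; since by the analogue of Proposition~\ref{AllIrreds} every simple of $\sC_F^{I,-}$ is some $Z_{F,I,\chi}(\mu)$, this identifies the head as $Z_{F,I,\chi}(\lambda)$ — note the target of the essential surjection must be $Z_{F,I,\chi}(\lambda)$, not $Z_{F,\chi}(\lambda)$ as in your second paragraph, since the latter is not simple in $\sC_F^{I,-}$. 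Your third paragraph's claimed isomorphism $\Hom_{\sC_F^{I,-}}(Q_{F,\chi}^I(\lambda),M)\cong\Hom_{\sC_F^I}(Q_{F,I,\chi}(\lambda),M)$ for $M$ concentrated in grade $\lambda+\bZ I$ is essentially equivalent to the displayed isomorphism (it amounts to the $U_I^{-}$-action on $Q_{F,\chi}^I(\lambda)$ being free on its top grade), so it cannot be assumed; it is precisely what must be proved.
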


\begin{proof}
	Recall that $Q_{F,\chi}^I(\lambda)=\Gamma_{F,\chi}(Q_{F,I,\chi}(\lambda))$, where $Q_{F,I,\chi}(\lambda)$ is the projective cover of $Z_{F,I,\chi}(\lambda)$ in $\sC_F^I$. By Frobenius reciprocity, we have $$\Hom_{\sC_F^I}(Q_{F,I,\chi}(\lambda),\Gamma_{F,\chi}(Q_{F,I,\chi}(\lambda)))\cong \Hom_{\sC_{F}^{I,-}}(\Phi_F^{I,-}(Q_{F,I,\chi}(\lambda)),\Gamma_{F,\chi}(Q_{F,I,\chi}(\lambda)).$$
	The morphism $m\mapsto 1\otimes m$ in $\sC_F^I$ thus extends to a morphism $$\Phi_F^{I,-}(Q_{F,I,\chi}(\lambda))\to\Gamma_{F,\chi}(Q_{F,I,\chi}(\lambda))$$ in $\sC_F^{I,-}$, which is surjective since $1\otimes Q_{F,I,\chi}(\lambda)$ generates $\Gamma_{F,\chi}(Q_{F,I,\chi}(\lambda))$ in $\sC_F^{I,-}$. It is easy to see that $$\dim_F(\Phi_F^{I,-}(Q_{F,I,\chi}(\lambda)))=p^{\dim\fu^{-}}\dim_F(Q_{F,I,\chi}(\lambda))=\dim_F(\Gamma_{F,\chi}(Q_{F,I,\chi}(\lambda)))$$ and so they are isomorphic in $\sC_F^{I,-}$.
	
	Now, $\Phi_F^{I,-}(Q_{F,I,\chi}(\lambda))$ is projective in $\sC_F^{I,-}$, and we have $$\Hom_{\sC_F^{I,-}}(\Phi_F^{I,-}(Q_{F,I,\chi}(\lambda)),Z_{F,I,\chi}(\mu))\cong \Hom_{\sC_F^I}(Q_{F,I,\chi}(\lambda),Z_{F,I,\chi}(\mu))$$
	and
	$$\Hom_{\sC_F^I}(Q_{F,I,\chi}(\lambda),Z_{F,I,\chi}(\mu))=\twopartdef{F}{Z_{F,I,\chi}(\lambda)\cong Z_{F,I,\chi}(\mu),}{0}{\mbox{otherwise.}}$$
	
	This implies that $Q_{F,\chi}^I(\lambda)\cong \Phi_F^{I,-}(Q_{F,I,\chi}(\lambda))$ is a projective indecomposable module with unique irreducible head $Z_{F,I,\chi}(\lambda)$ in $\sC_F^{I,-}$. The result follows.
\end{proof}

Since $Q_{F,\chi}(\lambda)$ is projective in $\sC_F$, it has a $Q$-filtration by Proposition~\ref{ProjQFilt}. Restricting to $\sC_F^{I,-}$, the $Q_{F,\chi}^I(\mu)$ are projective, and so we have $$Q_{F,\chi}(\lambda)=\bigoplus_{\mu\in\Lambda_I} Q_{F,\chi}^I(\mu)^{m_\mu}$$ where each $$m_\mu=(Q_{F,\chi}(\lambda):Q_{F,\chi}^I(\mu)),$$ the number of appearances of $Q_{F,\chi}^I(\mu)$ in the $Q$-filtration of $Q_{F,\chi}(\lambda)$. This number is well-defined by Proposition~\ref{LeviUniqDecomp}. 
\begin{theorem}
	Suppose $\pi(h_\alpha)=0$ for all $\alpha\in R_I$, and let $\lambda,\mu\in X$. Then $(Q_{F,\chi}(\lambda):Q_{F,\chi}^I(\mu))=[Z_{\bD F,\chi}(\mu), L_{\bD F,\chi}(\lambda)]$.
\end{theorem}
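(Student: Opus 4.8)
The plan is to compute the multiplicity $(Q_{F,\chi}(\lambda):Q_{F,\chi}^I(\mu))$ using the projective resolution machinery developed in Propositions~\ref{ProjResQ} and \ref{ExtQFilt}, together with duality. First I would observe that since $Q_{F,\chi}(\lambda)$ is projective in $\sC_F$, it has a $Q$-filtration by Corollary~\ref{ProjQFilt}, and restricting to $\sC_F^{I,-}$ it decomposes as $\bigoplus_{\mu\in\Lambda_I} Q_{F,\chi}^I(\mu)^{m_\mu}$ with $m_\mu = (Q_{F,\chi}(\lambda):Q_{F,\chi}^I(\mu))$, since the $Q_{F,\chi}^I(\mu)$ are the projective indecomposables in $\sC_F^{I,-}$. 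The strategy is then to extract $m_\mu$ as a $\Hom$-dimension. Because $Q_{F,\chi}^I(\mu)$ is the projective cover of $Z_{F,I,\chi}(\mu)$ in $\sC_F^{I,-}$ (the preceding proposition) and $Z_{F,I,\chi}(\mu)$ is irreducible there, standard projective-cover counting gives $m_\mu = \dim_F \Hom_{\sC_F^{I,-}}(Q_{F,\chi}(\lambda), Z_{F,I,\chi}(\mu))$, or more precisely one should pair against the simple head; I would set this up carefully using that $\sC_F^{I,-}$ has finite length and the $Q_{F,\chi}^I(\mu)$ exhaust the indecomposable projectives.

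Next I would translate this $\Hom$-space in $\sC_F^{I,-}$ back into data about $\sC_F$. Using Frobenius reciprocity for $\Gamma'_{F,\chi}:\sC_F^{I,-}\to\sC_F$ (the negative analogue of Proposition~\ref{FrobRecPara}) and the identification $Z_{F,\chi}(\mu) \cong \Gamma'_{F,\chi}(Z_{F,I,\chi}(\mu))$, together with the fact that restriction from $\sC_F$ to $\sC_F^{I,-}$ is exact, I expect $\Hom_{\sC_F^{I,-}}(Q_{F,\chi}(\lambda)|_{\sC_F^{I,-}}, Z_{F,I,\chi}(\mu))$ to match $\Hom_{\sC_F}(Q_{F,\chi}(\lambda), Z_{F,\chi}(\mu))$ after accounting for the grading shift (working in the appropriate graded piece). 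Since $Q_{F,\chi}(\lambda)$ is the projective cover of $L_{F,\chi}(\lambda)$ in $\sC_F$, the dimension $\dim_F\Hom_{\sC_F}(Q_{F,\chi}(\lambda), N)$ equals the composition multiplicity $[N : L_{F,\chi}(\lambda)]$ for any $N\in\sC_F$; applying this with $N = Z_{F,\chi}(\mu)$ would give $(Q_{F,\chi}(\lambda):Q_{F,\chi}^I(\mu)) = [Z_{F,\chi}(\mu) : L_{F,\chi}(\lambda)]$.

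Finally I would apply the duality functor $\bD:\sC_F\to\sC_{\bD F}$ of Proposition~\ref{Antiequiv}, which is an anti-equivalence, to rewrite $[Z_{F,\chi}(\mu):L_{F,\chi}(\lambda)]$. Anti-equivalences reverse composition series, so $[Z_{F,\chi}(\mu):L_{F,\chi}(\lambda)] = [\bD(Z_{F,\chi}(\mu)) : \bD(L_{F,\chi}(\lambda))]$. By Proposition~\ref{DualIrred} we have $\bD(L_{F,\chi}(\lambda))\cong L_{\bD F,\chi}(\lambda)$, and a similar computation (using Propositions~\ref{CommDiag1}, \ref{CommDiag2} and the classical fact $\bD(Z_{\bK,\chi}(\mu))\cong Z_{\bK,\chi}(\mu)$ from \cite{Jan}, adapted to baby Verma modules in $\sC_\bK$ rather than just $\sC_\bK^I$) should give $\bD(Z_{F,\chi}(\mu))\cong Z_{\bD F,\chi}(\mu)$. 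Combining yields $[Z_{F,\chi}(\mu):L_{F,\chi}(\lambda)] = [Z_{\bD F,\chi}(\mu) : L_{\bD F,\chi}(\lambda)]$, which is the claimed formula.

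The main obstacle I anticipate is the bookkeeping in the middle step: correctly matching the graded $\Hom$-spaces across the functors $\Gamma'_{F,\chi}$, $\Phi_F^{I,-}$ and the restriction functor, and in particular verifying that counting $Q_{F,\chi}^I(\mu)$-multiplicities in $\sC_F^{I,-}$ genuinely computes $\Hom_{\sC_F}(Q_{F,\chi}(\lambda), Z_{F,\chi}(\mu))$ rather than some twisted or shifted version of it. One must be careful that $Q_{F,\chi}^I(\mu)$, being only relatively projective (projective in $\sC_F^{I,-}$ but generally not in $\sC_F$), interacts correctly with the $Q$-filtration, and that the identity $\dim_F\Hom_{\sC_F}(Q_{F,\chi}(\lambda), Z_{F,\chi}(\mu)) = [Z_{F,\chi}(\mu):L_{F,\chi}(\lambda)]$ is applied with the right projective cover; establishing the claim about $\bD(Z_{F,\chi}(\mu))$ for baby Verma modules in $\sC_F$ itself (not merely in $\sC_F^I$) via the commuting-diagram propositions is the second point that will need care.
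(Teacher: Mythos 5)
Your first step (restricting $Q_{F,\chi}(\lambda)$ to $\sC_F^{I,-}$, decomposing into the relatively projective $Q_{F,\chi}^I(\mu)$, and extracting the multiplicity as $\dim_F\Hom_{\sC_F^{I,-}}(Q_{F,\chi}(\lambda),Z_{F,I,\chi}(\mu))$) agrees with the paper. The gap is in your middle step: Frobenius reciprocity for $\Gamma'_{F,\chi}$ runs the wrong way. The functor $\Gamma'_{F,\chi}$ is \emph{left} adjoint to restriction, so it identifies $\Hom_{\sC_F}(\Gamma'_{F,\chi}(Z_{F,I,\chi}(\mu)),N)$ with $\Hom_{\sC_F^{I,-}}(Z_{F,I,\chi}(\mu),N)$, i.e.\ it controls maps \emph{out of} the induced module; it gives you no handle on $\Hom_{\sC_F^{I,-}}(Q_{F,\chi}(\lambda)\vert,\,Z_{F,I,\chi}(\mu))$. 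To move that Hom-space into $\sC_F$ you need the right adjoint of restriction (coinduction), and the coinduced module $\Hom_{U_I^-U^I}(U_\chi,Z_{F,I,\chi}(\mu))$ is \emph{not} $Z_{F,\chi}(\mu)$. Identifying it is exactly what the paper's chain of equalities does: one dualises (the analogue of Corollary~\ref{HomDual}), applies Frobenius reciprocity in $\overline{\sC_{\overline F}}$ for $\Gamma'_{\overline F,-\chi}$, dualises back, and then uses the nontrivial identification $\,^{\tau}\Gamma'_{\overline F,-\chi}(M)\cong\Gamma_{\bD F,\chi}(\,^{\tau}M)$ (the "eighth equality", proved by a separate adjunction-plus-dimension argument). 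The upshot is a composition multiplicity computed in $\sC_{\bD F}$, not in $\sC_F$.

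Relatedly, your final step asserts $\bD(Z_{F,\chi}(\mu))\cong Z_{\bD F,\chi}(\mu)$. This is only known for the Levi, i.e.\ $\bD(Z_{\bK,I,\chi}(\mu))\cong Z_{\bK,I,\chi}(\mu)$ in $\sC_\bK^I$, where the baby Verma module is irreducible (regular nilpotent case); for the full algebra with $I\neq\Pi$ the duality sends the baby Verma (induced from $U^0U^{+}$) to a coinduced, opposite-side object which need not be a baby Verma at all. Consequently your intermediate identity $(Q_{F,\chi}(\lambda):Q_{F,\chi}^I(\mu))=[Z_{F,\chi}(\mu):L_{F,\chi}(\lambda)]$ is not justified, and note that it is genuinely stronger than the theorem: since $\bD F$ has a different $U^0$-algebra structure than $F$ (unless $\pi$ kills all of $\fh$), the categories $\sC_F$ and $\sC_{\bD F}$ differ, and the paper deliberately states the multiplicity in $\sC_{\bD F}$ precisely because the passage through duality and the $\tau$-twist cannot be avoided. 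To repair the proof you should follow the coinduction/duality route rather than trying to land directly on $Z_{F,\chi}(\mu)$.
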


\begin{proof}
	We have the following chain of equalities, denoting the functor $\Hom_F(-,F)$ as $\,^{*}$.
	\begin{equation*}
		\begin{split}
			(Q_{F,\chi}(\lambda):Q_{F,\chi}^I(\mu)) & =\dim_F\Hom_{\sC_F^{I,-}}(Q_{F,\chi}(\lambda),Z_{F,I,\chi}(\mu)) \\ & = \dim_F\Hom_{\overline{\sC_{\overline{F}}^{I,-}}}(Z_{F,I,\chi}(\mu)^{*}, Q_{F,\chi}(\lambda)^{*}) \\ & = \dim_F\Hom_{\overline{\sC_{\overline{F}}}}(\Gamma'_{\overline{F},-\chi}(Z_{F,I,\chi}(\mu)^{*}), Q_{F,\chi}(\lambda)^{*}) \\ & = \dim_F\Hom_{\sC_{F}}(Q_{F,\chi}(\lambda),\Gamma'_{\overline{F},-\chi}(Z_{F,I,\chi}(\mu)^{*})^{*}) \\ & = [\Gamma'_{\overline{F},-\chi}(Z_{F,I,\chi}(\mu)^{*})^{*}:L_{F,\chi}(\lambda)] \\ & =[\Gamma'_{\overline{F},-\chi}(Z_{F,I,\chi}(\mu)^{*}):L_{F,\chi}(\lambda)^{*}] \\ & 
			=[\,^{\tau}\Gamma'_{\overline{F},-\chi}(Z_{F,I,\chi}(\mu)^{*}):L_{\bD F,\chi}(\lambda)] \\ & = [\Gamma_{\bD F,\chi}(\bD(Z_{F,I,\chi}(\mu))):L_{\bD F,\chi}(\lambda)] \\ & = [Z_{\bD F,\chi}(\mu):L_{\bD F,\chi}(\lambda)].
		\end{split}
	\end{equation*}
	Let us explain where these equalities come from. The first equality comes from the fact that $$\Hom_{\sC_F^{I,-}}(Q_{F,\chi}^I(\lambda),Z_{F,I,\chi}(\mu))=\twopartdef{F}{\lambda\in W_{I,p}\cdot \mu,}{0}{\mbox{not.}}$$ The second equality comes from an analogue of Corollary~\ref{HomDual} for the category $\sC_F^{I,-}$ (nothing in its proof depending upon the objects being $U_\chi\otimes F$-modules rather than $U_I^{-}U^I\otimes F$-modules). The third equality comes from Frobenius reciprocity, which works just as well for $U_{-\chi}\otimes F$-modules as for $U_\chi\otimes F$-modules. The fourth equality also follows from Corollary~\ref{HomDual}. The fifth equality is a general fact about projective covers. The sixth equality comes from applying $\Hom_F(-,F)$, noting that the simplicity of $L_{F,\chi}(\lambda)$ in $\sC_F$ implies the simplicity of  $L_{F,\chi}(\lambda)^{*}$ in $\overline{\sC_{\overline{F}}}$. The seventh equality comes from applying the functor $\overline{\sC_{\overline{F}}}\to \sC_{\bD F}$, $M\mapsto \,^{\tau} M$, and using Proposition~\ref{DualIrred}. The eighth equality will be addressed momentarily. Finally, the ninth equality comes from Proposition~\ref{DualIrred}, since that implies $\bD(Z_{F,I,\chi}(\mu))\cong Z_{\bD F,I,\chi}(\mu)$ in $\sC_F^I$.
	
	For the eighth equality, suppose $M\in\overline{\sC_{\overline{F}}^I}$ with $M=M_{-\lambda+\bZ I}$. We want to show that $$\,^{\tau}\Gamma'_{\overline{F},-\chi}(M)\cong \Gamma_{\bD F,\chi}(\,^{\tau} M)$$ in $\sC_{\bD F}$. By Frobenius reciprocity, we have $$\Hom_{\sC_{\bD F}^{I,+}}(\,^{\tau}M,\,^{\tau}\Gamma'_{\overline{F},-\chi}(M))\cong\Hom_{\sC_{\bD F}}(\Gamma_{\bD F,\chi}(\,^{\tau} M),\,^{\tau}\Gamma'_{\overline{F},-\chi}(M)),$$ noting that $\tau$ means the same thing in all appearances here, since we observed above that the $\tau$ used in $\sC_{\bD F}^I$ is just a restriction of the $\tau$ used in $\sC_{\bD F}$. Here, $\,^{\tau}M$ is viewed as lying in $\sC_{\bD F}^{I,+}$ by trivial extension. There is a morphism in $\sC_{\bD F}^{I,+}$ from $\,^{\tau}M$ to $\,^{\tau}\Gamma'_{\overline{F},-\chi}(M)$ which sends $m\in\,^{\tau}M$ to $1\otimes m\in \,^{\tau}\Gamma'_{\overline{F},-\chi}(M)$. This is a morphism in $\sC_{\bD F}^{I,+}$ since, if $u\in (U_{I}^{+})_{\sigma+\bZ I}$ for $\sigma+\bZ I>0+\bZ I$, and $z\in (\,^{\tau}\Gamma'_{\overline{F},-\chi}(M))_{\lambda+\bZ I}$, we have $u\cdot z\in \Gamma'_{\overline{F},-\chi}(M)_{-\lambda-\sigma+\bZ I}$ (using here that $\tau^{-1}(\fu^{+})=\fu^{-}$ as in \cite{Jan5}). But $\Gamma'_{\overline{F},-\chi}(M)_{-\lambda-\sigma+\bZ I}=0$ whenever $\sigma+\bZ I>0+\bZ I$, by construction, so we indeed have a morphism in $\sC_{\bD F}^{I,+}$.

	This therefore induces a morphism $\Gamma_{\bD F,\chi}(\,^{\tau} M)\to \,^{\tau}\Gamma'_{\overline{F},-\chi}(M)$ in $\sC_{\bD F}$. It is clear that $1\otimes M$ generates $\,^{\tau}\Gamma'_{\overline{F},-\chi}(M)$ in $\sC_{\bD F}$. Hence, there is a surjective morphism in $\sC_{\bD F}$:
	$$\Gamma_{\bD F,\chi}(\,^{\tau} M)\to \,^{\tau}\Gamma'_{\overline{F},-\chi}(M).$$ 
	Since the $F$-dimensions coincide, this must be an isomorphism, as required.
\end{proof}

\begin{cor}
	Suppose $\pi(h_\alpha)=0$ for all $\alpha\in R_I$, and let $\lambda,\mu\in X$. Then $(Q_{F,\chi}(\lambda):Q_{F,\chi}^I(\lambda))=1$ and $(Q_{F,\chi}(\lambda):Q_{F,\chi}^I(\mu))$ is non-zero only if $\mu+\bZ I\geq\lambda+\bZ I$.
\end{cor}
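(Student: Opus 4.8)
The plan is to derive everything from the preceding theorem, which gives $(Q_{F,\chi}(\lambda):Q_{F,\chi}^I(\mu))=[Z_{\bD F,\chi}(\mu):L_{\bD F,\chi}(\lambda)]$, combined with two structural facts already in hand: that $Z_{A,\chi}(\mu)$ lies in the truncated subcategory $\sC_A(\le\mu+\bZ I)$ (recalled in Subsection~\ref{Sec5.2}), which is closed under subquotients, and that $L_{F,\chi}(\nu)_{\nu+\bZ I}\cong Z_{F,I,\chi}(\nu)$ as objects of $\sC_F^I$ (isolated inside the proof of Proposition~\ref{DualIrred}), where $Z_{F,I,\chi}(\nu)$ is simple by Proposition~\ref{AllIrreds}.

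For the ``non-zero only if'' clause I would argue as follows: if $(Q_{F,\chi}(\lambda):Q_{F,\chi}^I(\mu))\neq 0$, then $L_{\bD F,\chi}(\lambda)$ is a composition factor of $Z_{\bD F,\chi}(\mu)$, hence lies in $\sC_{\bD F}(\le\mu+\bZ I)$; since $L_{\bD F,\chi}(\lambda)_{\lambda+\bZ I}\cong Z_{\bD F,I,\chi}(\lambda)\neq 0$, this forces $\lambda+\bZ I\le\mu+\bZ I$.

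For $(Q_{F,\chi}(\lambda):Q_{F,\chi}^I(\lambda))=1$ I would set $\mu=\lambda$, so the value is $[Z_{\bD F,\chi}(\lambda):L_{\bD F,\chi}(\lambda)]$. The bound ${}\ge 1$ is immediate, since $L_{\bD F,\chi}(\lambda)$ is a quotient of $Z_{\bD F,\chi}(\lambda)$ by Proposition~\ref{UniqIrredQuot}. For the reverse bound I would apply the exact functor $M\mapsto M_{\lambda+\bZ I}$ (with its natural $U^I$-action, legitimate since $U^I\subseteq(U_\chi)_{0+\bZ I}$) from $\sC_{\bD F}$ to the finite-length category $\sC_{\bD F}^I(\lambda+\bZ I)$, applied to a composition series of $Z_{\bD F,\chi}(\lambda)$. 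A composition factor $L_{\bD F,\chi}(\nu)$ of $Z_{\bD F,\chi}(\lambda)$ satisfies $\nu+\bZ I\le\lambda+\bZ I$, so its image in $\sC_{\bD F}^I(\lambda+\bZ I)$ is zero unless $\nu+\bZ I=\lambda+\bZ I$, in which case the image is the simple object $Z_{\bD F,I,\chi}(\nu)$; two such images are isomorphic exactly when the $\nu$'s lie in one $W_{I,p}$-dot-orbit (Corollary~\ref{ZIsomFull}), which also implies $L_{\bD F,\chi}(\nu)\cong L_{\bD F,\chi}(\lambda)$ and hence equal composition multiplicities in $Z_{\bD F,\chi}(\lambda)$. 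Since $Z_{\bD F,\chi}(\lambda)_{\lambda+\bZ I}\cong Z_{\bD F,I,\chi}(\lambda)$ is itself simple, comparing classes in the Grothendieck group of $\sC_{\bD F}^I(\lambda+\bZ I)$, where the $[Z_{\bD F,I,\chi}(\nu)]$ are distinct basis elements, forces the coefficient of $[Z_{\bD F,I,\chi}(\lambda)]$, namely $[Z_{\bD F,\chi}(\lambda):L_{\bD F,\chi}(\lambda)]$, to equal $1$.

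The only genuine difficulty I anticipate is pinning the multiplicity to \emph{exactly} one rather than merely at least one: the weight space $Z_{\bD F,\chi}(\lambda)_{\lambda+\bZ I}^{d\lambda}$ is not one-dimensional in general (its dimension is $\dim_F Z_{\bD F,I,\chi}(\lambda)^{d\lambda}$, which already exceeds $1$ for $\mathfrak{sl}_3$ with $I=\Pi$), so a crude count on one weight space is useless. The resolution is precisely the observation used above that the \emph{entire} graded component $Z_{\bD F,\chi}(\lambda)_{\lambda+\bZ I}$ is irreducible in $\sC_{\bD F}^I$; this makes the Grothendieck-group comparison conclusive, and everything else is bookkeeping with the partial order on $X/\bZ I$ and the cited results.
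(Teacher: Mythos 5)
Your argument is correct and is essentially the paper's: both reduce via the preceding theorem to the multiplicity $[Z_{\bD F,\chi}(\mu):L_{\bD F,\chi}(\lambda)]$, get the condition $\mu+\bZ I\geq\lambda+\bZ I$ from the grading, and force multiplicity one from the identity $L_{\bD F,\chi}(\lambda)_{\lambda+\bZ I}=Z_{\bD F,\chi}(\lambda)_{\lambda+\bZ I}\cong Z_{\bD F,I,\chi}(\lambda)$. The paper phrases the last step as a terse comparison of the $\lambda+\bZ I$-graded components, while you spell it out via exactness of $M\mapsto M_{\lambda+\bZ I}$ and a Grothendieck-group count in $\sC_{\bD F}^I(\lambda+\bZ I)$ — a more explicit rendering of the same idea, and your remark that a single weight space would not suffice is exactly why the full graded component is used.
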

\begin{proof}
	It is clear that $[Z_{\bD F,\chi}(\mu):L_{\bD F,\chi}(\lambda)]\neq 0$ only if $\mu+\bZ I\geq\lambda+\bZ I$. Furthermore, since $L_{\bD F,\chi}(\lambda)_{\lambda+\bZ I}=Z_{\bD F,\chi}(\lambda)_{\lambda+\bZ I}$, the simple module $L_{\bD F,\chi}(\lambda)$ can only appear once in a composition series of $Z_{F,\chi}(\lambda)$ (it clearly must appear at least once).
\end{proof}

\begin{cor}\label{QTruncFact}
	Suppose $\pi(h_\alpha)=0$ for all $\alpha\in R_I$, let $\lambda,\mu\in X$, and let $\nu+\bZ I\in X/\bZ I$ with $\lambda+\bZ I\leq \nu+\bZ I$. Then $(Q_{F,\chi}^{\nu+\bZ I}(\lambda):Q_{F,\chi}^I(\lambda))=1$ and $(Q_{F,\chi}^{\nu+\bZ I}(\lambda):Q_{F,\chi}^I(\mu))$ is non-zero only if $\mu+\bZ I\geq\lambda+\bZ I$ and $\mu+\bZ I\leq\nu+\bZ I$.
\end{cor}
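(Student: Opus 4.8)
The plan is to reduce the statement to the structure of a $Q$-filtration of $Q_{F,\chi}(\lambda)$, which is already pinned down by the theorem and corollary immediately preceding this one. First I would note that, taking $A=F$ (a field, hence local with residue field $F$), Corollary~\ref{ProjQFilt} shows that the projective module $Q_{F,\chi}(\lambda)$ has a $Q$-filtration, and the preceding corollary tells us that in any such filtration the factor $Q_{F,\chi}^I(\lambda)$ occurs exactly once while $Q_{F,\chi}^I(\mu)$ occurs only when $\mu+\bZ I\geq\lambda+\bZ I$; the multiplicities $(Q_{F,\chi}(\lambda):Q_{F,\chi}^I(\mu))$ are well defined by Proposition~\ref{LeviUniqDecomp}. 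So it suffices to understand how truncation affects a $Q$-filtration.

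The heart of the argument is a truncation lemma for $Q$-filtrations, mirroring the proposition on $Z$-filtrations proved in Section~\ref{Sec5.2}: if $M\in\sC_F$ has a $Q$-filtration and $\nu+\bZ I\in X/\bZ I$, then $T^{\nu+\bZ I}M$ has a $Q$-filtration whose factors are exactly those factors $Q_{F,\chi}^I(\mu)$ of a $Q$-filtration of $M$ with $\mu+\bZ I\leq\nu+\bZ I$, counted with the same multiplicities. To prove this I would reorder the given $Q$-filtration of $M$ by a bubble-sort: whenever a section $Q_{F,\chi}^I(\lambda_{i+1})$ with $\lambda_{i+1}+\bZ I\not\leq\nu+\bZ I$ sits directly above a section $Q_{F,\chi}^I(\lambda_i)$ with $\lambda_i+\bZ I\leq\nu+\bZ I$, one has $\lambda_{i+1}+\bZ I\not<\lambda_i+\bZ I$ (otherwise $\lambda_{i+1}+\bZ I<\lambda_i+\bZ I\leq\nu+\bZ I$, a contradiction), so $\Ext^1_{\sC_F}(Q_{F,\chi}^I(\lambda_{i+1}),Q_{F,\chi}^I(\lambda_i))=0$ by Corollary~\ref{QExt} and the two adjacent sections can be interchanged. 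This process terminates (the number of such inversions strictly decreases), yielding a $Q$-filtration $0=M_0\subseteq\cdots\subseteq M_r=M$ and an index $k$ with $\lambda_i+\bZ I\not\leq\nu+\bZ I$ for $i\leq k$ and $\lambda_i+\bZ I\leq\nu+\bZ I$ for $i>k$. Since each $Q_{F,\chi}^I(\mu)$ lies in $\sC_F(\leq\mu+\bZ I)$ (visible from its $Z$-filtration, cf.\ Proposition~\ref{QIZFilt}) and $\sC_F(\leq\nu+\bZ I)$ is closed under extensions, $M/M_k\in\sC_F(\leq\nu+\bZ I)$, so $O^{\nu+\bZ I}M\subseteq M_k$. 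Conversely, for $N\in\sC_F(\leq\nu+\bZ I)$ and $\mu+\bZ I\not\leq\nu+\bZ I$, Frobenius reciprocity (Proposition~\ref{FrobRecPara}) identifies $\Hom_{\sC_F}(Q_{F,\chi}^I(\mu),N)=\Hom_{\sC_F}(\Gamma_{F,\chi}(Q_{F,I,\chi}(\mu)),N)$ with $\Hom_{\sC_F^{I,+}}(Q_{F,I,\chi}(\mu),N)$, which vanishes because $Q_{F,I,\chi}(\mu)$ is concentrated in grade $\mu+\bZ I$ while $N_{\mu+\bZ I}=0$; an induction up the $Q$-filtration of $M_k$ then gives $\Hom_{\sC_F}(M_k,N)=0$ for every such $N$, so $M_k\subseteq O^{\nu+\bZ I}M$. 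Hence $M_k=O^{\nu+\bZ I}M$ and $T^{\nu+\bZ I}M=M/M_k$ has the claimed $Q$-filtration.

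Applying the lemma with $M=Q_{F,\chi}(\lambda)$ gives $(Q_{F,\chi}^{\nu+\bZ I}(\lambda):Q_{F,\chi}^I(\mu))=(Q_{F,\chi}(\lambda):Q_{F,\chi}^I(\mu))$ when $\mu+\bZ I\leq\nu+\bZ I$ and $0$ otherwise. Since $\lambda+\bZ I\leq\nu+\bZ I$ by hypothesis and $(Q_{F,\chi}(\lambda):Q_{F,\chi}^I(\lambda))=1$ by the preceding corollary, the first assertion follows; and if $(Q_{F,\chi}^{\nu+\bZ I}(\lambda):Q_{F,\chi}^I(\mu))\neq0$ then $\mu+\bZ I\leq\nu+\bZ I$ from the lemma and $\mu+\bZ I\geq\lambda+\bZ I$ from the preceding corollary, giving the second. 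The main obstacle I anticipate is the truncation lemma itself, in particular verifying cleanly that the reordered filtration cuts $M$ exactly along $O^{\nu+\bZ I}M$: the bubble-sort is routine once Corollary~\ref{QExt} is in hand, but one must be careful that the $\Ext$-vanishing applies in precisely the configuration being swapped, and that the second inclusion $M_k\subseteq O^{\nu+\bZ I}M$ genuinely follows from the grade-concentration of $Q_{F,I,\chi}(\mu)$ together with Frobenius reciprocity.
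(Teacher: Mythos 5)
Your proposal is correct and follows the route the paper intends: the paper states this corollary without proof, the implicit argument being exactly yours, namely that $Q_{F,\chi}^{\nu+\bZ I}(\lambda)=T^{\nu+\bZ I}(Q_{F,\chi}(\lambda))$ and that truncation of a $Q$-filtration retains precisely the factors $Q_{F,\chi}^I(\mu)$ with $\mu+\bZ I\leq\nu+\bZ I$ (with unchanged multiplicities), which one then combines with the preceding corollary. The truncation lemma you supply is proved by the same reordering-via-$\Ext$-vanishing and $O^{\nu+\bZ I}$-identification technique the paper itself uses for $Z$-filtrations in Section~\ref{Sec5.2} and in Lemma~\ref{QFiltLambda}, so you have simply written out the details the paper leaves implicit.
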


\begin{cor}
	Suppose $\pi(h_\alpha)=0$ for all $\alpha\in R_I$, and let $\lambda\in X$. Then $Q_{F,\chi}^I(\lambda)$ is projective if and only if $Z_{\bD F,\chi}(\lambda)$ is irreducible.
\end{cor}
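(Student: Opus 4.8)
The plan is to reduce the statement, via the preceding theorem and its corollary, to a statement purely about composition factors of baby Verma modules in $\sC_{\bD F}$, and then to dualise.

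First I would handle the bookkeeping that does not use the hard input. Recall from Section~\ref{Sec8.1} that $Q_{F,\chi}^I(\lambda)=\Gamma_{F,\chi}(Q_{F,I,\chi}(\lambda))$ surjects onto $\Gamma_{F,\chi}(Z_{F,I,\chi}(\lambda))\cong Z_{F,\chi}(\lambda)$ and hence onto $L_{F,\chi}(\lambda)$, and that $Q_{F,\chi}^I(\lambda)$ is the projective cover of $Z_{F,I,\chi}(\lambda)$ in $\sC_F^{I,-}$, so in particular it is indecomposable in $\sC_F^{I,-}$ and therefore in $\sC_F$. If $Q_{F,\chi}^I(\lambda)$ is projective in $\sC_F$, then, being an indecomposable projective in a finite-length category with enough projectives, it has simple head; since it surjects onto $L_{F,\chi}(\lambda)$, that head is $L_{F,\chi}(\lambda)$, whence $Q_{F,\chi}^I(\lambda)\cong Q_{F,\chi}(\lambda)$. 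As $Q_{F,\chi}(\lambda)$ is projective, $Q_{F,\chi}^I(\lambda)$ is projective in $\sC_F$ \emph{if and only if} $Q_{F,\chi}^I(\lambda)\cong Q_{F,\chi}(\lambda)$. By Corollary~\ref{ProjQFilt}, $Q_{F,\chi}(\lambda)$ has a $Q$-filtration, and by Proposition~\ref{LeviUniqDecomp} the multiplicities $(Q_{F,\chi}(\lambda):Q_{F,\chi}^I(\mu))$ are well defined; since $(Q_{F,\chi}(\lambda):Q_{F,\chi}^I(\lambda))=1$, the iso $Q_{F,\chi}^I(\lambda)\cong Q_{F,\chi}(\lambda)$ holds if and only if $(Q_{F,\chi}(\lambda):Q_{F,\chi}^I(\mu))=0$ for every $\mu\in\Lambda_I$ with $\mu\notin W_{I,p}\cdot\lambda$ (a $Q$-filtration of length one). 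Applying the previous theorem, this is equivalent to
\[
[Z_{\bD F,\chi}(\mu):L_{\bD F,\chi}(\lambda)]=0\qquad\text{for all }\mu\in\Lambda_I,\ \mu\notin W_{I,p}\cdot\lambda ,
\]
i.e.\ to $L_{\bD F,\chi}(\lambda)$ occurring as a composition factor of no baby Verma module in $\sC_{\bD F}$ other than $Z_{\bD F,\chi}(\lambda)$.

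It remains to show this last condition holds if and only if $Z_{\bD F,\chi}(\lambda)$ is irreducible, which is the substantive step. I would argue it using the anti-equivalence $\bD$ of Proposition~\ref{Antiequiv} together with a BGG-type reciprocity in $\sC_F$. Concretely: if $Q_{F,\chi}^I(\lambda)$ is not projective, then $Q_{F,\chi}(\lambda)$ has a $Q$-filtration, hence (remark after Proposition~\ref{QIZFilt}) a $Z$-filtration, with some factor $Z_{F,\chi}(\mu)$, $\mu\notin W_{I,p}\cdot\lambda$; computing $\Hom_{\sC_F}(Q_{F,\chi}(\lambda),-)$-multiplicities of $L_{F,\chi}(\lambda)$ against the costandard "dual baby Verma modules" $\Gamma'_{F,\chi}(Z_{F,I,\chi}(\mu))$ via this $Z$-filtration shows $L_{F,\chi}(\lambda)$ occurs in such a $\Gamma'_{F,\chi}(Z_{F,I,\chi}(\mu))$; transporting through $\bD$, which by Propositions~\ref{DualIrred}, \ref{CommDiag1}, \ref{CommDiag2} and the identification of $\bD$ on baby Vermas (as in the proof of the previous theorem) sends $\Gamma'_{F,\chi}(Z_{F,I,\chi}(\mu))$ to $Z_{\bD F,\chi}(\mu)$ and $L_{F,\chi}(\lambda)$ to $L_{\bD F,\chi}(\lambda)$, gives $[Z_{\bD F,\chi}(\mu):L_{\bD F,\chi}(\lambda)]\neq 0$, so the condition fails. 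Dually, if the condition holds then $Q_{F,\chi}(\lambda)=Q_{F,\chi}^I(\lambda)$, which by the remark after Proposition~\ref{QIZFilt} has a $Z$-filtration built only from copies of $Z_{F,\chi}(\lambda)$; feeding this into the same reciprocity and $\bD$-dualising shows $Z_{\bD F,\chi}(\lambda)$ has no composition factor but $L_{\bD F,\chi}(\lambda)$, i.e.\ is irreducible.

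The main obstacle is exactly this last step: establishing the reciprocity in $\sC_F$ linking $Q$-filtration (equivalently $Z$-filtration) multiplicities of $Q_{F,\chi}(\lambda)$ to composition multiplicities, and matching $\Gamma'_{F,\chi}(Z_{F,I,\chi}(\mu))$ with $\bD Z_{\bD F,\chi}(\mu)$. An alternative, perhaps cleaner, route to the same end is to first observe that $\Ext^i_{\sC_F}(Q_{F,\chi}^I(\lambda),N)\cong\Ext^i_{\sC_F^{I,+}}(Q_{F,I,\chi}(\lambda),N)$ for all $i$ and all $N\in\sC_F$ — by exactness of $\Gamma_{F,\chi}$, its preservation of projectives, Frobenius reciprocity (Proposition~\ref{FrobRecPara}), applied to a projective resolution as in Lemma~\ref{ProjResI} — so that $Q_{F,\chi}^I(\lambda)$ is projective in $\sC_F$ iff $\Ext^1_{\sC_F^{I,+}}(Q_{F,I,\chi}(\lambda),N)=0$ for every $N$ in the image of the restriction functor $\sC_F\to\sC_F^{I,+}$, and then to read off when this vanishing can fail from the radical structure of $Z_{\bD F,\chi}(\lambda)$. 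Everything in the first paragraph is formal given the preceding theorem, Corollary~\ref{ProjQFilt}, Proposition~\ref{LeviUniqDecomp}, and the fact that an indecomposable projective has simple head.
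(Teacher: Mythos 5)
Your first paragraph is sound: the reduction of projectivity of $Q_{F,\chi}^I(\lambda)$ to the statement that $(Q_{F,\chi}(\lambda):Q_{F,\chi}^I(\mu))=0$ for all $\mu\notin W_{I,p}\cdot\lambda$, and hence (by the theorem) to the condition that $L_{\bD F,\chi}(\lambda)$ occurs as a composition factor of no $Z_{\bD F,\chi}(\mu)$ with $\mu\notin W_{I,p}\cdot\lambda$, is correct, using indecomposability of $Q_{F,\chi}^I(\lambda)$, Corollary~\ref{ProjQFilt} and Proposition~\ref{LeviUniqDecomp}. Note also that one implication of the corollary then comes for free, without any reciprocity: if $Z_{\bD F,\chi}(\lambda)$ is irreducible, then $L_{\bD F,\chi}(\lambda)=Z_{\bD F,\chi}(\lambda)$ has $F$-dimension $p^{|R^{+}|}=\dim_F Z_{\bD F,\chi}(\mu)$, so any baby Verma containing it as a composition factor is isomorphic to it, whence $\mu\in W_{I,p}\cdot\lambda$ by Corollary~\ref{ZIsomFull} (valid over $\bD F$ since $\bD\pi(h_\alpha)=0$ for $\alpha\in R_I$); your condition then holds and $Q_{F,\chi}^I(\lambda)\cong Q_{F,\chi}(\lambda)$ is projective.

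The genuine gap is the other implication, and your second paragraph does not close it. Your condition controls the multiplicities $[Z_{\bD F,\chi}(\mu):L_{\bD F,\chi}(\lambda)]$, i.e.\ \emph{where $L_{\bD F,\chi}(\lambda)$ occurs}; irreducibility of $Z_{\bD F,\chi}(\lambda)$ is the transposed statement $[Z_{\bD F,\chi}(\lambda):L_{\bD F,\chi}(\mu)]=0$ for $\mu\notin W_{I,p}\cdot\lambda$, which by the theorem (with the roles of $\lambda$ and $\mu$ interchanged) concerns the multiplicity of $Q_{F,\chi}^I(\lambda)$ in the $Q$-filtrations of the \emph{other} projective covers $Q_{F,\chi}(\mu)$ — something your hypothesis, which speaks only about $Q_{F,\chi}(\lambda)$, does not touch. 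Your proposed fix, a BGG-type reciprocity combined with $\bD$-dualising, does not repair this: in every step of your sketch $\lambda$ remains in the simple-multiplicity slot, so "feeding the $Z$-filtration of $Q_{F,\chi}(\lambda)$ into the reciprocity and dualising" only reproduces the condition of your first paragraph (the first half of that paragraph is in fact redundant, since the equivalence it re-derives was already established), and never yields information about the composition series of $Z_{\bD F,\chi}(\lambda)$ itself. Moreover the reciprocity you invoke ($\Hom$ from projectives against the costandard objects $\Gamma'_{F,\chi}(Z_{F,I,\chi}(\mu))$ computing $Z$-filtration multiplicities) is not established anywhere in the paper, as you acknowledge. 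To finish one needs an additional input: either such a reciprocity, or an argument that a projective $Q_{F,\chi}^I(\lambda)\cong Q_{F,\chi}(\lambda)$ cannot occur as a section of a $Q$-filtration of an indecomposable $Q_{F,\chi}(\mu)$ with $\mu\notin W_{I,p}\cdot\lambda$ (for instance by showing it is also injective in $\sC_F$, so that such an occurrence would split off, or by a symmetric-algebra dimension count as in \cite{Jan}). Neither is supplied, so the proof of "projective $\Rightarrow$ irreducible" is missing.
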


We are now ready to prove the main result of this section, which should be compared with Proposition 4.18 in \cite{AJS}.

\begin{theorem}\label{ProjCovQ}
	Suppose that $\pi(h_\alpha)=0$ for all $\alpha\in R_I$. For all $\lambda\in X$ with $\lambda+\bZ I\leq \mu+\bZ I$, there exists a projective module $Q_{A,\chi}^{\nu+\bZ I}(\lambda)\in\sC_A(\leq\nu+\bZ I)$ with $Q_{A,\chi}^{\nu+\bZ I}(\lambda)\otimes_{A} F\cong Q_{F,\chi}^{\nu+\bZ I}(\lambda)$.
\end{theorem}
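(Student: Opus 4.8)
The strategy mirrors the proof of Proposition 4.18 in \cite{AJS}, building $Q_{A,\chi}^{\nu+\bZ I}(\lambda)$ as a direct summand of a large projective module with a $Q$-filtration. First I would fix $\nu+\bZ I\in X/\bZ I$ with $\lambda+\bZ I\leq\nu+\bZ I$ and consider a projective generator of $\sC_A(\leq\nu+\bZ I)$. Concretely, using Proposition~\ref{QHomIm} there is a projective $P\in\sC_A$ with a $Q$-filtration surjecting onto a chosen projective generator, and then by Lemma~\ref{TruncProj} the truncation $T^{\nu+\bZ I}P\in\sC_A(\leq\nu+\bZ I)$ is projective; one checks it still has a $Q$-filtration (the sections $Q_{A,\chi}^I(\mu)$ with $\mu+\bZ I\not\leq\nu+\bZ I$ are killed, the others survive — this uses Proposition~\ref{ZFiltOrd}-type reordering adapted to $Q$-filtrations via Corollary~\ref{QExt}, together with the proposition just before it on $Q$-filtrations of truncations). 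The key point is that after base change, $T^{\nu+\bZ I}P\otimes_A F$ is projective in $\sC_F(\leq\nu+\bZ I)$ (by Lemma~\ref{projscal} and compatibility of $T^{\nu+\bZ I}$ with $\otimes_A F$, the corollary in Subsection~\ref{Sec5.3}), hence decomposes as a direct sum of the indecomposable projectives $Q_{F,\chi}^{\nu+\bZ I}(\mu)$ in that truncated category.

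The second step is to split off the summand over $A$ itself. Since $A$ is local with residue field $F$, I would appeal to the lifting-of-idempotents argument used in the proof of Proposition~\ref{UniqDecomp} (a version of Remark 4.18 in \cite{AJS}): a direct-summand decomposition of $T^{\nu+\bZ I}P\otimes_A F$ in $\sC_F$ that isolates $Q_{F,\chi}^{\nu+\bZ I}(\lambda)$ lifts to a decomposition $T^{\nu+\bZ I}P\cong Q_{A,\chi}^{\nu+\bZ I}(\lambda)\oplus N$ in $\sC_A(\leq\nu+\bZ I)$ with $Q_{A,\chi}^{\nu+\bZ I}(\lambda)\otimes_A F\cong Q_{F,\chi}^{\nu+\bZ I}(\lambda)$. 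This requires knowing $\End_{\sC_A}(T^{\nu+\bZ I}P)$ is semiperfect, or more directly that $T^{\nu+\bZ I}P$ is finitely generated over $A$ with each graded piece finitely generated, so that endomorphism rings are module-finite over the local ring $A$ and idempotents lift; this is exactly the setting of Fitting's lemma over a complete-enough local ring, and the finite-generation conditions (B) guarantee it. The resulting $Q_{A,\chi}^{\nu+\bZ I}(\lambda)$ is projective in $\sC_A(\leq\nu+\bZ I)$ as a direct summand of a projective.

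A cleaner alternative, which I would actually prefer to present, bypasses the generic generator: start from the module $\Xi_{A,\chi}^I(\lambda)=\Phi_A^I(Q_{A,I,\chi}(\lambda))$ constructed at the end of Subsection~\ref{Sec8.1}, which is projective, has a $Q$-filtration with $Q_{A,\chi}^I(\lambda)$ at the top and all other factors $Q_{A,\chi}^I(\mu)$ with $\mu+\bZ I>\lambda+\bZ I$. Iterate: whenever a factor $Q_{A,\chi}^I(\mu)$ with $\mu+\bZ I>\lambda+\bZ I$ appears, glue on $\Xi_{A,\chi}^I(\mu)$ to cancel it, as in the classical "BGG reciprocity" style argument. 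Because the poset $\{\sigma+\bZ I: \lambda+\bZ I\leq\sigma+\bZ I\leq\nu+\bZ I\}$ is finite (condition (B)), this process terminates after truncating at $\nu+\bZ I$, producing a projective $\widetilde{Q}\in\sC_A(\leq\nu+\bZ I)$ with a $Q$-filtration in which only $Q_{A,\chi}^I(\lambda)$ appears among the maximal-weight factors, and $\widetilde{Q}\otimes_A F$ is projective in $\sC_F(\leq\nu+\bZ I)$. One then matches $\widetilde{Q}\otimes_A F$ against $Q_{F,\chi}^{\nu+\bZ I}(\lambda)$ using the multiplicity formula in Corollary~\ref{QTruncFact}, which pins down exactly how many copies of each $Q_{F,\chi}^I(\mu)$ occur, and splits off the desired summand as before.

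\textbf{Main obstacle.} The delicate point is the lifting step: showing that the Krull--Schmidt-type decomposition valid in $\sC_F$ (where all objects have finite length) descends to $\sC_A(\leq\nu+\bZ I)$ over the local ring $A$. This is where one must be careful that endomorphism rings are finitely generated $A$-modules (so idempotents lift modulo $\fm$) and that a surjection between finitely generated free $A$-modules of equal rank is an isomorphism — exactly the mechanism used in Lemmas~\ref{QFiltDecomp} and \ref{QFiltLambda} and Propositions~\ref{UniqDecomp} and \ref{LeviQFiltDecomp}. Everything else — projectivity of truncations, $Q$-filtrations surviving truncation, compatibility of $T^{\nu+\bZ I}$ with base change — is assembled from results already established earlier in the paper; the real content is orchestrating the summand-splitting over $A$ so that the base-changed module is forced to be the projective cover $Q_{F,\chi}^{\nu+\bZ I}(\lambda)$ on the nose, not merely a module with the same composition factors.
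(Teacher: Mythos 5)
Your skeleton is essentially the paper's: the proof does start from $\Xi_{A,\chi}^I(\lambda)=\Phi_A^I(Q_{A,I,\chi}(\lambda))$, sets $Q=T^{\nu+\bZ I}(\Xi_{A,\chi}^I(\lambda))$, notes $Q$ is projective in $\sC_A(\leq\nu+\bZ I)$ with a $Q$-filtration and that $Q\otimes_A F$ decomposes in $\sC_F(\leq\nu+\bZ I)$ as $Q_{F,\chi}^{\nu+\bZ I}(\lambda)$ plus summands $Q_{F,\chi}^{\nu+\bZ I}(\mu)$ with $\mu+\bZ I>\lambda+\bZ I$, exactly via Corollary~\ref{QTruncFact}. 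So your ``cleaner alternative'' is the right object and the right identification over $F$.

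The genuine gap is in your splitting step. You propose to descend the decomposition of $Q\otimes_A F$ to $\sC_A(\leq\nu+\bZ I)$ by lifting idempotents in $\End_{\sC_A}(Q)$ (``Fitting's lemma over a complete-enough local ring''), but $A$ is only assumed Noetherian local, not complete or Henselian, and idempotents do not in general lift modulo the maximal ideal of such a ring; module-finiteness of the endomorphism ring over $A$ is not enough. Moreover, the arguments you cite (Proposition~\ref{UniqDecomp}, Lemma~\ref{QFiltDecomp}) compare two modules that already exist over $A$, whereas here the summand $Q_{A,\chi}^{\nu+\bZ I}(\lambda)$ is precisely what has to be constructed, so there is nothing over $A$ to lift a projection onto. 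The paper's mechanism (following AJS, Proposition 4.18) is different: it runs a descending induction on $\lambda+\bZ I$. When $\lambda+\bZ I=\nu+\bZ I$ one takes $Q_{A,\chi}^{\nu+\bZ I}(\lambda)=Q$ outright; for smaller $\lambda+\bZ I$, the modules $Q_{A,\chi}^{\nu+\bZ I}(\mu)$ for $\mu+\bZ I>\lambda+\bZ I$ exist by induction, and one lifts the split maps $Q_{F,\chi}^{\nu+\bZ I}(\mu)\to Q\otimes_A F$ to maps over $A$ using the base-change isomorphism on $\Hom$ supplied by (the analogue of) Proposition~\ref{ExtQFilt} (this is where the $Q$-filtration and the $\Ext$-vanishing really enter), and then splits off the complement by Nakayama plus the fact that a surjection of free $A$-modules of equal finite rank is an isomorphism. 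Your ``glue on $\Xi_{A,\chi}^I(\mu)$ to cancel'' sketch does not substitute for this: cancellation of filtration factors is not a construction of a direct complement. To repair your write-up, replace the idempotent-lifting step by this downward induction with $\Hom$-base-change; the rest of your outline then goes through.
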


\begin{proof}
	Recall the definition of $\Xi_{A,\chi}^I(\lambda)$ from the end of Subsection~\ref{Sec8.1}. We know that $\Xi_{A,\chi}^I(\lambda)\in\sC_A$ is projective and has a $Q$-filtration where $Q_{A,\chi}^I(\lambda)$ appears once (at the top) and all other factors are of the form $Q_{A,\chi}^I(\mu)$ for $\mu\in X$ with $\mu+\bZ I>\lambda+\bZ I$. Set $Q=T^{\nu+\bZ I}(\Xi_{A,\chi}^I(\lambda))\in\sC_A(\leq\nu+\bZ I)$. This is a projective module in $\sC_A(\leq\nu+\bZ I)$ by Lemma~\ref{TruncProj}. It has a $Q$-filtration by Corollary~\ref{ProjQFilt} and there exists a surjection $Q\twoheadrightarrow Q_{A,\chi}^I(\lambda)$.
	
	Then $Q\otimes_{A} F$ is projective in $\sC_F(\leq\nu+\bZ I)$ by a similar argument to that of Lemma~\ref{projscal}. Since the simple objects in $\sC_F(\leq\nu+\bZ I)$ are precisely the objects $L_{F,\chi}(\mu)$ for $\mu\in \Lambda_I$ with $\mu+\bZ I\leq \nu+\bZ I$, and since each $Q_{F,\chi}^{\nu+\bZ I}(\mu)$ is the projective cover of such $L_{F,\chi}(\mu)$ in $\sC_F(\leq\nu+\bZ I)$, every projective object in $\sC_F(\leq\nu+\bZ I)$ is a direct sum of some $Q_{F,\chi}^{\nu+\bZ I}(\mu)$ with $\mu\in \Lambda_I$ with $\mu+\bZ I \leq \nu+\bZ I$. Hence, $Q\otimes_{A} F$ is a direct sum of some copies of $Q_{F,\chi}^{\nu+\bZ I}(\mu)$ with $\mu\in \Lambda_I$, $\mu+\bZ I\leq\nu+\bZ I$. 
	
	If $Q_{F,\chi}^{\nu+\bZ I}(\mu)$ appears in the direct sum decomposition then, by Corollary~\ref{QTruncFact}, $Q_{F,\chi}^I(\mu)$ appears in the $Q$-filtration of $Q\otimes_{A} F$. Therefore, from what we know about the $Q$-filtration, we must have $\mu+\bZ I\geq \lambda+\bZ I$. Furthermore, there can be at most one $Q_{F,\chi}^{\nu+\bZ I}(\mu)$ appearing in the filtration with $\mu+\bZ I=\lambda+\bZ I$. Since both $Q\otimes_{A} F$ and $Q_{F,\chi}^{\nu+\bZ I}(\lambda)$ surject onto $L_{F,\chi}(\lambda)$, and since $Q_{F,\chi}^{\nu+\bZ I}(\lambda)$ is the projective cover of $L_{F,\chi}(\lambda)$ in $\sC_F(\leq\lambda+\bZ I)$, the module $Q_{F,\chi}^{\nu+\bZ I}(\lambda)$ must appear at least once in the decomposition. We therefore have $$Q\otimes_{A} F\cong Q_{F,\chi}^{\nu+\bZ I}(\lambda)\oplus\bigoplus_{\lambda+\bZ I<\mu+\bZ I\leq \nu+\bZ I} Q_{F,\chi}^{\nu+\bZ I}(\mu)^{m(\mu)}$$ for some $m(\mu)\geq 0$.

	If $\lambda+\bZ I=\nu+\bZ I$, we may therefore take $Q_{A,\chi}^{\nu+\bZ I}(\lambda)=Q$. The remainder of the proof works in exactly the same way as Proposition 4.18 does in \cite{AJS}, using an analogue of Proposition 3.3 in \cite{AJS} where necessary.

\end{proof}

\begin{theorem}\label{ProjCovQ2}
	Suppose that $\pi(h_\alpha)=0$ for all $\alpha\in R_I$. For all $\lambda\in X$, there exists a projective module $Q_{A,\chi}(\lambda)\in\sC_A$ with $Q_{A,\chi}(\lambda)\otimes_{A} F\cong Q_{F,\chi}(\lambda)$. This module is uniquely determined up to isomorphism. Furthermore, any projective module in $\sC_A$ can be expressed as a direct sum of some $Q_{A,\chi}(\lambda)$.	
\end{theorem}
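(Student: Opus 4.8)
The plan is to obtain $Q_{A,\chi}(\lambda)$ as a suitable direct summand of the truncated projectives $Q_{A,\chi}^{\nu+\bZ I}(\lambda)$ constructed in Theorem~\ref{ProjCovQ}, taking a limit over increasing $\nu+\bZ I$. First I would fix $\lambda\in X$ and note that $Q_{F,\chi}(\lambda)\in\sC_F$ has only finitely many nonzero graded pieces, so there is $\nu+\bZ I\in X/\bZ I$ with $\lambda+\bZ I\leq\nu+\bZ I$ and $Q_{F,\chi}(\lambda)\in\sC_F(\leq\nu+\bZ I)$; for such $\nu$ the truncation acts trivially, $T^{\nu+\bZ I}(Q_{F,\chi}(\lambda))=Q_{F,\chi}(\lambda)$, i.e. $Q_{F,\chi}^{\nu+\bZ I}(\lambda)=Q_{F,\chi}(\lambda)$. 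Theorem~\ref{ProjCovQ} then gives a projective $Q_{A,\chi}^{\nu+\bZ I}(\lambda)\in\sC_A(\leq\nu+\bZ I)$ with $Q_{A,\chi}^{\nu+\bZ I}(\lambda)\otimes_A F\cong Q_{F,\chi}(\lambda)$. The point is that, because $Q_{F,\chi}(\lambda)$ is indecomposable and $A$ is local (so idempotents lift from $F=A/\fm$ to $\End_{\sC_A}(Q_{A,\chi}^{\nu+\bZ I}(\lambda))$, which is a finitely generated $A$-algebra), the $A$-form $Q_{A,\chi}^{\nu+\bZ I}(\lambda)$ is itself indecomposable, or can be cut down by a lifted idempotent to an indecomposable direct summand reducing to $Q_{F,\chi}(\lambda)$. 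I would define $Q_{A,\chi}(\lambda)$ to be this summand (or simply $Q_{A,\chi}^{\nu+\bZ I}(\lambda)$ itself once one checks it lies in $\sC_A$, not merely $\sC_A(\leq\nu+\bZ I)$, which it does since $\sC_A(\leq\nu+\bZ I)$ is a full subcategory of $\sC_A$ closed under summands). Projectivity of $Q_{A,\chi}(\lambda)$ in $\sC_A$ (as opposed to in $\sC_A(\leq\nu+\bZ I)$) follows because its reduction mod $\fm$ is projective in $\sC_F$ and it has a $Q$-filtration by Corollary~\ref{ProjQFilt}, so the criterion after Proposition~\ref{QHomIm} (the Corollary stating that a module with a $Q$-filtration is projective iff its reduction mod every maximal ideal is) applies.

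For independence of $\nu$ and uniqueness, I would argue as follows. Given two choices $\nu+\bZ I$ and $\nu'+\bZ I$ both $\geq\lambda+\bZ I$ and both large enough that $Q_{F,\chi}(\lambda)\in\sC_F(\leq\nu+\bZ I)\cap\sC_F(\leq\nu'+\bZ I)$, one may pass to a common upper bound; the truncation functors $T^{\nu+\bZ I}$ are compatible (an $M\in\sC_A(\leq\nu+\bZ I)$ has $T^{\nu'+\bZ I}M=M$ when $\nu+\bZ I\leq\nu'+\bZ I$), and $-\otimes_A F$ commutes with truncation for modules with a $Z$-filtration by the Corollary at the end of Subsection~\ref{Sec5.3}. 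So the resulting $A$-form is well-defined up to isomorphism: any two projective modules in $\sC_A$ reducing mod $\fm$ to the same indecomposable projective $Q_{F,\chi}(\lambda)$ are isomorphic, by the standard lifting argument (lift the isomorphism $Q\otimes_A F\xrightarrow{\sim}Q'\otimes_A F$ to a map $Q\to Q'$ using projectivity, then it is surjective by Nakayama and an isomorphism since $Q$, $Q'$ are free of the same finite rank over $A$ — recall $Q_{A,\chi}(\lambda)$ is free over $A$ since it has a $Z$-filtration).

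For the final assertion — every projective $P\in\sC_A$ decomposes as a direct sum of $Q_{A,\chi}(\lambda)$'s — I would invoke Corollary~\ref{ProjQFilt}, which tells us $P$ has a $Q$-filtration, and then Proposition~\ref{LeviQFiltDecomp} together with the description of $Q$-filtrations to reduce to showing each $Q_{A,\chi}^I(\lambda)$-filtered projective is a sum of $Q_{A,\chi}(\mu)$'s. Alternatively, and more cleanly: $P\otimes_A F$ is projective in $\sC_F$, hence a direct sum $\bigoplus Q_{F,\chi}(\lambda_i)$; form $Q=\bigoplus Q_{A,\chi}(\lambda_i)\in\sC_A$, which is projective with $Q\otimes_A F\cong P\otimes_A F$; lift the identity to a map $Q\to P$, surjective by Nakayama since both are finitely generated over the Noetherian local ring $A$; then since $P$ is projective the surjection splits, $Q\cong P\oplus P'$ with $P'\otimes_A F=0$, and $P'$ being a summand of a projective is projective hence free over $A$ (Noetherian local), so $P'=0$ and $P\cong\bigoplus Q_{A,\chi}(\lambda_i)$.

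The main obstacle I anticipate is the passage from $\sC_A(\leq\nu+\bZ I)$-projectivity to $\sC_A$-projectivity of $Q_{A,\chi}(\lambda)$, and more precisely verifying that the construction in Theorem~\ref{ProjCovQ} really does stabilise — i.e. that for $\nu+\bZ I$ large the ``extra summands'' $Q_{F,\chi}^{\nu+\bZ I}(\mu)^{m(\mu)}$ with $\mu+\bZ I>\lambda+\bZ I$ can be uniformly split off, via a lifted idempotent, leaving an $A$-form that is genuinely independent of $\nu$. This is exactly the point where \cite{AJS} uses their Proposition~3.3 (referenced at the end of the proof of Theorem~\ref{ProjCovQ}), and the work is in checking that the relevant Hom- and Ext-modules over $A$ behave well under base change, which Proposition~\ref{ExtQFilt} is designed to supply; the rest is formal.
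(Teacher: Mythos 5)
Your overall strategy is the paper's: invoke Theorem~\ref{ProjCovQ} for a suitable $\nu+\bZ I$, identify $Q_{F,\chi}^{\nu+\bZ I}(\lambda)$ with $Q_{F,\chi}(\lambda)$, and then get uniqueness and the decomposition of arbitrary projectives by Nakayama-type arguments (your treatment of those last two points is fine, and essentially the ``similar argument'' the paper alludes to). But there is a genuine gap at the central step, which you yourself flag as the main obstacle and then do not close: Theorem~\ref{ProjCovQ} only provides a module that is projective in the truncated category $\sC_A(\leq\nu+\bZ I)$, and your route to projectivity in all of $\sC_A$ is circular. You propose to give $Q_{A,\chi}^{\nu+\bZ I}(\lambda)$ a $Q$-filtration via Corollary~\ref{ProjQFilt} and then apply the criterion following Proposition~\ref{QHomIm}; but the hypothesis of Corollary~\ref{ProjQFilt} is precisely projectivity in $\sC_A$, which is what you are trying to prove. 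Nothing in your choice of $\nu$ (just large enough that $Q_{F,\chi}(\lambda)\in\sC_F(\leq\nu+\bZ I)$) supplies an ambient module that is projective in the untruncated category. The paper's resolution is different and is the real content of the proof: one checks that $\Xi_{A,\chi}^I(\lambda)$ lies in $\sC_A(\leq 2(p-1)\rho+\bZ I)$ and chooses $\nu+\bZ I$ at least that large, so that $T^{\nu+\bZ I}\Xi_{A,\chi}^I(\lambda)=\Xi_{A,\chi}^I(\lambda)$; then the module $Q$ of Theorem~\ref{ProjCovQ} is $\Xi_{A,\chi}^I(\lambda)$ itself, projective in $\sC_A$, and $Q_{A,\chi}^{\nu+\bZ I}(\lambda)$, being a direct summand of it, is projective in $\sC_A$. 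Separately, the chain $\Xi_{F,\chi}^I(\lambda)\twoheadrightarrow Q_{F,\chi}^I(\lambda)\twoheadrightarrow Z_{F,\chi}(\lambda)\twoheadrightarrow L_{F,\chi}(\lambda)$ plus the projective cover property gives $\Xi_{F,\chi}^I(\lambda)\twoheadrightarrow Q_{F,\chi}(\lambda)$, which is what shows $Q_{F,\chi}(\lambda)\in\sC_F(\leq\nu+\bZ I)$ and hence $Q_{F,\chi}^{\nu+\bZ I}(\lambda)=Q_{F,\chi}(\lambda)$; you asserted this containment from finiteness of the grading, which also needs a word (finitely many weights in $X/\bZ I$ need not have a common upper bound a priori).

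A secondary problem is your appeal to lifting idempotents from $\End_{\sC_F}(Q\otimes_A F)$ to $\End_{\sC_A}(Q)$: for a Noetherian local $A$ that is not complete, idempotents do not in general lift modulo $\fm\End_{\sC_A}(Q)$, even though this ideal lies in the radical. The mechanism actually available here (and in \cite{AJS}) is not naive idempotent lifting but the splitting-off of $Q$-filtered summands using the Hom/Ext base-change statements of Proposition~\ref{ExtQFilt} (the analogue of \cite{AJS} Proposition 3.3 and Remark 4.18), which is how both Theorem~\ref{ProjCovQ} and the uniqueness step proceed. With the paper's choice of $\nu$ the idempotent argument is unnecessary; without it, your construction does not yet produce a module known to be projective in $\sC_A$.
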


\begin{proof}
	One may easily check that $\Xi_{A,\chi}^I(\lambda)\in\sC_A$ lies inside $\sC_A(\leq 2(p-1)\rho+\bZ I)$. Choosing then $\nu+\bZ I\in X/\bZ I$ with $\nu+\bZ I\geq 2(p-1)\rho+\bZ I$, we get that $\Xi_{A,\chi}^I(\lambda)=T^{\nu+\bZ I} \Xi_{A,\chi}^I(\lambda)$, which is the object $Q$ from the previous proof. Then $Q$ and $Q_{A,\chi}^{\nu+\bZ I}(\lambda)$ are projective in $\sC_A$, and we have $Q_{A,\chi}^{\nu+\bZ I}(\lambda)\otimes_{A} F=Q_{F,\chi}^{\nu+\bZ I}(\lambda)$. Since there is a surjection $\Xi_{F,\chi}^I(\lambda)\twoheadrightarrow Q_{F,\chi}^I(\lambda)\twoheadrightarrow Z_{F,\chi}(\lambda)\twoheadrightarrow L_{F,\chi}(\lambda)$, and since $Q_{F,\chi}(\lambda)$ is the projective cover of $L_{F,\chi}(\lambda)$ in $\sC_F$, there is a surjection $\Xi_{F,\chi}^I(\lambda)\twoheadrightarrow Q_{F,\chi}(\lambda)$. In particular, $Q_{F,\chi}(\lambda)\in\sC_F(\leq 2(p-1)\rho+\bZ I)$ and so $Q_{F,\chi}^{\nu+\bZ I}(\lambda)=Q_{F,\chi}(\lambda)$. Therefore, we may take $Q_{A,\chi}(\lambda)=Q_{A,\chi}^{\nu+\bZ I}(\lambda)$.
	
	Suppose $Q_{A,\chi}(\lambda)$ and $\widetilde{Q}_{A,\chi}(\lambda)$ are both projective modules in $\sC_A$ such that $Q_{A,\chi}(\lambda)\otimes_{A} F\cong \widetilde{Q}_{A,\chi}(\lambda)\otimes_{A} F\cong Q_{F,\chi}(\lambda)$. By Remark 4.18 in \cite{AJS} (adapted to this case), there exists $M\in\sC_A$ such that $Q_{A,\chi}(\lambda)\cong\widetilde{Q}_{A,\chi}(\lambda)\oplus M$. We must then have $M\otimes_{A} F=0$. But $M$ is projective in $\sC_A$, so has a $Q$-filtration by Corollary~\ref{ProjQFilt}, and so is free over $A$. This implies $M=0$. A similar argument proves the final claim.
\end{proof}

\section{Index of notation}

In this section we give a list of the notation used in this paper. We omit notation which is only used in individual proofs.

\hfill\\{\bf Section~\ref{Sec2}. Notation} 
\hfill\\	
$\bK$: Algebraically closed field of characteristic $p>0$.\\
$G$: Connected reductive algebraic group over $\bK$.\\
$T$: Maximal torus of $G$ of rank $d$.\\
$B$: Borel subgroup of $G$ containing $T$.\\
$X$: Character group $X=X(T)$.\\
$\fg,\fh,\fb$: Lie algebras of $G$, $T$, $B$ respectively.\\
$R$: Root system of $\fg$ with respect to $T$.\\
$R^{+}=\{\beta_1,\ldots,\beta_r\}$: Positive roots in $R$ corresponding to $B$.\\
$\Pi=\{\alpha_1,\ldots,\alpha_n\}$: Simple roots in $R^{+}$.\\
$Y(T)$: Cocharacter group of $T$.\\
$\langle\cdot,\cdot\rangle$: Natural pairing $X(T)\times Y(T)\to\bZ$.\\
$\alpha^\vee$: The coroot corresponding to $\alpha\in R$.\\
$x\mapsto x^{[p]}$: $p$-th power map on $\fg$, $\fb$, $\fh$.\\
$\{e_\alpha, h_i\mid\alpha\in R, 1\leq i\leq d\}$: Chosen basis of $\fg$, with $e_\alpha\in\fg_\alpha$ for $\alpha\in R$, and $h_i^{[p]}=h_i$ for $1\leq i\leq d$.\\
$h_\alpha\coloneqq[e_\alpha,e_{-\alpha}]$.\\
$\fn^{+}\coloneqq \bigoplus_{\alpha\in R^{+}}\fg_\alpha$.\\
$\fn^{-}\coloneqq \bigoplus_{\alpha\in R^{+}}\fg_{-\alpha}$.\\
$I$: Subset of $\Pi$.\\
$\chi$: Linear form on $\fg$ in standard Levi form corresponding to $I\subseteq\Pi$.\\
$\geq$: Partial ordering on $X/\bZ I$.

\hfill\\{\bf Section~\ref{Sec3}. The category $\sC_A$}
\hfill\\
{\bf Subsection~\ref{Sec3.1}. Definition of algebras}\hfill\\
$U(\fg)$: Universal enveloping algebra of $\fg$.\\
$U_\chi(\fg)$: Reduced enveloping algebra of $\fg$.\\
$U_\chi\coloneqq U(\fg)/\langle e_\alpha^p-\chi(e_\alpha)^p\mid\alpha\in R\rangle$.\\
$U^{+}\coloneqq U_\chi(\fn^{+})=U_0(\fn^{+})\subseteq U_\chi$.\\
$U^{-}\coloneqq U_\chi(\fn^{-})\subseteq U_\chi$.\\
$U^0\coloneqq U(\fh)\subseteq U_\chi$.\\
$U^I$: Subalgebra of $U_\chi$ generated by $\fh$ and root vectors $e_\alpha$ for $\alpha\in R\cap \bZ I$.\\
$R_I\coloneqq R\cap\bZ I$.\\
$R_I^{+}\coloneqq R^{+}\cap\bZ I$.\\
$\fg_I$: Lie subalgebra generated by $\fh$ and root vectors $e_\alpha$ for $\alpha\in R_I$.\\
$\fu^{+}$: Lie subalgebra generated by $e_\alpha$ for $\alpha\in R^{+}\setminus\bZ I$.\\
$\fu^{-}$: Lie subalgebra generated by $e_\alpha$ for $\alpha\in R^{-}\setminus\bZ I$.\\
$U_I^{+}\coloneqq U_\chi(\fu^{+})=U_0(\fu^{+})\subseteq U_\chi$.\\
$U_I^{-}\coloneqq U_\chi(\fu^{-})=U_0(\fu^{-})\subseteq U_\chi$.\\
$U_\chi=\bigoplus_{\lambda+\bZ I\in X/\bZ I} (U_\chi)_{\lambda+\bZ I}$: Standard $X/\bZ I$-grading on $U_\chi$.\\
$(U_\chi)_{\lambda+\bZ I}=\bigoplus_{d\mu\in \fh^{*}, \,\mu\in \lambda+\bZ I +pX }(U_\chi)_{\lambda+\bZ I}^{d\mu}:$ Weight space decomposition of $(U_\chi)_{\lambda+\bZ I}$.\\
$\mu\mapsto \widetilde{\mu}$: Group homomorphism $X\to \Aut_{\bK-alg}(U^0)$ from Lemma~\ref{Aut}.\\
\hfill
{\bf Subsection~\ref{Sec3.2}. Definition of the category $\sC_A$}\hfill\\
$\pi:U^0\to A$: Commutative Noetherian algebra over $U^0$.\\
$\sC_A$: Category of $X/\bZ I$-graded $U_\chi\otimes A$-modules satisfying Conditions (A), (B), (C) and (D).\\
$M_{\lambda+\bZ I}$: $\lambda+\bZ I$-graded part of $M$.\\
$M_{\lambda+\bZ I}^{d\mu}$: (D)-decomposition summand of $M_{\lambda+\bZ I}$ corresponding to $d\mu\in\fh^{*}$, $\mu\in\lambda+\bZ I +pX$.\\
$\widetilde{\sC_A}$: Category of $X/p\bZ I$-graded $U_\chi\otimes A$-modules satisfying conditions (A'), (B'), (C') and (D').	
\hfill\\
{\bf Subsection~\ref{Sec3.3}. Categories defined over subalgebras}\hfill\\
$\sC_A'$: Category of $X/\bZ I$-graded $U^0U^{+}\otimes A$-modules satisfying Conditions (A), (B), (C) and (D).\\
$\widehat{\sC_A'}$: Category of $X$-graded $U^0U^{+}\otimes A$-modules satisfying Conditions (\^{A}), (\^{B}), (\^{C}) and (\^{D}).\\
$\Upsilon_A$: Functor $\widehat{\sC_A'}\to \sC_A'$ (or $\widehat{\sC_A''}\to \sC_A''$) from Proposition~\ref{UpsilonDef}.\\
$M_\sigma$: $\sigma$-graded part of $X$-graded module $M$.\\
$\sC_A''$: Category of $X/\bZ I$-graded $U^0\otimes A$-modules satisfying Conditions (A), (B), (C) and (D).\\
$\widehat{\sC_A''}$: Category of $X$-graded $U^0\otimes A$-modules satisfying Conditions (\^{A}), (\^{B}), (\^{C}) and (\^{D}).	

\hfill\\{\bf Section~\ref{Sec4}. Induction}
\hfill \\
{\bf Subsection~\ref{Sec4.1}. Induction along $U^0$}\hfill\\
$\Phi_A'$: Induction functor $\sC_A''\to \sC_A'$.\\
$\Phi_A$: Induction functor $\sC_A''\to\sC_A$.\\	
{\bf Subsection~\ref{Sec4.2}. Baby Verma modules}\hfill\\
$Z_{A,\chi}$: Induction functor $\sC_A'\to \sC_A$.\\
$A^\lambda$: Object in $\sC_A''$ as defined in this subsection, corresponding to $\lambda\in X$.\\
$Z_{A,\chi}(\lambda)\coloneqq Z_{A,\chi}(A^\lambda)$ with $A^\lambda$ viewed as an object of $\sC_A'$ by inflation, for $\lambda\in X$.\\
{\bf Subsection~\ref{Sec4.3}. $Z$-Filtrations}\hfill\\
$\widehat{\Phi_A'}$: Functor $\widehat{\sC_A''}\to\widehat{\sC_A'}$ defined analogously to $\Phi_A'$.\\
{\bf Subsection~\ref{Sec4.4}. Induction from parabolic subalgebras}\hfill\\
$\sC_A^I$: Category of $X/\bZ I$-graded $U^I\otimes A$ modules satisfying Conditions (A), (B), (C) and (D).\\
$\fp$: Parabolic subalgebra $\fg_I\oplus\fu^{+}$ of $\fg$.\\
$\sC_A^{I,+}$: Category of $X/\bZ I$-graded $U^IU_I^{+}\otimes A$ modules satisfying Conditions (A), (B), (C) and (D).\\
$\Gamma_{A,\chi}$: Induction functor $\sC_{A}^{I,+}\to\sC_A$.\\
$\Phi_A^{I,+}$: Induction functor $\sC_A^I\to\sC_A^{I,+}$.\\
$\Phi_A^I$: Induction functor $\sC_A^I\to\sC_A$.\\
$Z_{A,I,\chi}(\lambda)$: Baby Verma module in $\sC_A^I$ corresponding to $\lambda\in X$.\\
$\fp'$: Parabolic subalgebra $\fu^{-}\oplus\fg_I$ of $\fg$.\\
$\sC_A^{I,-}$: Category of $X/\bZ I$-graded $U_I^{-}U^I\otimes A$ modules satisfying Conditions (A), (B), (C) and (D).\\
$\Gamma_{A,\chi}'$: Induction functor $\sC_{A}^{I,-}\to\sC_A$.\\
$\Phi_A^{I,-}$: Induction functor $\sC_A^I\to\sC_A^{I,-}$.

\hfill\\{\bf Section~\ref{Sec5}. Miscellaneous}
\hfill\\
{\bf Subsection~\ref{Sec5.1}. The category $\sC_A^I$}\hfill\\
$\sC_A^I(\lambda+\bZ I)$: The full subcategory of objects $M$ in $\sC_A^I$ with $M_{\lambda+\bZ I}=M$.\\
{\bf Subsection~\ref{Sec5.2}. Truncations}\hfill\\
$\sC_A(\leq\nu+\bZ I)$: The full subcategory $M\in \sC_A$ with  $M_{\lambda+\bZ I}\neq 0$ only if $\lambda+\bZ I\leq \nu+\bZ I$.\\
$T^{\lambda+\bZ I}$: Truncation functor $\sC_A\to\sC_A(\leq\lambda+\bZ I)$.\\
{\bf Subsection~\ref{Sec5.3}. Extension of scalars}\hfill\\
$\sG\sC_A$: The category of $X/\bZ I$-graded $U_\chi\otimes A$-modules satisfying conditions (A), (F), (C) and (D).
$\pi:U^0\to A\hookrightarrow A'$: Commutative $A$-algebra.\\
{\bf Subsection~\ref{Sec5.4}. Duality}\hfill\\
$\overline{\sC_A}$: The category analogous to $\sC_A$ for $U_{-\chi}\otimes A$-modules.\\
$\overline{\pi}:U^0\to\overline{A}$: $U^0$-algebra defined by $\overline{\pi}(h)=-\pi(h)$ for $h\in\fh$.\\
$\tau$: Automorphism of $G$ (and of $\fg)$ defined in \cite{Jan5}.\\
$W_I$: Weyl group corresponding to $R_I$.\\
$w_I$: Longest element in $W_I$.\\
$\,^{\tau}\pi:U^0\to\,^{\tau}A$: $U^0$-algebra defined by $\,^{\tau}\pi(h)=\pi(\tau^{-1}(h))$ for $h\in\fh$.\\
$\,^{\tau}M$: The $U_{-\chi}\otimes A$-module with the same $A$-module structure as $M$ and with the $U_{-\chi}$ action given by $x\cdot m=\tau^{-1}(x)m$ for $x\in U_{-\chi}$ and $m\in M$.\\
$\bD\pi:U^0\to\bD A$: $U^0$-algebra defined by $\bD\pi=\,^{\tau}(\overline{\pi})$.\\
$\bD$: Duality functor $\sC_A\to\sC_{\bD A}$.\\
$\overline{\bD}\pi:U^0\to\overline{\bD} A$: $U^0$-algebra defined by $\overline{\bD}\pi(h)=-\pi(\tau(h))$ for $h\in\fh$.\\
$\overline{\bD}$: ``Inverse'' duality functor $\sC_A\to\sC_{\overline{\bD} A}$.

\hfill\\{\bf Section~\ref{Sec6}. Properties of baby Verma modules}
\hfill\\
{\bf Subsection~\ref{Sec6.1}. Irreducibility}\hfill\\
$\pi:U^0\to F$: Commutative Noetherian $U^0$-algebra which is a field.\\
$L_{F,\chi}(\lambda)$: Unique irreducible quotient of $Z_{F,\chi}(\lambda)$, for $\lambda \in X$.\\
$Q_{F,\chi}(\lambda)$: Projective cover of $L_{F,\chi}(\lambda)$ in $\sC_F$, for $\lambda\in X$.\\
{\bf Subsection~\ref{Sec6.2}. Isomorphisms of baby Verma modules}\hfill\\
$s_\alpha$: Reflection $X\to X$ sending $\lambda\in X$ to $\lambda-\langle\lambda,\alpha^\vee\rangle\alpha$.\\
$s_{\alpha,m}$: Reflection $X\to X$ sending $\lambda\in X$ to $\lambda-\langle\lambda,\alpha^\vee\rangle\alpha+m\alpha$.\\
$W$: Weyl group of $R$.\\
$t_{m,\alpha}$: Translation $X\to X$ sending $\lambda\in X$ to $\lambda+m\alpha$.\\
$W_p$: $p$-affine Weyl group of $R$, subgroup of $\Aut_\bZ(X)$ generated by $W$ and $t_{\alpha,m}$ for $\alpha\in R$, $m\in\bZ$.\\ 
$W_I$: Weyl group of $R_I$.\\
$W_{I,p}$: $p$-affine Weyl group of $R_I$, subgroup of $\Aut_\bZ(X)$ generated by $W$ and $t_{\alpha,m}$ for $\alpha\in R_I$, $m\in\bZ$.\\
$\rho$: Half-sum of positive roots in $X$.

\hfill\\{\bf Section~\ref{Sec7}. Regular nilpotent $p$-characters}\hfill\\
{\em Recall that in this section $\chi$ is regular nilpotent, and $\pi(h_\alpha)=0$ for all $\alpha\in R$.}\\	
$\pi^\circ:U^0\to A$: $U^0$-algebra defined by $\pi^\circ(h)=0$ for $h\in\fh$. \hfill\\
{\bf Subsection~\ref{Sec7.1}. An equivalence of categories}\hfill\\
$\sC_A^\circ$: Category defined analogously to $\sC_A$ for $U^0$-algebra $\pi^\circ:U^0\to A$.\\
$\Theta_A$: Equivalence of categories $\sC_A^\circ\to\sC_A$ defined in Proposition~\ref{ThetaEquiv}.\\	
{\bf Subsection~\ref{Sec7.2}. Projective covers}\hfill\\
$Q_{\bK,\chi}(\lambda)$: Projective cover of $Z_{\bK,\chi}(\lambda)$ in $\sC_{\bK}$, for $\lambda\in X$.\\
$Q_{F,\chi}(\lambda)$: Projective cover of $Z_{F,\chi}(\lambda)$ in $\sC_{F}$, for $\lambda\in X$.\\
$Q_{F^\circ,\chi}(\lambda)$: Projective cover of $Z_{F^\circ,\chi}(\lambda)$ in $\sC_{F}^\circ,$ for $\lambda\in X$.\\
$Q_{A,\chi}(\lambda)\coloneqq \Theta_A(Q_{\bK,\chi}(\lambda)\otimes_{\bK}A)$ for $\lambda\in X$.\\
$\Lambda$: Set of $W_p$-dot-orbits on $X$, also fundamental domain for $W_p$-dot-action on $X$.

\hfill\\
{\bf Section~\ref{Sec8}. Arbitrary standard Levi form}\hfill\\
{\em Recall that in this section $\chi$ is in standard Levi form corresponding to $I\subseteq\Pi$, and $\pi(h_\alpha)=0$ for all $\alpha\in R_I$.}\hfill\\
{\bf Subsection~\ref{Sec8.1}. The module $Q_{A,\chi}^I(\lambda)$}\hfill\\
$(\sC_A^I)^\circ$: Category defined analogously to $\sC_A^I$ for $U^0$-algebra $\pi^\circ:U^0\to A$.\\
$\Theta_A^I$: Equivalence of categories $(\sC_A^I)^\circ\xrightarrow{\sim}\sC_A^I$ defined in this Subsection.\\
$Q_{\bK,I,\chi}(\lambda)$: Projective cover of $Z_{\bK,I,\chi}(\lambda)$ in $\sC_{\bK}^I$, for $\lambda\in X$.\\
$Q_{A,I,\chi}(\lambda)\coloneqq \Theta_A^I(Q_{\bK,I,\chi}(\lambda))$ for $\lambda\in X$.\\
$Q_{A,\chi}^I(\lambda)\coloneqq \Gamma_{A,\chi}(Q_{A,I,\chi}(\lambda))$ for $\lambda\in X$.\\
$\Lambda_I$: Set of $W_{I,p}$-orbits on $X$, also fundamental domain for $W_{I,p}$-action on $X$.\\
$\Theta_A^{I,+}$: Equivalence of categories $(\sC_A^{I,+})^\circ\xrightarrow{\sim} \sC_{A}^{I,+}$.\\
$\Xi_{A,\chi}^I(\lambda)\coloneqq\Phi_A^I(Q_{A,I,\chi}(\lambda))\in\sC_A$ for $\lambda\in X$.\\
{\bf Subsection~\ref{Sec8.2}. Projective covers}\hfill\\
$Q_{F,\chi}^{\nu+\bZ I}(\lambda)\coloneqq T^{\nu+\bZ I}(Q_{F,\chi}(\lambda))\in\sC_A(\leq\nu+\bZ I)$ for $\lambda\in X$.\\
$Q_{A,\chi}^{\nu+\bZ I}(\lambda)$: Projective module in $\sC_A(\leq\nu+\bZ I)$ with $Q_{A,\chi}^{\nu+\bZ I}(\lambda)\otimes_{A} F\cong Q_{F,\chi}^{\nu+\bZ I}(\lambda)$ when $A$ is a local ring with residue field $F$, for $\lambda\in X$.\\
$Q_{A,\chi}(\lambda)$: Projective module in $\sC_A$ with $Q_{A,\chi}(\lambda)\otimes_{A} F\cong Q_{F,\chi}(\lambda)$ when $A$ is a local ring with residue field $F$, for $\lambda\in X$.







\end{document}